\newtheorem{thm}{Theorem}[section]
\newtheorem{lem}[thm]{Lemma}
\newtheorem{cor}[thm]{Corollary}
\newtheorem{prop}[thm]{Proposition}
\newtheorem{super-derivations}[thm]{Super-Derivation}
\theoremstyle{definition}
\newtheorem{defn}[thm]{Definition}
\newtheorem{rem}[thm]{Remark}
\newtheorem{conv}[thm]{Convention}
\newtheorem{Kac claim}[thm]{Kac claim}
\numberwithin{equation}{thm}
\newcommand{\de}{\delta}
\def\N{\mathscr N}
\def\ggg{\mathfrak{g}}
\def\nnn{\mathfrak{n}}
\def\hhh{\mathfrak{h}}
\def\ker{{\rm Ker\,}}
\def\im{{\rm Im\,}}
\def\max{{\rm max}}
\def\ad{{\rm ad}}
\def\ch{{\rm ch}}
\def\Hom{{\rm Hom}}
\def\Ext{{\rm Ext}}
\def\pr{\partial}
\def\bbf{\mathbb F}
\def\bbz{\mathbb Z}
\def\bbn{\mathbb{N}}
\def\co{\mathcal O}
\def\calr{\mathcal{R}}
\def\calv{\mathcal{V}}
\newcommand{\mf}{\mathfrak}
\newcommand{\al}{\alpha}
\newcommand{\be}{\beta}
\newcommand{\ot}{\otimes}
\newcommand{\ka}{\kappa}
\newcommand{\cw}{\curlywedge}
\newcommand{\dis}{\displaystyle}
\newcommand{\ga}{\gamma}
\newcommand{\mbf}{\mathbf}
\def\C{\mathbb{C}}
\def\N{\mathbb{N}}
\def\bbn{\mathbb{N}}
\def\Z{\mathbb{Z}}
\def\bbz{\mathbb{Z}}
\newcommand{\op}{\oplus}
\newcommand{\cd}{\cdot}
\newcommand{\bgop}{\bigoplus}
\newcommand{\mcal}{\mathcal}
\newcommand{\la}{\lambda}
\newcommand{\La}{\Lambda}
\newcommand*{\Perm}[2]{{}^{#1}\!P_{#2}}
\newcommand{\bW}{\textbf{W}}
\newcommand{\bS}{\textbf{S}}
\newcommand{\bCS}{\textbf{CS}}
\newcommand{\bH}{\textbf{H}}
\newcommand{\bCH}{\textbf{CH}}
\newcommand{\bK}{\textbf{K}}
\newcommand{\Card}{\text{Card}}
\newcommand{\spann}{\text{span}}
\newcommand{\soc}{\text{Soc}}
\newcommand{\mfk}{\mathfrak}
\newcommand{\str}{\textsf{str}}
\newcommand{\diag}{\text{diag}}
\def\sfp{\textsf{P}}
	\def\lc{\mathcal{L\kern -.3em   C}}
	\def\gl{\mathfrak{gl}}
	\def\barz{{\bar{0}}}
	\def\baro{{\bar{1}}}
	\def\bz{{\bar{0}}}
	\def\bo{{\bar{1}}}
	\def\hmod{\text{-\bf{mod}}}
	\def\mcale{\mathcal{E}_\circ}
	\def\cale{\mathcal{E}}	
	\def\scrs{\mathscr{S}}
\begin{document}
		
		\title[Representations of Lie superalgebras]{Representations of a class of infinite-dimensional primitive Lie superalgebras}
		
		\author{Priyanshu Chakraborty, Yuhui shen and Bin Shu}
		
		\address{School of Mathematical Sciences, Ministry of Education Key Laboratory of Mathematics and Engineering Applications \& Shanghai Key Laboratory of PMMP,  East China Normal University, No. 500 Dongchuan Rd., Shanghai 200241, China} \email{priyanshu@math.ecnu.edu.cn}
		\address{School of Mathematical Sciences, Ministry of Education Key Laboratory of Mathematics and Engineering Applications \& Shanghai Key Laboratory of PMMP,  East China Normal University, No. 500 Dongchuan Rd., Shanghai 200241, China}\email{51215500073@stu.ecnu.edu.cn}
		\address{School of Mathematical Sciences, Ministry of Education Key Laboratory of Mathematics and Engineering Applications \& Shanghai Key Laboratory of PMMP,  East China Normal University, No. 500 Dongchuan Rd., Shanghai 200241, China} \email{bshu@math.ecnu.edu.cn}

		\subjclass[2010]{17B10, 17B66, 17B70}
		
		\keywords{infinite-dimensional primitive Lie superalgebras, BGG categories, irreducible modules and their characters, indecomposable titling modules and their characters}

		\begin{abstract} In \cite[\S5.4]{Kac77} and \cite{Kac98}, V. G. Kac tried to raise,  and  finished a classification of infinite-dimensional primitive Lie superalgebras. The series $\bW(m,n)$ with $m,n$ being positive integers  are the fundamental ones.
			
			In this article, we introduce the BGG category $\co$ of modules over $\bW(m,n)$, and try to systematically investigate the representations of $\bW(m,n)$ in this  category, analogue of the study in \cite{DSY24} dealing with finite-dimensional Lie superalgebra case  $\bW(0,n)$, or analogue of the study in \cite{DSY20} dealing with infinite-dimensional Lie algebra case $\bW(m,0)$.  Beyond a compound of the arguments in \cite{DSY20} and in \cite{DSY24}, it is nontrivial to understand irreducible modules in $\co$, which is the main goal of this article. We solve the question with  aid of homological analysis on costandard modules along with  extending Skryabin's theory on independence of operators for graded differential operator Lie algebras in  \cite{Skr} to the super case.   After classifying irreducible modules in this category and  describing their structure, we finally obtain irreducible characters. In the end,  by confirming the semi-infinite character property, and applying Soergel's tilting module theory  in \cite{Soe}, we study indecomposable tilting modules in $\co$, obtaining their character formulas.
			
		\end{abstract}
		
		\maketitle
		\setcounter{tocdepth}{1}\tableofcontents	
		\setcounter{section}{0}

		\section*{Introduction and preliminaries}
		
		\subsection{} Since Kac's classification of finite-dimensional simple Lie superalgebras over an algebraically closed field, the study of representations for those Lie superalgebras has developed extensively and thoroughly (see  \cite{BWW, B03, BLW, CCC, CKW17, CLW15}, {\sl{etc}}, in particular \cite{CW12, Mu12} and the references therein). In the same seminal paper \cite{Kac77}, Kac raised the classification of infinite-dimensional primitive Lie superalgeberas in its second last section \S5.4.  Recall that Lie superalgebra
		$L$ with a distinguished subalgebra $L_0$, is called primitive if $L_0$ is a maximal subalgebra and it does not contain nontrivial ideals of $L$.
		Let $\calr:=\bbf[x_1,\ldots, x_n]\otimes \bigwedge(y_1,\ldots,y_n)$ be the commutative superalgebra with indeterminants $x_i, i=1,\ldots,m;  y_s, s=1,\ldots,n$, and  relations $x_ix_j=x_jx_i$, $y_sy_t=-y_ty_s$ and $x_iy_s=y_sx_i$. The degree conventions  $\deg x_i= \deg y_j= 1$
		determine on $\calr$ a $\bbz$-grading, while  the parity conventions are given as follows:  the parities of $x_i$ and $y_j$ are $\bz$ and $\bo$ respectively. The superspace of super derivations of $\calr$ becomes an infinite-dimensional Lie superalgebra, denoted by $\bW(m,n)$. There are other infinite-dimensional Lie superalgeras, as subalgebras of $\bW(m,n)$ which are defined via exterior  differential forms, for example
		$$ \bS(m, n), \bCS(m, n), \bH(m, n), \bCH(m, n), \text{ and }\bK(m, n)$$
		with $m> 0$.  Under the conditions $(1^\circ)-(5^\circ)$ of \cite[\S5.4]{Kac77}, Kac made the following claim ({\sl{ibid.}})
			
			\begin{Kac claim} Let $L = \bigoplus_{i\geq -d}L_i$ be an infinite-dimensional $\bbz$-graded Lie
				superalgebra having properties  $(1^\circ)-(5^\circ)$ . Then $L$ is isomorphic as $\bbz$-graded superalgebra
				to one of $\bW(m, n)$, $\bS(m, n)$, $\bCS(m, n)$, $\bH(m, n)$,
				$\bCH(m, n)$, or $\bK(m, n)$ with $m>0$.
			\end{Kac claim}
			His final classification of linearly compact Lie superlagebras was properly formulated in \cite{Kac98} and  proved there, 	based on his claim along with \cite{Shc1, Shc2} and their works jointly with Cheng \cite{ChKac98, ChKac99} (the list of the complete classification contains eight series of completed graded superalgebras, two series of filtered deformations, and six  exceptional Lie superalgebras (see \cite[Theorem 0.1]{Kac98})). 		This classification can be regarded a super counterpart of infinite-dimensional  primitive Lie algebras related to primitive Lie pseudogroups (see \cite{G70, GQS66, KoN, SS65}, {\sl{etc}}.).
		

		\subsection{} In this paper, we investigate the representations of those infinite-dimensional  primitive Lie superalgebras, beginning with the BGG category of $\bW(m,n)$, in the sense of analogue of \cite{DSY20}, \cite{DSY24}  tracing back to  \cite{BBG}.
		
		Let $\ggg=W(m,n)$ for $m,n\in \bbz_{>0}$. As introduced above, $\ggg$ is endowed with both $\bbz$-gradation, and $\bbz_2$-structure
		\begin{align*}
			\ggg&=\sum_{i=-1}^\infty \ggg_i\cr
			&=\ggg_\bz\oplus \ggg_\bo
		\end{align*}
		both of which are compatible. This means $\ggg_\bz=\sum_{k=0}^\infty\ggg_{2k}$, and $\ggg_\bo=\sum_{k=-1}^\infty\ggg_{2k+1}$.
		Especially, $\ggg_0\cong \gl(m|n)$ and $\ggg_{-1}\cong \bbf^{m|n}$. Consider $\sfp=\ggg_{-1}+\ggg_0$ which is a  subalgebra, regarded a mimic of the minimal parabolic subalgebra in the case  $W(0,n)$ (see \cite{DSY24} where all parabolic subalgebras containing $\ggg_0$ are classified into two).  By definition, the BGG category of $\ggg$ contains objects which are locally $\sfp$-finite, $\hhh$-semisimple, and endowed
		with $\bbz$-structure compatible with the $\bbz$-gradation of $\ggg$ (see Definition \ref{category O}). Here $\hhh\cong \bbf^m\oplus \bbf^n$
		denotes  the standard Cartan subalgebra of $\ggg_0$ which is equivalent to the one consisting of all diagonal matrices $\text{diag}(a_1,\ldots,a_m)\oplus \diag(c_1,\ldots,c_n)\in \gl(m)\oplus \gl(n)$ in $\gl(m|n)$ (see \S\ref{sec: gl}). In particular,
		$\hhh^*$ admits a standard basis $\{\epsilon_i, \delta_s\mid i=1,\ldots,m; s=1,\ldots,n\}$ which can be equivalently defined as $\epsilon_i(\diag(a_1,\ldots,a_m)+\diag(c_1,\ldots,c_n))=a_i$ and $\delta_s(\diag(a_1,\ldots,a_m)+\diag(c_1,\ldots,c_n))=c_s$.
		It is well-known that the isomorphism classes of finite-dimensional irreducible modules  of $\gl(m|n)$  are in a one-to one correspondence to the set $\Lambda^+$ of dominant integral weights with respect to  the standard positive root system of $\gl(m|n)$.
		The irreducible module corresponding to $\lambda\in \Lambda^+$ is denoted by $L^0(\lambda)$ (see for example,  \cite{CW12}).

		Roughly, one can naturally define the standard objects $\Delta(\lambda)$ and costandard objects $\nabla(\lambda)$ as usually by inducing and coinducing from $L^0(\lambda)$, respectively (see \S\ref{sec: standard obj}-\ref{sec: costandard obj}),
		where $\lambda$ runs through $\Lambda^+$.
		The simple objects $L(\lambda)$ in $\co$ can be determined as simple heads of standard objects or simple socles of costandard objects (see Lemmas \ref{lem1} and \ref{lem: simple socles}).   However, it is  challenging to investigate the structure of irreducible  modules and to describe the irreducible characters for this category.
		
		\subsection{} The main purpose  of the present paper is to accomplish the challenging task as mentioned in the previous paragraph.  The primary idea is to  describe the costandard modules $\nabla(\lambda)$. There is a non-trivial result that $\nabla(\lambda)$ can be realized by the so-called mixed-product module $\calv(\lambda):=\calr\otimes_\bbf L^0(\lambda)$ (see Proposition \ref{p3.6}), as a super counterpart of Professor Guang-Yu Shen's mixed-product modules in \cite{Shen1}.

		Thus, it becomes a theme to investigate $\calv(\lambda)$. In order to investigate $\calv(\lambda)$, we develop  the theory introduced by Skryabin on independence of differential operators in \cite{Skr} (see \S\ref{sec: skry theory}). Such development is based on a large computation, most of which is listed in the Appendix B. With this, we  prove that  $\calv(\lambda)$ is irreducible if and only if $\lambda$ is non-exceptional weights (see below or Theorem \ref{thm: 5.5}). Thus, we classify $\calv(\lambda)$ into two classes
		\begin{itemize}
			\item[(1)] Exceptional modules $\calv(\omega_k)$ and $\calv(\theta_q)$ with exceptional weights $\omega_k$ and $\theta_q$ with $k$ and $q$ being non-negative integers:
			\begin{align*}
				\omega_k:=\begin{cases} 0  \; &\text{ for }k=0, \cr
					\sum_{i=1}^k\epsilon_i   \; &\text{ for }k=1,\ldots,m,\cr
					\sum_{i=1}^m\epsilon_i + (k-m)\delta_1 \; &\text{ for }k>m.
				\end{cases}
			\end{align*}
			and
			$$\theta_q=\sum_{i=1}^m\epsilon_{i}-\sum_{s=1}^n\delta_s-q\delta_n$$
			for $q\in \bbn$.
			
			\item[(2)] Non-exceptional modules $\calv(\lambda)$ with $\lambda$ not belonging to exceptional weights.
		\end{itemize}

		\subsection{} In order to get the precise structure information of all irreducible modules and to obtain their characters, the other key ingredient is to investigate the decomposition series of $\calv(\omega_k)$ (type I) and $\calv(\theta_q)$ (type II),  which are dealt with separately. We construct two kinds of  $\ggg$-module chain complexes:
		$$0\longrightarrow\mathcal{V}(\omega_0)\xrightarrow{\,\,\,\mfk d_0\,\,\,}\mathcal{V}(\omega_1)\xrightarrow{\,\,\,\mfk d_1\,\,\,}\cdots\cdots \mathcal{V}(\omega_k)\xrightarrow{\,\,\,\mfk d_k\,\,\,}\mathcal{V}(\omega_{k+1})\xrightarrow{\mfk d_{k+1}}\cdots\cdots, $$
		and
		\begin{align*}
			\mcal \cdots\cdots\longrightarrow\mathcal{V}(\theta_{q+1})\xrightarrow{\,\,\,\mathbf d_{q+1}\,\,\,}\mathcal{V}(\theta_q)\xrightarrow{\,\,\,\mathbf d_q\,\,\,} \mathcal{V}(\theta_{q-1})\xrightarrow{\,\,\,\mathbf d_{q-1}\,\,\,}\cdots\cdots\mathcal{V}(\theta_{1})\xrightarrow{\,\,\,\mathbf d_{1}\,\,\,}\mcal V(\theta_0) \xrightarrow{\,\,\,\mathbf d_{0}\,\,\,} 0
		\end{align*}
		(see \S\ref{sec: differential type I} and \S\ref{sec: differential type II} for the differentials).  The first chain complex above is somehow typical, similar to de Rham complex
		by a lot of computation most of which is listed in the Appendix A. Consequently, the corresponding irreducible modules and irreducible characters can be read off from the long exact sequence.  However, the second chain complex is not an exact sequence  with singularity at $q=m$,
		which means that it loses the  exactness at this point. At any regular point $q\neq m$, the complex turns out to be exact.
		We need to analyse more on the sub-quotients of $\calv(\theta_m)$ arising from the second complex above. The analyses based on computations show that at singular point $q=m$,  $\calv(\theta_m)$ admits three composition factors:
		$$\calv(\theta_m)\slash \ker\mbf d_m, \; \ker\mbf d_m/\im\mbf d_{m+1}, \text{ and }\im\mbf d_{m+1}$$
		(see  Theorem \ref{thm on V(la_q)}).  According to the property of the second complex mentioned previously along with the composition factors of $\calv(\theta_m)$,  we finally read off all irreducible characters for irreducible modules arising from the type II highest weights. Hence, we  obtain the structure theorem for irreducible modules in $\co$ (Theorem \ref{Final irr thm}), and   all irreducible characters (Theorem \ref{thm: irr char}).

		\subsection{Tilting modules} Tilting module theory in classical Lie theory has becomes important since its appearance. Soergel developed the tilting module theory for graded Lie algebras with semi-infinite character property, which enabled him to succeed in the study of character formulas for indecomposable tilting modules of affine Kac-Moody algebras.
		We continue to investigate the tilting modules of the BGG category $\co$ for $\bW(m,n)$, which is also an expectation raised in \cite{DSY20}. In the case  $\ggg=\bW(m,n)$,  according to Soergel's construction in \cite{Soe} and Brundan's super verion (see \cite{JB}) we can talk the tilting objects in $\co$ corresponding to each simple objects, denote by ${}^dT(\lambda)$. Furthermore,  semi-infinite character requirement turns out satisfied, by routine but tedious  computation which is listed in Appendix C. With the above results on costandard modules and irreducible modules for $\co$ over $\ggg$, we then obtain the characters of indecomposable tilting modules (Theorem \ref{thm: tilting character}),  with aid of  Soergel reciprocity (Proposition \ref{soegrel formula}).
		
		\subsection{Main results} In the following, we summarize our main result.
		\begin{thm} \label{thm: main 1}
			Let $\ggg=\bW(m,n)$. The following statements.
			
			\begin{itemize}
				\item[(1)]  The iso-classes of irreducible modules in $\mathcal{O}$ are parameterized by ${\Lambda^+}\times \mathbb{Z}$. More precisely, each irreducible module $S$ in $\mathcal{O}$ is of the form $L(\mu)$ for some $\mu\in \La^+$ with the depth $d$, which belongs to the subcategory $\co_{\geq d}$ (see \S\ref{sec: sub O} for the notation).
				
				\item[(2)] Forgetting the depth,  an irreducible module $L(\lambda)$ can be described as: $L(\lambda)\cong\calv(\lambda)$ if $\lambda\notin\{\omega_k,\theta_q\mid k,q\in\Z_{>0}\}$, $L(\omega_k)\cong\im \mfk d_{k-1}$, $L(\theta_q)\cong \im \textbf{d}_{q+1}$, { for all $k,q \geq 1$.}
				\item[(3)] The irreducible characters are presented below.
				
				{\begin{itemize}
						
						\item[(3.1)] \ch$(L(\la))	=\Gamma \ch(L^0(\la))$, for $\la \neq \theta_k, \omega_k$, $k \geq 1$.
						\item[(3.2)] $\ch(L(\omega_k))=\dis{\sum_{i=0}^{k} } (-1)^{2k-i}\Gamma \ch(L^0(\omega_{k-i})),  \,\,\, \forall \, k \geq 1.$
						\item[(3.3)] $\ch(L(\theta_q))=\dis{\sum_{i=0}^{q-1} } (-1)^{2q-i}\Gamma \ch(L^0(\theta_{q-i})) +(-1)^q\Gamma e^{\mcale},  \,\,\, \forall \, q <m.$
						\item[(3.4)] $\ch(L(\theta_q))=\dis{\sum_{i=0}^{q-1} } (-1)^{2q-i}\Gamma \ch(L^0(\theta_{q-i})) +(-1)^q\Gamma e^{\mcale} +(-1)^{q-m+1} \ch(L(0)),  \, \forall \, q \geq m,$
						
					\end{itemize}
					where  $\Gamma=\ch \calr$ and $\mcale= \dis{\sum_{i=1}^{m}}\epsilon_i - \dis{\sum_{j=1}^{n}}\de_j.$
				}
			\end{itemize}
		\end{thm}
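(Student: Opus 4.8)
The plan is to assemble the three statements from the structural and homological results of the preceding sections; the theorem is a synthesis, so the work lies in citing the correct ingredients and carrying out the Euler-characteristic bookkeeping forced by the two chain complexes. For part (1), I would use that every object of $\co$ carries a $\bbz$-grading compatible with the one on $\ggg$, so an irreducible module $S$ is concentrated in degrees $\geq d$ for a unique minimal $d$, its depth, and hence lies in $\co_{\geq d}$; after the degree shift identifying $\co_{\geq d}$ with $\co_{\geq 0}$, the simple objects there are exactly the simple heads $L(\mu)$ of the standard objects $\Delta(\mu)$, $\mu\in\Lambda^+$, by Lemma \ref{lem1}, equivalently the simple socles of $\nabla(\mu)$ by Lemma \ref{lem: simple socles}. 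Combined with the structure theorem (Theorem \ref{Final irr thm}), this yields the parametrisation by $\Lambda^+\times\bbz$.

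For part (2), Proposition \ref{p3.6} gives $\nabla(\lambda)\cong\calv(\lambda)=\calr\otimes_\bbf L^0(\lambda)$, reducing everything to the mixed-product modules. The super analogue of Skryabin's independence theory (Theorem \ref{thm: 5.5}) shows $\calv(\lambda)$ is irreducible precisely for $\lambda$ non-exceptional, so $L(\lambda)\cong\calv(\lambda)$ there. For the exceptional weights one invokes the two complexes of \S\ref{sec: differential type I} and \S\ref{sec: differential type II}. The type I complex $0\to\calv(\omega_0)\xrightarrow{\mfk d_0}\calv(\omega_1)\to\cdots$, by the computations in Appendix A, is exact off its initial term (a de Rham-type phenomenon), so each $\calv(\omega_k)$ with $k\geq1$ has socle $\im\mfk d_{k-1}\cong L(\omega_k)$. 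The type II complex $\cdots\to\calv(\theta_{q+1})\xrightarrow{\mbf d_{q+1}}\calv(\theta_q)\to\cdots\to\calv(\theta_0)\to 0$ is exact at every $q\neq m$, while at the singular point $q=m$ Theorem \ref{thm on V(la_q)} isolates the three composition factors $\calv(\theta_m)/\ker\mbf d_m$, $\ker\mbf d_m/\im\mbf d_{m+1}$ and $\im\mbf d_{m+1}$; in all cases one reads off $L(\theta_q)\cong\im\mbf d_{q+1}$.

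For part (3), $\ch\calv(\lambda)=\ch(\calr\otimes_\bbf L^0(\lambda))=\Gamma\,\ch L^0(\lambda)$, so part (2) gives (3.1) at once. Formula (3.2) follows by telescoping the alternating sum of characters along the type I complex truncated at $\calv(\omega_k)$ (the exponent $2k-i$ only records the homological degree and equals $(-1)^i$). Formulas (3.3)--(3.4) come from the same computation for the type II complex: away from the singularity the alternating sum of the $\Gamma\,\ch L^0(\theta_{q-i})$ collapses to the displayed sum plus the bottom-of-complex term $(-1)^q\Gamma e^{\mcale}$ --- here $\theta_0=\mcale$ and $L^0(\theta_0)$ is one-dimensional, so $\Gamma e^{\mcale}=\ch\calv(\theta_0)$ --- whereas for $q\geq m$ the middle factor $\ker\mbf d_m/\im\mbf d_{m+1}$ detected at $q=m$ is, up to a grading shift, $L(0)$, contributing the further summand $(-1)^{q-m+1}\ch L(0)$. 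Collecting the cases is Theorem \ref{thm: irr char}.

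The main obstacle is the behaviour of the type II complex at $q=m$: proving that exactness fails there and nowhere else, and pinning down the three composition factors of $\calv(\theta_m)$. This rests on the super extension of Skryabin's independence-of-operators theorem and on the lengthy verifications collected in Appendix B, and is the genuinely nontrivial part of the whole scheme.
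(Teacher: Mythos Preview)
Your proposal is correct and follows essentially the same route as the paper: the theorem is stated there as a summary of Theorems \ref{Final irr thm} and \ref{thm: irr char} together with Lemmas \ref{lem1} and \ref{lem: shift functor}, and your synthesis invokes precisely these ingredients in the right order, including the identification $\nabla(\lambda)\cong\calv(\lambda)$ (Proposition \ref{p3.6}), the Skryabin-type irreducibility criterion (Theorem \ref{thm: 5.5}), and the two chain complexes with the Euler-characteristic bookkeeping.

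One small inaccuracy in your final paragraph: the analysis of the singularity of the type II complex at $q=m$ (Lemmas \ref{Lem for q>m}, \ref{Lem for q<m}, \ref{Lem q=m}, leading to Theorem \ref{thm on V(la_q)}) is carried out by direct computation with the explicit differentials $\mbf d_q$ and does not rely on the Skryabin machinery or Appendix B; those are used separately to establish irreducibility of $\calv(\lambda)$ for non-exceptional $\lambda$ (Theorem \ref{thm: 5.5}). Both are nontrivial, but they are logically independent pieces of the argument.
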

		The above theorem is a summary of Theorems \ref{Final irr thm} and \ref{thm: irr char} along with Lemmas \ref{lem1} and \ref{lem: shift functor}.

		
		For indecomposable tilting modules ${}^dT(\lambda)$ with $(\lambda,d)\in \Lambda^+\times \bbz$, we have the following results on their character formula, ignoring their depth $d$.

		\begin{thm}\label{thm: tilting character}
			Maintain the notation as above. Let  $m,n> 1$ and $\lambda\in\Lambda^+$. Then the following statements hold.
			\begin{itemize}
				\item[(1)] If $\la=-2\sum\limits_{i=1}^k\epsilon_{m+1-i}-\sum\limits_{j=k+1}^m\epsilon_{m+1-i} + \sum\limits_{i=1}^n\de_{n+1-i}$  for some $k$ with $1\leq k< m$. Then
				$$ \ch(T(\la))=\Upsilon(\ch(L_0(\lambda))+\ch(L_0(\lambda+\epsilon_{m-k}))).  $$
				\item[(2)] If  $ \la=-\sum\limits_{i=1}^m\epsilon_{i} -\mcal E_{\circ} -(k-m)\de_n $ for some $k \geq m$. Then
				$$ \ch(T(\la))=\Upsilon(\ch(L_0(\lambda))+\ch(L_0(\lambda+\de_{n}))).  $$

				\item[(3)] If $\la=-2\mcal E_{\circ} +k\de_1 $ and $k \neq m$, $k \in \Z_+$. Then
				$$ \ch(T(\la))=\Upsilon(\ch(L_0(\lambda))+\ch(L_0(\lambda-\de_{1}))).  $$

				\item[(4)]  If $\la=-2\mcal E_{\circ} +m\de_1. $ Then
				$$ \ch(T(\la))=\Upsilon(\ch(L_0(\lambda))+\ch(L_0(\lambda-\de_{1}))+chL_0(\la-\mcal E_{\circ} +m \de_1)).  $$
				\item[(5)] If $\la$ be none of type (1)-(4). Then
				$$ \ch(T(\la))=\Upsilon(\ch(L_0(\lambda))). $$
			\end{itemize}
{\color{red}Here $\Upsilon=\prod\limits_{\alpha\in\Phi^{\geq 1}_{\bar 1}}
				(1+e^{\alpha})^{-1} \prod\limits_{\alpha\in\Phi^{\geq 1}_{\bar 0}}
				(1-e^{\alpha})^{-1}$.}
		\end{thm}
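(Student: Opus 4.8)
The plan is to compute $\ch{}^dT(\lambda)$ from a costandard filtration, to turn the filtration multiplicities into decomposition numbers via Soergel reciprocity, and finally to read those decomposition numbers off the irreducible character formulas already established. Suppressing the depth, write $T(\lambda):={}^dT(\lambda)$. Being tilting, $T(\lambda)$ has a $\nabla$-filtration, so in the Grothendieck group $[T(\lambda)]=\sum_{\mu\in\Lambda^{+}}\bigl(T(\lambda):\nabla(\mu)\bigr)\,[\nabla(\mu)]$, and by Proposition~\ref{p3.6} each $\ch\nabla(\mu)=\Gamma\,\ch L^{0}(\mu)$; hence everything comes down to the multiplicities $\bigl(T(\lambda):\nabla(\mu)\bigr)$.

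By Soergel reciprocity (Proposition~\ref{soegrel formula}) --- whose hypothesis, the semi-infinite character property, is the content of the lengthy but routine verification in Appendix~C --- one has $\bigl(T(\lambda):\nabla(\mu)\bigr)=[\Delta(\mu^{\star}):L(\lambda^{\star})]$ for the involution $\star$ of $\Lambda^{+}$ obtained by twisting $-w_{0}$ by the semi-infinite character; here $\Delta(\cdot)$ is the (infinite-dimensional) standard object, with $\ch\Delta(\mu)=\ch U(\ggg_{\geq 1})\,\ch L^{0}(\mu)$. The decomposition numbers I would then extract from Theorem~\ref{thm: irr char} (equivalently Theorem~\ref{thm: main 1}(3)): using $\ch\nabla(\nu)=\Gamma\,\ch L^{0}(\nu)$ together with $\Gamma\,e^{\mcale}=\ch\nabla(\theta_{0})$, a triangular inversion of the alternating sums defining $\ch L(\nu)$ along the exceptional chains gives $[\nabla(\mu):L(\nu)]\in\{0,1\}$, namely $\delta_{\mu\nu}$ for non-exceptional $\mu$, $\delta_{\nu,\omega_{k}}+\delta_{\nu,\omega_{k-1}}$ for $\mu=\omega_{k}$ $(k\geq 1)$, $\delta_{\nu,\theta_{q}}+\delta_{\nu,\theta_{q-1}}$ for $\mu=\theta_{q}$ $(q\geq 1,\ q\neq m)$, and $\delta_{\nu,\theta_{m}}+\delta_{\nu,\theta_{m-1}}+\delta_{\nu,0}$ for $\mu=\theta_{m}$. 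So the only off-diagonal nonzero decomposition numbers sit on the three families of linked pairs $(\omega_{k+1},\omega_{k})$, $(\theta_{q+1},\theta_{q})$, $(\theta_{m},0)$, and --- via the generic decomposition numbers of $\Delta$, which produce the ``de~Rham''-type bulk --- the diagonal part of the sum reassembles into the principal term $\Upsilon\,\ch L^{0}(\lambda)$.

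To assemble the formulas, fix $\lambda\in\Lambda^{+}$. If $\lambda$ is of type~(5) --- i.e.\ $\lambda^{\star}$ is non-exceptional and is not the lower end of any linked pair --- then only the principal term survives and $\ch T(\lambda)=\Upsilon\,\ch L^{0}(\lambda)$. Each linked pair contributes, for the weight $\lambda$ with $\lambda^{\star}$ at its lower end, after propagation through the tilting construction, one further $\Upsilon\,\ch L^{0}$-block at the $\star$-preimage of the upper end; computing $\star$ on the exceptional weights $\omega_{k},\theta_{q}$ matches these $\lambda$'s with the explicit weights in (1)--(4): a pair $(\theta_{q+1},\theta_{q})$ yields type~(1) with extra term $\Upsilon\,\ch L^{0}(\lambda+\epsilon_{m-k})$; the pairs $(\omega_{k+1},\omega_{k})$ yield types~(2) and (3) with extra terms $\Upsilon\,\ch L^{0}(\lambda+\de_{n})$ and $\Upsilon\,\ch L^{0}(\lambda-\de_{1})$; and, since $0=\omega_{0}$ is the lower end of \emph{both} $(\omega_{1},0)$ and $(\theta_{m},0)$, type~(4) picks up the two corrections $\Upsilon\,\ch L^{0}(\lambda-\de_{1})$ and $\Upsilon\,\ch L^{0}(\lambda-\mcale+m\de_{1})$. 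Summing the blocks gives the stated character formulas.

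The main obstacle is the reciprocity step: making Soergel/Brundan tilting reciprocity rigorous in this non-classical situation --- $\co$ is infinite-dimensional and is not a highest-weight category in the usual finite sense --- and pinning down the correct normalization of the involution $\star$ through the semi-infinite character. The second delicate point, inside the assembling step, is the combinatorics at $\omega_{0}$, where the $(\omega_{k+1},\omega_{k})$-chain collides with the tail $(\theta_{m},0)$ of the type~II complex: tracking this collision is precisely what forces the third summand of case~(4), and matching $\star$ consistently across the four exceptional families (while $\Upsilon$ itself is identified by a direct but tedious computation from the $\Delta$-flag of a generic tilting object) is the bulk of the remaining work.
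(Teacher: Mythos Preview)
Your overall strategy---Soergel reciprocity plus the already-computed composition series of the costandard modules---is exactly the paper's, but the execution swaps $\Delta$ and $\nabla$ in a way that makes the argument break down.

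The paper's reciprocity (Proposition~\ref{soegrel formula}) reads $[T(\lambda):\Delta(\mu)]=[\mathcal V(\mu^\star):L(\lambda^\star)]$ with $\mu^\star=-w_\circ\mu-\mcale$; it relates $\Delta$-flag multiplicities of $T(\lambda)$ to composition multiplicities in the \emph{costandard} modules $\mathcal V=\nabla$. You instead write $(T(\lambda):\nabla(\mu))=[\Delta(\mu^\star):L(\lambda^\star)]$, then compute $[\nabla(\mu):L(\nu)]$. These two moves are inconsistent with each other, and neither matches the character you are aiming for: a $\nabla$-filtration would give $\ch T(\lambda)=\sum(T(\lambda):\nabla(\mu))\,\Gamma\,\ch L^0(\mu)$ with $\Gamma=\ch\mathcal R$, whereas the stated formulas involve $\Upsilon=\ch U(\ggg_{\geq1})$, which is precisely $\ch\Delta(\mu)/\ch L^0(\mu)$. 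The correct (and simpler) route is the paper's: use the $\Delta$-flag, so $\ch T(\lambda)=\Upsilon\sum_\mu[T(\lambda):\Delta(\mu)]\ch L^0(\mu)$, and read the multiplicities $[T(\lambda):\Delta(\mu)]=[\mathcal V(\mu^\star):L(\lambda^\star)]$ directly from Theorem~\ref{Final irr thm} rather than by inverting Theorem~\ref{thm: irr char}.

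Two further concrete slips. First, the composition factors of $\mathcal V(\omega_k)$ are $L(\omega_k)$ and $L(\omega_{k+1})$, and those of $\mathcal V(\theta_q)$ are $L(\theta_q)$ and $L(\theta_{q+1})$ (plus $L(0)$ when $q=m$); your indices $\omega_{k-1},\theta_{q-1},\theta_{m-1}$ point the wrong way. Second, your matching of the four cases to the exceptional families is off: in the paper's Proposition~\ref{main thm1} cases~(1)--(2) come from $\mu^\star=\omega_k$ (with $k<m$ and $k\ge m$ respectively), cases~(3)--(4) from $\mu^\star=\theta_k$ (with $k\neq m$ and $k=m$); the extra third summand in case~(4) arises because $\mathcal V(\theta_m)$ has the extra factor $L(0)$, not because $\lambda^\star=0$ sits at the bottom of two chains.
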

		
		The proof of Theorem \ref{thm: tilting character} will be given in \S\ref{sec: soeg reci}-\S\ref{sec: pf of tilt ch}.

		\subsection{Organization of the present paper} In the first section, we introduce some basic material on $\bW(m,n)$. In the second section, the BGG category $\co$ for $\bW(m,n)$ is introduced. In particular,  costandard modules are  proved to be isomorphic to the corresponding mixed-product modules. The third section is devoted to dealing with exceptional weight modules, where two types of complexes are constructed and analysed. The main results in this section are Theorems \ref{thm: 4.3} and \ref{thm on V(la_q)} which present all composition factors of exceptional mixed-product modules of type I and of type II, respectively. In the fourth section, we extend Skryanbin's arguments on independence of differential operators in \cite{Skr} to the super case $\bW(m,n)$, leading to the result that all non-exceptional mixed-product modules are irreducible (Theorem \ref{thm: 5.5}). The fifth section is devoted to the arguments on tilting modules and the proof of Theorem \ref{thm: tilting character}.
		
		\subsection{General notations}
		\begin{itemize}

			\item Throughout this paper we will work over the base field $\bbf$ which is an algebraically closed field of characteristic $0$.
			\item Let $\mathbb{R}, \Z$ denote the set of complex numbers, set of real numbers and set of integers respectively.
			\item Let $\mathbb N $ and $\Z_+$ denote set of non-negative integers and set of positive integers respectively. For $r \in \N$, $\bbf^r = \{(a_1, \ldots , a_r) : a_i \in \bbf, 1 \leq i \leq r\}$ and
			$\mathbb{R}^r, \Z^r, \N^{r}$ and $\Z_{+}^{r}$ are defined similarly.
			\item In general, elements of $ \mathbb N^r$ are denoted by Greek letters $\al, \be, \ga$ with the meaning that $\al=(\al_1, \ldots, \al_r)$. Set  $||\al||=\al_1+ \cdots + \al_r$ for $\alpha\in\bbn$.
			\item For $S \subseteq \N$, denote $\Card(S)$ as the cardinality of $S$.
			\item For any Lie algebra $\mathfrak{g}$, $U(\mathfrak{g})$ will denote universal enveloping algebra of $\mathfrak{g}$.
			\item For $a, b \in \Z$, denote $[a,b]=\{x \in \Z: a \leq x \leq b\}$.

			\item Denote the subsets of $[1,n]$ by the letters $\eta, \mu$ etc and set $|\mu|= \Card(\mu)$.
			
			\item For a superspace $V=V_\bz+V_\bo$, we denote the parity of a homogeneous vector $v\in V$ by $\wp(v)$.
			
		\end{itemize}
		
		\subsection{Conventions}
		
		For convenience we make the following convention on notations.
		\begin{conv}\label{conv}
			\begin{itemize}
				\item[(1)] For $\alpha=(\alpha_1,\cdots,\alpha_m)\in\bbn^m$, define
				
				\begin{align*}
					\partial^{\alpha}=\begin{cases}\prod_{i=1}^n \partial_i^{\alpha_i}, &\text{ if }\alpha\in\bbn^m;\cr
						0, &\text{ otherwise}.
					\end{cases}
				\end{align*}
				\item[(2)] Fix the notation $\al!=\al_1! \dots \al_m!$, for $\al=(\al_1,\al_2,\dots,\al_m) \in \N^m$.
				\item[(3)] For $\alpha=(\alpha_1,\ldots,\alpha_m),\kappa=(\kappa_1,\ldots,\kappa_m)\in\bbn^m$,
				define $\kappa\leq \alpha$ if and only if $\kappa_i\preceq \al_i$ for $i=1,\ldots,m$.
				\item[(4)] Let $\kappa\preceq \alpha$ as in (3). Define
				$$ C_\ka^\al:=\prod_{i=1}^m
				C_{\ka_i}^{\al_i}$$
				and
				$$P_\ka^\al:=\prod_{i=1}^m
				P_{\ka_i}^{\al_i}$$
				with the combinatorial numbers $C_{\ka_i}^{\al_i}:= {\frac{\al_i!}{\kappa_i!(\al_i-\kappa_i)!}}$, and $P_{\ka_i}^{\al_i}:= {\frac{\al_i!}{(\al_i-\kappa_i)!}}$.
				\item[(5)] For a subset $\eta= \{i_1, \dots, i_k\} $  of $[1,n]$ with $i_1 < \cdots < i_k,$ we define $D_\eta=D_{i_1}\dots D_{i_k}$.
				
				\item[(6)] We always denote by $\delta_{ij}$ the character function on $\bbz\times \bbz$ with value in $\{0,1\}$, taking value $1$ at point $(i,j)$ with $i=j$, and $0$ otherwise.

				\item[(7)]
				We denote  $\epsilon_i=(\delta_{1i}, \ldots, \delta_{mi})\in \bbn^m$.  This means that for $\alpha=(\alpha_1,\ldots, \alpha_m)\in \bbn^m$, we have $\alpha=\sum_{i=1}^m \alpha_i \epsilon_i$.
			\end{itemize}

		\end{conv}

	\subsection*{\textsc{Acknowledgement}}	This work is partially supported by the National Natural Science Foundation of China (Grant No. 12071136, and 12271345)  by Science and Technology Commission of Shanghai Municipality (No. 22DZ2229014). BS expresses his deep thanks to Shun-Jen Cheng for his helpful discussions and his introduction of the references \cite{ChKac98, ChKac99, Kac98}.

\subsection*{Data availability} This paper has no associated data.

\subsection*{Conflict of interest} The authors declare no conflict of interest.

		
		\section{Infinite-dimensional primitive Lie superalgebra  $\bW(m,n)$ }\label{sec: first}

		In this paper, we always assume that the ground field $\bbf$ is algebraically closed, and of characteristic $0$. All super vector spaces (modules) are over $\bbf$. For $V=V_{\bar 0}\oplus V_{\bar 1}$, the party of a $\bbz_2$-homogeneous vector $v\in V$ is denoted by $\wp(v)$. Moreover, if dim $V_{\bar 0}=m$ and dim $V_{\bar 1}=n$, then we write $V$ as $ \bbf^{m|n}.$
		
		\subsection{ }\label{CartanType}
		Let $m$ and $n$ be a pair of non-negative integers with $m+n>0$, and $\calr=\bbf[x_1,\cdots, x_m; y_1,\ldots, y_n ]$ be the polynomial superalgebra on  $m$ indeterminants $x_1,\ldots,x_n$ and $n$ infinitesimal indeterminants $y_1,\ldots,y_n$, which means
		that $\calr$ is a commutative $\bbf$-superalgebra with generators $x_i$, $y_s$ $(i=1,\ldots,m; s=1,\ldots,n)$ and defining relations for $i,j=1,\ldots,m$ and $s,t=1,\ldots,n$
		\begin{align*}
			x_ix_j&=x_jx_i,\cr
			x_iy_s&=y_sx_i,\cr
			y_s^2&=0,\cr
			y_sy_t&=-y_ty_s.
		\end{align*}
		So $\calr=\bbf[x_1,\ldots,x_m]\otimes_\bbf \bigwedge(y_1,\ldots,y_n)$. As a polynomial superalgebra, $x_i, y_s$ $(i=1,\ldots,m; s=1,\ldots,n)$ are endowed with degree $1$. This gives $\calr$ a $\bbz$-gradation. What is especially important is that, we appoint to $x_i$ and $y_s$ the parities $\barz$ and $\baro$ respectively.  This endows $\calr$ with the structure of $\bbf$-super space. We can
		express basis elements $x_1^{\al_1}\cdots x_m^{\al_m}y_{j_1} \cdots y_{j_k}$ by $x^\alpha y_\mu$ for simplicity, where
		$\alpha=(\al_1,\ldots,\al_m) \in \N^m$ and $\mu=\{j_1, \cdots , j_k\} \subseteq [1,n]$.
		Then we have the natural superspace  $\calr=\mathcal{R}_{\bar 0} \oplus \mathcal{R}_{\bar 1}$ with
		$\calr_{\bar 1}=\spann_\bbf\{x^\alpha y_\mu: |\mu |\in 2\bbz\}$,
		and $\calr_{\bar 1}=\spann_\bbf\{x^\alpha y_\mu : |\mu|\in 2\bbz+1\}$. Hence for a homogeneous polynomial $f=x^\al y_\mu \in \mcal R$, the parity of $f$ is $ |\mu| (\mod  2)$.
		On the other hand,  $\calr$ admits a natural grading via the $\bbz$-degree: $\calr=\sum_{i\geq 0}\calr_i$ with $\calr_0=\bbf$ and $\calr_i$ consists of all homogeneous polynomials of degree $i$ for $i>0$.
		
		Denote by $\mf g=\bW(m,n)$ the space spanned all  super derivations  on $\calr$. This means $\mf g=\mf g_{\bar 0}\oplus \mf g_{\bar 1}$ with $\mf g_{\bar 0}$ and $\mf g_{\bar 1}$ consisting of even and odd derivations, respectively, in the following sense:
		
		\begin{super-derivations}
			A $\bbz_2$-homogenous derivation  $E\in \mf g_{|E|}$ is a linear operator on $\calr$ with the following axiom
			$$ E(f_1f_2)=E(f_1)f_2+(-1)^{\wp(E)\wp(f_1)}f_1 E(f_2) $$
			where $E\in \mf g_{\wp(E)}$, and $f_i\in \calr_{\wp(f_i)}$, $i=1,2$.
		\end{super-derivations}
		Then it is readily verified that $\mf g$ becomes a Lie superalgebra under the Lie bracket for $\bbz_2$-homogeneous derivations:
		$$[E_1,E_2]=E_1\circ E_2-(-1)^{\wp(E_1)\wp(E_2)}E_2\circ E_1$$ \
		where $E_i\in \mf g_{\wp(E_i)}$ for $i=1,2$.

		
		When $m=0$, then $\bW(0,n)$ becomes a finite-dimensional simple Lie superalgebra which is one of Cartan series in  the classification of finite-dimensional simple Lie superalgebras over $\bbf$ by Kac in \cite{Kac77}. The representations in this case were investigated thoroughly in \cite{BL83}, \cite{Shap82},   \cite{Ser05}, \cite{DSY24}, {\sl{etc}}.
		
		When $n=0$, then $\bW(m,0)$ becomes an infinite-dimensional primitive Lie algebra. The representations in this case were investigated thoroughly in \cite{Rud},  \cite{Fu86}, \cite{Shen3}, \cite{DSY20}, {\sl{etc}}.
		
		
		\subsection{}
		So we always assume $m,n>0$. The polynomial superalgebra $\calr$ is an infinite-dimensional superspace. So $\bW(m,n)$ is naturally an infinite-dimensional Lie superalgebra.
		

		By definition, $\mf g$ is actually a free $\calr$-module with basis $\{\partial_j\mid 1\leq j\leq m\}\bigcup \{D_s\mid \partial_t\mid 1\leq t\leq n\}$, where $\partial_i$ is the partial (even) derivation with respect to $x_i$, i.e., $\partial_i(x_j)=\delta_{ij}$ for $1\leq i,j\leq n$ with $\partial_i(x_jx_l)=\partial_i(x_j)x_l+x_j\partial_i(x_l)$, $i,j,l=1,\ldots,m$; and $D_t$ is the partial (odd) derivation with respect to $y_t$, i.e., $D_t(y_s)=\delta_{ts}$ for $1\leq t,s\leq n$ with $D_t(y_sy_q)=D_t(y_s)y_q-y_sD_t(y_q)$, $t,s,q=1,\ldots,n$.
		Then $\mf g$ is spanned by the following basis elements:
		$x^\alpha y_\mu \partial_j;  x^\alpha y_\mu D_t$ for $\alpha\in\bbn^m$, $\mu \subseteq [1,n]$,  $j\in [1,m]$ and $t\in [1,n]$.
		
		So $\mf g$ can be clearly described as a super space with the even part $\mf g_{\barz}$ spanned by:
		$$\{ x^\alpha y_\mu \partial_j, x^\be y_\eta D_t: |\mu|\in 2\bbz, |\eta|\in 2\bbz+1 \}$$
		and the odd part $\mf g_{\baro}$ spanned by:
		$$\{ x^\alpha y_\mu \partial_j, x^\be y_\eta D_t: |\mu|\in 2\bbz+1, |\eta|\in 2\bbz \}.$$
		\subsection{$\bbz$-gradations} From the above structure, the $\bbz$-gradation of $\calr$ gives rise to the $\mathbb{Z}$-grading on $\mf g$, i.e., $\mf g=\bigoplus\limits_{i=-1}^{\infty}\mf g_{i}$, where $\mf g_{i}=\text{span}_{\mathbb{F}}\{f_i\partial_i, g_sD_s : 1\leq i\leq m, 1\leq s\leq n;  f_i, g_s\in \calr, \deg(f_i)=\deg(g_s)=i+1\}$. Then  $\ggg_{-1}=\sum_{i=1}^m\bbf\partial_i\oplus \sum_{t=1}^n\bbf D_t$. So $\ggg_{-1}\cong \bbf^{m|n}$ and
		$\ggg_{0}=\langle x_i\partial_j, y_sD_t, x_iD_t, y_s\partial_j\mid i,j=1,\ldots,m; s,t=1,\ldots,n
		\rangle$.  In particular, $\sfp:=\ggg_{-1}+\ggg_0$ is a subalgebra which is a ``minimal" parabolic subalgebra in the sense of \cite{DSY24}. The subalgebra $\sfp$ will play an important role in the BGG category introduced later.

		Moreover,  $\{\ggg_{\geq k}:=\bigoplus\limits_{i \geq k}^{\infty}\mf g_{i}\mid k=-1,0,1,\ldots\}$ forms a decreasing filtration of subalgebras of $\mf g$.

		\subsection{The graded-zero component  $\ggg_0$}\label{sec: gl}  By the $\bbz$-graded construction of $\ggg=\bW(m,n)$,
		$\ggg_{0}=\spann_\bbf \{ x_i\partial_j, y_sD_t, x_iD_t, y_s\partial_j\mid i,j=1,\ldots,m; s,t=1,\ldots,n\}$. Then $\mf g_0$ is naturally isomorphic to the Lie superalgebra $\mf{gl}(m,n)$ via the isomorphism given by
		\begin{align}\label{eq: iso gl}
			& x_i\pr_j \mapsto E_{ij}, \qquad x_i D_s \mapsto E_{i,m+s}, \cr &y_r\pr_j \mapsto E_{m+r,j},   \qquad y_rD_s \mapsto E_{m+r,m+s}
		\end{align}
		for all $i,j \in [1,m]$ and $r,s \in [1,n]$. Here $E_{st}$ are the unit matrix with $(s,t)$th entry equal to $1$ and all other $0$ for $s,t=1,\ldots, m+n$. Let	$\hhh$ be the standard Cartan subalgebra of $\ggg_{0}$ which is $\bbf$-spanned  by  $x_i\partial_i, y_sD_s, i=1,\ldots,m; s=1,\ldots,n$. Now we define unit linear functions $\epsilon_i, \de_r$ on $\mf h$ by defining $\epsilon_i(x_j\pr_j)=\de_{ij}$, $\epsilon_i(y_sD_s)=0$ and $\de_r(x_j\pr_j)=0$, $\de_r(y_sD_s)=\de_{rs}$, for $i,j \in [1,m]$ and $r,s \in [1,n] $.  Here we abuse the notation $\epsilon_i$ as the same as in (7) of Convention \ref{conv} because via the isomorphism given by (\ref{eq: iso gl}),   $\epsilon_i$ can be  regarded the $i$th coordinate function on the space $\sum_{i=1}^m \bbf E_{ii}\cong \bbf^m$.

		\section{The BGG category $\co$}\label{sec: sec}
		
		Maintain the notations and conventions as before. In particular, $\ggg=\bW(m,n)$ with $m,n$ being positive integers. In this section, we introduce the BGG category $\co$ of $\ggg$-modules.

		\subsection{}
		The following notion of Category $\mcal O$ is an analogy of the BGG category for complex finite dimensional semi-simple Lie algebras. Further, an interesting reader can see the reference \cite{DSY20} for Category $\mcal O$ in case of the Lie algebra of polynomial vector fields.
		
		\begin{defn}\label{category O}
			Denote by $\mathcal{O}$ the BGG category of $\ggg$-modules, whose objects are $\bbz_2$-graded vector spaces $M=M_{\bar 0} \oplus M_{\bar 1}$ with the following properties satisfied.
			\begin{itemize}
				\item[(1)] $M$ is an admissible $\mathbb{Z}$-graded $\ggg$-module, i.e., $M=\bigoplus\limits_{i\in\mathbb{Z}} M_{i}$ with $\dim M_{i}<+\infty$, and
				$\ggg_{i}M_{j}\subseteq M_{i+j},\forall\,i,j \in \Z$.
				\item[(2)] $M$ is locally finite for $\sfp$ where $\sfp$ is introduced before.
				{ \item[(3)] $M$ is $\hhh$-semisimple, i.e., $M$ is a weight module: $M=\bigoplus_{\lambda\in\hhh^*}M_{\lambda}$.}
				
			\end{itemize}
			The morphisms in $\mathcal{O}$ are always assumed to be even and they are $\ggg$-module morphisms that respect the $\mathbb{Z}$-gradation i.e.,
			$$\Hom_{\mathcal{O}}(M, N)=\{f\in\Hom_{U(\ggg)}(M, N)\mid f(M_{i})\subseteq N_{i},\,\\ \,\forall\,i\in\mathbb{Z}\},\, \forall\,M,N\in\mathcal{O}.$$
			
		\end{defn}
		Additionally, we say an object $M \in \mcal O$ has depth $d$ if $M =\bigoplus\limits_{i \geq d} M_{i}$	with $M_d \neq 0$ and $M_i=0$, for all $i < d$.
		
		\subsection{Standard objects in $\co$}\label{sec: standard obj}
		\subsubsection{}\label{basic notations}
		
		Let $\Lambda^+$ be the set of dominant integral weights relative to the standard Borel subalgebra $\mathfrak{b}:=\hhh+\nnn^+$ of $\ggg_{0}$, i.e.
		$$\La^+=\{\dis{\sum_{i=1}^{m}}\la_i\epsilon_i + \dis{\sum_{j=1}^{n}}\zeta_j\de_j: \la_i -\la_{i+1} \in \N, \, \zeta_j -\zeta_{j+1} \in \N \, \text{ for all possible } i, j\}.$$

		It is well-known that $\Z$-graded finite-dimensional irreducible $\ggg_{0}$-modules are parameterized by ${\Lambda^+}\times\mathbb{Z}$. For any $\lambda\in {\Lambda^+},$ let ${}^dL^0(\lambda)$ be the irreducible  $\ggg_{0}$-module concentrated in a single degree $d$ with highest weight $\lambda$.  Set $${}^d\Delta(\lambda)=U(\ggg)\otimes_{U(\sfp)}{}^dL^0(\lambda),$$
		where ${}^dL^0(\lambda)$ is regarded as a $\sfp$-module with trivial $\ggg_{{-1}}$-action. Then ${}^d\Delta(\lambda)$ is called a standard module of depth $d$ and
		$\{{}^d\Delta(\lambda)\mid \lambda\in {\Lambda^+},d\in\mathbb{Z}\}$ constitute a class of so-called standard modules of depth $d$ for $\ggg$ in the usual sense.  The following result will follow with the parallel lines of \cite[Lemma 3.1]{DSY20}.
		\begin{lem}\label{lem1}
			Let $\lambda\in{\Lambda^+}, d\in\mathbb{Z}$. The following statements hold.
			\begin{itemize}
				\item[(1)] The standard module ${}^d\Delta(\lambda)$ is an object in $\mathcal{O}$.
				\item[(2)] The standard module ${}^d\Delta(\lambda)$ has a unique irreducible quotient, denoted by ${}^dL(\lambda)$.
				\item[(3)] The iso-classes of irreducible modules in $\mathcal{O}$ are parameterized by ${\Lambda^+}\times \mathbb{Z}$. More precisely, each irreducible module $S$ in $\mathcal{O}$ is of the form $L(\mu)$ for some $\mu\in \La^+$ with the depth $d$.
			\end{itemize}
		\end{lem}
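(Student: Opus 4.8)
The plan is to mirror the proof of \cite[Lemma 3.1]{DSY20}, adapting all three assertions to the $\bbz$-graded super setting. For statement (1), I would first apply the PBW theorem to the decomposition $\ggg=\ggg_{\geq 1}\oplus\sfp$ into subalgebras, obtaining $U(\ggg)\cong U(\ggg_{\geq 1})\otimes U(\sfp)$ as super vector spaces and hence a $\bbz$-graded identification ${}^d\Delta(\lambda)\cong U(\ggg_{\geq 1})\otimes_{\bbf}{}^dL^0(\lambda)$, with ${}^dL^0(\lambda)$ concentrated in degree $d$. Admissibility is then reduced to the fact that each $\ggg_i$ with $i\geq 1$ is finite-dimensional and has $\bbz$-degree $i\geq 1$, so only finitely many PBW monomials of $U(\ggg_{\geq 1})$ lie in a prescribed degree; thus every graded piece of ${}^d\Delta(\lambda)$ is finite-dimensional and ${}^d\Delta(\lambda)$ has depth $d$. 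Local $\sfp$-finiteness holds because $\ggg_{-1}$ strictly lowers and $\ggg_0$ preserves the $\bbz$-degree, so $U(\sfp)v$ for homogeneous $v$ occupies only finitely many degrees and is finite-dimensional; and $\hhh$-semisimplicity is inherited from that of ${}^dL^0(\lambda)$ together with the $\ad\hhh$-weight decomposition of $\ggg$.

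For statement (2), the key device is the Euler element $E=\sum_{i=1}^m x_i\partial_i+\sum_{s=1}^n y_sD_s$, which lies in $\hhh$, satisfies $[E,\xi]=(\deg\xi)\xi$ on homogeneous $\xi\in\ggg$, and whose image in $\ggg_0\cong\gl(m|n)$ is the central element $\sum_a E_{aa}$, acting on ${}^dL^0(\lambda)$ by a scalar $c_\lambda$. A one-line computation with the PBW factorization then shows that $E$ acts on $({}^d\Delta(\lambda))_i$ by the scalar $c_\lambda+i-d$; as these scalars are pairwise distinct, every $\ggg$-submodule of ${}^d\Delta(\lambda)$ is graded. Since ${}^d\Delta(\lambda)$ is generated by its bottom graded piece $({}^d\Delta(\lambda))_d\cong{}^dL^0(\lambda)$, which is $\ggg_0$-irreducible, every proper submodule meets degree $d$ trivially; hence the sum of all proper submodules is itself proper, is the unique maximal submodule, and the quotient is ${}^dL(\lambda)$. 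This quotient lies in $\co$ (which is closed under passage to quotients) and has depth $d$ because its degree-$d$ part is the nonzero $\ggg_0$-module ${}^dL^0(\lambda)$; uniqueness of the irreducible quotient is then immediate.

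For statement (3), I would show that $(\lambda,d)\mapsto{}^dL(\lambda)$ is a bijection onto iso-classes of irreducibles in $\co$. Injectivity is clear, since the depth recovers $d$ and the degree-$d$ component, equal to ${}^dL^0(\lambda)$ as a $\ggg_0$-module, recovers $\lambda$. For surjectivity, let $S$ be an irreducible object and $0\neq v\in S_j$. Writing $S=U(\ggg)v=U(\ggg_{\geq 1})U(\ggg_0)U(\ggg_{-1})v$ and using local $\sfp$-finiteness to see that $U(\ggg_{-1})v$ occupies only finitely many degrees $\leq j$, one concludes that $S$ is bounded below, so $S$ has a depth $d$. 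Then $S_d$ is annihilated by $\ggg_{-1}$ and stabilized by $\ggg_0$, and by a computation like the one in (2) (the degree-$d$ part of $U(\ggg)W$ equals $W$, for any $\ggg_0$-submodule $W\subseteq S_d$) it is an irreducible finite-dimensional $\gl(m|n)$-module; by the classification of such modules, $S_d\cong{}^dL^0(\lambda)$ for a unique $\lambda\in\La^+$. Finally, the $\sfp$-module inclusion $S_d\hookrightarrow S$ together with the universal property of ${}^d\Delta(\lambda)=U(\ggg)\otimes_{U(\sfp)}{}^dL^0(\lambda)$ produces a nonzero, hence surjective, homomorphism ${}^d\Delta(\lambda)\twoheadrightarrow S$; by (2) we get $S\cong{}^dL(\lambda)$.

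The one place I expect to need genuine care is the boundedness-below step in (3): one must deduce, purely from local $\sfp$-finiteness, that an irreducible object cannot be spread over infinitely many negative degrees. The remaining ingredients — the PBW bookkeeping, the grading element $E\in\hhh$, and the parameterization of finite-dimensional simple $\gl(m|n)$-modules by $\La^+$ — are standard.
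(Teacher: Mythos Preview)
Your proposal is correct and follows precisely the approach the paper takes: the paper does not give an independent proof but simply states that the result ``will follow with the parallel lines of \cite[Lemma 3.1]{DSY20}'', and your outline is exactly the adaptation of that argument to the super case, including the use of the Euler grading element in $\hhh$ and the PBW bookkeeping. The one point you flag as needing care---boundedness below in (3)---is handled exactly as you suggest, since local $\sfp$-finiteness forces $U(\ggg_{-1})v$ to be finite-dimensional and hence to occupy only finitely many $\bbz$-degrees.
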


		\subsubsection{Shift functors by shifting depthes}\label{sec: sub O}
		Set $$\mathcal{O}_{\geq d}:=\{M\in\mathcal{O}\mid M=\sum\limits_{i\geq d}M_{i}\},$$ which consists of objects admitting depths not smaller than $d$.

		Consider the shift functor  $T_{d',d}:\mathcal{O}_{\geq d}\longrightarrow\mathcal{O}_{\geq d'}$ which maps the object  $M=\sum_{k}M_k\in \mathcal{O}_{\geq d}$   to $M[d'-d]\in\mathcal{O}_{\geq d'}$, where $M[d'-d] =\sum_{k}(M[d'-d])_k$ and $M[d'-d]_k=M_{k+d'-d}$.  The following fact is clear.
		
		\begin{lem}\label{lem: shift functor}
			The functor $T_{d',d}$ induces a category equivalence.
		\end{lem}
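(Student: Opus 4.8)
The plan is to exhibit an explicit quasi-inverse, namely the functor $T_{d,d'}$ going the other way, and to check that the two composites are literally the identity functors; this shows that $T_{d',d}$ is in fact an isomorphism of categories, hence in particular an equivalence.

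First I would verify that $T_{d',d}$ is well defined on objects: given $M=\bigoplus_k M_k\in\mathcal{O}_{\geq d}$, one must confirm that $M[d'-d]$ satisfies the three defining axioms of $\mathcal{O}$ and has depth at least $d'$. Since $M[d'-d]$ carries the same underlying $\ggg$-module structure as $M$, only with the $\bbz$-grading relabelled via $M[d'-d]_k=M_{k+d'-d}$, the $\hhh$-semisimplicity and the local $\sfp$-finiteness are inherited verbatim, and $\dim M[d'-d]_k=\dim M_{k+d'-d}<\infty$. The one point requiring a short computation is the compatibility of the shifted grading with the grading of $\ggg$: for $i,j\in\bbz$ one has
$$\ggg_i\,M[d'-d]_j=\ggg_i\,M_{j+d'-d}\subseteq M_{i+j+d'-d}=M[d'-d]_{i+j},$$
so $M[d'-d]$ is admissible; and since $M_k=0$ for $k<d$ we get $M[d'-d]_k=0$ for $k<d'$, i.e. $M[d'-d]\in\mathcal{O}_{\geq d'}$.

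Next I would check that $T_{d',d}$ is well defined on morphisms: if $f\in\Hom_{\mathcal{O}}(M,N)$ with $f(M_k)\subseteq N_k$, then the same underlying (even) map satisfies $f(M[d'-d]_k)=f(M_{k+d'-d})\subseteq N_{k+d'-d}=N[d'-d]_k$, so $f\in\Hom_{\mathcal{O}}(M[d'-d],N[d'-d])$; that identities go to identities and composites to composites is immediate because the map itself is left unchanged. Finally, for any object $M$ one computes $(M[d'-d][d-d'])_k=M[d'-d]_{k+d-d'}=M_k$, so $T_{d,d'}\circ T_{d',d}=\mathrm{id}_{\mathcal{O}_{\geq d}}$, and symmetrically $T_{d',d}\circ T_{d,d'}=\mathrm{id}_{\mathcal{O}_{\geq d'}}$. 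Hence $T_{d',d}$ is an isomorphism of categories.

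There is no genuine obstacle in this proof; the only place that needs care is the grading-compatibility verification displayed above, where one uses crucially that it is the grading of $M$ that is being shifted while the grading of $\ggg$ stays fixed (so that $\ggg_i$ still raises degree by $i$ after the relabelling).
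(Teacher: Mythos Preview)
Your argument is correct and is exactly the standard one; the paper itself offers no proof at all, merely stating that the fact is clear. One minor caution: with the paper's literal convention $M[d'-d]_k=M_{k+d'-d}$ your depth check actually yields depth $2d-d'$ rather than $d'$ (this is evidently a sign slip in the paper's definition of the shift), so strictly speaking the intended convention is $M[d'-d]_k=M_{k-(d'-d)}$, under which your verification goes through verbatim.
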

		
		With the shift functors, we can focus our concern on $\mathcal{O}_{\geq 0}$ (or $\mathcal{O}_{\geq d}$ for some specific depth $d$) when we make arguments on module structures. Therefore from this point we only consider the depth of the modules as zero.
		
		\subsection{Costandard objects in $\co$ and their mixed-product realization}\label{sec: costandard obj}
		
		\subsubsection{Costandard modules} Let $\lambda\in{\Lambda^+}$, define the costandard $\ggg$-module corresponding to $\lambda$ as $$\nabla(\lambda):=\Hom_{U(\ggg_{\geq0})}(U(
		\ggg), L^0(\lambda)),$$
		where $L^0(\lambda)$ is regarded as a $\ggg_{0}$-module with trivial $\ggg_{\geq 1}$-action. The module structure on $\nabla(\la)$ is given by:
		$$ (X.\phi)(Y)=\phi(YX)  \, \,\,\,  \forall \, X \in \mf g, \, Y \in U(\mf g), \, \phi \in \nabla(\la).  $$
		Then by the arguments parallel to \cite[\S4.1]{DSY20}, one can show  that $\nabla(\lambda)\in\mathcal{O}$.  Thanks to  \cite[Lemma 3.5]{JB}, we have the following Lemma.
		
		{\begin{lem}\label{description of socle}\label{lem: simple socles}
				The costandard module  ${}^d\nabla(\la)$ admits a simple socle which is isomorphic to ${}^dL(\la)$.
			\end{lem}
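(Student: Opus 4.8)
\emph{Proof proposal.} The plan is to reduce the whole statement to one graded Frobenius reciprocity for the coinduction defining $\nabla(\la)$ and then to finish by Schur's lemma; this is the mechanism behind the cited \cite[Lemma 3.5]{JB}, which I now unpack. First I would use the shift equivalence $\co_{\geq 0}\simeq\co_{\geq d}$ of Lemma~\ref{lem: shift functor}: it preserves socles and matches ${}^0\nabla(\la)$ with ${}^d\nabla(\la)$ and ${}^0L(\la)$ with ${}^dL(\la)$, so it suffices to treat $d=0$; write $\nabla(\la)={}^0\nabla(\la)\in\co_{\geq 0}$ and $L(\la)={}^0L(\la)$.

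Next I would record two easy preliminaries. (i) For each $\mu\in\La^+$ one has $L(\mu)_0\cong L^0(\mu)$ as $\ggg_0$-modules: by PBW ${}^0\Delta(\mu)=U(\ggg)\otimes_{U(\sfp)}L^0(\mu)\cong U(\ggg_{\geq 1})\otimes L^0(\mu)$ as graded $\ggg_0$-modules, so ${}^0\Delta(\mu)_0\cong L^0(\mu)$; the maximal submodule of ${}^0\Delta(\mu)$ (automatically graded) meets degree $0$ in a proper $\ggg_0$-submodule of the simple module $L^0(\mu)$, hence in $0$, whence $L(\mu)_0\cong L^0(\mu)$. In particular $L(\mu)$ is generated by its degree-$0$ part, so $\mathrm{End}_{\co}(L(\mu))\cong\mathrm{End}_{\ggg_0}(L^0(\mu))=\bbf$. (ii) Every $\ggg$-submodule of $\nabla(\la)$ is $\Z$-graded: the Euler derivation $\mathfrak e=\sum_i x_i\partial_i+\sum_s y_sD_s$ lies in $\ggg_0$, is central in $\ggg_0$, and satisfies $[\mathfrak e,x]=i\,x$ for $x\in\ggg_i$; a short computation in $\nabla(\la)=\Hom_{U(\ggg_{\geq 0})}(U(\ggg),L^0(\la))$ then shows $\mathfrak e$ acts on $\nabla(\la)_i$ by the scalar $\la(\mathfrak e)+i$, so the $\Z$-grading is the $\mathfrak e$-eigenspace decomposition and the socle of $\nabla(\la)$ in $\co$ is also its socle as a plain $\ggg$-module.

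The main step is the adjunction isomorphism, natural in $M\in\co_{\geq 0}$,
\[
\Hom_{\co}\bigl(M,\nabla(\la)\bigr)\ \cong\ \Hom^{\mathrm{gr}}_{\ggg_{\geq 0}}\bigl(M,L^0(\la)\bigr)\ \cong\ \Hom_{\ggg_0}\bigl(M_0,L^0(\la)\bigr),
\]
with $L^0(\la)$ in degree $0$ and trivial $\ggg_{\geq 1}$-action. The first isomorphism is the restriction--coinduction adjunction for $\nabla(\la)=\Hom_{U(\ggg_{\geq 0})}(U(\ggg),L^0(\la))$, with inverse $f\mapsto\bigl(m\mapsto(Y\mapsto f(Ym))\bigr)$; the second holds because a degree-preserving $\ggg_{\geq 0}$-map into a module concentrated in degree $0$ must annihilate $M_i$ for $i\neq 0$ and is then nothing but a $\ggg_0$-map on $M_0$ (the $\ggg_{\geq 1}$-equivariance being vacuous, since $\ggg_{\geq 1}M$ sits in positive degree and $\ggg_{\geq 1}$ kills $L^0(\la)$). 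The only point needing care is to see that this adjunction really takes place inside $\co$ — i.e. that $\nabla(\la)$, built from a Hom out of an infinite-rank free module, is the graded-supported coinduction and lies in $\co$ — which is exactly what \cite[Lemma 3.5]{JB} provides and what one checks as in \cite[\S4.1]{DSY20}; I expect this to be the only genuine obstacle, everything afterwards being formal.

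Finally I would assemble the pieces. Taking $M=L(\la)$ gives $\Hom_{\co}(L(\la),\nabla(\la))\cong\Hom_{\ggg_0}(L^0(\la),L^0(\la))=\bbf$, so a nonzero map $L(\la)\to\nabla(\la)$ exists and is injective; hence $L(\la)\hookrightarrow\nabla(\la)$ and $\soc\nabla(\la)\neq 0$. If $L(\mu)$ is any simple submodule of $\nabla(\la)$, then $L(\mu)\in\co_{\geq 0}$ forces $L(\mu)$ to have depth $\geq 0$, while $0\neq\Hom_{\co}(L(\mu),\nabla(\la))\cong\Hom_{\ggg_0}(L(\mu)_0,L^0(\la))$ forces $L(\mu)_0\neq 0$, i.e. depth $\leq 0$; so the depth is $0$, $L(\mu)_0\cong L^0(\mu)$ by (i), and $\Hom_{\ggg_0}(L^0(\mu),L^0(\la))\neq 0$, whence $\mu=\la$. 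Thus $\soc\nabla(\la)\cong L(\la)^{\op k}$ for some $k\geq 1$, and by (i) again $k=\dim\Hom_{\co}(L(\la),\soc\nabla(\la))=\dim\Hom_{\co}(L(\la),\nabla(\la))=1$. Hence $\soc\nabla(\la)\cong L(\la)$, and the case of general depth $d$ follows by the shift equivalence.
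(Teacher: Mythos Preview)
Your proposal is correct and is precisely an unpacking of the paper's approach: the paper gives no argument of its own here and simply invokes \cite[Lemma 3.5]{JB}, which is exactly the coinduction--restriction adjunction plus Schur's lemma you spell out. Your added remarks on the Euler element and on $\nabla(\la)$ being the graded-supported coinduction in $\co$ make explicit the points the paper defers to \cite[\S4.1]{DSY20} and \cite{JB}.
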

		}

		{	\subsubsection{Mixed-product modules and their realizations for costandard modules} In this subsection, we introduce  a kind realization of costandard modules $\nabla(\lambda)$ for $\lambda\in{\Lambda^+}$ via prolonging $L^0(\lambda)$ as below.} For $\la \in \La^+$ consider a $\mf g_0$-module  $L^0(\la)$ and set $$\mathcal{V}(\lambda)=\calr\otimes_\bbf L^0(\lambda).$$
		Now define a $\mf g$-module structure on $\mathcal{V}(\lambda)$ via
		\begin{align}\label{W(n)-module structure}
			f\pr_i.(g\otimes v)=&f\pr_i(g)\otimes v+\sum\limits_{j=1}^m \partial_j(f)g\otimes \xi(x_j\partial_i)v\cr
			&+(-1)^{\wp(f)+\wp(g)+1} \sum\limits_{j=1}^n D_j(f)g\otimes \xi(y_j\pr_i)v
		\end{align}
		and
		\begin{align}\label{W(n)-module structure-2}
			fD_i.(g\otimes v)=&fD_i(g)\otimes v+(-1)^{\wp(g)}\sum\limits_{j=1}^m \partial_j(f)g\otimes \xi(x_jD_i)v \cr &+(-1)^{\wp(f)+1} \sum\limits_{j=1}^n D_j(f)g\otimes \xi(y_jD_i)v
		\end{align}
		for all $f,g\in \calr, v\in L^0(\lambda)$,
		where $(L^0(\lambda),\xi)$ is the representation of $\mf g_{0}$.
		It is routine (but tremendous calculation) to check that this forms a module action. This structure can be regarded a super version of Guang-Yu Shen's mixed-products for a class of graded Lie algebras (see \cite{Shen1}). We denote this representation by $(\mcal V(\la), \rho^g)$.
		{
			We will call $\mathcal{V}(\lambda)$ as mixed-product modules, in memory of Professor Guang-Yu Shen for his contribution to  the modular and infinite-dimensional representation theory of graded Lie algebras with aid of his mixed-product modules (see \cite{Shen1}-\cite{Shen3}).}
		
		\begin{prop}\label{p3.6}
			$	\nabla(\lambda) \cong \mathcal{V}(\lambda)$ as $\ggg$-modules.
		\end{prop}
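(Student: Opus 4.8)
\emph{Proof idea.} The plan is to build an explicit $\ggg$-module isomorphism $\Psi\colon\calv(\lambda)\longrightarrow\nabla(\lambda)$, in the spirit of Guang-Yu Shen's mixed products \cite{Shen1} and of \cite{DSY20}, \cite{DSY24}. First I would invoke the PBW theorem to identify, via multiplication, $U(\ggg)\cong U(\ggg_{\geq0})\otimes_\bbf U(\ggg_{-1})$; thus a $U(\ggg_{\geq0})$-linear map $\phi\colon U(\ggg)\to L^0(\lambda)$ is freely determined by its restriction to $U(\ggg_{-1})$. Since $\ggg_{-1}\cong\bbf^{m|n}$ is abelian with even part $\sum_i\bbf\partial_i$ and odd part $\sum_t\bbf D_t$, one has $U(\ggg_{-1})=\bbf[\partial_1,\dots,\partial_m]\otimes\bigwedge(D_1,\dots,D_n)$, whose degree $-j$ component is finite-dimensional with basis $\{\partial^{\alpha}D_{\eta}:||\alpha||+|\eta|=j\}$. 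Hence $\nabla(\lambda)_j\cong\Hom_\bbf(U(\ggg_{-1})_{-j},L^0(\lambda))$, which shows at once that $\nabla(\lambda)$ is admissible and that $\dim\nabla(\lambda)_j=\dim\calr_j\cdot\dim L^0(\lambda)=\dim\calv(\lambda)_j<\infty$ for every $j$.

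Next I would fix the non-degenerate graded pairing $\langle\,,\,\rangle\colon\calr\times U(\ggg_{-1})\to\bbf$ in which $\langle x^{\beta}y_{\mu},\partial^{\alpha}D_{\eta}\rangle$ is the constant term of the result of letting the constant-coefficient (super)differential operator $\partial^{\alpha}D_{\eta}$ act on $x^{\beta}y_{\mu}$; it vanishes unless $(\beta,\mu)=(\alpha,\eta)$ and is perfect on each graded component. One then defines $\Psi(f\otimes v)\in\nabla(\lambda)$ by $\Psi(f\otimes v)(z\cdot\partial^{\alpha}D_{\eta})=\langle f,\partial^{\alpha}D_{\eta}\rangle\,z.v$ for $z\in U(\ggg_{\geq0})$; this is well defined (PBW uniqueness), $\bbz$-graded, and even. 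Injectivity of $\Psi$ is immediate from non-degeneracy of $\langle\,,\,\rangle$ (write a putative kernel element as $\sum f_i\otimes v_i$ with the $v_i$ linearly independent), and together with the equality of the finite graded dimensions obtained above this gives that $\Psi$ is bijective.

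It then remains to prove that $\Psi$ is $\ggg$-equivariant. Since the elements $f\partial_i,\,fD_i$ $(f\in\calr)$ span $\ggg$, it suffices to verify $\Psi(X.(g\otimes v))=X.\Psi(g\otimes v)$ for each such $X$; and as every member of $\nabla(\lambda)$ is left $U(\ggg_{\geq0})$-linear while $U(\ggg)=U(\ggg_{\geq0})\cdot U(\ggg_{-1})$, it is enough to compare the two sides after evaluation on the monomials $\partial^{\alpha}D_{\eta}\in U(\ggg_{-1})$. Evaluating the right-hand side amounts to rewriting $\partial^{\alpha}D_{\eta}\cdot X$ in PBW form $\sum_r z_rw_r$ with $z_r\in U(\ggg_{\geq0})$, $w_r\in U(\ggg_{-1})$, by iterated use of the bracket relations $[\partial_j,f\partial_i]=\partial_j(f)\partial_i$, $[D_j,f\partial_i]=D_j(f)\partial_i$, $[\partial_j,fD_i]=\partial_j(f)D_i$, $[D_j,fD_i]=D_j(f)D_i$ together with the Koszul sign rule for passing the odd generators past $f,g$ and past one another; pairing the $w_r$ against $g$ through $\langle\,,\,\rangle$ and applying $\xi$ to $v$ through the $z_r$ then reproduces precisely the three terms of \eqref{W(n)-module structure} (respectively \eqref{W(n)-module structure-2}), the cases $X\in\ggg_{-1}$ and $X\in\ggg_0\cong\gl(m|n)$ being the straightforward ones that already display the outer and inner contributions. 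I expect the main obstacle to be exactly this last step: not its conceptual content but the systematic accounting of the Koszul signs produced in the reordering — the super-refinement of the commutative computation in \cite{Shen1}, \cite{DSY20} — which is presumably why the bulk of it is relegated to an appendix. As a cross-check, one may also observe that $\varepsilon_\lambda\colon\calv(\lambda)\to L^0(\lambda)$, $f\otimes v\mapsto f(0)\,v$, is $\ggg_{\geq0}$-equivariant with $\ggg_{\geq1}$ acting by zero, so that $\calv(\lambda)$ satisfies the universal property of the coinduced module $\nabla(\lambda)$, with $\varepsilon_\lambda$ corresponding under $\Psi$ to the evaluation $\phi\mapsto\phi(1)$.
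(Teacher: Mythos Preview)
Your proposal is correct and takes essentially the same approach as the paper: your map $\Psi$ coincides with the paper's $\psi$ (defined there by the explicit formula $\psi(x^\alpha y_\mu\otimes v)(\partial^a D_\eta)=\delta_{a,\alpha}\delta_{\eta,\mu}\,\partial^\alpha(x^\alpha)D_\mu(y_\mu)\,v$ rather than via a pairing), and the equivariance check you outline---rewriting $\partial^\alpha D_\eta\cdot X$ in PBW form and tracking the Koszul signs---is precisely what is carried out in Lemmas~\ref{l3.7}--\ref{l3.10} in the main text, not an appendix. Your bijectivity argument via graded dimension count plus injectivity is a mild streamlining of the paper's explicit surjectivity/injectivity verification in \S\ref{sec: proof of prop}.
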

		We postpone   the proof of the above proposition till \S\ref{sec: proof of prop}. Before that, we need some preparation lemmas.

		\subsubsection{Preparation lemmas}		Let us define  $\psi:\mathcal{V}(\lambda) \to 	\nabla(\lambda)$  by:
		$$  x^\al y_\mu \ot v \mapsto \psi(x^\al y_\mu\ot v), \, \forall \al \in \N^m, \mu \subseteq [1,n], \, v \in L^0(\la), $$
		where $$\psi(x^\al y_\mu\ot v)(\pr^aD_\eta)=
		\begin{cases}
			\pr^\al(x^\al)D_\mu(y_\mu)v, \ \ \ \ &\text{ if } a=\al, \eta =\mu, \\
			0, &\text{otherwise}.
		\end{cases}$$
		
		Note that,  $\psi(x^\al y_\eta \ot v)$ is well defined due to the isomorphism	{\color{red}
		$$\nabla(\lambda)=\Hom_{U(\mf g_{\geq 0})	}(U(\mf g),L^0(\lambda)) \simeq \Hom_{U(\mf g_{\geq 0})	}(U(\mf g), \bbf) \ot_ \bbf  L^0(\lambda ).$$
		}

		The following two lemmas are easy to check by induction principal, which are necessary to prove the map $\psi$ is a $\mf g$-module map.

		\begin{lem}\label{l3.7} For $\vartheta\in \{\pr_i,D_i\}$, the following formula holds:
			{$$\pr^\alpha(x^\beta y_\eta \vartheta)=\sum_{0\preceq\kappa\preceq\alpha} C_\kappa^\alpha P_\kappa^\beta x^{\beta -\kappa}y_\eta\vartheta\pr ^{\alpha-\kappa}.$$}
		\end{lem}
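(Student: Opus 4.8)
The asserted identity is an equality of linear operators on $\calr$; it involves no Lie bracket, only composition of operators, so the plan is to verify it by a direct Weyl-algebra style computation, reducing to the one-variable case and then multiplying out.

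First I would record that the even operator $\pr^\alpha = \pr_1^{\alpha_1}\cdots\pr_m^{\alpha_m}$ commutes, as a linear operator on $\calr$, both with multiplication by $y_\eta$ and with $\vartheta$ for $\vartheta\in\{\pr_i,D_i\}$. Indeed, writing $g\in\calr$ as $\sum_{\nu\subseteq[1,n]}y_\nu g_\nu$ with $g_\nu\in\bbf[x_1,\ldots,x_m]$, the operator $\pr^\alpha$ only differentiates the coefficients $g_\nu$, while $y_\eta\cdot$ and $D_i$ only rearrange or delete the $y$-monomials and $\pr_j$ only differentiates in the $x$-variables; no super-signs enter because $\pr^\alpha$ is even. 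Hence, viewing $x^\beta y_\eta\vartheta$ as the operator $g\mapsto x^\beta y_\eta\,\vartheta(g)$, we get $\pr^\alpha(x^\beta y_\eta\vartheta)=\big(\pr^\alpha\circ(x^\beta\cdot)\big)\circ(y_\eta\cdot)\circ\vartheta$ after sliding one copy of $\pr^{\alpha-\kappa}$ past $(y_\eta\cdot)$ and $\vartheta$, so it suffices to expand the operator $\pr^\alpha\circ(x^\beta\cdot)$ inside the polynomial Weyl algebra $\bbf[x_1,\ldots,x_m;\pr_1,\ldots,\pr_m]$ and then reassemble.

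Second, I would prove the one-variable formula
$$\pr_i^{\,a}\circ(x_i^{\,b}\cdot)=\sum_{k=0}^{a}C_k^a\,P_k^b\,(x_i^{\,b-k}\cdot)\circ\pr_i^{\,a-k}$$
by induction on $a$. The base case $a=0$ is trivial, and for the inductive step one applies $\pr_i$ on the left and uses the product rule $\pr_i(x_i^{\,c}h)=c\,x_i^{\,c-1}h+x_i^{\,c}\pr_i(h)$ together with Pascal's identity $C_k^{a}+C_{k-1}^{a}=C_k^{a+1}$ and $C_k^{a}(b-k)P_k^b+C_{k-1}^{a}P_{k-1}^b=C_k^{a+1}P_k^b$ to collect terms; terms with $k>b$ disappear because $P_k^b=0$ (equivalently $x_i^{\,b-k}=0$ by Convention \ref{conv}(1)). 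Since for distinct indices $i$ the operators $\pr_i^{\alpha_i}\circ(x_i^{\beta_i}\cdot)$ act on disjoint sets of variables and hence commute, I would then take the product over $i=1,\ldots,m$ of these one-variable expansions to obtain
\begin{align*}
\pr^\alpha\circ(x^\beta\cdot)&=\sum_{0\preceq\kappa\preceq\alpha}\Big(\prod_{i=1}^m C_{\kappa_i}^{\alpha_i}\Big)\Big(\prod_{i=1}^m P_{\kappa_i}^{\beta_i}\Big)\,(x^{\beta-\kappa}\cdot)\circ\pr^{\alpha-\kappa}\\
&=\sum_{0\preceq\kappa\preceq\alpha}C_\kappa^\alpha P_\kappa^\beta\,(x^{\beta-\kappa}\cdot)\circ\pr^{\alpha-\kappa},
\end{align*}
where any term with $\kappa\not\preceq\beta$ is zero since $x^{\beta-\kappa}=0$. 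Composing on the right with $(y_\eta\cdot)\circ\vartheta$, commuting each $\pr^{\alpha-\kappa}$ back past $(y_\eta\cdot)$ and $\vartheta$ as in Step 1, yields exactly the stated formula $\pr^\alpha(x^\beta y_\eta\vartheta)=\sum_{0\preceq\kappa\preceq\alpha}C_\kappa^\alpha P_\kappa^\beta\,x^{\beta-\kappa}y_\eta\vartheta\pr^{\alpha-\kappa}$.

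The argument is entirely routine; the only mild obstacle is the bookkeeping in Step 1 — making precise that $\pr^\alpha$ genuinely commutes with both $y_\eta\cdot$ and $D_i$ as operators on $\calr$ and that no super-sign intervenes — together with keeping the multi-index combinatorial coefficients $C_\kappa^\alpha$, $P_\kappa^\beta$ straight when passing from the single-variable identity to the general one. Both points are standard and are dispatched by the induction indicated in the statement.
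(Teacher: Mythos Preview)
Your inductive computation is correct and matches what the paper does (the paper gives no details beyond ``easy to check by induction''). One framing point deserves care, though: the identity is needed as an equality in $U(\ggg)$, since in Lemma~\ref{l3.10} one evaluates elements of $\nabla(\lambda)=\Hom_{U(\ggg_{\geq 0})}(U(\ggg),L^0(\lambda))$ on these products. Your decomposition $x^\beta y_\eta\vartheta=(x^\beta\cdot)\circ(y_\eta\cdot)\circ\vartheta$ lives only in $\text{End}(\calr)$, and the map $U(\ggg)\to\text{End}(\calr)$ is not a priori injective, so an operator identity on $\calr$ does not immediately give the identity in $U(\ggg)$.

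The fix is painless: run the same induction directly in $U(\ggg)$. Since $\pr_j$ is even and $[\pr_j,x^\beta y_\eta\vartheta]=\pr_j(x^\beta y_\eta)\vartheta=\beta_j x^{\beta-\epsilon_j}y_\eta\vartheta$ in $\ggg$, one has $\pr_j\cdot(x^\beta y_\eta\vartheta)=(x^\beta y_\eta\vartheta)\cdot\pr_j+\beta_j x^{\beta-\epsilon_j}y_\eta\vartheta$ in $U(\ggg)$; together with $\pr_j\pr_i=\pr_i\pr_j$ and $\pr_j D_i=D_i\pr_j$ in $U(\ggg)$ (the brackets vanish), your one-variable induction and the product over $i$ go through verbatim without passing to operators on $\calr$.
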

		
		\begin{lem}\label{l3.8}
			Let $\eta \subseteq [1,n]$ and $D_{\eta_1}=D_{m_1} \cdots D_{ m_r}$, $ \eta_1=\{m_1, \ldots, m_r\}\subseteq \eta$
			. Then \\
			$(i) D_{\eta_1}x^\al y_\eta \pr_i=x^{\al } D_{\eta_1}(y_{\eta})\pr_i +  \dis{\sum_{k_1=0}^{r-1}c_{k_1}x^\al y_{\eta \setminus A_{k_1}} \pr_i D_{m_{r-k_1}} } + \dis{\sum_{k_i \in \eta_1} }c_{k_1,\dots,k_u} x^\al y_{\eta \setminus A_{k_1,\dots,k_u}} \pr_i D_{A_{k_1,\dots,k_u}^c} .$\\
			{$(ii)  \ D_{\eta_1}x^\al  y_\eta D_i=x^{\al }D_{\eta_1}(y_{\eta})D_i +  \dis{\sum_{k_1=0}^{r-1} c_{k_1}x^\al y_{\eta \setminus A_{k_1}} D_i D_{m_{r-k_1}} } + \dis{\sum_{k_i \in \eta_1} }c_{k_1,\dots,k_u} x^\al y_{\eta \setminus A_{k_1,\dots,k_u}} D_i D_{A_{k_1,\dots,k_u}^c} ,$}
			where  $c_{k_1,\dots,k_u}$ are non-zero constant depending on $A_{k_1, \cdots, k_u}$ and $\eta$,  $A_{k_1,\dots,k_u}=\{m_{r-i}:i \neq k_1,\dots,k_u\}$ and $A_{k_1,\dots,k_u}^c=\eta_1 \setminus A_{k_1,\dots,k_u}$.
		\end{lem}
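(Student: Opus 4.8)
\textbf{Proof proposal for Lemma \ref{l3.8}.}

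The plan is to establish both formulas $(i)$ and $(ii)$ simultaneously by induction on $r=\mathrm{Card}(\eta_1)$, the number of odd derivations $D_{m_j}$ being applied. The base case $r=1$ amounts to computing $D_{m_1}\cdot x^\al y_\eta \vartheta$ for $\vartheta\in\{\pr_i,D_i\}$; since $D_{m_1}$ is an odd derivation commuting with $x^\al$ (up to a sign that is absorbed into the parity conventions of \S\ref{sec: first}) and acting on $y_\eta$ by the Leibniz rule $D_{m_1}(y_\eta)=\pm y_{\eta\setminus\{m_1\}}$ when $m_1\in\eta$, one obtains exactly the two displayed terms: the ``leading'' term $x^\al D_{m_1}(y_\eta)\vartheta$ coming from $D_{m_1}$ hitting $y_\eta$, and one correction term $c\, x^\al y_\eta \vartheta D_{m_1}$ coming from $D_{m_1}$ commuted past $\vartheta$ — here we use that $[D_{m_1},\pr_i]=0$ and that $D_{m_1}D_i = -D_iD_{m_1}$ in the enveloping algebra, so the sign/constant $c$ is nonzero. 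In this base case the third (double-index) sum is empty.

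For the inductive step, I would write $D_{\eta_1}=D_{m_1}D_{\eta_1'}$ with $\eta_1'=\{m_2,\dots,m_r\}$, apply the induction hypothesis to $D_{\eta_1'}\cdot x^\al y_\eta\vartheta$, and then apply $D_{m_1}$ term-by-term to the resulting expression. Each term in the inductive expression has the shape $c'\, x^\al y_{\eta\setminus A}\, \vartheta\, D_B$ for appropriate subsets $A,B$, and applying $D_{m_1}$ produces two contributions: one where $D_{m_1}$ acts on the monomial $y_{\eta\setminus A}$ (which either kills the term or shortens the $y$-part, feeding the leading and single-sum terms after re-indexing), and one where $D_{m_1}$ is commuted all the way to the right past $\vartheta$ and past $D_B$ (picking up signs from the super-commutation relations, which remain nonzero constants), feeding the double-index sum with the enlarged index set. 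Collecting terms according to which subset of $\eta_1$ has been ``moved to the right'' yields precisely the asserted form, with the constants $c_{k_1,\dots,k_u}$ being products of the $\pm1$'s from the graded commutators and hence nonzero; that they depend only on $A_{k_1,\dots,k_u}$ and $\eta$ (not on $\al$ or on the order of application) follows because $D_{m_1}$ commutes with $x^\al$ and $\pr_i$, so the $x$-part is never disturbed.

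The main obstacle is purely bookkeeping: tracking the index sets $A_{k_1,\dots,k_u}$ and their complements $A_{k_1,\dots,k_u}^c$ through the induction, and verifying that the recursion on subsets closes up exactly into the three-part sum in the statement rather than producing stray terms. In particular one must check that the ``leading term plus single sum'' produced at stage $r$ matches the combinatorial pattern obtained by prepending $m_1$ to the stage-$(r-1)$ data, and that no cancellation forces any $c_{k_1,\dots,k_u}$ to vanish — this is where the fact that we work in characteristic $0$ and that all the relevant commutators $[D_s,D_t]_+$, $[D_s,\pr_i]$ are governed by the simple relations of \S\ref{sec: first} is used. Since $(i)$ and $(ii)$ differ only in whether the rightmost factor is $\pr_i$ or $D_i$, and $D_{m_1}$ commutes with $\pr_i$ while anticommuting with $D_i$, the two cases run in parallel with only the sign of one commutator changing; hence a single induction handles both. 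The explicit values of the constants are not needed for the applications (only their non-vanishing), so I would not grind through them beyond what the induction forces.
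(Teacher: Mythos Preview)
Your proposal is correct and follows essentially the same approach as the paper, which simply states that the lemma is ``easy to check by induction'' without giving further details. Your induction on $r=\Card(\eta_1)$, with the base case coming from the single commutator $[D_{m_1},x^\al y_\eta\vartheta]=x^\al D_{m_1}(y_\eta)\vartheta$ and the inductive step obtained by peeling off $D_{m_1}$ and using the super-commutation relations in $U(\ggg)$, is exactly what is intended.
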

		
		
		\begin{lem}\label{l3.9}
			Let $\eta, \mu, \mu_1 \subset \{1,2,\dots n\}$ such that $\mu_1\cup \eta_2 \neq \mu$ for all subset $\eta_2$ of $\eta$. Also let $\eta_1=\{r_1,\dots , r_k\}\subseteq \eta$. Then
			\begin{enumerate}
				\item
				$\psi(x^\be y_\mu \ot v)(\pr^aD_{\eta_1}x^\al y_\eta D_i)D_{\mu_1}= \psi(x^\be y_\mu \ot v)(\pr^ax^\al D_{\eta_1}(y_{\eta })D_i)D_{\mu_1}$.
				\item 	$\psi(x^\be y_\mu \ot v)(\pr^aD_{\eta_1}x^\al y_\eta \pr_i)D_{\mu_1}= \psi(x^\be y_\mu \ot v)(\pr^ax^\al D_{\eta_1} (y_{\eta})\pr_i)D_{\mu_1}$.
				
			\end{enumerate}
		\end{lem}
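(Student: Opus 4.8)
The plan is to reduce both identities to a single statement about when the functional $\psi(x^\be y_\mu\ot v)$ annihilates a given word of $U(\ggg)$, and then to peel off the error terms produced by Lemma~\ref{l3.8} and show each of them is annihilated. Since the $\vartheta=D_i$ and $\vartheta=\pr_i$ cases differ only by signs I would treat $(1)$ and remark that $(2)$ is identical. The starting observation is the description of $\phi:=\psi(x^\be y_\mu\ot v)$: using $\nabla(\lambda)\simeq\Hom_{U(\ggg_{\geq 0})}(U(\ggg),\bbf)\ot_\bbf L^0(\lambda)$ and PBW in the form $U(\ggg)=U(\ggg_{\geq 0})\cdot U(\ggg_{-1})$, $\phi$ is the unique functional with $\phi(\pr^c D_\nu)=\delta_{c\be}\delta_{\nu\mu}\,\pr^\be(x^\be)D_\mu(y_\mu)v$ and $\phi(h\cdot u)=\xi(h)\phi(u)$ for $h\in U(\ggg_{\geq 0})$, where $\xi$ is the $\ggg_0$-action on $L^0(\lambda)$ extended to $U(\ggg_0)$ and $\xi(h)=0$ as soon as $h$ has a factor of positive $\bbz$-degree. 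Hence, once a word $w\in U(\ggg)$ is rewritten — by moving all of $\ggg_{\geq 0}$ to the left — as $\sum_k h_k\cdot\pr^{c_k}D_{\nu_k}$ with $h_k\in U(\ggg_{\geq 0})$, one has $\phi(w)=\bigl(\sum_k\xi(h_k)\,\delta_{c_k\be}\delta_{\nu_k\mu}\bigr)\pr^\be(x^\be)D_\mu(y_\mu)v$, so only the summands with $h_k\in U(\ggg_0)$ and $(c_k,\nu_k)=(\be,\mu)$ can survive.

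With this in hand I would proceed as follows. Apply Lemma~\ref{l3.8}(ii) to write $D_{\eta_1}x^\al y_\eta D_i=x^\al D_{\eta_1}(y_\eta)D_i+\sum_A c_A\,(x^\al y_{\eta\setminus A}D_i)\,D_{A^c}$, the sum over the correction data of that lemma, with $\emptyset\neq A^c=\eta_1\setminus A\subseteq\eta_1\subseteq\eta$. Left-multiplying by $\pr^a$ and right-multiplying by $D_{\mu_1}$, assertion $(1)$ reduces to showing $\phi\bigl(\pr^a(x^\al y_{\eta\setminus A}D_i)D_{A^c}D_{\mu_1}\bigr)=0$ for every correction term. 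One may assume $A^c\cap\mu_1=\emptyset$, since otherwise a repeated odd derivation forces $D_{A^c}D_{\mu_1}=0$. Pushing $x^\al y_{\eta\setminus A}D_i$ leftward past $\pr^a$ by Lemma~\ref{l3.7}, and using that the $\pr$'s commute with the $D$'s, one gets
$$\pr^a(x^\al y_{\eta\setminus A}D_i)D_{A^c}D_{\mu_1}=\pm\sum_{0\preceq\kappa\preceq a}C_\kappa^a P_\kappa^\al\,(x^{\al-\kappa}y_{\eta\setminus A}D_i)\cdot\pr^{\,a-\kappa}D_{A^c\cup\mu_1},$$
which is already a sum of words $h\cdot d$ with $h=x^{\al-\kappa}y_{\eta\setminus A}D_i\in\ggg_{\geq 0}$ and $d=\pr^{\,a-\kappa}D_{A^c\cup\mu_1}$ a PBW monomial of $U(\ggg_{-1})$. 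By the previous paragraph such a word contributes to $\phi$ only if $h\in\ggg_0$ and the derivation block of $d$ equals $D_\mu$, i.e. $A^c\cup\mu_1=\mu$; but $A^c\subseteq\eta_1\subseteq\eta$, so $A^c\cup\mu_1=\mu_1\cup\eta_2$ with $\eta_2:=A^c\subseteq\eta$, and the hypothesis $\mu_1\cup\eta_2\neq\mu$ rules this out. Thus $\phi$ kills every correction term, and $(1)$ follows; replacing $D_i$ by $\pr_i$ and invoking Lemma~\ref{l3.8}(i) and Lemma~\ref{l3.7} with $\vartheta=\pr_i$ gives $(2)$.

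The only step requiring real care is the very first rewriting: one must commute $x^\al y_\eta D_i$ (resp. $x^\al y_\eta\pr_i$) past $D_{\eta_1}$ keeping exact track both of the $\bbz_2$-signs and of the constants $c_A$ appearing in Lemma~\ref{l3.8}. This sign- and index-bookkeeping is routine but lengthy; it is exactly the material relegated to Appendix~B. Everything after it is the short argument above, whose entire force is carried by the combinatorial hypothesis $\mu_1\cup\eta_2\neq\mu$ together with the fact that, under the chosen PBW ordering, the correction terms never lose the derivations $D_j$ with $j\in A^c$ from the surviving $\ggg_{-1}$-block.
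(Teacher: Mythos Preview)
Your argument is correct and rests on the same key observation as the paper's proof: every correction term produced when commuting $D_{\eta_1}$ past $x^\al y_\eta\vartheta$ carries a trailing odd block $D_{\eta_2}D_{\mu_1}$ with $\eta_2\subseteq\eta$, and the hypothesis $\mu_1\cup\eta_2\neq\mu$ then forces $\phi$ to annihilate it. The only difference is organizational: the paper argues by induction on $|\eta_1|$, peeling off one $D_{r_k}$ at a time and reapplying the statement with $\mu_1$ replaced by $\{r_k\}\cup\mu_1$, whereas you invoke Lemma~\ref{l3.8} once to get the full expansion and kill all corrections simultaneously. Your route is slightly cleaner, since it makes the role of the combinatorial hypothesis transparent in a single step rather than threading it through an induction.

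Two small remarks. First, your reference to Appendix~B is misplaced: that appendix treats the Skryabin-type operator identities for Theorem~\ref{Thm g submod}, not the commutation formula of Lemma~\ref{l3.8}; the latter is simply asserted in \S\ref{sec: costandard obj} as an easy induction. Second, you use implicitly that each $h=x^{\al-\kappa}y_{\eta\setminus A}D_i$ lies in $\ggg_{\geq 0}$, so that $U(\ggg_{\geq 0})$-linearity of $\phi$ applies; this holds because $\eta\setminus A\supseteq\eta_1\setminus A=A^c\neq\emptyset$ for every correction term, but it is worth saying explicitly.
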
		
		\begin{proof}
			We prove (1) of this lemma on the induction of cardinality of $\eta_1$ ((2) follows with similar proof due to Lemma \ref{l3.7}).
			Note that for $\eta_1=\{r\}$ we have\\
			
			$	\psi(x^\be y_\mu \ot v)((\pr^aD_{r}x^\al y_\eta D_i)D_{\mu_1}) $\\
			
			$	=\psi(x^\be y_\mu \ot v)([\pr^ax^\al D_r(y_{\eta}) D_i +(-1)^{|\eta|+1}\pr^ax^\al  y_{\eta} D_iD_r]D_{\mu_1})$\\
			
			$=\psi(x^\be y_\mu \ot v)[(\pr^ax^\al D_r(y_{\eta})D_i)D_{\mu_1}+(-1)^{|\eta|+1}(\dis \sum_{0\preceq k\preceq a} C^a_k P^{\al}_{k} x^{\al -k}y_\eta D_i\pr ^{a-k}D_rD_{\mu_1} )]$\\
			
			$=\psi(x^\be y_\mu \ot v)(\pr^ax^\al D_r(y_{\eta}) D_iD_{\mu_1}),$ \\
			
			since $r \notin \mu$, by the definition of $\psi(x^\be y_\mu \ot v)$ second term is zero.\\
			
			Now consider $\eta_1=\{r_1,\dots,r_k\}$ and assume that the result is true for cardinality of $\eta_1 \leq k-1.$ Then we have\\
			
			$ \psi(x^\be y_\mu \ot v)(\pr^aD_{\eta_1}x^\al y_\eta D_i)D_{\mu_1} $\\
			
			$= \psi(x^\be y_\mu \ot v)([\pr^aD_{r_1}\dots D_{r_{k-1}}x^\al D_{r_k}(y_{\eta}) D_i +(-1)^{|\eta|+1}\pr^aD_{r_1}\dots D_{r_{k-1}}x^\al  y_{\eta} D_iD_{r_k}]D_{\mu_1})$.\\
			
			Note that to complete the induction hypothesis, it is sufficient to prove that the second part of the above formula is zero. Now we have\\
			
			$\psi(x^\be y_\mu \ot v)(\pr^aD_{r_1}\dots D_{r_{k-1}}x^\al  y_{\eta} D_i)D_{r_k}D_{\mu_1}$\\
			
			$=\psi(x^\be y_\mu \ot v)( \pr^ax^\al D_{r_1}\dots D_{r_{k-1}} (y_{\eta})D_i D_{r_k}D_{\mu_1}) =0 ,$\\
			
			by Lemma \ref{l3.7} and definition of $\psi(x^\be y_\mu \ot v)$. This completes the proof.
			
		\end{proof}	
		\begin{lem}\label{l3.10}
			The map $\psi$ is a $\mf g$-module map.
		\end{lem}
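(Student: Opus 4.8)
The plan is to verify directly that $\psi(X.(x^\al y_\mu\ot v)) = X.\psi(x^\al y_\mu\ot v)$ for every homogeneous basis element $X\in\ggg$, i.e. for $X = f\pr_i$ and $X = fD_i$ with $f = x^\be y_\eta$, and every basis vector $x^\al y_\mu\ot v$ of $\calv(\lambda)$. Since a linear functional on $U(\ggg)$ in $\nabla(\lambda)$ is determined by its values on a PBW-type basis of $U(\ggg)$, and since by the very definition of $\psi$ the functional $\psi(\,\cdot\,)$ is supported (against the relevant $\ker$ of the coinduction) only on monomials of the form $\pr^a D_{\eta_1}$, it suffices to evaluate both sides of the desired identity at an arbitrary element $\pr^a D_{\eta_1}\in U(\ggg_{-1})$ and check they agree. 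First I would spell out the left-hand side: apply the module formulas \eqref{W(n)-module structure} and \eqref{W(n)-module structure-2} to compute $f\pr_i.(x^\al y_\mu\ot v)$ and $fD_i.(x^\al y_\mu\ot v)$ as explicit sums in $\calr\ot L^0(\lambda)$, then apply $\psi$ termwise using its defining formula. Next I would spell out the right-hand side: by the contragredient action on $\nabla(\lambda)$, $(X.\phi)(Y) = (-1)^{\wp(X)\wp(\phi)}\phi(YX)$, so I must evaluate $\psi(x^\al y_\mu\ot v)\big((\pr^aD_{\eta_1})\,f\vartheta\big)$, which means commuting the differential operator $f\vartheta = x^\be y_\eta\vartheta$ (with $\vartheta\in\{\pr_i,D_i\}$) to the left past $\pr^aD_{\eta_1}$ inside $U(\ggg)$.

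The computational engine is Lemmas \ref{l3.7} and \ref{l3.8}: Lemma \ref{l3.7} handles moving $\pr^\alpha$ past $x^\beta y_\eta\vartheta$ and produces exactly the binomial/falling-factorial coefficients $C_\kappa^\alpha P_\kappa^\beta$ that will match the partial-derivative terms $\partial_j(f)$ appearing in \eqref{W(n)-module structure}, while Lemma \ref{l3.8} handles moving the odd monomial $D_{\eta_1}$ past $x^\al y_\eta\vartheta$ and produces the terms matching the $D_j(f)$ contributions. The key simplification is Lemma \ref{l3.9}: after the commutation, every ``error'' term in which the surviving exterior monomial is not exactly $y_\mu$ — equivalently, every term where the leftover odd derivations do not pair off perfectly against $y_\mu$ — is killed by the support condition in the definition of $\psi(x^\be y_\mu\ot v)$, which vanishes unless the test monomial is precisely $\pr^\al D_\mu$. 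So only the ``diagonal'' terms survive on the right-hand side, and one then reads off that the surviving coefficient, coming from $\pr^\al(x^{\al'})D_\mu(y_{\mu'})$-type evaluations, reproduces term by term the image under $\psi$ of the module action. I would organize the bookkeeping by parity signs carefully: the signs $(-1)^{\wp(f)+\wp(g)+1}$, $(-1)^{\wp(g)}$, $(-1)^{\wp(f)+1}$ in \eqref{W(n)-module structure}–\eqref{W(n)-module structure-2} must be checked to coincide with the Koszul signs generated when pulling $D_{\eta_1}$ and $\vartheta$ past the even and odd parts of the monomials, together with the sign $(-1)^{\wp(X)\wp(\phi)}$ built into the $\nabla(\lambda)$-action.

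The main obstacle is not conceptual but combinatorial: keeping track simultaneously of (a) the even multi-index arithmetic governed by Lemma \ref{l3.7}, (b) the ordered exterior-algebra sign combinatorics governed by Lemma \ref{l3.8}, and (c) the compatibility of all the parity signs, while confirming that the only non-vanishing contributions on both sides are the matching diagonal ones. I expect the cleanest route is to treat the four cases — ($X = x^\be y_\eta\pr_i$ versus $x^\be y_\eta D_i$) $\times$ (the $\pr^a$-commutation versus the $D_{\eta_1}$-commutation) — somewhat independently, using Lemma \ref{l3.7} to dispose of the polynomial part in all cases and Lemmas \ref{l3.8}–\ref{l3.9} to dispose of the exterior part, then assembling. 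A sanity check worth doing first is the low-rank case $\al = 0$, $|\mu|\le 1$, where the identity reduces essentially to the $\ggg_0$-module compatibility of $L^0(\lambda)$ with the action formulas, confirming that the normalizations in the definition of $\psi$ are correct before launching the general induction.
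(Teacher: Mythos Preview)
Your approach is essentially the same as the paper's: verify the identity by evaluating both sides at arbitrary monomials $\pr^a D_A\in U(\ggg_{-1})$, using Lemmas \ref{l3.7}--\ref{l3.9} to commute the action element past $\pr^a D_A$ and then invoking the support condition on $\psi$ to kill the cross-terms. Two points worth flagging. First, the action on $\nabla(\lambda)$ is $(X.\phi)(Y)=\phi(YX)$ with \emph{no} Koszul sign --- this is a coinduced module, not a contragredient dual --- so your extra factor $(-1)^{\wp(X)\wp(\phi)}$ should be dropped; your planned low-rank sanity check would catch this. Second, the paper organizes the computation not by your four cases (action type $\times$ commutation type) but by the dichotomy $\mu\cap\eta=\emptyset$ versus $\mu\cap\eta\neq\emptyset$, which is where the real subtlety lies: in the latter case one must argue that only $|\mu\cap\eta|=1$ can yield a nonzero contribution, and this is precisely where Lemma \ref{l3.9} (with its hypothesis that no subset of $\eta$ completes $\mu_1$ to $\mu$) is deployed most delicately. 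Your plan would get there, but the paper's case split isolates the hard step more cleanly.
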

		\begin{proof}
			Let $\mu , \eta \subseteq \{1,2\dots,n\}$.\\
			{\bf Case I :}  Let $\mu \cap \eta =\emptyset$. Then \\
			$\psi(x^\al y_\eta \pr_i.x^\be y_\mu \ot v)$\\
			$= \psi(x^\al y_\eta \pr_i(x^\be y_\mu )\otimes v+\sum\limits_{j=1}^m \partial_j(x^\al y_\eta)x^\be y_\mu \otimes (x_j\partial_i.v) +(-1)^{|\eta|+|\mu|+1} \sum\limits_{j=1}^n D_j(x^\al y_\eta)x^\be y_\mu \otimes (y_j\pr_i.v) ) $	\\
			$=\psi(\be_ix^{\al +\be-\epsilon_i}y_\eta y_\mu \ot v+\sum\limits_{j=1}^m \al_j x^{\al +\be-e_j}y_\eta y_\mu \ot x_j\pr_i.v +(-1)^{|\eta|+|\mu|+1} \sum\limits_{j=1}^n(-1)^{\eta(j)}x^{\al +\be}D_j(y_{\eta}) y_\mu \ot y_j\pr_i.v )  \hspace*{13cm}      (3.9.0)   $\\
			
			Therefore for any $A \subseteq [1,n]$ and $a \in \N^m$, we have\\
			
			$\psi(x^\al y_\eta \pr_i.x^\be y_\mu \ot v)(\pr^aD_A)$\\
			
			$ =\begin{cases}
				D_\eta D_\mu(y_\eta y_\mu)\pr^{\al+\be-e_i}(x^{\al +\be-e_i})(\be_iv + \al_i x_i\pr_i.v), \ \ \ D_A=D_\eta D_\mu, a=\al+\be-e_i \\
				D_\eta D_\mu(y_\eta y_\mu)\pr^{\al+\be-e_j}(x^{\al +\be-e_j}) \al_j x_j\pr_i.v, \ \ \ D_A=D_\eta D_\mu, a=\al+\be-e_j, j \neq i \\
				D_{\eta \setminus \{j\}} D_\mu(y_\eta y_\mu)\pr^{\al+\be}(x^{\al +\be})  y_j\pr_i.v, \ \ \ D_A=D_{\eta\setminus \{j\}} D_\mu, a=\al+\be, j \in \eta\\
				0,  \ \, \text{ otherwise}
			\end{cases}  $\\
			On the other hand we need to compute
			$x^\al y_\eta \pr_i.\psi(x^\be y_\mu \ot v)(\pr^aD_A) $. Let us write $A$ as $A=(A \setminus \eta \cap A ) \cup
			(A\cap \eta)=\mu_1 \cup \eta_1$ (say). Also let $D_A=D_{\eta_1}D_{\mu_1}$. Then \\
			
			$x^\al y_\eta \pr_i.\psi(x^\be y_\mu \ot v)(\pr^aD_{\eta_1}D_{\mu_1}) $\\
			
			$= \psi(x^\be y_\mu \ot v)(\pr^aD_{\eta_1}D_{\mu_1}x^\al y_\eta \pr_i) $\\
			
			$=(-1)^{|\eta||\mu_1|}\psi(x^\be y_\mu \ot v)(\pr^aD_{\eta_1}x^\al y_\eta \pr_i)D_{\mu_1}   $\\
			
			$=(-1)^{|\eta||\mu_1|} \psi(x^\be y_\mu \ot v)(\pr^ax^\al D_{\eta_1}(y_{\eta })\pr_i)D_{\mu_1}, $\\
			
			since $\mu$ can not be union of a subset of $\eta_1$ and $\mu_1$, otherwise $\mu \cap \eta \neq \emptyset$, hence last equality follows from Lemma \ref{l3.9}. \\
			
			$= (-1)^{|\eta||\mu_1|} \psi(x^\be y_\mu \ot v)(\dis \sum_{0\preceq k\preceq a} C^a_k P^{\al}_{k} x^{\al -k}D_{\eta_1}(y_{\eta })\pr_i\pr ^{a-k}  D_{\mu_1})  $\\

			Note that terms of this summation can be non-zero only when the following cases arise,\\
			
			$  \begin{cases}
				\ a-k=\beta, \al-k=e_r ,   i.e, \ a=\al+\be-e_r,  \mu_1=\mu,\eta=\eta_1,   1 \leq r \leq m,\\
				\ a-k=\beta,\al=k ,i.e, a=\al+\be,\ \mu_1=\mu,  \eta\setminus\eta_1=\{j\}\\
				\ a-k=\beta-e_i, \al=k , \ i.e, \ a= \al+\be-e_i,\mu_1=\mu,  \eta=\eta_1,
			\end{cases} $\\

			This follows by the definition of $\psi(x^\be y_\mu \ot v)$, the fact that $\psi(x^\be y_\mu \ot v)$ is $U(\mf g)_{\geq 0}$-module map and ${\mf g}_{\geq 1}$ acts trivially on $L_0(\la)$. Therefore we have\\
			
			$x^\al y_\eta \pr_i.\psi(x^\be y_\mu \ot v)(\pr^aD_A) $\\
			$=\begin{cases}
				(-1)^{|\eta||\mu|}D_{\eta}(y_\eta)\psi(x^\be y_\mu \ot v) (C^{\al+\be-e_i}_{\al-e_i}P^{\al}_
				{\al-e_i}x_i\pr_i\pr^\be +C^{\al+\be-e_i}_{\al}P^{\al}_{\al}\pr^\be )D_{\mu}, \\
				\hspace*{11cm}	a=\al+\be-e_i, D_A=D_\eta D_\mu\\
				
				(-1)^{|\eta||\mu|}D_{\eta}(y_\eta)\psi(x^\be y_\mu \ot v) (C^{\al+\be-e_r}_{\al-e_r}P^{\al}_{\al-e_r}x_r\pr_i\pr^\be D_{\mu}), \\
				\hspace*{9cm} \ a=\al+\be-e_r, \,  D_A=D_\eta D_\mu , \, r \neq i\\
				
				(-1)^{|\eta||\mu|+\eta(j)}D_{\eta}(y_\eta)\psi(x^\be y_\mu \ot v)(C^{\al+\be}_{\al}P^{\al}_{\al}y_j\pr_i\pr^\be D_{\mu}), \\
				\hspace*{9cm} a=\al+\be, \, D_A=D_{\eta \setminus \{j\}}D_\mu, \, j \in \eta.\\
				0, \, \text{ otherwise}
			\end{cases}
			$
			Now it follows from above two computations that	$\psi(x^\al y_\eta \pr_i.x^\be y_\mu \ot v)=x^\al y_\eta \pr_i.\psi(x^\be y_\mu \ot v)$, for all $\al , \be \in \N^m, \eta, \mu \subseteq [1,n]$.\\
			
			{\bf Case II :} Let $\eta \cap \mu \neq \emptyset
			$.
			For arbitrary $A \subseteq [1,n]$ and $a \in \N^m$, we can write $A=  (A \setminus \eta \cap A)\cup (\eta \cap A)= A_1\cup \eta_1$ (say). Also let $D_A=D_{\eta_1}D_{A_1}$. Now we have
			\begin{align}\label{a3.10.1}
				x^\al y_\eta \pr_i.\psi(x^\be y_\mu \ot v)(\pr^aD_{A})
				& = \psi(x^\be y_\mu \ot v)(\pr^aD_{\eta_1}D_{A_1}x^\al y_\eta \pr_i)  \cr
				& =(-1)^{|\eta||A_1|}\psi(x^\be y_\mu \ot v)(\pr^aD_{\eta_1}x^\al y_\eta \pr_iD_{A_1}) \cr
			\end{align}
			Now by Lemma \ref{l3.8}, we have
			$$\psi(x^\be y_\mu \ot v)(\pr^aD_{\eta_1}x^\al y_\eta \pr_iD_{A_1})  $$	
			$$= \psi(x^\be y_\mu \ot v)(\pr^ax^\al D_{\eta_1}(y_\eta) \pr_iD_{A_1})   $$	
			(since 	$A_1 \cup A_{k_1,\dots,k_u}^c=\mu$ does not hold for any subset $A_{k_1,\dots,k_u}^c=\eta_1 \setminus A_{k_1,\dots,k_u}$, otherwise $\mu \cap \eta =\emptyset$.)
			$$ =\psi(x^\be y_\mu \ot v) (\dis\sum_{0\preceq k\preceq a} C^a_k P^{\al}_{k} x^{\al -k}D_{\eta_1}(y_\eta) \pr_i \pr ^{a-k} D_{A_1})  $$
			
			It is clear with the similar arguments as Case I that  non-zero terms from the above summation   can arise only when\\
			
			$  \begin{cases}
				(i)	\ a-k=\beta, \al-k=e_r ,   i.e, \ a=\al+\be-e_r,  A_1=\mu,\eta_1=\eta,   1 \leq r \leq m,\\
				(ii)	\ a-k=\beta,\al=k ,i.e, a=\al+\be, A_1=\mu,  \eta\setminus \eta_1=\{j\}, \text{for some }\, j\\
				(iii)	\ a-k=\beta-\epsilon_i, \al=k , \ i.e, \ a= \al+\be-e_i,A_1=\mu,  \eta_1=\eta.
			\end{cases} $\\
			
			Observe that cases (i) and (iii) can not appear, since in this case $\mu \cap \eta =A_1 \cap \eta_1= \emptyset$. Moreover, from (ii) and  the fact $A_1 \cap \eta_1 = \emptyset, $ $A_1 \cap \eta=\mu \cap \eta \neq \emptyset$, it follows that $\mu \cap \eta =\{j\}$. In particular, $|\mu \cap \eta| \geq 2$ implies that $x^\al y_\eta \pr_i.\psi(x^\be y_\mu \ot v)(\pr^aD_{A})=0 $ for all $a\in \N^m$ and $A \subseteq [1,n]$.\\

			Now we consider the case $\mu \cap \eta =\{j\} $.
			Without loss of generality assume that $ \mu =\{s_1, \dots, s_t\}$, $\eta= \{m_1,\dots,m_r\}$ such that $m_r=s_1=j$. Then for $a=\al+\be$ and $D_A=D_{\eta \setminus \{j\}}D_\mu$ we have  \\

			$x^\al y_\eta \pr_i.\psi(x^\be y_\mu \ot v)(\pr^{\al +\be}D_{\eta \setminus \{j\}}D_\mu) $\\
			
			$=\psi(x^\be y_\mu \ot v)(\pr^{\al +\be}D_{\eta \setminus \{j\}}D_\mu x^\al y_\eta \pr_i ) $\\
			
			$=\psi(x^\be y_\mu \ot v)(\pr^{\al +\be}D_{\eta }D_{\mu \setminus \{j\}} x^\al y_\eta \pr_i ) $\\
			
			$=(-1)^{(|\mu|-1)||\eta|} \psi(x^\be y_\mu \ot v)(\pr^{\al+\be}D_{\eta \setminus \{j\}}D_j x^\al y_\eta \pr_i D_{\mu \setminus \{j\}})$\\
			
			$=(-1)^{(|\mu|-1)||\eta|} \psi(x^\be y_\mu \ot v) (\pr^{\al+\be}D_{\eta \setminus \{j\}}( x^\al D_j(y_{\eta}) \pr_i + (-1)^{|\eta|} x^\al y_\eta \pr_iD_j)D_{\mu \setminus \{j\}}) $\\
			
			$=(-1)^{(|\mu|-1)|\eta| +|\eta|} \psi(x^\be y_\mu \ot v) (\pr^{\al+\be}D_{\eta \setminus \{j\}}   x^\al y_\eta \pr_iD_{\mu})   $ \\
			
			( 1st term will vanish, since $\mu\setminus \{j\} \cup S \neq \mu$ for all $ S \subseteq \eta \setminus \{j\}$, see Lemma \ref{l3.9} ) \\
			
			$=(-1)^{|\mu|||\eta|  } \psi(x^\be y_\mu \ot v)(\pr^{\al+\be} x^{\al }D_{\eta \setminus \{j\}}(y_{\eta \setminus \{j\}})y_{ j} \pr_i    D_{\mu} ) $ (see Lemma \ref{l3.9}) \\
			
			$=(-1)^{|\mu||\eta|  } \psi(x^\be y_\mu \ot v)(\dis \sum_{0\preceq\kappa\preceq \al+\be} C^{\al+\be}_k P^{\al}_{k} x^{\al -k}D_{\eta \setminus \{j\}}(y_{\eta \setminus \{j\}})y_{ j} \pr_i\pr ^{\al+\be-k}   D_{\mu} ) $ \\
			
			$=(-1)^{|\mu||\eta| }D_{\eta \setminus \{j\}}(y_{\eta \setminus \{j\}})\psi(x^\be y_\mu \ot v)(C^{\al+\be}_{\al} P^{\al}_{\al} y_{ j} \pr_i\pr ^{\al+\be-\al}    D_{\mu})$ (all other terms are zero )\\
			
			$=(-1)^{|\mu||\eta| }D_{\eta \setminus \{j\}}(y_{\eta \setminus \{j\}})D_\mu(y_\mu)(\al+\be)!  y_j\pr_i.v$\\
			
			Therefore for $\mu \cap \eta =\{j\}$, we have \\
			$$x^\al y_\eta \pr_i.\psi(x^\be y_\mu \ot v)(\pr^aD_{A})$$
			$$= \begin{cases}
				(-1)^{|\mu||\eta| }D_{\eta \setminus \{j\}}(y_{\eta \setminus \{j\}})D_\mu(y_\mu)(\al+\be)!  y_j\pr_i.v, \, \text{for} \, a= \al +\be, \, D_A=D_{\eta\setminus \{j\}} D_\mu\\
				0, \, \text{otherwise}
			\end{cases}$$

			Now we compute $\psi(x^\al y_\eta \pr_i.x^\be y_\mu \ot v)$. Note that from equation (3.9.0) we have
			
			$$\psi(x^\al y_\eta \pr_i.x^\be y_\mu \ot v)=0, \, \, \text{ when} \, |\eta \cap \mu| \geq 2.$$
			
			Again for $\mu \cap \eta =\{j\}$ we have
			$$\psi(x^\al y_\eta \pr_i.x^\be y_\mu \ot v)(\pr^aD_A)$$
			$$=\begin{cases}
				(-1)^{|\mu||\eta| }D_{\eta \setminus \{j\}}(y_{\eta \setminus \{j\}})D_\mu(y_\mu)(\al+\be)!  y_j\pr_i.v, \ \ \ D_A=D_{\eta\setminus \{j\}} D_\mu, a=\al+\be, \\
				0,  \ \ \ \text{otherwise,}
			\end{cases}	$$\\
			(this follows from computation of Case I). \\
			
			This completes the proof of $x^\al y_\eta \pr_i.\psi(x^\be y_\mu \ot v)=\psi(x^\al y_\eta \pr_i.x^\be y_\mu \ot v)$ for all $ \al, \be \in \N^m, \eta , \mu \subseteq [1,n], v \in L^0(\la), i \in [1,m] $. Following this computation with the help of Lemmas \ref{l3.7},\ref {l3.8},\ref{l3.9} it is routine to check that $x^\al y_\eta D_i.\psi(x^\be y_\mu \ot v)=\psi(x^\al y_\eta D_i.x^\be y_\mu \ot v)$. This completes the proof.					
		\end{proof}

		\subsubsection{\bf Proof of Proposition \ref{p3.6}}	\label{sec: proof of prop}
		With the help of Lemma \ref{l3.10} to complete the proof we need to check that $\psi$ is a bijection. Note that $\nabla(\la)$ is spanned by $\{f_{q,v,\mu}:q=(q_1,\dots,q_m),\ v \in L^0(\la), \ \mu \subseteq [1,n]  \},$ where $	f_{q,v,\mu}(\pr^aD_\eta)$ is defined by \\
		$$	f_{q,v,\mu}(\pr^aD_\eta)=\de_{a,q}\de_{\mu,\eta}v , \ a \in \N^m,  \eta \subseteq [1,n]  .  $$
		This proves that $\psi$ is surjective since $cf_{q,v,\mu}$ is the image of the $x^q y_\mu\ot v$ for some non-zero scalar $c$. Now consider the relation $\psi(\dis{\sum_{i=1}^{k}} f_i \ot v_i )=0 , \ f_i \in \mcal R , \ v_i \in L^0(\la)$ with linearly independent $v_i$. Then we consider the action of $\pr^aD_\mu$ on $\psi(\dis{\sum_{i=1}^{k}} f_i \ot v_i )$ for various $a, \mu$, which implies that $f_i=0$ for all $i$. This completes the proof.
		
		\begin{cor}\label{Soc of V(la)}
			Any non-zero submodule of $\mcal V(\la)$ contains $1 \ot L^0(\la)$. In particular, Socle of $\nabla(\la)$ is generated by $1 \ot L^0(\la)$.
		\end{cor}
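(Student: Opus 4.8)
The plan is to read everything off the explicit mixed‑product realization $\nabla(\lambda)\cong\mathcal{V}(\lambda)=\mathcal{R}\otimes_{\bbf}L^0(\lambda)$ of Proposition \ref{p3.6} together with the structure formulas \eqref{W(n)-module structure}--\eqref{W(n)-module structure-2}. The first observation I would record is the special case $f=1$ of those formulas: since $\partial_j(1)=D_j(1)=0$, all the $\xi$‑terms drop out, so the abelian subalgebra $\ggg_{-1}$, hence all of $U(\ggg_{-1})=\bbf[\partial_1,\dots,\partial_m]\otimes\bigwedge(D_1,\dots,D_n)$, acts on $\mathcal{V}(\lambda)$ only through the $\mathcal{R}$‑factor, namely $\partial^{\alpha}D_{\eta}.(g\otimes v)=(\partial^{\alpha}D_{\eta}g)\otimes v$. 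Dually, taking $f\in\mathcal{R}_1$ and acting on $1\otimes v$ one checks directly from \eqref{W(n)-module structure}--\eqref{W(n)-module structure-2} that $\ggg_0$ acts on $1\otimes L^0(\lambda)$ exactly by the original representation $\xi$ of $\ggg_0\cong\gl(m|n)$; in particular $\mathcal{V}(\lambda)_0=\mathcal{R}_0\otimes L^0(\lambda)=1\otimes L^0(\lambda)$ is an irreducible $\ggg_0$‑submodule.

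Next I would reduce the statement to \emph{graded} submodules. The Euler element $E=\sum_{i=1}^m x_i\partial_i+\sum_{s=1}^n y_sD_s\in\ggg_0$ corresponds under \eqref{eq: iso gl} to the identity matrix $\sum_a E_{aa}$, and a short computation with \eqref{W(n)-module structure}--\eqref{W(n)-module structure-2} shows that $E$ acts on the homogeneous component $\mathcal{V}(\lambda)_d=\mathcal{R}_d\otimes L^0(\lambda)$ by the scalar $d+c(\lambda)$, where $c(\lambda)=\sum_i\lambda_i+\sum_j\zeta_j$ is the value of $\lambda$ on $\sum_a E_{aa}$. These scalars are pairwise distinct, so every $E$‑stable subspace, and in particular every $\ggg$‑submodule $M$, is graded: $M=\bigoplus_d\bigl(M\cap\mathcal{V}(\lambda)_d\bigr)$.

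Now comes the core argument. Let $M\neq 0$ be a submodule and let $d$ be minimal with $M\cap\mathcal{V}(\lambda)_d\neq 0$. Suppose $d\geq 1$ and pick $0\neq v=\sum_i g_i\otimes w_i$ in $M\cap\mathcal{V}(\lambda)_d$ with the $w_i$ linearly independent and, say, $g_1\neq 0$. Since $\deg g_1=d\geq 1$, some $\vartheta\in\{\partial_1,\dots,\partial_m,D_1,\dots,D_n\}\subseteq\ggg_{-1}$ satisfies $\vartheta(g_1)\neq 0$, so $\vartheta.v=\sum_i\vartheta(g_i)\otimes w_i$ is a nonzero element of $M\cap\mathcal{V}(\lambda)_{d-1}$, contradicting minimality of $d$. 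Hence $d=0$, and $M\cap\mathcal{V}(\lambda)_0$ is a nonzero $\ggg_0$‑submodule of the irreducible module $1\otimes L^0(\lambda)$, so it equals all of $1\otimes L^0(\lambda)$; thus $M\supseteq 1\otimes L^0(\lambda)$. For the last assertion, set $N=U(\ggg).(1\otimes L^0(\lambda))$: any nonzero submodule of $N$ contains $1\otimes L^0(\lambda)$ and hence all of $N$, so $N$ is simple, and any simple submodule of $\mathcal{V}(\lambda)$ likewise contains $1\otimes L^0(\lambda)$, hence equals $N$. Therefore $\Soc\nabla(\lambda)\cong\Soc\mathcal{V}(\lambda)=N$ is generated by $1\otimes L^0(\lambda)$. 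There is no serious obstacle here; the corollary is essentially a consequence of Proposition \ref{p3.6}, and the only point I would take care to spell out is the reduction to graded submodules via the Euler operator $E$.
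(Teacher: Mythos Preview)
Your proof is correct and follows essentially the same strategy as the paper: act by elements of $\ggg_{-1}$ to lower the $\mathcal{R}$-degree until one lands in $1\otimes L^0(\lambda)$, then invoke irreducibility of $L^0(\lambda)$ as a $\ggg_0\cong\gl(m|n)$-module. Two points of presentation differ from the paper. First, you explicitly justify the reduction to graded submodules via the Euler element $E=\sum_i x_i\partial_i+\sum_s y_sD_s$, whereas the paper simply writes $w=\sum_i f_i\otimes v_i$ with homogeneous $f_i$ and applies $\partial_i,D_j$ ``sufficiently many times''; your version is cleaner and removes any ambiguity about whether the submodule is assumed graded. Second, for the socle statement the paper invokes Lemma~\ref{description of socle} (simple socle of $\nabla(\lambda)$, taken from \cite{JB}), whereas you argue directly that $N=U(\ggg).(1\otimes L^0(\lambda))$ is the unique simple submodule; your route is self-contained and in fact recovers the content of that lemma as a by-product.
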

		\begin{proof}
			Let $M'$ be a non-zero submodule of $\mcal V(\la)$ and $w= \dis{\sum_{i=1}^{n} }f_i \ot v_i $ be a non-zero element of $M'$ with homogeneous $f_i \in \mcal R, v_i \in L^0(\la)$. Then using the action of $\pr_i, D_j$ sufficiently many times on $w$, we have a non-zero vector $1 \ot v \in M'$. Since $L^0(\la)$ is irreducible $\mf{gl}(m,n)$ module, we get that $1 \ot L^0(\la ) \in M'$. This clearly proves that Socle of $\nabla (\la)$ is generated by $1 \ot L^0(\la)$ by Lemma \ref{description of socle}.
		\end{proof}
			
			\subsection{A criterion for the irreducibility of mix-product modules} 		
			
			For any submodule $M$ of $\mcal V(\lambda),$ define $M_{\alpha, \mu}=\{v \in L^0(\lambda):  x^\al y_\mu \ot v \in M\}$. Then we have $M \supseteq \dis{\bigoplus_{\al, \mu}} x^\al y_\mu \ot M_{\al,\mu} $. { Let $M^\flat=\dis{\bigcap}_{\al, \mu}M_{\al,\mu} $
				\begin{lem}\label{l3.15}
					If $M^\flat\neq 0, $ then $M=\mcal{V}(\lambda)$.
				\end{lem}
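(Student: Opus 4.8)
The plan is to show that the hypothesis $M^\flat\neq 0$ forces $M^\flat=L^0(\la)$; granting this, $M=\mcal{V}(\la)$ is immediate from the coefficient-wise inclusion $M\supseteq\bigoplus_{\al,\mu}x^\al y_\mu\ot M_{\al,\mu}$ recorded just before the statement, since $M_{\al,\mu}\supseteq M^\flat=L^0(\la)$ for all $\al,\mu$ gives $M\supseteq\bigoplus_{\al,\mu}x^\al y_\mu\ot L^0(\la)=\calr\ot_\bbf L^0(\la)=\mcal{V}(\la)$. So the entire content is to identify $M^\flat$ with $L^0(\la)$, and the strategy is to exhibit $M^\flat$ as a nonzero $\ggg_0$-submodule of the irreducible $\ggg_0$-module $L^0(\la)$.

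First I would record that $M^\flat$ is a $\bbz_2$-graded subspace of $L^0(\la)$. Each $M_{\al,\mu}$ is plainly a subspace; and since $M$ is $\bbz_2$-graded while the even and odd components $x^\al y_\mu\ot v_{\bz}$ and $x^\al y_\mu\ot v_{\bo}$ of $x^\al y_\mu\ot v$ have opposite parity inside $\mcal{V}(\la)$, both of them lie in $M$ as soon as $x^\al y_\mu\ot v$ does. Hence each $M_{\al,\mu}$, and therefore the intersection $M^\flat$, is $\bbz_2$-graded. (This uses that $M$ is $\bbz_2$-graded, which is exactly the relevant case: the irreducibility criterion concerns sub-objects of $\mcal{V}(\la)$ in $\co$.)

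The key step is to prove that $M^\flat$ is stable under $\ggg_0$. As every element of $\ggg_0$ is an $\bbf$-linear combination of the vector fields $f\vartheta$ with $f\in\calr$ of degree one and $\vartheta\in\{\pr_1,\dots,\pr_m,D_1,\dots,D_n\}$, it suffices to treat $X=f\vartheta$ of this form. For such $X$ the derivatives $\pr_j(f)$ and $D_j(f)$ are scalars, and the module formulas \eqref{W(n)-module structure}--\eqref{W(n)-module structure-2} then collapse to the two-term identity
$$ X.(g\ot v)=X(g)\ot v+c\,\bigl(g\ot\xi(X)v\bigr),\qquad g\in\calr,\ v\in L^0(\la), $$
where $X(g)\in\calr$ denotes the image of $g$ under the vector field $X$ and $c\in\{1,-1\}$ depends only on the parities of $f$ and $g$. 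Now fix $v\in M^\flat$ and put $g=x^\al y_\mu$: then $g\ot v\in M$, so $X.(g\ot v)\in M$; and $X(g)$ is an $\bbf$-linear combination of monomials $x^{\al'}y_{\mu'}$, so $X(g)\ot v\in M$ because $v\in M^\flat$. Subtracting, $x^\al y_\mu\ot\xi(X)v\in M$; as $\al,\mu$ are arbitrary, $\xi(X)v\in M^\flat$. Thus $M^\flat$ is a $\ggg_0$-submodule of $L^0(\la)$, and being nonzero and $\bbz_2$-graded it must equal the irreducible module $L^0(\la)$, which — by the first paragraph — finishes the proof.

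The one place calling for genuine (if routine) work is the reduction, in the key step, of the general action \eqref{W(n)-module structure}--\eqref{W(n)-module structure-2} to the displayed two-term formula; it must be carried out for each of the four families $x_i\pr_j$, $y_s D_t$, $x_i D_t$, $y_s\pr_j$ spanning $\ggg_0$, with $c$ coming out as $1$ for some and as $(-1)^{\wp(g)}$ for others — a difference that is immaterial to the argument. The parity bookkeeping in the second paragraph is equally routine and is needed only so that irreducibility of $L^0(\la)$ may be applied to the $\bbz_2$-graded subspace $M^\flat$.
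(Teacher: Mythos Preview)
Your proof is correct and follows essentially the same approach as the paper: both show that $M^\flat$ is a nonzero $\ggg_0$-submodule of the irreducible module $L^0(\la)$ by acting with a degree-zero vector field on $x^\al y_\mu\ot v$ and subtracting the term already known to lie in $M$. Your version packages the four generator types into a single two-term formula and adds the explicit $\bbz_2$-grading check, whereas the paper treats $x_s\pr_i$ concretely and leaves the remaining cases and the grading implicit; these are cosmetic differences only.
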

			}
			\begin{proof}
				Let $v \in M^\flat$. Then $x^\al y_\eta \ot v \in M$ for all $\al, \eta$. Now to complete the proof we need to show that $x_s\pr_i.v, x_sD_j.v, y_r\pr_i.v,$ $ y_rD_j.v \in M^\flat$, for all $1 \leq s,i \leq m $ and $1 \leq r,j \leq n$. Because then $M^\flat$ becomes a submodule of $L^0(\lambda)$ and hence $M^\flat=L^0(\lambda)$ and $M=\mcal{V}(\lambda)$. In particular we show that $x_s\pr_i.v \in M^\flat$ and all other follows with similar methods. Let $\beta , \eta $ be arbitrary. Now consider the action:
				$$x_s\pr_i.(x^{\be} y_\eta \ot v)= \be_ix^{\be-\epsilon_i} y_\eta \ot v +x^{\be} y_\eta \ot x_s\pr_i. v \in M. $$
				This implies that $x^{\be} y_\eta \ot x_s\pr_i. v \in M,$ for all $\beta, \eta$ and hence $x_s\pr_i.v \in M^\flat$.				
			\end{proof}

			{
				
				\section{Exceptional mixed-product modules and their composition factors}}\label{sec: third sec}
			\subsection{Grassmann products in the category of finite $\gl(m|n)$-modules}
			Let $V\cong\bbf^{m|n}$ be the vector superspace of super-dimension $(m|n)$. Then $V$ has parity decomposition $V=V_\bz\oplus V_\bo$ where $V_\bz$ and $V_\bo$ are the even part and odd part of $V$, respectively, which are isomorphic to $\bbf^m$ and $\bbf^n$ as vector space, respectively.
			Let $e_i$ for $1\leq i\leq m$ and $e_{m+j} $ for $1 \leq j \leq n$ be the basis vectors of $V_\bz$ and $V_\bo$  respectively. It is easy to see that $V$ forms a module for  $\mathfrak{gl}(m|n)$ with the natural action given by:
			$$E_{ij}e_k=\delta_{j,k}e_i.$$
			We denote the natural representation of $\mf{gl}(m|n)$ by $(V, \rho)$.
			
			\subsubsection{} We first introduce the so-called Grassmann algebra over $V$
			$$\Omega(V):=\Omega^\bullet(V)=\bigoplus_{k\geq 0}\Omega^k(V)$$
			which is by definition the quotient of the tensor product $\bigotimes^\bullet(V)$ by the ideal generated by
			$$\{ u\otimes w +(-1)^{\wp(u)\wp(w)}w\otimes u\mid u\in V_{\wp(u)}, w\in V_{\wp(w)}\}.$$
			Hence $\Omega(V)$  is an {anti-commutative} superalgebra generated by $e_i$, $i=1,\ldots,m+n$.
			Let $\Omega^k(V)$ be the $k$th Grassmann product space over $V$, which is by definition the $k$th homogeneous part of $\Omega^\bullet(V)$.
			Hence, by definition we have the following isomorphism of vector spaces
			$$\Omega^k(V)\cong \sum_{i=0}^k \bigwedge^i (V_\bz)\otimes S^{k-i}(V_\bo)$$
			where $\bigwedge^i(V_\bz)$ denotes the $i$th exterior product space of $V_\bz$, and
			$S^{k-i}(V_\bo)$ denotes the $(k-i)$th symmetric product  space of $V_\bo$.  In the following, we denote by $\curlywedge$ the Grassmann product in $\Omega^\bullet(V)$.  The basis of $\Omega^k(V)$ is given by
			$$\mcal C =\{ e_{i_1}\curlywedge e_{i_2}\curlywedge\cdots \curlywedge e_{i_r} \curlywedge e_{m+j_1}\curlywedge e_{m+j_2}\curlywedge\cdots \curlywedge e_{m+j_l}:1 \leq i_1 < \dots <i_r \leq m,$$  $$  m+1 \leq j_1 \leq j_2 \leq \dots \leq j_l \leq m+n, r+l=k, r \geq 0, l \geq 0\}.$$

			Then $\Omega^k(V)$ naturally becomes a module for $\mathfrak{gl}(m|n)$ with the action given by \\
			$$ X.v_1\curlywedge\cdots\curlywedge v_k=\displaystyle{\sum_{i=1}^{k}}(-1)^{\wp(X)(\wp(v_1)+\cdots+\wp(v_{i-1}))}
			v_1\curlywedge\cdots\curlywedge X.v_{i}\curlywedge\cdots\curlywedge v_k, $$
			for all homogeneous $X \in \mathfrak{gl}(m|n)$ and $v_i \in V$ are homogeneous vectors for $1 \leq i \leq k$.

			\begin{lem}\label{l3.12} The module $\Omega^k(V)$
				is an  irreducible $\mathfrak{gl}(m|n)$-module for all $k \in \N$. Moreover, $\Omega^k(V) \simeq L^0(\Lambda)$, where
				\begin{equation}
					\Lambda=	\begin{cases}
						\epsilon_1+\cdots+\epsilon_k, \ \ \text{for} \ \  k \leq m,\\
						\epsilon_1+ \dots +\epsilon_m + (k-m) \de_{1},  \ \ \text{for} \ \  k\geq m.\\
						
					\end{cases}
				\end{equation}
				
			\end{lem}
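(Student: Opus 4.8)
The plan is to exhibit an explicit $\mathfrak{b}$-highest weight vector $v_\Lambda\in\Omega^k(V)$ of weight $\Lambda$, to show that $v_\Lambda$ generates $\Omega^k(V)$ over $U(\gl(m|n))$, and to show that $v_\Lambda$ is, up to scalars, the only highest weight vector of $\Omega^k(V)$; these three facts together force $\Omega^k(V)$ to be irreducible, hence $\cong L^0(\Lambda)$.

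For the first point I would take $v_\Lambda=e_1\curlywedge\cdots\curlywedge e_k$ when $k\le m$ and $v_\Lambda=e_1\curlywedge\cdots\curlywedge e_m\curlywedge e_{m+1}^{\curlywedge(k-m)}$ when $k\ge m$ (the repeated Grassmann power of the odd vector $e_{m+1}$ being harmless). Using $E_{st}e_l=\delta_{tl}e_s$ one reads off at once that the $\hhh$-weight of $v_\Lambda$ is exactly $\Lambda$. To see $E_{st}v_\Lambda=0$ for every $s<t$: if $E_{st}$ annihilates all factors of $v_\Lambda$ the term vanishes trivially; otherwise $E_{st}$ can only turn some even factor, or some odd factor $e_{m+1}$, into an $e_s$ with $s\le m$, and since $e_1,\dots,e_m$ already occur among the factors (in the case $k\ge m$; for $k\le m$ the factor $e_t$ already occurs whenever $t\le k$) the resulting Grassmann monomial contains a repeated even vector and is zero. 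This is the usual Koszul-sign computation in the super-exterior algebra.

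Next I would exploit the $\gl(m)\oplus\gl(n)$-module decomposition $\Omega^k(V)=\bigoplus_{i=0}^{\min(k,m)}\bigwedge^i(V_\bz)\otimes S^{k-i}(V_\bo)$ coming from the vector-space isomorphism already recorded above; each summand is irreducible over $\gl(m)\oplus\gl(n)$ with highest weight vector $w_i:=e_1\curlywedge\cdots\curlywedge e_i\curlywedge e_{m+1}^{\curlywedge(k-i)}$, and $v_\Lambda=w_{\min(k,m)}$. Applying the odd root vector $E_{m+1,i}$, which sends $e_i\mapsto e_{m+1}$ and kills the other basis vectors, to $w_i$ produces a nonzero scalar multiple of $w_{i-1}$; hence $U(\gl(m|n))v_\Lambda$ meets every summand, and since each summand is $\gl(m)\oplus\gl(n)$-irreducible we conclude $U(\gl(m|n))v_\Lambda=\Omega^k(V)$.

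Finally I would classify the highest weight vectors. Any $\gl(m|n)$-highest weight vector is in particular a $\gl(m)\oplus\gl(n)$-highest weight vector, hence a linear combination $\sum_i c_iw_i$; imposing annihilation by the single odd simple root vector $E_{m,m+1}$ and computing $E_{m,m+1}w_i=(k-i)\,e_1\curlywedge\cdots\curlywedge e_i\curlywedge e_m\curlywedge e_{m+1}^{\curlywedge(k-i-1)}$ (a nonzero basis element precisely when $i<\min(k,m)$, and these are linearly independent for distinct such $i$) forces $c_i=0$ for all $i<\min(k,m)$. Thus the highest weight vectors are exactly the scalar multiples of $v_\Lambda$. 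Irreducibility then follows in the standard way: $\nnn^+$ acts nilpotently on the finite-dimensional weight module $\Omega^k(V)$, so every nonzero submodule contains a highest weight vector, hence contains $v_\Lambda$, hence equals $\Omega^k(V)$ by the generation step; and being generated by a highest weight vector of weight $\Lambda$, it is isomorphic to $L^0(\Lambda)$. I expect the only genuine difficulty to be the bookkeeping of super-signs when odd root vectors act on Grassmann monomials, both in checking that $v_\Lambda$ is highest weight and in showing there are no others; everything else is formal.
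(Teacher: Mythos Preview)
Your argument is correct, but it takes a genuinely different route from the paper. The paper argues directly at the level of elements: given any nonzero submodule $W$, it first shows by an induction on the number of monomial summands that $W$ must contain some single Grassmann monomial $e_{i_1}\curlywedge\cdots\curlywedge e_{i_k}$, and then shows that any such monomial can be carried to any other by successively applying suitable $E_{j_l,i_l}$, so that $W=\Omega^k(V)$; the highest weight vector is identified only at the very end. Your approach instead exploits the $\gl(m)\oplus\gl(n)$-module decomposition $\Omega^k(V)=\bigoplus_i\bigwedge^i(V_\bz)\otimes S^{k-i}(V_\bo)$, uses the classical irreducibility of exterior and symmetric powers to pin down the $\gl(m)\oplus\gl(n)$-highest weight vectors $w_i$, and then determines which of these survive as $\gl(m|n)$-highest weight vectors via the single odd simple root vector $E_{m,m+1}$. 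The paper's argument is more self-contained (it does not invoke the irreducibility of $\bigwedge^i(\bbf^m)$ or $S^j(\bbf^n)$), whereas yours is more structural and avoids the somewhat ad hoc length-reduction step; your method also makes the uniqueness of the highest weight vector transparent, which the paper does not address explicitly.
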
	
			\begin{proof}
				Let $W$ be a non-zero submodule of $\Omega^k(V)$, then $W$ contains a non-zero vector of length $r$ of the form
				\begin{align*}				
					v=&c_1e_{i_1^1}\curlywedge \dots \curlywedge e_{i_s^1}\cw e_{m+j^1_1} \cw \cdots \cw e_{m+j^1_t}+\cr
					&+\cdots+c_re_{i_1^r}\curlywedge \dots \curlywedge e_{i_s^r}\cw e_{m+j_1^r} \cw \cdots \cw e_{m+j_t^r}.
				\end{align*}
				{\bf Claim :}  $W$ contains a non-zero vector of the form $w=e_{i_1}\curlywedge e_{i_2}\curlywedge \dots \curlywedge e_{i_k}$.
				It is clear that there exists $j$ such that $ i_j^1 \neq i_a^r$ for all $1 \leq a \leq s$ or there exists some $l$ such that $\Card \{m+j^1_l \} > \Card \{m+j_a^r \} $ for all $ 1\leq a \leq r$.     Without loss of generality assume that $i_1^1 \neq i_a^r$ for all $ 1\leq a \leq s$. Then we consider the vector $E_{m+1,i_1^1} \cd v$,  which is a non-zero vector of length $ \leq r-1$. On the other case, let $e_{m+j_1^1}$ occurs $l$ times and $\Card \{m+j^1_1 \} > \Card \{m+j_a^r \} $ for all $ 1\leq a \leq r$. Then we consider the action $E_{m+n,j_1^1}^l \cd v$,  which becomes a non-zero vector with length $ \leq r-1 $.   Hence by induction claim follows.\\
				Now consider any $j_1, \dots ,j_k$ for $1 \leq j_1, \dots, j_k \leq m+n$. To prove that $W=\Omega^k(V)$, it is sufficient to show that $W$ contains $e_{j_1}\curlywedge e_{j_2}\curlywedge \dots \curlywedge e_{j_k}$. Since $W$ contains a vector $e_{i_1}\curlywedge e_{i_2}\curlywedge \dots \curlywedge e_{i_k}$,
				without loss of generality assume that $\{i_1,\dots, i_k\}\cap \{j_1, \dots,j_k\} =\{j_1, \dots, j_l\}$ for some $l <k$. Also assume that  $i_{l+1}= \dots =i_{l+r}$ for some $r$.
				Now consider the action
				\begin{align*}
					&E_{j_{l+1},i_{l+1}}.e_{j_1}\curlywedge \dots \curlywedge e_{j_l} \curlywedge e_{i_{l+1}}\curlywedge\dots \curlywedge e_{i_{l+r}} \curlywedge \dots \curlywedge e_{i_k}\cr
					=&(-1)^{\wp(E_{j_{l+1},i_{l+1}})(\wp(e_{j_1})+\wp(e_{j_2})+ \cdots+\wp(e_{j_l}))}e_{j_1}\curlywedge \dots \curlywedge e_{j_l} \curlywedge E_{j_{l+1},i_{l+1}}.e_{i_{l+1}}\curlywedge\dots \curlywedge e_{i_{l+r}} \curlywedge \dots \curlywedge e_{i_k}\cr
					&\quad +\dis{\sum_{s=1}^{r-1}} (-1)^{\wp(E_{j_{l+1},i_{l+1}})(\wp(e_{j_1})+\wp(e_{j_2})+ \cdots+\wp(e_{j_l})+\wp(e_{i_{l+1}}) + \cdots +\wp(e_{i_{l+s}}))}e_{j_1}\curlywedge \dots \curlywedge e_{j_l} \curlywedge e_{i_{l+1}} \curlywedge \dots \curlywedge \cr
					&\qquad\qquad  \curlywedge E_{j_{l+1},i_{l+1}}.e_{i_{l+s+1}}\curlywedge\dots \curlywedge e_{i_{l+r}} \curlywedge \dots \curlywedge e_{i_k}\cr
					=& r(-1)^{\wp(E_{j_{l+1},i_{l+1}})(\wp(e_{j_1})+\wp(e_{j_2})+ \cdots+\wp(e_{j_l}))}e_{j_1} \curlywedge \dots \curlywedge e_{j_l} \curlywedge e_{j_{l+1}}\curlywedge e_{i_{l+2}}\dots \curlywedge e_{i_k} \in W.
				\end{align*}
				
				Now we continue this process to conclude that $W=\Omega^k(V)$. It is easy to check that the vector $e_1\curlywedge e_2 \curlywedge \dots \curlywedge e_k$ is a highest weight vector for $k \leq m$ and for $ k=m+l$ the highest weight vector is $e_1\curlywedge e_2 \curlywedge \dots \curlywedge e_m \curlywedge e_{m+1} \curlywedge \dots \curlywedge e_{m+1}$ (wedge product of $e_{m+1}$ is taken $l$ times) with the required highest weight.
			\end{proof}	

			\subsubsection{}	We make some preparation to find a relation between the $\mf{gl}(m|n)$-modules $V$ and $V^*$.	
			For
			$$X =  \begin{pmatrix}
				A & B  \\
				C & D  \\
			\end{pmatrix}  \in \mf{gl}(m|n),$$
			define super transpose of $X $ by
			$$X^T= \begin{pmatrix}
				A^T & -C^T  \\
				B^T & D^T  \\
			\end{pmatrix}. $$
			In particular, we have
			\begin{align*}
				&E_{i,j}^T=E_{j,i}, \, \forall  \, i,j \in[1,m],\cr
				&E_{m+i,j}^T=-E_{j,m+i}, \, \forall  \, i \in [1,n],  j \in [1,m],\cr
				&E_{i,m+j}^T=E_{m+j,i} ,\, \forall \,  i \in [1,m], \,j \in [1,n],\cr    &E_{m+i,m+j}^T=E_{m+j,m+i}, \, \forall \, i,j \in [1,n].
			\end{align*}
			Moreover it is easy to see that $\tau:\mf{gl}(m|n) \to \mf{gl}(m|n)$ defined by
			$$\tau(X)=-X^T$$ is an automorphism of $  \mf{gl}(m|n)$ with the help of the property
			$$(XY)^T=(-1)^{\wp(X)\wp(Y)}Y^TX^T$$
			for all $X,Y\in \mf{gl}(m|n)$.
			
			Now we twist the natural representation $(V, \rho)$ of $\mf{gl}(m|n)$ by the automorphism $\tau$ and denote the resulting representation by $(V, \rho^\tau )$. Now consider the action of $\mf{gl}(m|n)$ on $V^*$,
			$$ (X.f)(v)=-(-1)^{\wp(X)\wp(f)}f(X.v), \, \, \forall \, X \in \mf{gl}(m|n), \, f \in V^*, \, v \in V.   $$
			We denote this representation by $(V^*, \rho^*)$. Let $\{e_i^*: 1 \leq i \leq m+n\}$ be the dual basis of the natural basis of $V$. Then one can see that $$E_{ij}.e_k^*=-(-1)^{\wp(E_{ij})\wp(e_k^*)}\de_{ik}e_j^*.$$
			
			\begin{lem}\label{equivalance lemma}
				$\mf{gl}(m|n)$ representations $(V, \rho^\tau)$ and $(V^*, \rho^*)$ are equivalent via the map $e_i \mapsto e_i^*$ for all $1 \leq i \leq m+n$.
			\end{lem}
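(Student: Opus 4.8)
The plan is to check directly that the obvious linear bijection $\phi\colon V\to V^*$, $e_i\mapsto e_i^*$ ($1\le i\le m+n$), is a morphism of $\mf{gl}(m|n)$-modules from $(V,\rho^\tau)$ to $(V^*,\rho^*)$. Note first that $\phi$ is parity preserving, since $\wp(e_i^*)=\wp(e_i)$, so $\phi$ is an even isomorphism of superspaces; it remains only to verify that it intertwines the two actions. As $\phi$ is linear and both representations are determined by the action of the basis $\{E_{ab}\mid 1\le a,b\le m+n\}$ of $\mf{gl}(m|n)$, it suffices to establish
$$\phi\bigl(\rho^\tau(E_{ab})e_k\bigr)=\rho^*(E_{ab})\,\phi(e_k)=\rho^*(E_{ab})e_k^*$$
for all indices $a,b,k$.

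The next step is to write both sides in closed form. From the displayed action of $\mf{gl}(m|n)$ on $V^*$ one gets $\rho^*(E_{ab})e_k^*=-(-1)^{\wp(E_{ab})\wp(e_k)}\,\de_{ka}\,e_b^*$, while by definition of $\tau$ and of the natural action $\rho(E_{st})e_k=\de_{tk}e_s$ one has $\rho^\tau(E_{ab})e_k=\rho\bigl(\tau(E_{ab})\bigr)e_k=-\rho(E_{ab}^T)e_k$. I would then substitute the four explicit super-transpose formulas $E_{ij}^T=E_{ji}$, $E_{m+i,j}^T=-E_{j,m+i}$, $E_{i,m+j}^T=E_{m+j,i}$, $E_{m+i,m+j}^T=E_{m+j,m+i}$, according to which of $[1,m]$ or $[m+1,m+n]$ the indices $a,b$ lie in, apply $\rho$ and then $\phi$ on the left-hand side, and compare. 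Concretely: when $E_{ab}$ is even ($a,b$ in the same block) the super-transpose introduces no sign and the parity factor $(-1)^{\wp(E_{ab})\wp(e_k)}$ is trivial, so both sides equal $-\de_{ka}e_b^*$; when $a>m\ge b$ the super-transpose contributes a $-1$ and, for the only relevant value $k=a$, $(-1)^{\wp(E_{ab})\wp(e_k)}=(-1)^{\bo\cdot\bo}=-1$, so the two signs cancel and both sides equal $+\de_{ka}e_b^*$; when $b>m\ge a$ the super-transpose contributes no sign and $\wp(E_{ab})\wp(e_a)=\bo\cdot\bz=\bz$, so both sides equal $-\de_{ka}e_b^*$. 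In every case the identity holds, hence $\phi$ is the desired isomorphism of $\mf{gl}(m|n)$-modules.

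I do not expect any genuine obstacle here; the only delicate point is the bookkeeping of Koszul signs, namely ensuring that the sign produced by the super-transpose inside $\tau(X)=-X^T$ matches the sign $(-1)^{\wp(X)\wp(f)}$ built into the dual action. Conceptually this match is precisely the adjunction $\langle X^Tu,w\rangle=(-1)^{\wp(X)\wp(u)}\langle u,Xw\rangle$ for the standard pairing $\langle e_i,e_j\rangle=\de_{ij}$, which characterizes the super-transpose; one may therefore, if preferred, phrase the argument invariantly by checking $\langle\rho^\tau(X)e_k,\,e_l\rangle$ against $\langle e_k,\,Xe_l\rangle$ rather than running the four-case computation, but the basis check above is already immediate and self-contained.
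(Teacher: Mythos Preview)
Your proof is correct and follows essentially the same approach as the paper: define $\phi(e_i)=e_i^*$ and verify the intertwining relation $\phi(\rho^\tau(E_{ab})e_k)=\rho^*(E_{ab})e_k^*$ by a case analysis on which blocks the indices $a,b$ lie in, using the explicit super-transpose formulas. The paper carries out the same three/four cases with the same sign bookkeeping; your added remark about the invariant adjunction interpretation is a pleasant bonus but not in the paper.
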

			\begin{proof}
				Define a linear isomorphism $\phi: V \to V^*$ by $\phi(e_i)= e_i^*$, for $1 \leq i \leq m+n$. Now we show that $\phi(\rho^\tau(E_{ij})e_k)=\rho^*(E_{ij})\phi(e_k)$ for all $1 \leq i,j,k \leq m+n$.\\
				(i) Let $1 \leq i,j \leq m$ or $ m+1 \leq i,j \leq m+n$
				\begin{align}
					\phi(\rho^\tau(E_{ij})e_k) &= \phi(\rho(-E_{ji})e_k) \cr
					&= \phi(-\de_{ik}e_j) \cr
					&= -\de_{ik}e_j^*=\rho^*( E_{ij})e_k^*
				\end{align}
				(ii) Let $1 \leq i \leq n$ and $1 \leq j \leq m$.
				\begin{align}
					\phi(\rho^\tau(E_{m+i,j})e_k) & = \phi(\rho(-E_{m+i,j}^T)e_k) \cr
					&= \phi(\rho(E_{j,m+i})e_k) \cr
					&= \phi(\de_{m+i,k}e_j) \cr
					&= \de_{m+i,k}e_j^* \cr
					&= -(-1)^{\wp(e_k^*)} \de_{m+i,k}e_j^*, \text{ since } \, \de_{m+i,k} \neq 0 \text{ when, \,} k =m+i, \text{i.e,} \wp(e_k^*)=1\cr
					&=\rho^*( E_{m+i,j})e_k^*
				\end{align}
				(iii) Let $1 \leq i \leq m$ and $1 \leq j \leq n$.
				\begin{align}
					\phi(\rho^\tau(E_{i,m+j})e_k) & = \phi(\rho(-E_{i,m+j}^T)e_k) \cr
					&= \phi(\rho(-E_{m+j,i})e_k) \cr
					&= \phi(-\de_{i,k}e_{m+j}) \cr
					&= -\de_{i,k}e_{m+j}^* \cr
					&= -(-1)^{\wp(e_k^*)} \de_{i,k}e_j^*, \text{ since } \, \de_{i,k} \neq 0 \text{ when,\,} k =i, \text{i.e,} \wp(e_k^*)=0\cr
					&=\rho^*( E_{i,m+j})e_k^*
				\end{align}
				This completes the proof.
			\end{proof}

			\begin{cor}\label{Cor for irr V*}
				The module $\Omega^k(V^*)$
				is an  irreducible $\mathfrak{gl}(m|n)$-module for all $k \in \N$. Moreover, $\Omega^k(V^*) \simeq {L^0(-k\delta_n)}$.
			\end{cor}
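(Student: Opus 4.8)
The plan is to bootstrap from Lemma \ref{l3.12} by means of the equivalence established in Lemma \ref{equivalance lemma}. Recall that $\tau(X)=-X^{T}$ is a parity‑preserving automorphism of $\mf{gl}(m|n)$, and that by Lemma \ref{equivalance lemma} the map $\phi\colon V\to V^{*}$, $e_{i}\mapsto e_{i}^{*}$, is an isomorphism of $\mf{gl}(m|n)$‑modules from $(V,\rho^{\tau})$ to $(V^{*},\rho^{*})$. Since the $\mf{gl}(m|n)$‑action on each Grassmann power $\Omega^{k}(-)$ is the canonical derivation extension of the action on the generating space, functoriality of $\Omega^{k}$ turns $\phi$ into an isomorphism $\Omega^{k}(\phi)\colon \Omega^{k}(V)^{\tau}\xrightarrow{\ \sim\ }\Omega^{k}(V^{*})$, $e_{i_{1}}\cw\cdots\cw e_{i_{k}}\mapsto e_{i_{1}}^{*}\cw\cdots\cw e_{i_{k}}^{*}$, where $\Omega^{k}(V)^{\tau}$ denotes the $\tau$‑twist of the $\mf{gl}(m|n)$‑module $\Omega^{k}(V)$; here one only needs to note that twisting by $\tau$ replaces $X$ by $\tau(X)$ while leaving the sign factors $(-1)^{\wp(X)(\cdots)}$ untouched, because $\tau$ preserves parities. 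As $\Omega^{k}(V)$ is irreducible by Lemma \ref{l3.12} and twisting by an automorphism preserves the submodule lattice, $\Omega^{k}(V^{*})$ is irreducible for every $k\in\N$.

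It then remains to identify the highest weight of $\Omega^{k}(V^{*})$ with respect to the standard Borel $\mathfrak{b}=\hhh+\nnn^{+}$. From $E_{ij}.e_{k}^{*}=-(-1)^{\wp(E_{ij})\wp(e_{k}^{*})}\de_{ik}e_{j}^{*}$ one reads off that the $\hhh$‑weight of $e_{k}^{*}$ is $-\epsilon_{k}$ for $1\le k\le m$ and $-\de_{k-m}$ for $m<k\le m+n$. I would then exhibit the vector $v:=e_{m+n}^{*}\cw\cdots\cw e_{m+n}^{*}$ ($k$ factors): it is nonzero because $e_{m+n}^{*}$ is odd and hence Grassmann‑commutes with itself, its $\hhh$‑weight is $-k\de_{n}$, and applying the Leibniz rule together with $E_{ij}.e_{m+n}^{*}=0$ for all $i<j\le m+n$ (as then $i\ne m+n$) gives $\nnn^{+}\cdot v=0$. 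Thus $v$ is a highest weight vector of weight $-k\de_{n}$, and since $\Omega^{k}(V^{*})$ is irreducible, $-k\de_{n}$ is its highest weight. Finally $-k\de_{n}\in\Lambda^{+}$ — its only nonzero coordinate is $\zeta_{n}=-k$, so that $\zeta_{n-1}-\zeta_{n}=k\in\N$ while all remaining differences vanish — whence $\Omega^{k}(V^{*})\cong L^{0}(-k\de_{n})$.

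I expect no serious obstacle here: everything of substance is already contained in Lemmas \ref{l3.12} and \ref{equivalance lemma}, and the only care needed is the parity bookkeeping when transporting the module structure through $\Omega^{k}$, together with the verifications that $v\ne 0$ and $\nnn^{+}v=0$, both routine. As an alternative one could simply rerun the lowering‑operator argument from the proof of Lemma \ref{l3.12} verbatim on $\Omega^{k}(V^{*})$, but the reduction to the $\tau$‑twist is shorter.
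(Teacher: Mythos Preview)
Your proposal is correct and follows essentially the same approach as the paper: both deduce irreducibility from Lemmas \ref{l3.12} and \ref{equivalance lemma} via the $\tau$-twist, and both identify $e_{m+n}^{*\,k}$ as the highest weight vector of weight $-k\delta_n$. You have simply spelled out in more detail the functoriality of $\Omega^k$, the verification that $\nnn^+v=0$, and the check that $-k\delta_n\in\Lambda^+$, all of which the paper leaves implicit.
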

			\begin{proof}
				Follows from Lemmas \ref{equivalance lemma} and \ref{l3.12} with the fact that $e_{m+n}^{*k}$ is a highest weight vector with highest weight vector $-k \de_n$.
			\end{proof}	
			\subsection{Exceptional weights  of $\bW(m,n)$}
			We fix the notations
			\begin{align}\label{eq: type I}
				\omega_k&= \epsilon_1+\cdots+\epsilon_k, \ \ \text{for} \ \  k \leq m, k \in \N, \, \text{where} \, \, \, \omega_0=0,\cr
				\omega_{m+l}&=   \epsilon_1+ \dots +\epsilon_m + l \epsilon_{m+1},  \ \ \text{for} \ \  l \in \Z_+
			\end{align}
			and
			\begin{align}\label{eq: type II}
				\theta_q=\sum_{i=1}^m\epsilon_i -\sum_{j=1}^n \delta_j-q\delta_n, \,  \forall \, q\in \N,
			\end{align}
			and call them exceptional weights of $\mf{gl}(m|n)$.
			
			We will show the mixed-product $\bW(m,n)$-modules corresponding to those exceptional weights will share special properties, which will called exceptional mixed-product modules.  We say that the exceptional mixed-product modules corresponding to (\ref{eq: type I}) and (\ref{eq: type II}) are of  type I and type II, respectively.
			
			\subsection{Exceptional mixed-product modules of type I}
			
			\subsubsection{}\label{sec: differential type I}
			Consider the map
			$$\mfk d_k:\mathcal{V}(\omega_k)\longrightarrow \mathcal{V}(\omega_{k+1})$$
			defined via
			$$\mfk d_k:\mathcal{V}(\omega_k)\longrightarrow \mathcal{V}(\omega_{k+1})$$
			\[x^{\alpha}y_{\eta}\otimes (e_{j_1}\curlywedge\cdots\curlywedge e_{j_k})\longmapsto\sum\limits_{i=1}^m\partial_i(x^{\alpha})y_{\eta} \otimes (e_{j_1}\curlywedge\cdots\curlywedge e_{j_k}\curlywedge e_i) \]
			\[+(-1)^{\wp(e_{j_1})+ \cdots + \wp(e_{j_k}) +|\eta|+1}\sum\limits_{i=1}^nx^{\alpha}D_i(y_{\eta}) \otimes (e_{j_1}\curlywedge\cdots\curlywedge e_{j_k}\curlywedge e_{m+i}),\]
			$\forall\,\alpha\in {\mathbb{N}}^m, \eta \subseteq [1,n], e_{j_1}\curlywedge\cdots\curlywedge e_{j_k} \in \mcal C.$
			
			\begin{lem}\label{lem: type I module map} The map $\mfk d_k$ is a $\bW(m,n)$-module homomorphism.
			\end{lem}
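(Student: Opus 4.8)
The plan is to exploit the fact that the family $\{\mcal V(\omega_k)\}_{k\ge0}$ together with the maps $\mfk d_k$ is, up to sign conventions, the polynomial super de Rham complex of the superdomain with (even) coordinates $x_1,\dots,x_m$ and (odd) coordinates $y_1,\dots,y_n$, on which $\ggg=\bW(m,n)$ acts by Lie derivatives. Concretely, by Lemma~\ref{l3.12} one identifies $\mcal V(\omega_k)=\calr\otimes L^0(\omega_k)$ with $\calr\otimes\Omega^k(V)$, and then with the space $\Omega^k_\calr$ of polynomial $k$-forms via $e_i\leftrightarrow\mathsf d x_i$ ($1\le i\le m$) and $e_{m+s}\leftrightarrow\mathsf d y_s$ ($1\le s\le n$); under this identification $\mfk d_k$ corresponds to a sign-normalization of the de Rham differential $\mathsf d\colon\Omega^k_\calr\to\Omega^{k+1}_\calr$. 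Granting that the $\ggg$-action on $\mcal V(\omega_k)$ given by \eqref{W(n)-module structure}--\eqref{W(n)-module structure-2} is the Lie derivative $L_X$ ($X\in\ggg$), the lemma follows from the super Cartan homotopy formula expressing $L_X$ as a graded commutator of $\iota_X$ and $\mathsf d$: since $\mathsf d^2=0$ and $\mfk d_k$ is parity-even (see below), this yields $L_X\circ\mathsf d=\mathsf d\circ L_X$ for all $X$, which is exactly the statement that each $\mfk d_k$ is a $\bW(m,n)$-module homomorphism.

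To make this rigorous one must verify that \eqref{W(n)-module structure}--\eqref{W(n)-module structure-2} does reproduce the Lie derivative. It suffices to check this on the spanning set $\{x^\beta y_\mu\partial_l\}\cup\{x^\beta y_\mu D_l\}$ of $\ggg$: on the degree $-1$ generators $\partial_l,D_l$ both actions are the coefficientwise derivations $\partial_l,D_l$ of $\Omega^\bullet_\calr$; on $\ggg_0\cong\gl(m|n)$ formula \eqref{W(n)-module structure} specializes to the Leibniz (tensor) action, and one matches the assignment $x_l\partial_i\mapsto E_{li}$ (and its $x_lD_s,\,y_r\partial_i,\,y_rD_s$ analogues from \eqref{eq: iso gl}) against $L_{x_l\partial_i}(\mathsf d x_j)=\delta_{ij}\,\mathsf d x_l$ together with the $\gl(m|n)$-action on Grassmann products recorded just before Lemma~\ref{l3.12}; a general $x^\beta y_\mu\vartheta$ is then handled via the $\calr$-module structure and the derivation property of $L$ in its vector-field slot. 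In practice, rather than setting up the de Rham dictionary formally, it is cleaner to verify $\mfk d_k(X\cdot w)=X\cdot\mfk d_k(w)$ directly for $X=x^\beta y_\mu\partial_l$ and $X=x^\beta y_\mu D_l$ and $w=x^\alpha y_\eta\otimes(e_{j_1}\cw\cdots\cw e_{j_k})$: expand both sides using \eqref{W(n)-module structure}--\eqref{W(n)-module structure-2}, the Leibniz rule, $\partial_i\partial_j=\partial_j\partial_i$, $D_iD_j=-D_jD_i$ and the explicit $\gl(m|n)$-actions on $\Omega^k(V)$ and $\Omega^{k+1}(V)$, and match the resulting sums over multi-indices and subsets of $[1,n]$ term by term. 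The bulk of this computation is to be recorded in Appendix~A, with a representative case (e.g.\ $X=x^\beta y_\mu\partial_l$ with $\mu\cap\eta=\emptyset$) displayed in the main text.

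Two preliminary bookkeeping points should be dealt with first. The map $\mfk d_k$ is well defined, since appending $e_i$ (resp.\ $e_{m+i}$) on the right of $e_{j_1}\cw\cdots\cw e_{j_k}$ is compatible with the Grassmann relations; and it is parity-even and, with the evident grading on $\Omega^\bullet(V)$, compatible with the $\bbz$-grading up to a depth shift, a parity count showing that both summands of $\mfk d_k(x^\alpha y_\eta\otimes\omega)$ have parity $|\eta|+\wp(\omega)$, so that $\mfk d_k$ is at least a legitimate candidate morphism. The genuine difficulty, and the reason a long appendix is unavoidable, is sign bookkeeping: one must show that the sign $(-1)^{\wp(e_{j_1})+\cdots+\wp(e_{j_k})+|\eta|+1}$ built into $\mfk d_k$ combines correctly with the Koszul signs $(-1)^{\wp(f)+\wp(g)+1}$, $(-1)^{\wp(g)}$, $(-1)^{\wp(f)+1}$ in \eqref{W(n)-module structure}--\eqref{W(n)-module structure-2}, with the parity signs in the $\gl(m|n)$-action on Grassmann products, and with the extra signs produced when a freshly appended basis vector is commuted past the existing wedge factors; one must also check that the degenerate terms — those in which $D_i(y_\eta)=0$ or a repeated wedge factor annihilates a summand — cancel or match consistently on the two sides. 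Controlling all of these signs uniformly in $\alpha,\beta,\eta,\mu,l$ is the main obstacle.
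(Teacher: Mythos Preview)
Your proposal is correct, and the direct verification you ultimately recommend---expanding both $X\cdot\mfk d_k(w)$ and $\mfk d_k(X\cdot w)$ for $X=x^\beta y_\mu\partial_l$ (and $X=x^\beta y_\mu D_l$), $w=x^\alpha y_\eta\otimes(e_{j_1}\cw\cdots\cw e_{j_k})$, and matching the resulting sums term by term---is exactly what the paper does in Appendix~A: each side is broken into six summands $E_1,\dots,E_6$ and $A_1,\dots,A_6$, and the equalities $E_1+E_2=A_1+A_3$, $E_3+E_4=A_2+\text{(residual)}$, $A_6=E_6$, etc.\ are checked by hand, with the $D_l$ case left to the reader.

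Your de Rham/Cartan framing is a genuinely different packaging. Identifying $\mcal V(\omega_k)$ with polynomial $k$-forms and $\mfk d_k$ with (a sign-normalized) $\mathsf d$, and then invoking $L_X=[\iota_X,\mathsf d]$ together with $\mathsf d^2=0$, gives $\mfk d_k\circ L_X=L_X\circ\mfk d_k$ for free. The paper does not take this route at all; it never introduces $\iota_X$ or the Cartan formula and works purely with the explicit formulas \eqref{W(n)-module structure}--\eqref{W(n)-module structure-2}. Your approach is more conceptual and explains \emph{why} the identity holds, but as you correctly note, the price is verifying that the mixed-product action really is the Lie derivative on $\Omega^\bullet_\calr$---which, once one unwinds the sign conventions in the super setting, is essentially the same bookkeeping the paper does directly. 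Either way the content is the sign-tracking you describe; the de Rham language organizes it, while the paper just computes.
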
		
			
			The proof is ordinary, but very tedious and involving lots of computation. We leave it to the appendix A (see \S\ref{sec: appendix A})

			\subsubsection{Exceptional mixed-product complex of type I} Now we consider the chain complex of $\bW(m,n)$-modules maps ( which we will confirm in Appendix A) for $\mcal V(\omega_k)$, given by: 			
			\begin{align}\label{eq: type I complex}
				0\longrightarrow\mathcal{V}(\omega_0)\xrightarrow{\,\,\,\mfk d_0\,\,\,}\mathcal{V}(\omega_1)\xrightarrow{\,\,\,\mfk d_1\,\,\,}\cdots\cdots \mathcal{V}(\omega_k)\xrightarrow{\,\,\,\mfk d_k\,\,\,}\mathcal{V}(\omega_{k+1})\xrightarrow{\mfk d_{k+1}}\cdots\cdots
			\end{align}
			
			We prove in appendix A (see Corollary \ref{cor d^2=0} ) that $\mfk d_k \circ \mfk d_{k-1}=0$ for all $k \geq 1$. Furthermore, one can observe that both the kernels and images of $\mfk d_k$ are non-zero for all $k \geq 1$. In particular, $\mcal V(\omega_k)$ is reducible for all $k \geq 1$. Now we find a spanning set for $\mcal V(\omega_k)/\im \mfk d_{k-1}$ for all $k \geq 1$. Fix some $\al \in \N^m$ with $\al_{i_r}=0$ for all $ r \notin[1,s] $. Then from $\mfk d_{k-1}(x^{\alpha +\epsilon_i}y_{\eta}\otimes e_{i_1}\curlywedge\cdots \curlywedge e_{i_s}\curlywedge e_{m+j_1}\curlywedge e_{m+j_t}) =0$, we have
			\begin{align*}			
				&(\al_i+1)x^{\alpha}y_{\eta} \otimes (e_{i_1}\curlywedge\cdots \cw e_{i_s} \cw e_{m+j_1}\curlywedge \cdots \cw e_{m+j_t}\curlywedge e_i)  \cr
				=&(-1)^{\wp(e_{m+j_1})+ \cdots + \wp(e_{m+j_t}) +|\eta|}\sum\limits_{j=1}^nx^{\alpha +\epsilon_i}D_j(y_{\eta}) \otimes (e_{i_1}\curlywedge\cdots \cw e_{i_s} \cw e_{m+j_1}\curlywedge \cdots \cw e_{m+j_t} \cw e_{m+j})
			\end{align*}
			for all $1 \leq i_1 < \cdots <i_s \leq m$ and $ 1 \leq j_1 \leq \cdots \leq j_t \leq n$ and some $i \notin \{i_1, \cdots ,i_s\}$. Hence we get that
			\begin{align*}	
				\mcal A=& \{ x^{\alpha}y_{\eta} \otimes e_{i_1}\curlywedge\cdots \cw e_{i_s} \cw e_{m+j_1}\curlywedge \cdots \cw e_{m+j_t} + \im \mfk d_{k-1}  \mid \al_{i_k} \neq 0,  \text{ for some }  k \notin [1,s], \cr
				&		 \qquad\qquad \eta \subseteq [1,n], 1 \leq i_1 < \cdots <i_s \leq m  \text{ and }  1 \leq j_1 \leq \cdots \leq j_t \leq n \}
			\end{align*}		
			is a spanning set for $\mcal V(\omega_k)/\im \mfk d_{k-1}$ for all $k \geq 1$.
			
			\begin{lem}
				\begin{itemize}
					\item[(1)] $ \mcal V(\omega_k)/ \im \mfk d_{k-1}$ is irreducible.
					\item[(2)] 	$\ker \mfk d_k$= $\im \mfk d_{k-1}$ for all $k \geq 1$.
				\end{itemize}
				
			\end{lem}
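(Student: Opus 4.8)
The plan is to derive (2) quickly from (1), and to prove (1) by a ``reduction to the bottom degree'', exploiting the very simple form of the $\ggg_{-1}$-action together with the spanning set $\mcal A$ just constructed. Set $Q_k=\calv(\omega_k)/\im\mfk d_{k-1}$. From the module formulas (\ref{W(n)-module structure})--(\ref{W(n)-module structure-2}) with $f=1$ one sees that $\partial_i$ and $D_j$ act on $\calv(\omega_k)=\calr\otimes L^0(\omega_k)$ simply as $\partial/\partial x_i$ and (up to sign) $\partial/\partial y_j$ on the $\calr$-factor; hence they act the same way on $Q_k$ and strictly lower the $\bbz$-degree. By Corollary \ref{Soc of V(la)} the degree-$0$ space $1\otimes L^0(\omega_k)$ lies in $\im\mfk d_{k-1}$, so $Q_k$ is concentrated in degrees $\ge 1$, and inspection of $\mcal A$ shows that its degree-$1$ component $(Q_k)_1$ is spanned by the classes $\overline{x_j\otimes(e_{i_1}\cw\cdots\cw e_{i_s}\cw e_{m+j_1}\cw\cdots\cw e_{m+j_t})}$ with $j\notin\{i_1,\dots,i_s\}$ and $s+t=k$; also $Q_k\neq 0$ since $\mfk d_{k-1}$ is visibly not surjective.

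\emph{Proof of (1).} Let $0\neq\bar M\subseteq Q_k$ be a submodule and pick a nonzero homogeneous $\bar w\in\bar M$ of minimal degree $d$. Since $\ggg_{-1}$ strictly lowers degree, minimality forces $\ggg_{-1}\bar w=0$. A bookkeeping with $\mcal A$ then shows that no nonzero homogeneous class of $Q_k$ of degree $\ge 2$ is annihilated by all of $\ggg_{-1}$ (one can always apply some $\partial_i$ or $D_j$ keeping alive the ``outside'' exponent witnessing membership in $\mcal A$); hence $d=1$ and $\bar w\in(Q_k)_1$. Next, $(Q_k)_1$ is a quotient of $\calr_1\otimes L^0(\omega_k)\cong V\otimes\Omega^k(V)$ as a $\ggg_0=\gl(m|n)$-module, and the relations forced by $\mfk d_{k-1}$ collapse it onto the wedge-multiplication image $\Omega^{k+1}(V)\cong L^0(\omega_{k+1})$, which is irreducible by Lemma \ref{l3.12}; therefore the $\ggg_0$-span of $\bar w$ is all of $(Q_k)_1$, i.e.\ $(Q_k)_1\subseteq\bar M$. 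Finally, one checks (again from the shape of $\mcal A$) that every generator in $\mcal A$ is obtained from degree-$1$ classes by applying operators of the form $x_ax_b\partial_r, y_sx_b\partial_r,\dots\in\ggg_{\ge1}$ followed by a $\ggg_0$-correction; thus $U(\ggg).(Q_k)_1=Q_k$, so $\bar M=Q_k$ and $Q_k$ is irreducible.

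\emph{Proof of (2).} By Corollary \ref{cor d^2=0}, $\mfk d_k\circ\mfk d_{k-1}=0$, so $\im\mfk d_{k-1}\subseteq\ker\mfk d_k$ and $\ker\mfk d_k/\im\mfk d_{k-1}$ is a submodule of $Q_k$; by (1) it is either $0$ or $Q_k$. The latter would give $\mfk d_k=0$, which is false (evaluate $\mfk d_k$ on a single $x_j\otimes v$, or note $\mfk d_k$ surjects onto $1\otimes L^0(\omega_{k+1})$). Hence $\ker\mfk d_k=\im\mfk d_{k-1}$.

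\emph{Main obstacle.} The genuinely delicate points all sit inside the proof of (1): that no class of degree $\ge 2$ is $\ggg_{-1}$-annihilated, that $(Q_k)_1$ collapses onto the irreducible $\gl(m|n)$-module $L^0(\omega_{k+1})$, and that $(Q_k)_1$ generates $Q_k$ under $U(\ggg_{\ge0})$. Each amounts to a careful computation with the spanning set $\mcal A$ and the relations $\mfk d_{k-1}(\,\cdot\,)=0$ — precisely the kind of bookkeeping relegated to Appendix A — and it is the de Rham–like structure of the complex that makes all three come out as expected.
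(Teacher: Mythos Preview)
Your overall strategy is different from the paper's and is conceptually appealing, but it has a genuine gap at the step you yourself flag as the main obstacle, namely the claim that no nonzero class of degree $\ge 2$ in $Q_k$ is annihilated by $\ggg_{-1}$. Your justification --- ``one can always apply some $\partial_i$ or $D_j$ keeping alive the outside exponent witnessing membership in $\mcal A$'' --- does not work as stated: $\mcal A$ is only a spanning set, not a basis, so an element $\bar w\ne 0$ may be a linear combination of $\mcal A$-elements in which every individual $\partial_i$ or $D_j$ produces terms that cancel modulo $\im\mfk d_{k-1}$. What you are really asserting is that $H^0(\ggg_{-1},Q_k)$ is concentrated in degree $1$, and the natural way to see this is to identify $Q_k\cong\im\mfk d_k\subset\calv(\omega_{k+1})$ and use that $\ggg_{-1}$-invariants in $\calv(\omega_{k+1})$ sit in degree $0$. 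But that identification is exactly statement (2), so your argument for (1) is circular.

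The cleanest repair is to reverse the logic: prove (2) directly via the super Poincar\'e homotopy $h_k(f\otimes v)=\sum_i x_i f\otimes\iota_i v+\sum_j(\pm)y_j f\otimes\iota_{m+j}v$, which gives $\mfk d_{k-1}h_k+h_{k+1}\mfk d_k=(d+k)\,\mathrm{id}$ on $\calv(\omega_k)_d$; this yields $\ker\mfk d_k=\im\mfk d_{k-1}$ at once, and then $Q_k\cong\im\mfk d_k$, whose irreducibility is the next lemma. By contrast the paper avoids this entirely: it never analyses $\ggg_{-1}$-invariants or identifies $(Q_k)_1$, but instead takes an arbitrary nonzero weight vector in a submodule and, by an explicit sequence of operators ($D_j$'s, then a high power of one $\partial_{i_l}$, then a product of $y_1\partial_{i_r}$'s and $y_1D_{j_r}$'s), drives it to the single generator $\overline{x_{i_l}\otimes e_{m+1}^k}$. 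Your Claims 6 and 7 are more benign --- Claim 6 is the degree-one exactness of the super Koszul complex $S^2V\otimes\Omega^{k-1}V\to V\otimes\Omega^kV\to\Omega^{k+1}V\to 0$, and Claim 7 follows from the single-generator observation --- but you should state them as such rather than as unspecified bookkeeping.
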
	
			\begin{proof}
				At first observe that as a $U(\mf g)$-module $\mcal V(\omega_k)$ is generated by the single vector $\{ x_i \ot e_{m+1}^k \}$ for all $i \in [1,m]$. This follows from the fact
				$$x^\al y_\eta \pr_i\cd (x_i \ot e_{m+1}^k)=x^\al y_\eta  \ot e_{m+1}^k \in U(\mf g)( x_i \ot e_{m+1}^k) \,\,\forall \, \al \in \N^m , \eta \subseteq [1,n].$$ Hence with the help of Lemma \ref{l3.15}, $U(\mf g)( x_i \ot e_{m+1}^k)= \mcal V(\omega_k)$. Therefore as $U(\mf g)$-module $\mcal V(\omega_k)/ \text{Im} \mfk d_{k-1}$ is generated by the single vector $\{ x_i \ot e_{m+1}^k + \text{Im} \mfk d_{k-1} \}$ for all $i \in [1,m]$.\\
				Let $W$ be a non-zero submodule of $  \mcal V(\omega_k)/ \text{Im}  \mfk d_{k-1}$. As $W$ is a weight module, there exists a non-zero weight vector in $W$. Let $w =\dis{\sum_{i =1}^{R}}x^{\be^i}y_{\eta_i}\ot w_i + \text{Im}  \mfk d_{k-1}$ be a non-zero weight vector of weight $\dis{\sum_{i=1}^{m}} \al_i \epsilon_{i} + \dis\sum_{j \in \eta}  \de_{i} $ such that $x^{\be^i}y_{\eta_i}\ot w_i + \text{Im}  \mfk d_{k-1} \in \mcal A$ for all $i \in [1,R]$.  Now note that the $j$th component of $\be^i$ must be $\al_j$ or $\al_j -1$, as $w_i$ is of the form $\hat e_j e_{i_1} \cdots e_{i_s} e_{m+j_1} \cdots e_{m+j_t}$ or $ e_j e_{i_1} \cdots e_{i_s} e_{m+j_1} \cdots e_{m+j_t}$, due to the weight reason(here $ \hat .$  means omission). Moreover, if $w_i =w_k$ for some $i \neq k$, then $\be^i =\be^k$ and $\eta_i=\eta_k$ due to weight reason.
				
				Now considering the action of $D_j$'s sufficiently many times on $ w$ we assume that,
				$$w =\dis{\sum_{i=1}^{ R_0}}x^{\be^i}\ot w_i + \text{Im}  \mfk d_{k-1} \in W, \, $$
				for some $R_0\leq R$.
				Now looking at $\mcal A$ we can assume that $w_1=\hat e_{i_l} e_{i_1} \cdots e_{i_s}e_{m+j_1} \cdots e_{m+j_t}$ and ${\be^1_{i_l}}= {\al_{i_l}} \neq 0$.\\
				Now we consider the action
				
				$$\pr_{i_l}^{\al_{i_l}-1} \cd w = (\al_{i_l}-1)!  \dis{\sum_{r \leq R } }x_{i_l}\ot w_r + \text{Im} \mfk d_{k-1} \in W, $$ with all $w_r$ is of the form $\hat e_{i_l} e_{i_1} \cdots e_{i_s}e_{m+j_1} \cdots e_{m+j_t}$. \\
				Now consider $X=y_1\pr_{i_1} \cdots y_1\pr_{i_s}y_1D_{j_1} \cdots y_1D_{j_t}$, then we have the following\begin{align}
					X.w_0 &= \dis{\sum_{r  \leq k } }x_{i_l}\ot (X\cd w_i) + \text{Im} \mfk d_{k-1} \cr
					&= x_{i_l}\ot (X\cd w_1) + \text{Im} \mfk d_{k-1} \cr
					&= x_{i_l}\ot e_{m+1}^k + \text{Im} \mfk d_{k-1}  \in W,
				\end{align}
				since $w_i$'s are different all other $X.w_r=0$, for $r \neq 1$. Therefore $W$ contains a generator of $\mcal V(\omega_k)/ \text{Im} \mfk d_{k-1}$, hence $W=\mcal V(\omega_k)/ \text{Im} \mfk d_{k-1}$. This completes the proof of (1). Moreover, $\mfk d_{k} \mfk d_{k-1}=0$ implies that $\text{Im} \mfk d_{k-1} \subseteq \text{Ker} \mfk d_k$ and hence (2) follows from (1).
			\end{proof}
			
			\subsubsection{Composition series of $\mathcal{V}(\omega_k)$}
			\begin{lem}
				$\im \mfk d_{k}$ is irreducible for all $k \geq 0$. Moreover, {$\im \mfk d_k\cong L(\omega_{k+1})$.}
				
			\end{lem}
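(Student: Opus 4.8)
The plan is to separate the range $k\ge 1$, where the statement will follow at once from the preceding lemma, from the base case $k=0$, which is not reached by it because the complex (\ref{eq: type I complex}) fails to be exact at $\mathcal V(\omega_0)$ (its homology there is the line of constants).

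First I would record the uniform half of the argument. Since $\mfk d_k$ is a $\bW(m,n)$-module map (Lemma \ref{lem: type I module map}), the first isomorphism theorem gives $\im\mfk d_k\cong\mathcal V(\omega_k)/\ker\mfk d_k$ as $\ggg$-modules, and $\mfk d_k$ is nonzero (e.g. $\mfk d_k(x_1\otimes e_{m+1}^k)=1\otimes(e_{m+1}^k\curlywedge e_1)\ne 0$), so $\im\mfk d_k\ne 0$. Granting that this quotient is irreducible, $\im\mfk d_k$ is then a nonzero simple submodule of $\mathcal V(\omega_{k+1})\cong\nabla(\omega_{k+1})$ (Proposition \ref{p3.6}); by Corollary \ref{Soc of V(la)} it contains $1\otimes L^0(\omega_{k+1})$, which generates the simple socle of $\nabla(\omega_{k+1})$, isomorphic to $L(\omega_{k+1})$ (Lemma \ref{lem: simple socles}). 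Hence simplicity forces $\im\mfk d_k\cong L(\omega_{k+1})$, and everything reduces to the irreducibility of $\mathcal V(\omega_k)/\ker\mfk d_k$.

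For $k\ge 1$ this is precisely the content of the preceding lemma: part (2) identifies $\ker\mfk d_k=\im\mfk d_{k-1}$, and part (1) says $\mathcal V(\omega_k)/\im\mfk d_{k-1}$ is irreducible. For $k=0$ I would argue directly. Here $\mathcal V(\omega_0)=\calr\otimes_\bbf L^0(0)$ is just $\calr$ with its natural action by super-derivations, because the $\ggg_0$-summands in (\ref{W(n)-module structure}) and (\ref{W(n)-module structure-2}) vanish on the trivial module $L^0(0)$; under this identification $\mfk d_0$ carries $f\in\calr$ to $\sum_{i=1}^m\partial_i(f)\otimes e_i$ plus a sign times $\sum_{i=1}^n D_i(f)\otimes e_{m+i}$, so $\mfk d_0(f)=0$ forces $\partial_i(f)=0$ for all $i\in[1,m]$ and $D_i(f)=0$ for all $i\in[1,n]$, i.e. $f\in\bbf\cdot 1$ (the degree-zero instance of the polynomial super-Poincar\'e lemma). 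Thus $\ker\mfk d_0=\bbf\cdot 1$ and $\im\mfk d_0\cong\calr/\bbf\cdot 1$, and it remains to see the latter is irreducible. A nonzero submodule of $\calr/\bbf\cdot 1$ lifts to a graded submodule $M$ of $\calr$ with $\bbf\cdot 1\subsetneq M$ (graded, since the $\bbz$-grading is the eigenspace decomposition for the degree operator $\sum_i x_i\partial_i+\sum_s y_sD_s\in\ggg_0$); choosing a homogeneous $0\ne f\in M$ of degree $d\ge 1$ and using that $\calr_d\cong S^d(\bbf^{m|n})$ is an irreducible $\ggg_0\cong\gl(m|n)$-module gives $\calr_d\subseteq M$; then applying $\partial_1\in\ggg_{-1}$ and $x_1^2\partial_1\in\ggg_1$, together with the irreducibility of each $\calr_e$, propagates this to $\calr_e\subseteq M$ for every $e\ge 0$, whence $M=\calr$. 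This finishes the base case and hence the lemma.

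The step I expect to be the main obstacle is this base case $k=0$, which lies outside the scope of the preceding lemma: one must pin down $\ker\mfk d_0$ as exactly the constants and, more substantially, prove that $\calr/\bbf\cdot 1$ has no proper nonzero submodule, which ultimately rests on the irreducibility of the supersymmetric powers $S^d(\bbf^{m|n})$ over $\gl(m|n)$ and on being able to move freely between graded pieces using $\ggg_{\pm 1}$. The remaining ingredients -- the first isomorphism theorem and the already-established description of the socle of $\nabla(\lambda)$ -- are formal.
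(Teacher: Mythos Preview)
Your proof is correct but follows a different route from the paper. The paper argues uniformly for all $k\ge 0$ by working inside $\mathcal V(\omega_{k+1})$: given a nonzero submodule $W\subseteq\im\mfk d_k$, Corollary \ref{Soc of V(la)} forces $1\otimes\Omega^{k+1}(V)\subseteq W$, and then a single explicit computation shows that $x^\alpha y_\eta D_1\cdot(1\otimes e_{m+1}\curlywedge v)$ is a nonzero multiple of $\mfk d_k(x^\alpha y_\eta\otimes v)$, so every generator of $\im\mfk d_k$ already lies in $W$; this simultaneously yields $\im\mfk d_k\subseteq U(\ggg)(1\otimes L^0(\omega_{k+1}))=\soc\mathcal V(\omega_{k+1})\cong L(\omega_{k+1})$. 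Your approach instead passes to the source via the first isomorphism theorem, getting the case $k\ge 1$ for free from the preceding lemma, at the cost of a separate base case $k=0$ that invokes the irreducibility of the supersymmetric powers $S^d(\bbf^{m|n})$ over $\gl(m|n)$ and a propagation argument between graded pieces. The paper's approach is more self-contained and handles all $k$ at once with one calculation; yours is cleaner for $k\ge 1$ but imports an outside fact about $\gl(m|n)$-modules for $k=0$. Both identifications with $L(\omega_{k+1})$ go through the socle description in the same way.
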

			\begin{proof}
				Let $W$ be a non-zero submodule of $\im \mfk d_{k}.$ We show that $\mfk d_k(x^\al y_\eta \ot v) \in W$ for all $\al \in \N^m, \eta \subseteq [1,n], v \in \Omega^k(V)$. Let us fix a non-zero basis vector $v= e_{j_1}\curlywedge\cdots\curlywedge e_{j_k} \in \Omega^k(V) $. Also let $j_1, \cdots , j_r= m+1$. Note that from Corollary \ref{Soc of V(la)} we have $ \Omega^{k+1}(V) \subseteq W$.  Now consider the following action for all $\al \in \N^m, \eta \subseteq [1,n]$:
				\begin{align*}		
					&x^\al y_\eta D_1 \cd (1 \ot e_{m+1}\cw v)\cr
					=& \sum\limits_{i=1}^m \partial_i(x^\al)y_\eta\otimes x_iD_1\cd (e_{m+1}\cw v) +(-1)^{|\eta|+1} \sum\limits_{i=1}^n x^\al D_i(y_\eta)\otimes y_iD_1 \cd (e_{m+1}\cw v)\cr
					=& (r+1)\{\sum\limits_{i=1}^m \partial_i(x^\al)y_\eta\otimes e_{i}\cw v +(-1)^{|\eta|+1} \sum\limits_{i=1}^n x^\al D_i(y_\eta)\otimes e_{m+i}\cw v \}\cr
					=&(r+1)(-1)^{k} \{\sum\limits_{i=1}^m \partial_i(x^\al)y_\eta\otimes v \cw e_{i} +(-1)^{|\eta|+\wp(v)+1} \sum\limits_{i=1}^n x^\al D_i(y_\eta)\otimes v \cw e_{m+i} \}\cr
					=&(r+1)(-1)^{k} \mfk d_k(x^\al y_\eta \ot v)  \in W,
				\end{align*}
				since $r \geq 0$, this proves that $W=\im \mfk d_{k}$.	Moreover, from the above computation we also have {$ \im \mfk d_{k} \subseteq U(g)(1 \ot L_0(\omega_{k+1})) = \soc(\mcal V(w_{k+1})) \cong L(\omega_{k+1})$.} Hence completes the proof.

			\end{proof}
			\subsubsection{Irreducible module $L(\omega_k)$}	Combining the above Lemmas we have the following theorem:
			\begin{thm}\label{thm: 4.3} The following statements hold.
				\begin{itemize}
					\item[(1)] $\im \mfk d_k= \ker \mfk d_{k+1}$ is irreducible and  isomorphic to $ L(\omega_{k+1})$, for all $k \geq 0$
					\item[(2)] $\calv(\omega_k)$ has two composition factors of multiplicity free: $L(\omega_k)$, $L(\omega_{k+1})$ for $k\geq1$.
				\end{itemize}	
			\end{thm}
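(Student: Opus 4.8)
The plan is to read both parts of the theorem directly off the lemmas already established for the complex $(\ref{eq: type I complex})$, with essentially no new computation.

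For (1), I would simply note that the lemma stating $\ker\mfk d_k=\im\mfk d_{k-1}$ for all $k\geq 1$, reindexed by $k\mapsto k+1$, gives $\im\mfk d_k=\ker\mfk d_{k+1}$ for every $k\geq 0$; combined with the lemma asserting that $\im\mfk d_k$ is irreducible with $\im\mfk d_k\cong L(\omega_{k+1})$ for all $k\geq 0$, this is exactly statement (1). So there is nothing to do here but quote these two lemmas and keep the index shift straight.

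For (2), I would fix $k\geq 1$ and use that $\mfk d_k$ is a $\bW(m,n)$-module homomorphism to obtain the short exact sequence
\[0\longrightarrow\ker\mfk d_k\longrightarrow\calv(\omega_k)\xrightarrow{\mfk d_k}\im\mfk d_k\longrightarrow 0.\]
By (1) with $k-1$ in place of $k$, the submodule $\ker\mfk d_k=\im\mfk d_{k-1}$ is irreducible and isomorphic to $L(\omega_k)$, and by (1) with $k$, the quotient $\im\mfk d_k$ is irreducible and isomorphic to $L(\omega_{k+1})$. Hence $\calv(\omega_k)$ has a length-two filtration with subquotients $L(\omega_k)$ and $L(\omega_{k+1})$, so these are exactly its composition factors; and since $\omega_k\neq\omega_{k+1}$ by the definitions $(\ref{eq: type I})$, Lemma \ref{lem1}(3) gives $L(\omega_k)\not\cong L(\omega_{k+1})$, whence each occurs with multiplicity one, which is the multiplicity-free assertion.

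At this stage there is essentially no obstacle: all the real content — that $\mfk d_k$ is a morphism, that $\mfk d_k\circ\mfk d_{k-1}=0$, the irreducibility of $\calv(\omega_k)/\im\mfk d_{k-1}$, and the irreducibility of $\im\mfk d_k$ together with its identification as $L(\omega_{k+1})$ — has been carried out in the preceding lemmas and in Appendix A. The only points needing any care are the bookkeeping of the index shifts feeding (1) into (2), and the elementary remark that distinct dominant weights label non-isomorphic simple objects of $\co$.
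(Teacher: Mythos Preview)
Your proposal is correct and matches the paper's approach exactly: the paper presents Theorem~\ref{thm: 4.3} with the preface ``Combining the above Lemmas we have the following theorem'' and no further argument, so the entire content is indeed the index shift and the short exact sequence you describe. Your added remark that $\omega_k\neq\omega_{k+1}$ forces $L(\omega_k)\not\cong L(\omega_{k+1})$ via Lemma~\ref{lem1}(3) makes the multiplicity-free claim explicit, which the paper leaves tacit.
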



				\subsection{Exceptional mixed-product modules of type II} 
			
			Now we discuss about the modules $\mcal V(\theta_q)$ for all $ q\geq 0$.	Denote $\mcale=\sum_{i=1}^m\epsilon_i -\sum_{j=1}^n \delta_j $, then  $\theta_q=\mcale -q\delta_n$ for $q\geq 0$. Consider $M_q=\Omega^q(V^*) \ot \bbf_{\mcale}$, where $\bbf_{\mcale}$ is the one dimensional $\mf{gl}(m|n)$ module with highest weight $\mcale$. Then  by Corollary \ref{Cor for irr V*}, $M_q$ is an irreducible module with highest weight vector $e_{m+n}^{*q} \ot 1_{\mcale}$ and highest weight as $\theta_q$. Hence $L_{0}(\theta_q)$ can be realized as $L_0(\theta_q)=\Omega^q(V^*) \ot \bbf_{\mcale}$.  Here and further, for notational convenience  we write elements of $\Omega^k(V^*)$ just by product instead to wedge product.
			
			\begin{lem}\label{Generating set of V(la)}
				For $q \geq 0$,	$\mcal V(\theta_q)$ is generated by $y_n \ot {e_{m+n}^{*q}} \ot 1_{\mcale}$ as $\bW(m,n)$-module.
			\end{lem}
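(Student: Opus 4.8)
The plan is to set $M:=U(\ggg)\cdot w_0$, where $w_0=y_n\otimes e_{m+n}^{*q}\otimes 1_{\mcale}$, and to write $v:=e_{m+n}^{*q}\otimes 1_{\mcale}$ for the $\ggg_0$-highest weight vector of $L^0(\theta_q)$. I would prove $M=\mcal V(\theta_q)$ via the irreducibility criterion Lemma~\ref{l3.15}: since $M$ is a submodule, it suffices to show $v\in M^\flat$, i.e. that $x^\alpha y_\eta\otimes v\in M$ for \emph{every} $\alpha\in\bbn^m$ and \emph{every} $\eta\subseteq[1,n]$. First I would apply $D_n\in\ggg_{-1}$ to $w_0$: using $D_n(y_n)=1$ and formula (\ref{W(n)-module structure-2}) (all terms carrying $\partial_j(1)$ or $D_j(1)$ vanish) gives $D_n\cdot w_0=1\otimes v\in M$. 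Since $\ggg_0$ stabilises the degree-zero layer $1\otimes L^0(\theta_q)$ and acts there through the representation $\xi$ on the second tensor factor, $\ggg_0$-irreducibility of $L^0(\theta_q)$ yields $1\otimes L^0(\theta_q)\subseteq M$ (in particular $\soc\,\mcal V(\theta_q)\subseteq M$, by Corollary~\ref{Soc of V(la)}).

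The core is then a short bootstrapping computation using the explicit action (\ref{W(n)-module structure})--(\ref{W(n)-module structure-2}) together with the model $L^0(\theta_q)=\Omega^q(V^*)\otimes\bbf_{\mcale}$, $v=e_{m+n}^{*q}\otimes 1_{\mcale}$. Three vanishing identities keep all auxiliary terms under control: (i) $\xi(x_jD_n)v=0$ for every $j\in[1,m]$, since $x_jD_n\leftrightarrow E_{j,m+n}$ is a positive root vector and $v$ is a highest weight vector; (ii) $\xi(x_i\partial_i)v=v$ for $i\in[1,m]$, because the $\epsilon_i$-coordinate of $\theta_q=\mcale-q\delta_n$ equals $1$; and (iii) $\xi(y_s\partial_i)v=0$ whenever $s\in[1,n-1]$ and $i\in[1,m]$, since $E_{m+s,i}e_{m+n}^{*}=-(-1)^{\wp(E_{m+s,i})\wp(e_{m+n}^{*})}\delta_{m+s,\,m+n}e_i^{*}=0$. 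From (i), applying $x^\alpha D_n\in\ggg$ to $w_0$ gives $x^\alpha D_n\cdot w_0=x^\alpha\otimes v\in M$ for all $\alpha$. From (ii), applying the one-variable operators $x_j^{\,c+1}\partial_j$ repeatedly --- which produce \emph{no} cross terms, as $\partial_\ell(x_j^{c+1})$ and $D_\ell(x_j^{c+1})$ vanish for $\ell\neq j$ --- one reaches all $x^\alpha\otimes v$ from $1\otimes v$ and all $x^\alpha y_n\otimes v$ from $w_0$. Finally, from (ii) and (iii), the operators $x_iy_s\partial_i$ with $s\le n-1$ attach the remaining odd variables one at a time: for $s\notin\eta$ and any $i$ with $\alpha_i\ge 1$ (or any $i$ if $\alpha=0$), one computes $x_iy_s\partial_i\cdot(x^\alpha y_\eta\otimes v)=(\alpha_i+1)\,x^\alpha y_s y_\eta\otimes v$ plus a term proportional to $x^{\alpha+\epsilon_i}y_\eta\otimes\xi(y_s\partial_i)v=0$. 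Hence, by induction on $|\eta|$ (starting from $x^\alpha\otimes v$ when $n\notin\eta$, and from $x^\alpha y_n\otimes v$ when $n\in\eta$), $x^\alpha y_\eta\otimes v\in M$ for all $\alpha,\eta$, so $v\in M^\flat$ and Lemma~\ref{l3.15} forces $M=\mcal V(\theta_q)$.

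The only genuine difficulty is the bookkeeping: one must choose the ``diagonal'' operators $x_j^{\,c+1}\partial_j$ and $x_iy_s\partial_i$ (with $i\le m$, $s\le n-1$) so that every stray term is either absorbed into a monomial already known to lie in $M$ or killed outright by identity (iii), and one must notice at the outset that these three identities do hold for the specific vector $v=e_{m+n}^{*q}\otimes 1_{\mcale}$ (this is exactly why $y_n$ is the right first tensor factor to start from). Once this is set up the computation is self-contained; the degenerate case $n=1$ is immediate, since then only $\eta=\varnothing$ and $\eta=\{n\}$ occur, both handled directly.
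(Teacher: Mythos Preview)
Your proof is correct and follows essentially the same strategy as the paper: show that the highest weight vector $v=e_{m+n}^{*q}\otimes 1_{\mcale}$ lies in $M^\flat$ and then invoke Lemma~\ref{l3.15}. The paper reaches $M^\flat\neq 0$ more economically, via the single computation $x^{\alpha+\epsilon_m}y_1\cdots y_{n-1}\partial_m\cdot w_0=(\alpha_m+1)\,x^\alpha y_1\cdots y_n\otimes v$ (after which applying the $D_j$'s recovers all $x^\alpha y_\mu\otimes v$), whereas you build up $x^\alpha y_\eta\otimes v$ incrementally with several operators; but the underlying idea and the key vanishing identities you isolate are the same ones implicitly used in the paper's one-line calculation.
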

			\begin{proof}
				To prove this just note that, for all $\al \in \N^m$ we have,
				$$ x^{\al+\epsilon_m} y_1 \cdots y_{n-1}\pr_m \cdot (y_n \ot {e_{m+n}^{*q}} \ot 1_{\mcale})=(\al_m+1)x^{\al} y_1 \cdots y_{n} \ot ({e_{m+n}^{*q}} \ot 1_{\mcale}).$$
				This completes the proof, thanks to Lemma \ref{l3.12}.	
			\end{proof}
			
			\subsubsection{Exceptional mixed-product complex of type II}\label{sec: differential type II}			
			Now we construct a sequence of $\bW(m,n)$-module maps given by the following:
			\begin{align}\label{eq:type II complex}
				\mcal \cdots\cdots\longrightarrow\mathcal{V}(\theta_{q+1})\xrightarrow{\,\,\,\mathbf d_{q+1}\,\,\,}\mathcal{V}(\theta_q)\xrightarrow{\,\,\,\mathbf d_q\,\,\,} \mathcal{V}(\theta_{q-1})\xrightarrow{\,\,\,\mathbf d_{q-1}\,\,\,}\cdots\cdots\mathcal{V}(\theta_{1})\xrightarrow{\,\,\,\mathbf d_{1}\,\,\,}\mcal V(\theta_0) \xrightarrow{\,\,\,\mathbf d_{0}\,\,\,} 0,
			\end{align}
			where 	$$\mathbf d_{q}(y_n \ot {e_{m+n}^{*q}} \ot 1_{\mcale})=1 \ot {e_{m+n}^{*{q-1}}} \ot 1_{\mcale}.$$ \\
			Note that the above sequence of maps is well defined by Lemma \ref{Generating set of V(la)}. Also observe that for all $X \in U(\mf g)$ we have
			\begin{align}
				&\mbf d_q \mbf d_{q+1}(X \cdot y_n \ot {e_{m+n}^{*{q+1}}} \ot 1_{\mcale}) \cr
				= &\mbf d_q(X \cdot 1 \ot {e_{m+n}^{*{q}}} \ot 1_{\mcale}) \cr
				=& X \cd \mbf  d_q( 1 \ot {e_{m+n}^{*{q}}} \ot 1_{\mcale}) \cr
				=& X \cd \mbf d_q( D_n.(  y_n \ot {e_{m+n}^{*{q}}} \ot 1_{\mcale})) \cr
				=& X \cd D_n \cd \mbf d_q( y_n \ot {e_{m+n}^{*{q}}} \ot 1_{\mcale}) \cr
				=& X \cd D_n \cd ( 1 \ot {e_{m+n}^{*{q-1}}} \ot 1_{\mcale}) =0.
			\end{align} 				
			Hence we have $\text{Im} \mbf d_{q+1} \subseteq \text{Ker}\mbf  d_q$ for all $q \geq 0. 	$ To prove other inclusion relation we start with the following Lemma.
			\begin{lem}\label{expression for d_q}
				For all $q \geq 0$, $\eta \subseteq [1,n-1]$, $0 \leq s+t \leq q$, we have the following:\\
				\begin{itemize}
					\item[(1).]
					
					$\mbf d_{q+1}(x^\al y_{\eta}y_n \ot (e^*_{i_1}\cdots e^*_{i_s}) e^*_{m+j_1} \cdots e^*_{m+j_t}) e^{*{q+1-s-t}}_{m+n} \ot 1_{\mcale})$
					$$= \frac{1}{q+1}[- \dis{\sum_{k=1}^{t}} x^\al y_n D_{j_k}(y_\eta) \ot F_{m+j_k}(e^*_{i_1} \cdots e^*_{i_s} e^*_{m+j_1} \cdots e^*_{m+j_t} e^{*{q+1-s-t}}_{m+n}) \ot 1_{\mcale} $$
					$$+(-1)^{|\eta|+1}\dis{\sum_{l=1}^{s}} \pr_{i_l}(x^\al) y_n y_\eta \ot F_{i_l}(e^*_{i_1} \cdots e^*_{i_s} e^*_{m+j_1} \cdots e^*_{m+j_t} e^{*{q+1-s-t}}_{m+n}) \ot 1_{\mcale}   $$
					$$ +(-1)^s \, (q+1-s-t) x^\al y_{\eta} \ot e^*_{i_1} \cdots e^*_{i_s} e^*_{m+j_1} \cdots e^*_{m+j_t} e^{*{q-s-t}}_{m+n} \ot 1_{\mcale} ].$$
					\item[(2).]	$\mbf d_{q+1}(x^\al y_{\eta} \ot e^*_{i_1} \cdots e^*_{i_s} e^*_{m+j_1} \cdots e^*_{m+j_t} e^{*{q+1-s-t}}_{m+n} \ot 1_{\mcale})$
					$$= \frac{1}{q+1}[(-1)^{|\eta|+1} \dis{\sum_{k=1}^{t}} x^\al  D_{j_k}(y_\eta) \ot F_{m+j_k}(e^*_{i_1} \cdots e^*_{i_s} e^*_{m+j_1} \cdots e^*_{m+j_t} e^{*{q+1-s-t}}_{m+n}) \ot 1_{\mcale} $$
					$$-\dis{\sum_{l=1}^{s}} \pr_{i_l}(x^\al) y_\eta \ot F_{i_l}(e^*_{i_1} \cdots e^*_{i_s} e^*_{m+j_1} \cdots e^*_{m+j_t} e^{*{q+1-s-t}}_{m+n}) \ot 1_{\mcale} ]$$
					where  $1 \leq i_1 < \cdots < i_s \leq m$ and  $ 1 \leq j_1 \leq \cdots \leq j_t \leq n-1$ with super derivations $F_r$ given by $F_r(e^*_j)=\de_{rj}$ for all $1 \leq r,j \leq m+n$.	
				\end{itemize}
			\end{lem}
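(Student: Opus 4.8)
The plan is to recognise both sides of (1)--(2) as pieces of a single $\bW(m,n)$-module homomorphism $\mathcal{V}(\theta_{q+1})\to\mathcal{V}(\theta_q)$, and then invoke the cyclicity of $\mathcal{V}(\theta_{q+1})$ from Lemma~\ref{Generating set of V(la)}. Concretely, I would first \emph{define} a linear map $\psi_{q+1}\colon\mathcal{V}(\theta_{q+1})\to\mathcal{V}(\theta_q)$ by prescribing its value on the monomial basis $\{x^\alpha y_\mu\otimes w\otimes 1_{\mcale}\}$, where $w$ runs over the Grassmann basis of $\Omega^{q+1}(V^*)$: take the right--hand side of (1) when $n\in\mu$, writing $y_\mu=\pm\,y_\eta y_n$ with $\eta\subseteq[1,n-1]$, and the right--hand side of (2) when $n\notin\mu$. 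Since these are genuine basis monomials, $\psi_{q+1}$ is well defined with no compatibility to verify; that it lands in $\mathcal{V}(\theta_q)$ is clear because each super-derivation $F_r$ lowers the $\Omega(V^*)$-degree by one while the $\mcale$-twist is untouched. (An equivalent, more hands-on route is to avoid $\psi_{q+1}$ altogether and simply reduce a general basis monomial to the cyclic generator by applying the explicit operators below; the computational core is the same either way.)

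The only genuinely easy step is matching $\psi_{q+1}$ with $\mathbf d_{q+1}$ on the cyclic generator $g_{q+1}:=y_n\otimes e^{*(q+1)}_{m+n}\otimes 1_{\mcale}$. Specialising (1) to $\alpha=0$, $\eta=\emptyset$, $s=t=0$ annihilates the first two sums and leaves $\tfrac1{q+1}(q+1)\,\bigl(1\otimes e^{*q}_{m+n}\otimes 1_{\mcale}\bigr)=\mathbf d_{q+1}(g_{q+1})$, as wanted. As a consistency check one also sees that (2) at $\alpha=0$, $\eta=\emptyset$, $s=t=0$ gives $0$, which agrees with $1\otimes e^{*(q+1)}_{m+n}\otimes 1_{\mcale}=D_n\cdot g_{q+1}$ together with $D_n\cdot(1\otimes e^{*q}_{m+n}\otimes 1_{\mcale})=0$, both read off from (\ref{W(n)-module structure-2}). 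Granting that $\psi_{q+1}$ is a $\bW(m,n)$-module map, $\psi_{q+1}$ and $\mathbf d_{q+1}$ are two homomorphisms agreeing on a generator of $\mathcal{V}(\theta_{q+1})$, hence they coincide, which is precisely the content of the lemma.

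It remains to prove that $\psi_{q+1}$ intertwines the $\bW(m,n)$-action, and this is the heart of the matter. It suffices to check commutation on a Lie-generating set of $\bW(m,n)$, for instance $\ggg_{-1}\oplus\ggg_0\oplus\ggg_1$. The $\ggg_{-1}$-check is immediate: by (\ref{W(n)-module structure})--(\ref{W(n)-module structure-2}) the fields $\partial_i,D_k$ act on every $\mathcal{V}(\lambda)$ as $\partial_i\otimes\mathrm{id}$ and $D_k\otimes\mathrm{id}$ (the correction terms vanish since the coefficient function is constant), and the right--hand sides of (1)--(2) are assembled from the $\calr$-operators $x^\alpha\mapsto\partial_{i_l}(x^\alpha)$ and $y_\eta\mapsto D_{j_k}(y_\eta)$, which plainly commute with further $\partial_i,D_k$. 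The real work is commutation with $\ggg_0\cong\gl(m|n)$ and with $\ggg_1$, using (\ref{W(n)-module structure})--(\ref{W(n)-module structure-2}) together with the $V^*$-action $E_{ab}.e^*_c=-(-1)^{\wp(E_{ab})\wp(e^*_c)}\delta_{ac}e^*_b$ on the $\Omega^{q+1}(V^*)$-slot: since these operators act on both tensor factors, both sides expand into long alternating sums, and one must see that the Koszul signs produced by the parity conventions on $\calr$ and on $\Omega(V^*)$, and the multiplicities arising when an even generator $e^*_{m+j_k}$ is repeated, cancel so that the derivations $F_r$ repackage everything correctly. This sign-and-coefficient reconciliation is the main obstacle; it is forced but lengthy, and (exactly as with Lemmas~\ref{lem: type I module map} and \ref{l3.10}) is best relegated to an appendix. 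Once it is in place, the explicit formulas (1)--(2) are precisely what drives the subsequent analysis of $\ker\mathbf d_q/\im\mathbf d_{q+1}$.
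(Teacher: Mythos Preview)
Your proposal is correct in principle, but it takes a significantly harder route than the paper, and the aside you make (``reduce a general basis monomial to the cyclic generator by applying explicit operators'') is actually the paper's approach---and it is \emph{not} of the same computational weight as your primary plan.

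The paper never introduces an auxiliary map $\psi_{q+1}$ and never checks that anything new is a $\bW(m,n)$-homomorphism. Since $\mathbf d_{q+1}$ is, by its very definition via Lemma~\ref{Generating set of V(la)}, already a module map, the paper simply \emph{computes} its values. Concretely: first, using $x^{\alpha+\epsilon_m}y_\eta\partial_m\cdot(y_n\otimes e_{m+n}^{*q}\otimes 1_{\mcale})=(\alpha_m+1)\,x^\alpha y_\eta y_n\otimes e_{m+n}^{*q}\otimes 1_{\mcale}$, one gets the base case
\[
\mathbf d_q(x^\alpha y_\eta y_n\otimes e_{m+n}^{*q}\otimes 1_{\mcale})=x^\alpha y_\eta\otimes e_{m+n}^{*(q-1)}\otimes 1_{\mcale}.
\]
Then one applies the single string of $\ggg_0$-operators $y_n\partial_{i_s}\cdots y_n\partial_{i_1}\,y_nD_{j_t}\cdots y_nD_{j_1}$ to both sides. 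On the left, because $y_n^2=0$ the $\calr$-term in (\ref{W(n)-module structure})--(\ref{W(n)-module structure-2}) vanishes each time, so this string acts purely on the $\Omega^q(V^*)$-slot and produces a scalar multiple of the general vector in (1). On the right, the $y_n$ is absent and a two-term recursion unfolds into the right-hand side of (1). Part (2) then follows by applying $D_n$. So the whole proof is two direct module computations; there is no intertwining verification against all of $\ggg_0\oplus\ggg_1$.

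What your approach buys is conceptual symmetry with the type~I argument (define a map, prove it is a homomorphism), but the price is exactly the ``lengthy, appendix-worthy'' check you flag. What the paper's approach buys is that this check disappears entirely: the homomorphism property is free, and the only work is computing the action of a few well-chosen operators, with the key trick being that $y_n\partial_i$ and $y_nD_j$ act trivially on the $\calr$-slot once a factor $y_n$ is already present.
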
				
			\begin{proof}
				First note that for all $\eta \subseteq [1,n-1]$ and $q >0$,  we have
				\begin{align}
					&(\al_m+1) \mbf d_q(x^{\al} y_\eta y_n \ot  e^{*q}_{m+n} \ot 1_{\mcale}  ) \cr
					=& \mbf d_q( x^{\al+\epsilon_m} y_\eta \pr_m \cd ( y_n \ot  e^{*q}_{m+n} \ot 1_{\mcale} )) \cr
					=&  x^{\al+\epsilon_m} y_\eta \pr_m \cd ( 1 \ot  e^{*{q-1}}_{m+n} \ot 1_{\mcale}  )) \cr
					=&(\al_m+1) x^\al y_\eta \ot  e^{*{q-1}}_{m+n} \ot 1_{\mcale}.
				\end{align}
				Hence $\mbf d_q(x^{\al} y_\eta y_n \ot  e^{*{q}}_{m+n} \ot 1_{\mcale}  ) =x^\al y_\eta \ot  e^{*{q-1}}_{m+n} \ot 1_{\mcale},$ for all $ \al \in \N^m, \eta \subseteq [1,n-1]$.
				Now consider the following expressions which are obtained by direct use of (\ref{W(n)-module structure}-\ref{W(n)-module structure-2}):
				\begin{align*}					
					&y_n\pr_{i_s} \cdots y_n \pr_{i_1}y_nD_{j_t} \cdots y_nD_{j_1}\cd (x^{\al} y_\eta y_n \ot  e^{*{q}}_{m+n} \ot 1_{\mcale} )\cr
					=& (-1)^{t+s(|\eta|+1)} \Perm{q}{s+t}x^\al y_{\eta}y_n \ot e^*_{i_1} \cdots e^*_{i_s} e^*_{m+j_1} \cdots e^*_{m+j_t} e^{*{q-s-t}}_{m+n} \ot 1_{\mcale}.
				\end{align*}				
				Again we have
				\begin{align*}
					&y_n\pr_{i_s} \cdots y_n \pr_{i_1}y_nD_{j_t} \cdots y_nD_{j_1}\cd (x^{\al} y_\eta \ot  e^{*{q-1}}_{m+n} \ot 1_{\mcale} )\cr	
					=& P^{q-1}_{s+t-1} [(-1)^{t-1 +s|\eta|+s}  \dis{\sum_{k=1}^{t}} x^\al y_n D_{j_k}(y_\eta) \ot F_{m+j_k}(e^*_{i_1} \cdots e^*_{i_s} e^*_{m+j_1} \cdots e^*_{m+j_t} e^{*{q-s-t}}_{m+n}) \ot 1_{\mcale}\cr &\qquad\qquad+(-1)^{t+(s-1)(|\eta|+1)} \dis{\sum_{l=1}^{s}} \pr_{i_l}(x^\al) y_n y_\eta \ot F_{i_l}(e^*_{i_1} \cdots e^*_{i_s} e^*_{m+j_1} \cdots e^*_{m+j_t} e^{*{q-s-t}}_{m+n}) \ot 1_{\mcale} ] \cr
					&+ P^{q-1}_{s+t} (-1)^{t+s|\eta|}x^\al y_{\eta} \ot e^*_{i_1} \cdots e^*_{i_s} e^*_{m+j_1} \cdots e^*_{m+j_t} e^{*{q-s-t-1}}_{m+n} \ot 1_{\mcale}.
				\end{align*}
				
				Now applying $y_n\pr_{i_s} \cdots y_n \pr_{i_1}y_nD_{j_t}\cdots y_nD_{j_1}$ on the both sides of $$\mbf d_q(x^{\al} y_\eta y_n \ot  e^{*{q}}_{m+n} \ot 1_{\mcale}  ) =x^\al y_\eta \ot  e^{*{q-1}}_{m+n} \ot 1_{\mcale}$$
				and using above two computations we have,
				\begin{align*}
					&\mbf d_q(x^\al y_{\eta}y_n \ot e^*_{i_1} \cdots e^*_{i_s} e^*_{m+j_1} \cdots e^*_{m+j_t} e^{*{q-s-t}}_{m+n} \ot 1_{\mcale})\cr
					= &\frac{1}{q}[(-1)^{1} \dis{\sum_{k=1}^{t}} x^\al y_n D_{j_k}(y_\eta) \ot F_{m+j_k}(e^*_{i_1} \cdots e^*_{i_s} e^*_{m+j_1} \cdots e^*_{m+j_t} e^{*{q-s-t}}_{m+n}) \ot 1_{\mcale}\cr
					&\quad +(-1)^{|\eta|+1}\dis{\sum_{l=1}^{s}} \pr_{i_l}(x^\al) y_n y_\eta \ot F_{i_l}(e^*_{i_1} \cdots e^*_{i_s} e^*_{m+j_1} \cdots e^*_{m+j_t} e^{*{q-s-t}}_{m+n}) \ot 1_{\mcale}\cr
					&\quad +(-1)^s \, (q-s-t) x^\al y_{\eta} \ot e^*_{i_1} \cdots e^*_{i_s} e^*_{m+j_1} \cdots e^*_{m+j_t} e^{*{q-s-t-1}}_{m+n} \ot 1_{\mcale}].
				\end{align*}
				Note that (ii) follows from the (i) just by taking action of $D_n$ on both sides.
			\end{proof}						
			Let us denote $w^{i,j}_{s,t,k}=	e^*_{i_1} \cdots e^*_{i_s} e^*_{m+j_1} \cdots e^*_{m+j_t} e^{*{k}}_{m+n} $ for all $k \in \N$. Now by putting $\mbf d_{q+1}(x^\al y_n y_{\eta} \ot w^{i,j}_{s,t,q+1-s-t}\ot 1_{\mcale})=0$ one can observe that, for all $\al \in \N^m, \eta \subseteq [1,n-1]$, we have
			\begin{align*}
				&(-1)^{s+1}\, (q+1-s-t) x^\al y_{\eta} \ot w^{i,j}_{s,t,q-s-t} \ot 1_{\mcale}\cr
				=& \frac{1}{q+1}[(-1)^{1} \dis{\sum_{k=1}^{t}} x^\al y_n D_{j_k}(y_\eta) \ot F_{m+j_k}(w^{i,j}_{s,t,q+1-s-t}) \ot 1_{\mcale}\cr
				&+(-1)^{|\eta|+1}\dis{\sum_{l=1}^{s}} \pr_{i_l}(x^\al) y_n y_\eta \ot F_{i_l}(w^{i,j}_{s,t,q+1-s-t}) \ot 1_{\mcale}].
			\end{align*}
			Now since in $\mcal V(\theta_q)$, $ 0 \leq s+t \leq q$, we have
			\begin{align*}
				\mcal S= \{ &x^\al y_n y_{\eta} \ot w^{i,j}_{s,t,q-s-t}\ot 1_{\mcale} + \text{Im} \mbf d_{q+1}, x^\al y_1 \cdots y_n \ot w^{i,j}_{s,t,0} \ot 1_{\mcale} + \text{Im} \mbf d_{q+1} \cr
				& \mid   q-s-t >0,  \al \in \N^m, \eta \subseteq [1,n-1],
				1 \leq i_1 < \cdots < i_s \leq m,
				1 \leq j_1 \leq \cdots \leq j_t \leq n-1\}
			\end{align*}
			is a spanning set for $\mcal V(\theta_q)/\text{Im} \mbf d_{q+1}$ (may not be linearly independent for all $q$). 	Since $\mbf d_{q+1} \circ \mbf d_{q}=0$, we have a factor map $\bar{ \mbf d}_{q}:\mcal V(\theta_{q})/\text{Im} \mbf d_{q+1} \to \mcal V(\theta_{q-1})$.
			\begin{rem}\label{Rem for independent q>m}
				\begin{itemize}
					\item[(1)]	For some fix $k $, $\eta \subseteq [1,n-1], j_1, \cdots, j_t <n$,  the weight of $x^{\al} y_n y_{\eta} \ot (e^*_{i_1} \cdots e^*_{i_s} e^*_{m+j_1} \cdots e^*_{m+j_t} e^{*{k}}_{m+n} \ot 1_{\mcale}) $ is
					{	$$\dis{\sum_{j=1}^{m}\al_j \epsilon_j + \sum_{j \in \eta } \de_j - \sum_{j=1}^{s}\epsilon_{i_j} }- \sum_{l=1 }^{t} \de_{j_l} -k \de_n + (\mcale +\de_n).$$}
					In particular, coefficient of $\de_n$ is $-k$, i.e for different $k$ all vectors of this type  have different weight.
					\item[(2)]  The weight of $x^{\al} y_1 \cdots y_n  \ot (e^*_{i_1} \cdots e^*_{i_s} e^*_{m+j_1} \cdots e^*_{m+j_t}  \ot 1_{\mcale})$ is
					{	$$\dis{\sum_{j=1}^{m}\al_j \epsilon_{j} + \sum_{j =1 }^{n-1} \de_j - \sum_{j=1}^{s}\epsilon_{i_{j}} }- \sum_{l=1 }^{t} \de_{j_l} + (\mcale +\de_n).$$}
					In particular, coefficient of $\de_n$ is $0$. Moreover, for distinct set $\{j_1, \cdots , j_t\}$ all vectors of this type have different weights.
					
				\end{itemize}
				
			\end{rem}
			\begin{lem}\label{Lem for q>m}
				For $q >m$, $\bar {\mbf d}_q(\mcal S)=\{\bar {\mbf d}_q(w): w \in \mcal S\}$ is linearly independent.
			\end{lem}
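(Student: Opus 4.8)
The plan is to prove the Lemma by computing $\bar{\mbf d}_q$ explicitly on the two families making up $\mcal S$ and then running a leading-monomial argument. Applying Lemma \ref{expression for d_q}(1) (with $q$ in place of $q+1$) one finds: for a first-type representative $x^{\al}y_ny_{\eta}\ot w^{i,j}_{s,t,q-s-t}\ot1_{\mcale}$, since $q-s-t>0$ the image $\bar{\mbf d}_q(w)$ contains the distinguished summand $\frac{(-1)^s(q-s-t)}{q}\,x^{\al}y_{\eta}\ot w^{i,j}_{s,t,q-1-s-t}\ot1_{\mcale}$, and this is the only summand whose $\calr$-component is free of $y_n$; for a second-type representative $x^{\al}y_1\cdots y_n\ot w^{i,j}_{s,t,0}\ot1_{\mcale}$ (where necessarily $s+t=q$, by the weight computation in Remark \ref{Rem for independent q>m}(2)) the coefficient $(-1)^s(q-s-t)$ of that summand vanishes, and every summand of $\bar{\mbf d}_q(w)$ carries $y_n$ in its $\calr$-component. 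Since $\bar{\mbf d}_q$ preserves $\hhh$-weights it suffices to check independence inside each weight space, and Remark \ref{Rem for independent q>m} records which spanning vectors lie in which weight space.

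First I would dispose of the first-type coefficients. Suppose $\sum_{w\in\mcal S}c_w\bar{\mbf d}_q(w)=0$. Projecting onto the span of those basis monomials of $\mcal V(\theta_{q-1})$ whose $\calr$-component is $y_n$-free, only the first-type vectors survive, each contributing the single monomial $x^{\al}y_{\eta}\ot w^{i,j}_{s,t,q-1-s-t}\ot1_{\mcale}$ with the nonzero coefficient above. The assignment sending the datum $(\al,\eta,\{i_1<\cdots<i_s\},\{j_1\le\cdots\le j_t\})$ of such a vector to this monomial is patently injective, so the monomials obtained are pairwise distinct basis vectors of $\mcal V(\theta_{q-1})$; hence $c_w=0$ for every first-type $w$.

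It remains to prove that $\{\bar{\mbf d}_q(w): w\in\mcal S\text{ of second type}\}$ is linearly independent, and this is the only place where the hypothesis $q>m$ enters: since $s\le m<q$ one has $t=q-s\ge1$, so the sum $\sum_{k=1}^t$ in Lemma \ref{expression for d_q}(1) is nonempty. Its summands are of two shapes, the ``$k$-terms'' with $\calr$-component $x^{\al}$ times $y_1\cdots y_n$ with the factor $y_{j_k}$ deleted (each with coefficient the multiplicity of $j_k$ in $\{j_1,\dots,j_t\}$, hence nonzero), and the ``$l$-terms'' with $\calr$-component $\pr_{i_l}(x^{\al})\,y_1\cdots y_n$. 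Fix a total order on the monomial basis of $\mcal V(\theta_{q-1})$ comparing $x$-degree first (larger being greater) and, among monomials with a common $x$-part whose $y$-part is $y_1\cdots y_n$ with a single factor $y_j$ deleted, declaring smaller $j$ to be greater. With respect to this order the leading term of $\bar{\mbf d}_q(w)$ is the $k$-term with $j_k=\min\{j_1,\dots,j_t\}$; from it one reads off $\al$, that minimal index, the set $\{i_1,\dots,i_s\}$ and the residual $e^*_{m+\bullet}$-multiset, hence the whole vector $w$. Thus $w\mapsto(\text{leading monomial of }\bar{\mbf d}_q(w))$ is injective, and the usual triangularity argument — among the $w$'s in the support of a relation pick one whose leading monomial is largest; it cannot be cancelled by any term of the others — forces the remaining $c_w$ to vanish.

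\textbf{The main obstacle} is twofold: the sign bookkeeping in invoking Lemma \ref{expression for d_q}, and, more essentially, verifying that the leading-monomial assignment is genuinely triangular, i.e.\ that the designated leading term of one $\bar{\mbf d}_q(w)$ is never a non-leading term of another. For the first-type vectors this is automatic because their distinguished terms are $y_n$-free while all competing terms contain $y_n$; for the second-type vectors it is guaranteed by the order above together with the nonvanishing of the $k$-term coefficients. The conceptual heart is the inequality $t\ge1$: when $q\le m$ one can take $t=0$, and then a second-type vector such as $1\ot y_1\cdots y_n\ot e^*_{i_1}\cdots e^*_{i_q}\ot1_{\mcale}$ is annihilated by $\bar{\mbf d}_q$, which is exactly the failure of injectivity responsible for the singularity of the complex at $q=m$.
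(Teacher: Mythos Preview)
Your proof is correct and relies on the same two key observations the paper uses: for first-type vectors the last summand $(-1)^s(q-s-t)\,x^\al y_\eta\ot w^{i,j}_{s,t,q-1-s-t}$ is the unique $y_n$-free term in $\bar{\mbf d}_q(w)$, and for second-type vectors the $k$-terms (which exist precisely because $q>m$ forces $t\ge1$) pin down the datum $(\al,I,\{j_1,\dots,j_t\})$ uniquely.

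Where you differ from the paper is in the organisation. The paper runs an induction on the cardinality of finite subsets of $\mcal S$, at each step selecting a vector with maximal $|\eta|$ and $\|\al\|$ (Case~I) or maximal $\|\al\|$ (Case~II) and arguing that one term of its image cannot be cancelled. You instead (i) restrict to a single weight space at the outset, (ii) kill all first-type coefficients in one stroke via the $y_n$-free projection, and (iii) finish the second-type vectors with a leading-term argument. Step~(i) already buys you more than you exploit: by Remark~\ref{Rem for independent q>m} first-type and second-type representatives have distinct $\de_n$-coefficients, so a fixed weight space contains only one of the two kinds, and within a second-type weight space the multiset $\{j_1,\dots,j_t\}$ (hence $j_{\min}$) is constant. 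This makes your ``largest leading monomial'' selection unnecessary --- your designated $j_{\min}$-term of $w$ in fact appears in no $\bar{\mbf d}_q(w')$ with $w'\neq w$ whatsoever, so every $c_w$ vanishes directly. Your total order is thus not really needed (and, as stated, is only partial), but the argument survives intact once you notice this. Overall your route is a streamlined repackaging of the paper's proof rather than a different method.
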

			
			\begin{proof}

				We prove it by the induction on the cardinality of finite subsets in $\mcal S$.\\
				At first we note that, for $q-s-t >0$, $\bar {\mbf d}_q(x^\al y_n y_{\eta} \ot w^{i,j}_{s,t,q-s-t} \ot 1_{\mcale}  + \im \mbf d_{q+1}) \neq 0$, follows immediately from Lemma \ref{expression for d_q}. Again $q >m$, so for vectors of the form $w^{i,j}_{s,t,0}$ we must have $t >0$ and hence $\bar {\mbf d}_q(x^\al y_1 \cdots y_n \ot w^{i,j}_{s,t,0} \ot 1_{\mcale} + \im \mbf d_{q+1}) \neq 0$, follows from Lemma \ref{expression for d_q}. Thus we have $\bar {\mbf d}_q(w) \neq 0$ for all vectors $w$ of $\mcal S$.
				
				Above induction principal holds for all subsets $S$ of $\mcal S$ with $\Card(S)=1$. Assume that the induction is true for set $S$ with $\Card(S) \leq R$.
				Let us consider a subset $S$ of $ \mcal S$ such that $\Card (S)=R+1$.  Now consider the relation $\dis{\sum_{i=1}^{R+1} } a_i \bar {\mbf d}_q(w_i ) =0$, where $w_i \in S$.
				
				{\bf Case I:} Suppose $S$ contains $l$ vectors of the form  $w_i=x^{\al^i} y_n y_{\eta} \ot w^{i,j}_{s,t,k} \ot 1_{\mcale} +\text{Im}{\mbf d}_{q+1}$ for some fixed $k \neq 0$, $i_1 ,\cdots, i_s; j_1, \ldots,  j_t$, $1 \leq i \leq l$. Also let $w_r$ be the vector such that $|\eta|$  and $||\al^r||$ are the maximum among all $w_i$, $1 \leq i \leq l$ ( note that there may exists more than one $\eta$ and $\al$ with the above property, we choose one of them).
				
				We claim that $\bar {\mbf d}_q(w_r)$ has a term  $x^{\al^r} y_{\eta} \ot e^*_{i_1} \cdots e^*_{i_s} e^*_{m+j_1} \cdots e^*_{m+j_t} e^{*{k-1}}_{m+n} \ot 1_{\mcale}$ which can not be obtained from any  $\bar {\mbf d}_q(w_i)$ with $i \neq r$, $1 \leq i \leq R+1$. We prove this claim as follows:
				\begin{align*}
					&\mbf d_{q}(x^\al y_{\eta}y_n \ot e^*_{i_1} \cdots e^*_{i_s} e^*_{m+j_1} \cdots e^*_{m+j_t} e^{*{q+1-s-t}}_{m+n} \ot 1_{\mcale})\cr
					= &\frac{1}{q}[- \dis{\sum_{k=1}^{t}} x^\al y_n D_{j_k}(y_\eta) \ot F_{m+j_k}(e^*_{i_1} \cdots e^*_{i_s} e^*_{m+j_1} \cdots e^*_{m+j_t} e^{*{q-s-t}}_{m+n}) \ot 1_{\mcale}\cr
					&+(-1)^{|\eta|+1}\dis{\sum_{l=1}^{s}} \pr_{i_l}(x^\al) y_n y_\eta \ot F_{i_l}(e^*_{i_1} \cdots e^*_{i_s} e^*_{m+j_1} \cdots e^*_{m+j_t} e^{*{q-s-t}}_{m+n}) \ot 1_{\mcale}   \cr
					& +(-1)^s \, (q-s-t) x^\al y_{\eta} \ot e^*_{i_1} \cdots e^*_{i_s} e^*_{m+j_1} \cdots e^*_{m+j_t} e^{*{q-s-t-1}}_{m+n} \ot 1_{\mcale} ].
				\end{align*}
				
				From the expression of ${\mbf d}_{q}$, it is clear that the above mentioned term can appear only from the last term of ${\mbf d}_{q}.$ Therefore the corresponding $w_i$ has to be of the form $x^{\al} y_n y_{\mu} \ot w^{i,j}_{s,t,k} \ot 1_{\mcale} +\text{Im}{\mbf d}_{q+1}$, for some $\al, \mu$. Now comparing the weight of both $w_r$ and $w_i$ of the above form with the maximality of $\al^r$ and $\eta$, we must have $\mu =\eta$ and $\al=\al^r.$  Moreover, by Remark \ref{Rem for independent q>m} that term can not appear from images of elements of the form $x^\al y_1 \cdots y_n \ot w^{i,j}_{s,t,0} \ot 1_{\mcale} + \im \mbf d_{q+1}$ due to weight reason.  This proves the claim. Hence we have $a_r=0$.

				{\bf Case II:} Now assume that $S$ contains only  vectors of the form $x^{\al} y_1 \cdots y_n \ot w_{s,t,0} + \text{Im}{\mbf d}_{q+1}$. Also let for $1 \leq r \leq l$,  $w_r=x^{\al^r} y_1 \cdots y_n \ot w^{i,j}_{s,t,0} \ot 1_{\mcale} +\im{\mbf d}_{q+1}$, for some fix $i_1 ,\cdots, i_s; j_1, \cdots,  j_t$. Let $||\al^1||= \max\{||\al^r|| : 1 \leq r \leq l\}$ (if more $\al$ satisfy this property we choose one of them). Since $q>m$ there exists $j \in \{ j_1, \cdots , j_t\}$. Now one can observe that $\bar {\mbf d}_q(x^{\al^1} y_1 \cdots y_n\ot w_{s,t,0}+ \im{\mbf d}_{q+1})$ has a term  $x^{\al^1} y_n D_{j}(y_1 \cdots y_{n-1}) \ot F_{m+j}(w_{s,t,0}) \ot 1_{\mcale}$ which can not be obtained from any $\bar{\mbf d}_q(w_r)$ with $r \neq 1$ follows from Remark \ref{Rem for independent q>m} and Lemma \ref{expression for d_q}. Hence $a_1$ has to be zero. This completes the proof by induction.
			\end{proof}
			One can observe from the above proof that, even if $q<m$ and $q-s-t>0$, then $\bar {\mbf d}_q(x^\al y_n y_{\eta} \ot w^{i,j}_{s,t,q-s-t} \ot 1_{\mcale} + \im {\mbf d}_{q+1}) \neq 0$. But in case $q<m$, some vectors of $\mcal S$ have image zero. For an example $\bar {\mbf d}_q(y_1 \cdots y_n \ot e_1 \cdots e_q \ot 1_{\mcale} +  \im{\mbf d}_{q+1}) =0.$ Therefore for $q <m$ we delete some vectors from the spanning set $\mcal S$ and find a new spanning set $\mcal S_1$ such that $\bar {\mbf d}_q(w) \neq 0$ holds for all $w \in \mcal S_1$.
			Since $q <m$ for every $w^{i,j}_{s,t,0}$ there exists a $i \notin \{i_1, \cdots, i_s\}$. Now fix $\al$ with $\al_{i_1}=\cdots =\al_{i_s}=0$. Then from $\mbf d_{q+1}(x^{\al+\epsilon_i}y_n y_1 \cdots y_{n-1} \ot w_{s,t.0}^{i,j}\ot 1_{\mcale})=0$, we have
			\begin{align*}				
				&x^\al y_1 \cdots  y_n \ot e^*_{i_1} \cdots e^*_{i_s} e^*_{m+j_1} \cdots e^*_{m+j_t}  \ot 1_{\mcale}\cr
				=&c \dis{\sum_{k=1}^{t}} x^{\al+\epsilon_i}y_n D_{j_k}(y_1 \cdots y_{n-1}) \ot F_{m+j_k}(e_i^*e^*_{i_1} \cdots e^*_{i_s} e^*_{m+j_1} \cdots e^*_{m+j_t} ) \ot 1_{\mcale},
			\end{align*}
			for some non-zero $c \in \bbf$.
			Hence for $q <m$, we describe a set $\mcal S_1$  consisting of elements
			\begin{align*}				
				&x^\al y_n y_{\eta} \ot w^{i,j}_{s,t,q-s-t} \ot 1_{\mcale} +  \im{\mbf d}_{q+1},  \cr
				&x^\ga y_1 \cdots y_n \ot w^{i,j}_{s,t,0}\ot 1_{\mcale} +  \im{\mbf d}_{q+1}, \cr
				& x^{\be} y_n y_{\eta_1} \ot w^{i,j}_{s,t,0} \ot 1_{\mcale} +				\im{\mbf d}_{q+1}
			\end{align*}
			with
			$\al, \be, \ga \in \N^m, q-s-t >0 ,\, \eta, \eta_1 \subseteq \{1, \cdots, n-1\}, \ga_{i_k} \neq 0$ for some  $1 \leq k \leq s$  and  $ \be_{i_k}=0$ for  $1 \leq k \leq s$, except for one $k,|\eta_1|=n-2$ and $1 \leq i_1 < \cdots < i_s \leq m$,  $1 \leq j_1 \leq \cdots \leq j_t \leq n-1$.
			Clearly $\mcal S_1$ is  a spanning set for $\mcal V(\theta_q)/\im \mbf d_{q+1}$, for $q<m$.
			
			\begin{rem}\label{Rem for q <m}
				For a fixed set $\{j_1, \cdots , j_t\}$ and $\eta_1 \subseteq [1,n]$ with $\Card(\eta_1)=n-2$, the  weights of
				$$x^{\al} y_1  \cdots y_n \ot (e^*_{i_1} \cdots e^*_{i_s} e^*_{m+j_1} \cdots e^*_{m+j_t}  \ot 1_{\mcale})$$
				and  {$$x^{\al} y_n  y_{\eta_1} \ot (e^*_{i_1} \cdots e^*_{i_s} e^*_{m+j_1} \cdots e^*_{m+j_t}  \ot 1_{\mcale})$$}
				are different for all $\al \in \N^m$.			
			\end{rem}
			\begin{lem}\label{Lem for q<m}
				For $q <m$, $\bar {\mbf d}_q(\mcal S_1)=\{\bar {\mbf d }_q(w): w \in \mcal S_1\}$ is linearly independent.	
			\end{lem}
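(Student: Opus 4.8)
The plan is to imitate the proof of Lemma \ref{Lem for q>m}: run an induction on the cardinality of finite subsets $S\subseteq\mcal S_1$, the base case $|S|=1$ being the non-vanishing $\bar{\mbf d}_q(w)\ne 0$ for each $w\in\mcal S_1$, and the inductive step analysing a relation $\sum_i a_i\bar{\mbf d}_q(w_i)=0$ by isolating in one $\bar{\mbf d}_q(w_i)$ a monomial that occurs in no other. The new feature relative to the case $q>m$ is that $\mcal S_1$ carries three families of spanning vectors instead of two --- those of the shapes $x^\al y_ny_\eta\ot w^{i,j}_{s,t,q-s-t}$ with $q-s-t>0$, $x^\ga y_1\cdots y_n\ot w^{i,j}_{s,t,0}$ with $\ga$ nonzero at some lower index, and $x^\be y_ny_{\eta_1}\ot w^{i,j}_{s,t,0}$ with $|\eta_1|=n-2$ and $\be$ nonzero at exactly one lower index --- so the case analysis acquires one extra branch.

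For the base case I would apply the analogue for $\mbf d_q$ of the formula in Lemma \ref{expression for d_q} to each of the three families. A first-family vector has image whose last summand is the nonzero, $y_n$-free monomial $(-1)^s(q-s-t)\,x^\al y_\eta\ot w^{i,j}_{s,t,q-s-t-1}$, and since no other summand occurring anywhere in the complex is $y_n$-free, it cannot be cancelled; so the image is nonzero. A second-family vector has $q-s-t=0$, hence vanishing last summand, but the summand coming from $\pr_{i_l}$ (for $l$ with $\ga_{i_l}\ne 0$) is $\pm\ga_{i_l}\,x^{\ga-\epsilon_{i_l}}y_1\cdots y_n\ot(\cdots)\ne 0$; a third-family vector is treated identically via its unique lower index $i_l$ with $\be_{i_l}\ne 0$.

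For the inductive step, assume $\bar{\mbf d}_q$ is injective on subsets of size $\le R$, take $|S|=R+1$ with $\sum_i a_i\bar{\mbf d}_q(w_i)=0$, and branch on which families occur in $S$. If a first-family vector occurs, fix the sub-family (fixed $s,t$ and index sequences) of such a vector present in $S$, pick inside it $w_r$ with $|\eta|$ and $||\al^r||$ maximal, and note that the $y_n$-free monomial $x^{\al^r}y_\eta\ot w^{i,j}_{s,t,q-s-t-1}$ in $\bar{\mbf d}_q(w_r)$ cannot arise in any other $\bar{\mbf d}_q(w_i)$: being $y_n$-free it cannot come from a second- or third-family image nor from a non-last summand of a first-family image, and matching the $x$-monomial, the $e^*_{m+n}$-degree and the index sequences against another first-family last summand, together with the maximality of $(|\eta|,||\al^r||)$, forces $w_i=w_r$; hence $a_r=0$ and we finish by the induction hypothesis. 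If $S$ contains no first-family vector, it consists of second- and third-family vectors; then, as in Case II of Lemma \ref{Lem for q>m}, I would take $w_1\in S$ with the $x$-degree $||\cdot||$ maximal and use a monomial of $\bar{\mbf d}_q(w_1)$ coming from the $D_{j}$-summand --- whose $y$-support has cardinality $n-1$ if $w_1$ is second-family and $n-2$ if $w_1$ is third-family --- which, by the $\de_n$-coefficients and the $y$-support sizes recorded in Remarks \ref{Rem for independent q>m} and \ref{Rem for q <m} together with the maximality of $||\cdot||$, occurs in no other $\bar{\mbf d}_q(w_i)$; so $a_1=0$ and we conclude by induction. (A degenerate second-family vector with $\{j_1,\dots,j_t\}=\emptyset$ must instead be separated using the $\pr_{i_l}$-summand together with the same maximality and a weight comparison.)

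The main obstacle is precisely this separation bookkeeping: checking that under $\bar{\mbf d}_q$ the three families of $\mcal S_1$ land in distinguishable parts of $\mcal V(\theta_{q-1})$ --- the first family recognised by carrying a $y_n$-free monomial, and the second and third separated by the $\de_n$-coefficient together with the cardinality and support of the surviving $y$-monomials, which is exactly the content of Remarks \ref{Rem for independent q>m} and \ref{Rem for q <m}. Granting these separations, everything else is a mechanical unwinding of the formula in Lemma \ref{expression for d_q}.
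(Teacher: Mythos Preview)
Your overall plan (induction on $|S|$, locating a separating monomial) and your treatment of the first family coincide with the paper's. The divergence is in the branch where $S$ has no first-family vector: the paper splits this into Case II ($S$ contains a second-family vector) and Case III ($S$ contains only third-family vectors), and in \emph{both} cases takes the separating monomial from the $\pr_{i_l}$-summand of Lemma~\ref{expression for d_q}(1), not the $D_j$-summand. For second-family vectors this $\pr$-term has full $y$-support $\{1,\dots,n\}$, which no third-family term can produce (all third-family terms have $y$-support of size $\le n-1$); for third-family vectors the paper exploits the defining constraint of $\mcal S_1$ that exactly one $\be_{i_k}$ is nonzero.

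Your choice of the $D_j$-summand breaks down precisely when the maximal-$||\cdot||$ vector $w_1$ is third-family. Its $D_{j_k}$-term has $y$-support of size $n-2$ and $x$-part $x^\be$, and another third-family vector with the \emph{same} $\be$ (hence the same $||\be||$) can produce the identical term---so neither the $||\cdot||$-maximality nor the $\de_n$-coefficient nor the $y$-support cardinality separates them. Concretely, with $n=4$, $q=3<m$: the third-family vectors
\[
w_1=x^\be y_4 y_1 y_2\ot e^*_1 e^*_{m+1}e^*_{m+2}\ot 1_{\mcale},\qquad
w'=x^\be y_4 y_2 y_3\ot e^*_1 e^*_{m+2}e^*_{m+3}\ot 1_{\mcale}
\]
(both with $\be_1\ne 0$) have the same weight and the same $x$-degree, and the $D_1$-term of $\bar{\mbf d}_3(w_1)$ agrees up to scalar with the $D_3$-term of $\bar{\mbf d}_3(w')$, namely $x^\be y_4 y_2\ot e^*_1 e^*_{m+2}\ot 1_{\mcale}$. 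Remarks~\ref{Rem for independent q>m} and~\ref{Rem for q <m} do not rule this out. Switching to the $\pr_{i_l}$-summand as the paper does avoids the difficulty: the $\pr$-terms of $w_1$ and $w'$ have distinct $y$-supports $\{1,2,4\}$ and $\{2,3,4\}$ and hence cannot collide.
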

			
			\begin{proof}
				It is easy to observe that $\bar {\mbf d}_q(w) \neq 0$ for all $w \in \mcal S_1$. Now we proceed similarly to Lemma \ref{Lem for q>m}. Let $S$ be any finite subset of $\mcal S_1$ and consider the relation
				$$\dis{\sum_{i=1}^{R+1} } a_i \bar {\mbf d}_q(w_i ) =0 \quad\text{ with }w_i \in S.$$

				{\bf Case I:} Suppose $S$ contains $l$ vectors of the form  $w_i=x^{\al^i} y_n y_{\eta} \ot w_{s,t,k} \ot 1_{\mcale} +  \text{Im}{\mbf d}_{q+1}$ for some $k \neq 0$, $1 \leq i \leq l$. Then with similar argument to Lemma \ref{Lem for q>m} we have the result.

				{\bf Case II:} Let $S$ contain only vectors of the form $x^\ga y_1 \cdots y_n \ot w^{i,j}_{s,t,0} \ot 1_{\mcale}+  \text{Im}{\mbf d}_{q+1}$, $ x^{\be} y_n y_{\eta_1} \ot w^{i,j}_{s,t,0} \ot 1_{\mcale} +  \text{Im}{\mbf d}_{q+1}$ for all $\ga, \be, \eta_1, w^{i,j}_{s,t,0}$. Fix some $w^{i,j}_{s,t,0}$. Since there exists some $\ga_{i_k}$ such that $\ga_{i_k} \neq 0$, let $w_r= x^\ga y_1 \cdots y_n \ot w^{i,j}_{s,t,0} \ot 1_{\mcale} +  \text{Im}{\mbf d}_{q+1}$ be such that $x_{i_k}^{\ga_{i_k}}$ is maximum among all $w_i$. Then $\bar{\mbf d}_q(w_r)$ has a term $ \pr_{i_k}(x^\ga) y_n y_1 \cdots y_{n-1}  \ot F_{i_l}(e^*_{i_1} \cdots e^*_{i_s} e^*_{m+j_1} \cdots e^*_{m+j_t} ) \ot 1_{\mcale}. $  We assert that this term can not be present in any other $\bar{\mbf d}_q(w_i)$ for $i \neq r$. It is clear that this term can not appear from images of any other $x^\ga y_1 \cdots y_n \ot w^{i^1,j^1}_{s^1,t^1,0} \ot 1_{\mcale} +  \text{Im}{\mbf d}_{q+1}$, due to weight reason, see Remark \ref{Rem for independent q>m}(2). Now note that \begin{align*}					
					&\bar d_{q}(x^\be y_{\eta_1}y_n \ot e^*_{i_1} \cdots e^*_{i_s} e^*_{m+j_1} \cdots e^*_{m+j_t}  \ot 1_{\mcale}+  \im{\mbf d}_{q+1})\cr
					= &\frac{1}{q}[- \dis{\sum_{k=1}^{t}} x^\al y_n D_{j_k}(y_{\eta_1}) \ot F_{m+j_k}(e^*_{i_1} \cdots e^*_{i_s} e^*_{m+j_1} \cdots e^*_{m+j_t} ) \ot 1_{\mcale}\cr
					&+(-1)^{|\eta|+1}\dis{\sum_{l=1}^{s}} \pr_{i_l}(x^\al) y_n y_{\eta_1} \ot F_{i_l}(e^*_{i_1} \cdots e^*_{i_s} e^*_{m+j_1} \cdots e^*_{m+j_t} ) \ot 1_{\mcale} ].
				\end{align*}
				From this it follows that the term  $\pr_{i_k}(x^\ga) y_n y_1 \cdots y_{n-1}  \ot F_{i_l}(e^*_{i_1} \cdots e^*_{i_s} e^*_{m+j_1} \cdots e^*_{m+j_t} ) \ot 1_{\mcale} $ can occur only from the second summation of above expression, which is absurd because $\Card(\eta_1)=n-2$. Hence $a_r=0$. Now we can use induction principal.

				{\bf Case III:} Let $S$ contains only vectors of the form $ x^{\be} y_n y_{\eta_1} \ot w^{i,j}_{s,t,0} \ot 1_{\mcale} +  \text{Im}{\mbf d}_{q+1}$ for all $ \be, \eta_1, w^{i,j}_{s,t,0}$. In this we fix some $w^{i,j}_{s,t,0}$ and find the $w_r$ such that $x_{i_k}^{\be_{i_k}}$ is maximum among all $w_i$. Then proceed similarly to Case II to conclude that $a_r=0$. This completes the proof by induction principal.
			\end{proof}
			\begin{lem}\label{Lem q=m}
				For $q=m$, $\ker  {\mbf d}_q/\im {\mbf d}_{q+1}$ is one dimensional trivial module.
			\end{lem}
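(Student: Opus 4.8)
The plan is to pin down the homology $H:=\ker\mbf d_m/\im\mbf d_{m+1}$ of the complex \eqref{eq:type II complex} at its single non‑exact spot by an Euler characteristic count, using that every other homology group has already been shown to vanish. Two inputs are needed. First, by Lemmas \ref{Lem for q>m} and \ref{Lem for q<m} the maps $\bar{\mbf d}_q$ are injective for all $q\ne m$, so together with $\mbf d_q\circ\mbf d_{q+1}=0$ the complex \eqref{eq:type II complex} is exact at $\mcal V(\theta_q)$ for every $q\ne m$, whence $H$ is its only homology. Second, for a fixed weight $\nu\in\hhh^{*}$ only finitely many of the finite‑dimensional spaces $\mcal V(\theta_q)_{\nu}=(\calr\otimes L^{0}(\theta_q))_{\nu}$ are nonzero: writing a weight vector as $x^{\alpha}y_{\mu}\otimes(e^{*}_{i_1}\cdots e^{*}_{i_s}e^{*}_{m+j_1}\cdots e^{*}_{m+j_t}e^{*\,k}_{m+n}\otimes 1_{\mcale})$ with $s+t+k=q$, matching the $\epsilon_i$‑ and $\delta_j$‑coefficients of $\nu$ forces $\alpha,\mu$ and $s,t,k$ into ranges depending only on $\nu$, hence also $q=s+t+k$. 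Thus in each weight $\nu$, \eqref{eq:type II complex} is a bounded complex of finite‑dimensional spaces, and its Euler characteristic equals $(-1)^{m}\dim H_{\nu}$.

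Next I would evaluate that Euler characteristic through characters. From $\ch\calr=\prod_{i=1}^{m}(1-e^{\epsilon_i})^{-1}\prod_{j=1}^{n}(1+e^{\delta_j})$, from $L^{0}(\theta_q)=\Omega^{q}(V^{*})\otimes\bbf_{\mcale}$, and from $\Omega(V^{*})\cong\bigwedge((V^{*})_{\bz})\otimes S((V^{*})_{\bo})$ with $(V^{*})_{\bz},(V^{*})_{\bo}$ carrying the weights $\{-\epsilon_i\}_{i=1}^{m},\{-\delta_j\}_{j=1}^{n}$, one obtains $\sum_{q\ge 0}(-1)^{q}\ch\Omega^{q}(V^{*})=\prod_{i=1}^{m}(1-e^{-\epsilon_i})\prod_{j=1}^{n}(1+e^{-\delta_j})^{-1}$, so that (the $e^{\nu}$‑coefficient being a finite sum on each side)
$$\sum_{q\ge 0}(-1)^{q}\ch\mcal V(\theta_q)=e^{\mcale}\prod_{i=1}^{m}\frac{1-e^{-\epsilon_i}}{1-e^{\epsilon_i}}\prod_{j=1}^{n}\frac{1+e^{\delta_j}}{1+e^{-\delta_j}}=(-1)^{m}e^{\,\mcale-\sum_i\epsilon_i+\sum_j\delta_j}=(-1)^{m},$$
using $\mcale=\sum_i\epsilon_i-\sum_j\delta_j$. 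Comparing coefficients of $e^{\nu}$ gives $\dim H_{\nu}=\delta_{\nu,0}$, i.e. $H$ is one‑dimensional and concentrated in weight $0$. Concretely $H$ is spanned by the class of $y_1\cdots y_n\otimes e_1^{*}\cdots e_m^{*}\otimes 1_{\mcale}$, which by Lemma \ref{expression for d_q} lies in $\ker\mbf d_m$ (all three terms of $\mbf d_m$ vanish, since $\alpha=0$, $t=0$ and $m-s-t=0$) and has weight $0$ and degree $n$.

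It remains to see that $H$ is the trivial module. Being one‑dimensional it is concentrated in a single $\bbz$‑degree, so $\ggg_i$ acts as $0$ for all $i\ne 0$, and $\ggg_0\cong\gl(m|n)$ acts through a character; since $\gl(m|n)/[\gl(m|n),\gl(m|n)]$ is one‑dimensional and detected by the supertrace, this character is $c\cdot\str$ for some $c\in\bbf$. Its restriction to $\hhh$ is the weight of $H$, namely $c(\sum_i\epsilon_i-\sum_j\delta_j)=c\,\mcale$; as this weight is $0$ and $\mcale\ne 0$ we get $c=0$, so $\ggg$ acts trivially on $H$. The main obstacle in carrying this out rigorously is the boundedness/finiteness bookkeeping in each weight space that legitimizes the Euler characteristic identity (and the careful extraction of $e^{\nu}$‑coefficients from the otherwise purely formal product); once that is secured, the character computation and the triviality argument are routine. (Alternatively, one could push the spanning‑set analysis of Lemmas \ref{Lem for q>m}–\ref{Lem for q<m} to $q=m$ and show directly that $\bar{\mbf d}_m$ has a one‑dimensional kernel spanned by the above class, but the Euler characteristic route is shorter.)
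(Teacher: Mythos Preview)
Your proof is correct and takes a genuinely different route from the paper. The paper argues directly at $q=m$: on the spanning set $\mcal{S}$ of $\mcal{V}(\theta_m)/\im\mbf d_{m+1}$ it shows that exactly one vector (the class of $y_1\cdots y_n\otimes e_1^{*}\cdots e_m^{*}\otimes 1_{\mcale}$) is annihilated by $\bar{\mbf d}_m$, while the images of the remaining elements are linearly independent by the same combinatorics as in Lemma~\ref{Lem for q>m}; it then checks this vector is not in $\im\mbf d_{m+1}$ and verifies triviality of the action by explicit computation of $x^{\alpha}D_i$, $x^{\alpha}y_{\eta}\pr_i$, $x^{\alpha}y_{\eta}D_i$ on it. Your Euler-characteristic argument replaces that local analysis by a global character identity, which is cleaner once exactness at all $q\ne m$ is established, and your triviality argument via $\gl(m|n)/[\gl(m|n),\gl(m|n)]=\bbf\cdot\str$ is more conceptual than the paper's direct check.

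One point needs a sentence of care: exactness at $q=0$ (i.e.\ surjectivity of $\mbf d_1$) is not literally delivered by Lemmas~\ref{Lem for q>m} and~\ref{Lem for q<m}, whose arguments concern $\bar{\mbf d}_q$ for $q\ge 1$. This is easily patched: $\mbf d_1(y_n\otimes e_{m+n}^{*}\otimes 1_{\mcale})=1\otimes 1_{\mcale}$, and a one-line computation with the mixed-product action gives $x_1y_n\pr_1\cdot(1\otimes 1_{\mcale})=y_n\otimes 1_{\mcale}$; hence the $\ggg$-submodule $\im\mbf d_1$ contains the generator of $\mcal{V}(\theta_0)$ (Lemma~\ref{Generating set of V(la)}) and so equals $\mcal{V}(\theta_0)$. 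With this in place, your finiteness-per-weight check legitimizes the Euler-characteristic identity, and the character computation $\sum_{q\ge 0}(-1)^{q}\ch\mcal{V}(\theta_q)=(-1)^{m}$ is correct. Your approach avoids the spanning-set combinatorics at $q=m$; the paper's approach is self-contained to the section and does not require the careful weight-by-weight convergence bookkeeping.
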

			\begin{proof}
				One can see that, $\bar {\mbf d}_q( y_1 \cdots y_n \ot e_1^* \cdots e_m^* \ot 1_{\mcale} + \text{Im}{\mbf d}_{q+1}) =0$ and $\bar {\mbf d}_q(w) \neq 0$ for all other $w \in \mcal S.$  Now it follows with same method of Lemma \ref{Lem for q>m}  that $\{\bar {\mbf d}_q(w): w \in \mcal S, w \neq  y_1 \cdots y_n \ot e_1^* \cdots e_m^* \ot 1_{\mcale} + \text{ Im} \mbf d_{q+1} \}$ is linearly independent. Moreover, it is easy to observe from Lemma \ref{expression for d_q}(1) that $ y_1 \cdots y_n \ot e_1^* \cdots e_m^*\ot 1_{\mcale} \notin \text{Im}\mbf d_{q+1}$.
				Hence $\ker\mbf d_q/\text{Im}\mbf d_{q+1} =\spann_\bbf \{y_1 \cdots y_n \ot e_1^* \cdots e_m^* \ot 1_{\mcale} + \text{Im}\mbf d_{q+1}\}$. Also we have the following relations
				\begin{align*}				
					x^\al D_i.y_1 \cdots y_n \ot e_1^* \cdots e_m^* \ot 1_{\mcale}
					&= \mbf d_{m+1}(x^\al y_1 \cdots y_n \ot e_1^* \cdots e_m^*e_{m+1}^* \ot 1_{\mcale}), \, \forall \, \al  \in \N^m,\cr
					x^\al y_\eta \pr_i \cd y_1 \cdots y_n \ot e_1^* \cdots e_m^* \ot 1_{\mcale}&=0, \forall  \, \al, \eta
				\end{align*}
				and
				$$				x^\al y_\eta D_i \cd y_1 \cdots y_n \ot e_1^* \cdots e_m^* \ot 1_{\mcale}=0 , \, \forall \, \al, \eta \neq \emptyset.$$
				With the above relations it is clear that $\text{Ker}\mbf d_q/\text{Im}\mbf d_{q+1}$ is a trivial module.
			\end{proof}

			\begin{thm}\label{thm on V(la_q)} The following statements hold.
				\begin{itemize}
					\item[(1)] $\mcal V(\theta_q)$ is reducible for all $q \neq 0$.
					\item[(2)] $\im \mbf d_{q+1} \simeq L(\theta_q)$, for all $q \geq 0$.
					\item[(3)] $\ker \mbf  d_q=\im\mbf d_{q+1}$, for all $q \neq m$.
					\item[(4)] For $q=m$, $\ker\mbf d_m/\im\mbf d_{m+1}$ is one dimensional trivial module.
				\end{itemize}
			\end{thm}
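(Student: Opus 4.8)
The plan is to deduce all four statements from the type~II complex~$(\ref{eq:type II complex})$ together with two facts already in hand: $\mcal V(\theta_{q+1})$ is generated over $U(\ggg)$ by $y_n\ot e_{m+n}^{*(q+1)}\ot 1_{\mcale}$ (Lemma~\ref{Generating set of V(la)}), and $\mcal V(\theta_q)\cong\nabla(\theta_q)$ has a simple socle, equal to $U(\ggg)(1\ot L^0(\theta_q))$ and isomorphic to $L(\theta_q)$ (Proposition~\ref{p3.6}, Corollary~\ref{Soc of V(la)}, Lemma~\ref{description of socle}). Since the serious computational input is already isolated in Lemmas~\ref{expression for d_q}, \ref{Lem for q>m}, \ref{Lem for q<m} and \ref{Lem q=m}, what remains is an assembly.

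\emph{Part (1).} Fix $q\geq 1$. The generator $y_n\ot e_{m+n}^{*q}\ot 1_{\mcale}$ is carried by $\mbf d_q$ to $1\ot e_{m+n}^{*(q-1)}\ot 1_{\mcale}\neq 0$, so $\ker\mbf d_q\subsetneq\mcal V(\theta_q)$; on the other hand $\ker\mbf d_q\supseteq\im\mbf d_{q+1}$, which is non-zero since $\mbf d_{q+1}(y_n\ot e_{m+n}^{*(q+1)}\ot 1_{\mcale})=1\ot e_{m+n}^{*q}\ot 1_{\mcale}\neq 0$. Thus $\ker\mbf d_q$ is a proper non-zero submodule and $\mcal V(\theta_q)$ is reducible.

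\emph{Part (2).} As $\mbf d_{q+1}$ is a $\ggg$-module map and $\mcal V(\theta_{q+1})$ is cyclic on $y_n\ot e_{m+n}^{*(q+1)}\ot 1_{\mcale}$, we get $\im\mbf d_{q+1}=U(\ggg)\bigl(1\ot e_{m+n}^{*q}\ot 1_{\mcale}\bigr)$. Now $1\ot L^0(\theta_q)$ is a $\ggg_0$-submodule of $\mcal V(\theta_q)$ on which $\ggg_0$ acts through $\xi$; by Corollary~\ref{Cor for irr V*} it is $\ggg_0$-irreducible and generated over $\ggg_0$ by its highest weight vector $1\ot e_{m+n}^{*q}\ot 1_{\mcale}$. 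Hence $U(\ggg)\bigl(1\ot e_{m+n}^{*q}\ot 1_{\mcale}\bigr)=U(\ggg)\bigl(1\ot L^0(\theta_q)\bigr)$, which is exactly the simple socle $L(\theta_q)$ of $\mcal V(\theta_q)$. This gives $\im\mbf d_{q+1}\cong L(\theta_q)$ for every $q\geq 0$.

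\emph{Parts (3) and (4).} Fix $q\neq m$. The inclusion $\im\mbf d_{q+1}\subseteq\ker\mbf d_q$ is already known, so it suffices to show the induced map $\bar{\mbf d}_q:\mcal V(\theta_q)/\im\mbf d_{q+1}\to\mcal V(\theta_{q-1})$ is injective, since then $\ker\mbf d_q/\im\mbf d_{q+1}=\ker\bar{\mbf d}_q=0$. But Lemma~\ref{Lem for q>m} (for $q>m$) and Lemma~\ref{Lem for q<m} (for $q<m$) state precisely that $\bar{\mbf d}_q$ carries a spanning set of $\mcal V(\theta_q)/\im\mbf d_{q+1}$ onto a linearly independent family; this forces that spanning set to be a basis and $\bar{\mbf d}_q$ to be injective, so $\ker\mbf d_q=\im\mbf d_{q+1}$, which is~(3). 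Finally~(4) is exactly the content of Lemma~\ref{Lem q=m}. The one step that requires genuine care is the socle identification in~(2): one must invoke the cyclicity of $\mcal V(\theta_{q+1})$ in order to pin $\im\mbf d_{q+1}$ down to \emph{equal}, and not merely contain, the socle of $\mcal V(\theta_q)$; granted that, the rest is bookkeeping along the complex.
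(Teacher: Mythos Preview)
Your proof is correct and follows essentially the same approach as the paper's own proof: identify $\im\mbf d_{q+1}$ with the simple socle of $\mcal V(\theta_q)$ via cyclicity of $\mcal V(\theta_{q+1})$ and Corollary~\ref{Soc of V(la)}, then invoke Lemmas~\ref{Lem for q>m}, \ref{Lem for q<m}, \ref{Lem q=m} to settle the remaining parts. Your argument for~(2) is in fact slightly cleaner than the paper's, since you obtain equality $\im\mbf d_{q+1}=U(\ggg)(1\ot L^0(\theta_q))$ directly from cyclicity rather than checking two inclusions, and your explicit reduction of~(3) to injectivity of $\bar{\mbf d}_q$ makes transparent why the linear-independence lemmas suffice.
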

			
			\begin{proof}
				
				Note that $\mbf d_{q+1}(X.(y_n  \ot e_{m+n}^{*{q+1} } \ot 1_\mu))= X. ( 1\ot e_{m+n}^{*{q}} \ot 1_\mu)$ for all $X \in U(\mf g)$. Hence $\text{Im}\mbf d_{q+1} \subseteq U(\mf g)(1 \ot L_0(\theta_q))$.
				Hence by Corollary \ref{Soc of V(la)}, $\text{Im}\mbf d_{q+1}$ is irreducible and isomorphic to $L(\theta_q)$. Now the theorem follows from Lemmas \ref{Lem for q>m}, \ref{Lem for q<m} and \ref{Lem q=m}.
			\end{proof}

			\section{Independence of differential operators and the irreducibility of non-exceptional mixed-product modules}	\label{sec: skry theory}

			\subsection{The category of $(\calr,\ggg)$-modules}
				We define a new category of modules for Lie superalgebra $\mf g$ following \cite{Skr} and study some properties of this category to understand the irreducibility of the modules $\mcal V(\la)$.

				\begin{defn}
					Denote by $(\calr,\ggg)\hmod$ the category whose objects are additive groups $M$ endowed with an $\mcal R$-module structure $f_\mcal R$, a $\mf g$-module structure $\rho_\mf{g}$  and a  $\mf g_{\geq 0}$-module structure $\sigma$ so that the following properties are satisfied:
					\begin{itemize}
						\item[(i)] $[\rho_\mf{g}(D),f_\mcal R]=(Df)_{\mcal R}$;
						\item[(ii)] $[\sigma(D'),f_\mcal R]=0$;
						\item[(iii)] $[\rho_\mf{g}(X),\sigma(D')]= 0$;
						\item[(iv)] $\rho_\mf{g}(fX)=f_\mcal R \circ \rho_\mf{g}(X) +\dis\sum_{i=1 }^{m}(\pr_i f)_{\mcal R}\circ \sigma (x_i X) +(-1)^{\wp(f)+1} \sum_{j =1 }^{n}(D_j f)_{\mcal R}\circ \sigma (y_j X)$
					\end{itemize}
					for all $f \in \mcal R, D \in \mf g, D' \in \mf g_{\geq 0}, X=\pr_i, D_j, i \in [1,m], \, j \in [1,n]. $
					
					We call modules in this category as $(\mcal R, \mf g)$-modules. This category will be denoted by $(\calr,\ggg)\hmod$, whose objects are usually called $(\calr,\ggg)$-modules.
				\end{defn}
				Let $V$ be a module for $\mf g_{\geq 0}$ with trivial action of $\mf g_{\geq 1}$. Then we define $\sigma(\mf g)$-module action on $\mcal R \ot V$ as follows:
				\begin{align}\label{sigma action}
					fX.(h \ot v)=(-1)^{\wp(h)\wp(fX)}h \ot fX.v,
				\end{align}					
				for all $f,h \in \mcal R, v \in V, X\in \{\pr_i,D_j\mid 1\leq i \leq m, 1 \leq j \leq n\}$. Then with respect to the left multiplication as $\mcal R$ action, $\sigma(\mf g)$ as above and $\rho_{\mf g}$ actions as (\ref{W(n)-module structure}), (3.3.2) makes $\mcal R \ot V$ an object of the category $(\calr,\ggg)\hmod$. In particular, $\mcal V(\la)$ belongs to  $(\calr,\ggg)\hmod$ for $\lambda\in \Lambda^+$.	 	
				
				
				\subsection{Properties of the category $(\calr,\ggg)\hmod$}		
				\begin{lem}\label{Lemma R, g submod}
					Let $M$ be an  $(\mcal R, \mf g)$-module such that $M$ is annihilated by $\sigma(g_{\geq 1})$. Then every  $\mcal R$- and $\rho(\mf g)$-submodule  $M'$ of $M$ is $(\mcal R, \mf g)$-submodule.
				\end{lem}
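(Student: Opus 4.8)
The plan is to show that stability of $M'$ under the $\mcal R$-action and under the $\rho_{\mf g}$-action already forces stability under $\sigma$, after which axioms (i)--(iv), being operator identities valid on all of $M$, restrict automatically to $M'$. Since $M$ is annihilated by $\sigma(\mf g_{\geq 1})$ and $\mf g_{\geq 0}=\mf g_0\oplus\mf g_{\geq 1}$, it suffices to prove that $\sigma(D')M'\subseteq M'$ for every $D'\in\mf g_0$; and since $\mf g_0$ is spanned by the elements $x_i\pr_j$, $x_iD_t$, $y_s\pr_j$, $y_sD_t$, it is enough to treat these four families.

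The key device is axiom (iv), which I read as a formula expressing $\sigma$ of a degree-zero element in terms of $\rho_{\mf g}$ and $\mcal R$-multiplications. Taking $f=x_i$ in (iv) and using $\pr_j x_i=\de_{ij}$, $D_j x_i=0$, $\wp(x_i)=\bz$ collapses the two sums to a single term and yields $\rho_{\mf g}(x_iX)=(x_i)_\mcal R\circ\rho_{\mf g}(X)+\sigma(x_iX)$, hence $\sigma(x_iX)=\rho_{\mf g}(x_iX)-(x_i)_\mcal R\circ\rho_{\mf g}(X)$ for $X\in\{\pr_j,D_t\}$. Likewise, taking $f=y_s$ and using $\pr_j y_s=0$, $D_j y_s=\de_{sj}$, $\wp(y_s)=\bo$, so that the sign $(-1)^{\wp(f)+1}$ equals $1$, gives $\sigma(y_sX)=\rho_{\mf g}(y_sX)-(y_s)_\mcal R\circ\rho_{\mf g}(X)$. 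In each case the right-hand side is a composition of operators preserving $M'$: $\rho_{\mf g}(x_iX),\rho_{\mf g}(y_sX)\in\rho_{\mf g}(\mf g)$ because $x_iX,y_sX\in\mf g_0\subseteq\mf g$, $\rho_{\mf g}(X)\in\rho_{\mf g}(\mf g)$, and $(x_i)_\mcal R,(y_s)_\mcal R$ are $\mcal R$-multiplications. Therefore $\sigma(D')M'\subseteq M'$ for all four families, hence for all $D'\in\mf g_0$.

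Combining the pieces: $M'$ is stable under $f_\mcal R$ for all $f\in\mcal R$ (hypothesis), under $\rho_{\mf g}(D)$ for all $D\in\mf g$ (hypothesis), under $\sigma(\mf g_0)$ (just shown), and under $\sigma(\mf g_{\geq 1})$ (which annihilates $M$, hence $M'$); so $M'$ is closed under all three structures. Since the defining relations (i)--(iv) are identities between operators on $M$, their restrictions to $M'$ hold verbatim, and $M'$ is an $(\mcal R,\mf g)$-submodule of $M$.

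I do not anticipate a genuine obstacle here; the only point requiring care is the bookkeeping of parity signs in (iv) when $f=y_s$, namely confirming that the coefficient of $\sigma(y_sX)$ is exactly $\mathrm{id}$ (equivalently $(-1)^{\wp(y_s)+1}=1$), which is precisely what makes the inversion of axiom (iv) clean.
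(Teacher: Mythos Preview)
Your proof is correct and follows essentially the same approach as the paper: reduce to showing $\sigma(\mf g_0)$-stability and extract each $\sigma(x_iX)$, $\sigma(y_sX)$ from axiom (iv) by a suitable specialization. The paper packages this as a linear combination $\sum_r f_r\rho(g_rX)$ with choices such as $(f_1,g_1)=(1,x_k)$, $(f_2,g_2)=(-x_k,1)$, but these sums collapse to exactly your identities $\sigma(x_kX)=\rho(x_kX)-(x_k)_{\mcal R}\rho(X)$ and $\sigma(y_lX)=\rho(y_lX)-(y_l)_{\mcal R}\rho(X)$; your direct substitution of $f=x_i$ or $f=y_s$ into (iv) is simply the same computation stated more economically.
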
		
				\begin{proof}
					To prove the statement we need to show that $M'$ is a $\sigma(\mf g)$-submodule of $M$. Note that by the assumption we only need to show $M'$ is invariant under $\sigma(\mf g_{0})$-action. Now from property (iv) of Definition 4.7 we have,
					\begin{align}\label{eqn R,g sub mod}
						f\rho(gX) &=f(g\rho(X) +\sum_{j=1  }^{m}\pr_j g \sigma (x_jX ) + (-1)^{\wp(g)+1} \sum_{j =1 }^{n}D_j g \sigma (y_j X) ),
					\end{align}
					for all $f,g \in \mcal R$ and $X \in \{\pr_i, D_j: i \in [1,m], \, j \in  [1,n]\}$.
					Now we put various $f, g$ and $X$ in the equation (\ref{eqn R,g sub mod}) to prove that $ \sigma(x_k\pr_i), \sigma(x_kD_s), \sigma(y_lD_s),$ $ \sigma(y_l\pr_i) $ leaves $M'$ invariant, for all $i,k \in [1,m]$ and $s,l \in [1,n]$.\\
					
					(i)	 Put $f=f_1=1, g=g_1=x_k$ and $f=f_2=-x_k, g=g_2=1$ in the equation (\ref{eqn R,g sub mod}) and take \\
					\begin{align*}\dis{\sum_{r=1}^{2}f_r\rho(g_r\pr_i)} &=\dis{\sum_{r=1}^{2}}f_rg_r\rho(\pr_i) +\sum_{j=1  }^{m}\dis{\sum_{r=1}^{2}}f_r\pr_j g_r \sigma (x_j\pr_i ) +\sum_{j =1 }^{n}\dis{\sum_{r=1}^{2}}(-1)^{\wp(g_r)+1} f_r D_j g_r \sigma (y_j \pr_i) ) \cr
						&=   \sum_{j =1 }^{m}\dis{\sum_{r=1}^{2}}f_r\pr_j g_r \sigma (x_j\pr_i )\cr
						&=\sigma(x_k\pr_i)
					\end{align*}
					where the second last equation holds because
					\begin{align}\label{eq: reason}
						\dis{\sum_{r=1}^{2}}f_rg_r=0 \;\text{ and }\;D_j(g_r) =0\;\text{ for }\;r=1,2.
					\end{align}
					Again,
					\begin{align*}\dis{\sum_{r=1}^{2}f_r\rho(g_rD_s)} &=\dis{\sum_{r=1}^{2}}f_rg_r\rho(D_s) +\sum_{j =1 }^{m}\dis{\sum_{r=1}^{2}}f_r\pr_j g_r \sigma (x_jD_s ) +\sum_{j =1 }^{n}\dis{\sum_{r=1}^{2}}(-1)^{\wp(g_r)+1} f_r D_j g_r \sigma (y_j D_s) ) \cr
						&=   \sum_{j=1  }^{m}\dis{\sum_{r=1}^{2}}f_r\pr_j g_r \sigma (x_jD_s )\cr
						&=\sigma(x_kD_s)
					\end{align*}
					where the second last equation holds because of (\ref{eq: reason}).
					
					(ii) 	Put $f=f_1=1, g=g_1=y_l$ and $f=f_2=-y_l, g=g_2=1$ in the  equation (\ref{eqn R,g sub mod})  and take \\
					\begin{align*}\dis{\sum_{r=1}^{2}f_r\rho(g_r\pr_i)} &=\dis{\sum_{r=1}^{2}}f_rg_r\rho(\pr_i) +\sum_{j=1  }^{m}\dis{\sum_{r=1}^{2}}f_r\pr_j g_r \sigma (x_j\pr_i ) +\sum_{j =1 }^{n}\dis{\sum_{r=1}^{2}}(-1)^{\wp(g_r)+1} f_r D_j g_r \sigma (y_j \pr_i) ) \cr
						&= \sum_{j =1 }^{n}\dis{\sum_{r=1}^{2}}(-1)^{\wp(g_r)+1} f_r D_j g_r \sigma (y_j \pr_i)\cr
						&=\sigma(y_{l}\pr_i).
					\end{align*}	
					Again,	
					\begin{align*}\dis{\sum_{r=1}^{2}f_r\rho(g_rD_s)} &=\dis{\sum_{r=1}^{2}}f_rg_r\rho(D_s) +\sum_{j=1  }^{m}\dis{\sum_{r=1}^{2}}f_r\pr_j g_r \sigma (x_jD_s ) +\sum_{j =1 }^{n}\dis{\sum_{r=1}^{2}}(-1)^{\wp(g_r)+1} f_r D_j g_r \sigma (y_j D_s) ) \cr
						&= \sum_{j =1 }^{n}\dis{\sum_{r=1}^{2}}(-1)^{\wp(g_r)+1} f_r D_j g_r \sigma (y_j \pr_i)\cr
						&=\sigma(y_{l}D_s)
					\end{align*}
					where  a similar argument like  (\ref{eq: reason}) is also involved in  the above equations.

					Now it is clear from (i) and (ii) that $ \sigma(x_k\pr_i), \sigma(x_kD_s), \sigma(y_lD_s),$ $ \sigma(y_l\pr_i) $ leaves $M'$ invariant, because $M'$ is both $\mcal R$ and $\rho(\ggg)$-module, for all $i,k \in [1,m]$ and $s,l \in [1,n]$.	
					This completes the proof.					
				\end{proof}

				\begin{thm}\label{Thm g submod}
					Let $M$ be a $(\mcal R, \mf g)$-module such that  $M$ is annihilated by $\sigma(g_{\geq 1})$. Additionally,  assume that $M$ is completely reducible as a $\mf g_{\geq 0}$-module with finite dimensional irreducible components and none of its irreducible components have exceptional highest weight. Let $M'$ be any  $\rho(\ggg)$-submodule of $M$ containing a highest weight vector $v_\la$ of $\sigma(\mf g_{\geq 0})$ action such that $\rho_{\mf g}(\mf g_{-1})(v_\la)=0$.  Then $M'$ is a $(\mcal R, \mf g)$-submodule.
				\end{thm}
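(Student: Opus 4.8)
The plan is to reduce the assertion to the single claim that $M'$ is stable under the $\calr$-action, and then to prove that $\calr$-stability by an induction on the $\bbz$-grading whose crucial inductive step is the super-analogue of Skryabin's ``independence of differential operators'' from \cite{Skr}. For the reduction, I would evaluate property (iv) in the definition of $(\calr,\mf g)\hmod$ at $f\in\{x_k,y_s\}$ and $X\in\{\pr_l,D_l\}$; using $\pr_i(x_k)=\de_{ik}$, $D_j(y_s)=\de_{js}$ and the vanishing of the ``cross'' derivatives, one reads off the operator identities
$$\sigma(z\vartheta)=\rho_{\mf g}(z\vartheta)-z_{\calr}\circ\rho_{\mf g}(\vartheta)\qquad(z\in\{x_k,y_s\},\ \vartheta\in\{\pr_l,D_l\}),$$
and the elements $z\vartheta$ span $\mf g_0\cong\gl(m|n)$ under the isomorphism $(\ref{eq: iso gl})$. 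Since $\sigma(\mf g_{\geq 1})=0$ by hypothesis, it follows that any subgroup of $M$ which is stable under $\rho_{\mf g}(\mf g)$ (as $M'$ is, by assumption) and under every $z_{\calr}$ is automatically $\sigma(\mf g_{\geq 0})$-stable, hence a $(\calr,\mf g)$-submodule. As $\calr$ is generated by $\calr_1=\spann_\bbf\{x_k,y_s\}$, it therefore suffices to show $z_{\calr}M'\subseteq M'$ for all $z\in\calr_1$.

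Next I would locate the seed inside $M'$. Applying the same instances of (iv) to $v_\la$ and using $\rho_{\mf g}(\mf g_{-1})v_\la=0$ gives $\sigma(z\vartheta)v_\la=\rho_{\mf g}(z\vartheta)v_\la\in M'$; since the $z\vartheta$ span $\mf g_0$ and $\sigma(\mf g_{\geq 1})=0$, this yields $\sigma(\mf g_{\geq 0})v_\la\subseteq M'$. By complete reducibility of $M$ over $\mf g_{\geq 0}$ with finite-dimensional components, the submodule $V:=\sigma(U(\mf g_0))v_\la$ is the irreducible module $L^0(\la)$; and by property (iii) the action $\rho_{\mf g}(\mf g_{-1})$ commutes with $\sigma(\mf g_0)$, so $\rho_{\mf g}(\mf g_{-1})V=0$. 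Note that $\la$ is necessarily non-exceptional, since it occurs among the highest weights of the $\sigma$-components of $M$.

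The hard part is the propagation, and it is here that the non-exceptionality hypothesis and the bulk of the computation (Appendix B) enter. Evaluating (iv) at $f\in\calr_2$ and $X\in\{\pr_i,D_i\}$ on a vector $u\in V$, and again using $\rho_{\mf g}(\mf g_{-1})V=0$, one gets
$$\rho_{\mf g}(f\pr_i)u=\sum_k(\pr_k f)_{\calr}\,\sigma(x_k\pr_i)u+(-1)^{\wp(f)+1}\sum_s(D_s f)_{\calr}\,\sigma(y_s\pr_i)u\in M',$$
and likewise with $D_i$; as $\sigma(\mf g_0)u\in V$, each right-hand side lies in $(\calr_1)_{\calr}V$. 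The key assertion --- the super form of Skryabin's theorem --- is that when $\la$ is \emph{not} an exceptional weight the resulting linear map $\mf g_1\otimes_\bbf V\to(\calr_1)_{\calr}V$ is surjective; its verification in the present setting is exactly the content of Appendix B, and for exceptional $\la$ its image is a proper submodule, which is precisely the mechanism behind the reducibility of $\mcal V(\omega_k)$ and $\mcal V(\theta_q)$ established in Section \ref{sec: third sec}. Granting this surjectivity, $(\calr_1)_{\calr}V\subseteq M'$. One then runs the induction on the $\bbz$-grading of $M$ (which we may take to be present and compatible, as it is for $\mcal V(\la)$, the grading of $M'$ then following from the distinct eigenvalues of the Euler element on distinct degrees within each $\sigma$-component): the lowest non-zero graded piece $M'_d$ is annihilated by $\rho_{\mf g}(\mf g_{-1})$, hence is $\sigma(\mf g_0)$-stable by the identities of Step 1, so it decomposes into finite-dimensional non-exceptional irreducibles, to each of which the surjectivity applies, giving $(\calr_1)_{\calr}M'_d\subseteq M'$; using (iv) once more to carry $\calr$-multiplication up through the grading then yields $z_{\calr}M'\subseteq M'$ for every $z\in\calr_1$. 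By the reduction above, this shows $M'$ is a $(\calr,\mf g)$-submodule.

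I expect the genuine obstacle to be the surjectivity/independence statement in the super case (the super-analogue of Skryabin's main technical lemma); Steps 1 and 2 are short formal consequences of the defining relations (i)--(iv), and the grading induction is bookkeeping once the surjectivity is in hand. In the main application, where $M=\mcal V(\la)$ and $M'\ni 1\otimes L^0(\la)=V$ by Corollary \ref{Soc of V(la)}, the induction simplifies considerably, since $M$ is generated by $V$ as a $(\calr,\mf g)$-module and one only needs to conclude $M'\supseteq M$.
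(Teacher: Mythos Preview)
Your Steps 1 and 2 are fine and parallel the paper (Step 1 is essentially Lemma \ref{Lemma R, g submod}). The problem is Step 3, in two places.

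First, the ``surjectivity'' of $\rho:\mf g_1\otimes V\to(\mcal R_1)_{\mcal R}V$ is \emph{not} what Appendix B establishes. What Appendix B actually produces, for clever choices of $f_\nu,g_\nu\in\mcal R$, are identities of the form
\[
\sum_\nu \rho(ff_\nu X)\,\rho(g_\nu Y)\cdot v_\la \;=\; f\cdot\Phi\,v_\la,\qquad f\in\mcal R,
\]
where $\Phi$ is a specific \emph{quadratic} expression in the $\sigma(E_{ab})$; these are the relations (\ref{eq:i)Case1})--(\ref{eq:(iv)Case3}). Worse, your surjectivity statement, read inside $\mcal V(\la)$, says precisely that the socle $L(\la)=U(\ggg)\cdot(1\otimes L^0(\la))$ already fills $\mcal R_1\otimes L^0(\la)$ in degree~$1$; that is essentially the irreducibility of $\mcal V(\la)$ you are ultimately trying to prove (Theorem \ref{thm: 5.5}), so invoking it here is circular.

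Second, even granting surjectivity on the $\rho(\mf g_{-1})$-annihilated bottom piece $M'_d$, your induction does not close. For $u\in M'_{d+1}$ and $f\in\mcal R_2$, property (iv) gives $\rho(fX)u=f_{\mcal R}\,\rho(X)u+\sum(\text{degree-}1)\cdot\sigma(\cdot)u$; the first term lies in $\mcal R_2\cdot M'_d=\mcal R_1\cdot(\mcal R_1 M'_d)\subseteq \mcal R_1\cdot M'_{d+1}$, which is exactly the containment you are trying to establish. ``Using (iv) once more'' does not break this loop.

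The paper avoids both issues by Skryabin's $P,Q$ reduction: set $P=\{m\in M:\mcal R m\subseteq M'\}$ (the largest $\mcal R$-submodule inside $M'$) and $Q=\mcal R\cdot M'$, pass to $Q/P$, so that one may assume $M'$ contains no nonzero $\mcal R$-submodule and $\mcal R\cdot M'=M$; the goal becomes $M=0$. Since the left-hand side above lies in $M'$ for every $f$, one gets $\mcal R\cdot(\Phi\,v_\la)\subseteq M'$, hence $\Phi\,v_\la=0$. Evaluating the resulting quadratic relations (\ref{eq:i)Case1})--(\ref{eq:(iv)Case3}) at $v_\la$ (as in \S5.4) forces $\la\in\{\omega_k,\theta_q\}$, contradicting the hypothesis that no $\sigma$-component is exceptional. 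Thus $M'/P=0$, i.e.\ $M'=P$ is $\mcal R$-stable. There is no grading induction and no degree-one surjectivity claim: the entire weight of the argument is carried by the quadratic $\sigma$-identities of Appendix B together with the $P,Q$ trick.
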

				\begin{proof}
					In view of Lemma \ref{Lemma R, g submod}, we need to show that any $\rho(g)$-submodule $M'$ is $\mcal R$-submodule. Let $P=\{m \in M: \mcal R.m \subseteq M'\}$ be the largest $\mcal R$-submodule contained in $M'$ and $Q=\mcal R .M'$ be the smallest $\mcal R$-submodule containing $M'$. It follows immediately from Definition 4.3 that $P,Q$ are $\mf g$-submodules and hence $(\mcal R, \mf g)$-submodules. Therefore we can construct a $\mf g$-submodule $M'/P$ of $Q/P$. This observation shows that we can assume that $M'$ contains no non-zero $\mcal R$-submodules of $M$ and $\mcal R .M'=M$, then to complete the proof we need to show that $M=0$. We proceed similarly like \cite{Skr}. Let $S$ and $S'$ be the algebras generated by $\{\sigma(X): X \in \mf g_{\geq 0}\}$ and $\{ \rho(X): X \in \mf g \}$ respectively.  We are looking for endomorphisms $\phi$ of $M$ lying in $S$ with the property that for any $f \in \mcal R$ the endomorphism $f \phi \in S'.$ Now the $\mf g$-submdoule $M'$ is stable under $f \phi$ for any $f \in \mcal R$ and hence it contains the $\mcal R$-submodule $\mcal R \phi(M')$. Now by hypothesis $\phi(M')=0$ and from Definition 4.3 (ii), $\phi$ is an $\mcal R$-module homomorphism. Hence we have $\phi(M)=0$, i.e $\phi =0$. This gives us certain relation between endomorphisms of $\sigma(D')$.
					At the same time if a class of polynomial $f_\nu, g_\nu \in \mcal R$ satisfy the property
					$$\dis \sum_{\nu}	\rho(ff_{\nu}X)\rho(g_{\nu}Y)\cd v_\la=f\sigma(Z) \cd v_\la,$$
					for $X,Y \in \mf g_{-1}$ and $Z \in \mf g_0$ and $f \in \mcal R$, then we get $\sigma(Z) \cd v_\la=0  $. Otherwise $\mcal R \ot \sigma(Z) \cd v_\la$ will be a non-zero $\calr$-submodule contained in $M'$.
					In the subsequent section using the above properties we compute the restrictions on the action of $\sigma(\mf g_{\geq 0})$, which turns out the highest weight to be exceptional weights. We complete the proof of this theorem with the following arguments.
				\end{proof}

				\subsection{Applications of the above properties  and relations arising }

				For $1\le i,i^{'}\le m$ and a class of elements $f_\nu, g_\nu \in \mcal R$, we consider $\dis \sum_{\nu}	\rho(ff_{\nu}\pr_{i})\rho(g_{\nu}\pr_{i^{'}})$, {which gives us (see  \S\ref{sec: app B} Appendix B, the equation (\ref{equn 1st rel}))
					\begin{align*}
						&\dis \sum_{\nu}	\rho(ff_{\nu}\pr_{i})\rho(g_{\nu}\pr_{i^{'}})\cr
						= & f(\delta_{ik}\sigma(E_{k^{'}i^{'}})+\delta_{ik^{'}}\sigma(E_{ki^{'}})-\sigma(E_{ki})\sigma(E_{k^{'}i^{'}})-\sigma(E_{k^{'}i})\sigma(E_{ki^{'}})), \, for \, k\ne k^{'},
					\end{align*}		
					where $f_\nu, g_\nu$ depends on the polynomials involving $x_k, x_{k'}$.}
				Hence by the discussion of Theorem \ref{Thm g submod} we have, $$\delta_{ik}\sigma(E_{k^{'}i^{'}})+\delta_{ik^{'}}\sigma(E_{ki^{'}})-\sigma(E_{ki})\sigma(E_{k^{'}i^{'}})-\sigma(E_{k^{'}i})\sigma(E_{ki^{'}})=0.$$ In a similar way we get, action of $\sigma(\mf g_{\geq 0})$ at $v_\la$ satisfy all the following relations (see  \S\ref{sec: app B} Appendix B).
				
				\subsubsection{} Situation (1): For $1\le i,i^{'}\le m$  and $1\le k,k^{'}\le m$, we have
				\begin{align}\label{eq:i)Case1}
					\left\{\begin{array}{lll}
						\delta_{ik}\sigma(E_{k^{'}i^{'}})+\delta_{ik^{'}}\sigma(E_{ki^{'}})-\sigma(E_{ki})\sigma(E_{k^{'}i^{'}})-\sigma(E_{k^{'}i})\sigma(E_{ki^{'}})=0, &k\ne k^{'}\\
						\delta_{ik}\sigma(E_{ki^{'}})-\sigma(E_{ki})\sigma(E_{ki^{'}})=0,&k=k^{'}
					\end{array}\right.
				\end{align}

				Situation (2): For $1\le i\le m$, $m+1\le i^{'}\le m+n$
				and $1\le k\le m, m+1\le k^{'}\le m+n$,we have
				\begin{align}\label{key eqn ty weight 1}
					\delta_{ik}\sigma(E_{k^{'}i^{'}})-\sigma(E_{ki})\sigma(E_{k^{'}i^{'}})-\sigma(E_{k^{'}i})\sigma(E_{ki^{'}})=0,
				\end{align}

				\subsubsection{}Situation (3): For $m+1\le i,i^{'}\le m+n$ and $m+1\le k,k^{'}\le m+n,k\ne k^{'}$, we have
				\begin{align}\label{eq:(iv)Case3}
					\delta_{ik}\sigma(E_{k^{'}i^{'}})-\delta_{ik^{'}}\sigma(E_{ki^{'}})+
					\sigma(E_{ki})\sigma(E_{k^{'}i^{'}})-\sigma(E_{k^{'}i})\sigma(E_{ki^{'}})=0.
				\end{align}

				\subsection{Arguments on  the highest weights for $\sigma_{\ggg_0}$}	Let $\lambda=\sum\limits_{i=1}^{m}\lambda_{i}\epsilon_{i}+\sum\limits_{i=1}^{n}\lambda_{m+i}\delta_{i}$ and $v_{\lambda}$ be a highest weight vector {of the representation $\sigma=\sigma({\ggg_{\geq 0}})$}. Then with the relations above, we can get some restrictions on $\lambda$.
				
				\subsubsection{} In equation (\ref{eq:i)Case1}), we set $i=i^{'}=k=k^{'}$. Then we have $\sigma(E_{ii}) \cd v_{\lambda}=\sigma(E_{ii})^{2}\cd v_{\lambda}$. This gives $\lambda_{i}=0$ or $ 1$. If we set $i=k<i^{'}=k^{'}$,
				then $$(\sigma(E_{i^{'}i^{'}})-\sigma(E_{ii})\sigma(E_{i^{'}i^{'}})-\sigma(E_{i^{'}i})\sigma(E_{ii^{'}}))\cd v_{\lambda}=0.$$
				From this we get, $\lambda_{i^{'}}-\lambda_{i}\lambda_{i^{'}}=\lambda_{i^{'}}(1-\lambda_{i})=0$. Therefore we have the following relations
				\begin{align}\label{eq: 4.17.4}
					&\lambda_{i^{'}}(1-\lambda_{i})=0, \, \forall \, 1 \leq i < i' \leq m;\cr
					&\la_i=0 \, \, \text{or}  \,\, 1.
				\end{align}
				
				In equation (\ref{key eqn ty weight 1}), we set $i=k=s$, $i^{'}=k^{'}=m+t$  $(1\le s\le m, 1\le t\le n)$. Then it follows that $$(\sigma(E_{m+t,m+t})-\sigma(E_{s,s})\sigma(E_{m+t,m+t})-\sigma(E_{m+t,s})\sigma(E_{s,m+t})) \cd v_{\lambda}=0.$$ Hence we have
				\begin{align}\label{eq: type 1-1}
					\lambda_{m+t}-\lambda_{s}\lambda_{m+t}=\lambda_{m+t}(1-\lambda_{s})=0
				\end{align}
				
				In equation (\ref{eq:(iv)Case3}), we set $i=k=m+s$, $i^{'}=k^{'}=m+t$ $(1\le s<t\le n)$. Then it follows that $$(\sigma(E_{m+t,m+t})+\sigma(E_{m+s,m+s})\sigma(E_{m+t,m+t})-\sigma(E_{m+t,m+s})\sigma(E_{m+s,m+t}))\cd v_{\lambda}=0.$$ From this relation we get
				\begin{align}\label{eq: type 3}
					\lambda_{m+t}+\lambda_{m+s}\lambda_{m+t}=\lambda_{m+t}(1+\lambda_{m+s})=0
					\text{ for any  pair }\,  s, t \, \text{with} \, 1\leq s<t\leq n.
				\end{align}
				
				From the above fact, we have the following consequence.\\
				
				(i) If $\la_k =0$ for all $k$. Then $\la=0=\omega_0$.\\
				
				(ii) If $\lambda_{k} \neq 0 $ for some $k \leq m$, then from equations in (\ref{eq: 4.17.4}), (\ref{eq: type 1-1}) and (\ref{eq: type 3}), we have $\la= \epsilon_{1} + \cdots + \epsilon_{k}= \omega_k$. \\
				
				(ii)	Suppose  $\lambda_{m+1}\ne 0$ and $\lambda_{m+2}=0$.  Then $\lambda_{m+t}=0$ for  $t\ge 3$, by (\ref{eq: type 1-1}). Therefore we have $\la=\epsilon_{1}+\epsilon_{2}+\cdots+\epsilon_{m}+l\delta_{1}$. Furthermore, $l$ must be an positive integer because $\lambda\in \Lambda^+$, i,e $\la=\omega_{m+l}$, $l >0$.\\
				
				(iii) 	Suppose $\lambda_{m+1} \ne 0$ and $\lambda_{m+2} \neq 0$. By (\ref{eq: type 3}), we have $\lambda_{m+1}=-1$. By the requirement $\lambda\in \Lambda^+$, we further have that $\lambda_{m+j}\ne0$ for all $j\geq 2$. Again by (\ref{eq: type 3}), we have
				{$$\la = \epsilon_{1}+\epsilon_{2}+\cdots+\epsilon_{m}-\sum_{j=1}^n\delta_{j}-q\delta_n =\theta_q$$}
				with $q\geq 0$. This proves that the highest weight of the representation $\sigma(\mf g_{\geq 0})$ is exceptional, a contradiction. Hence $M=0$ by the hypothesis. This completes the proof of Theorem \ref{Thm g submod}.

				\begin{thm}\label{thm: 5.5}
					$\mcal V(\la)$ is irreducible if $\la \neq \omega_k $ or $\theta_q$, for $k, q \geq 1$.
				\end{thm}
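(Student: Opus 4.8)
The plan is to derive this from Theorem~\ref{Thm g submod}, applied to $M=\mcal V(\la)$. First I would assemble the structural facts already in place: $\mcal V(\la)$ is an object of $(\calr,\ggg)\hmod$ which is annihilated by $\sigma(\ggg_{\geq 1})$ by construction, and by \eqref{sigma action} one has, as $\sigma(\ggg_{\geq 0})$-modules,
\[
\mcal V(\la)=\bigoplus_{\al,\mu}\bbf\, x^\al y_\mu\ot L^0(\la),
\]
so that $\mcal V(\la)$ is completely reducible over $\ggg_{\geq 0}$ with finite-dimensional constituents, each isomorphic to $L^0(\la)$ (up to a parity shift) and hence of highest weight $\la$. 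Since by hypothesis $\la$ is none of the exceptional weights $\omega_k,\theta_q$, no constituent has an exceptional highest weight, so the hypotheses of Theorem~\ref{Thm g submod} on $M$ itself are met.

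Next I would locate, inside an arbitrary non-zero $\rho(\ggg)$-submodule $M'\subseteq\mcal V(\la)$, a vector meeting the remaining hypothesis of Theorem~\ref{Thm g submod}. By Corollary~\ref{Soc of V(la)}, $M'$ contains $1\ot L^0(\la)$, in particular the vector $v:=1\ot v_\la^+$, where $v_\la^+$ is a highest weight vector of $L^0(\la)$. One checks from \eqref{sigma action} that $\sigma(X)(1\ot w)=1\ot X.w$ for $X\in\ggg_0$ while $\sigma(\ggg_{\geq 1})$ annihilates $1\ot w$, so $v$ is a highest weight vector for $\sigma(\ggg_{\geq 0})$ of weight $\la$; and from \eqref{W(n)-module structure}--\eqref{W(n)-module structure-2} one has $\rho_\ggg(\pr_i)(1\ot w)=\rho_\ggg(D_j)(1\ot w)=0$ (the partial derivations kill constants), hence $\rho_\ggg(\ggg_{-1})(v)=0$.

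Then Theorem~\ref{Thm g submod} applies to $M'$ and shows that $M'$ is in fact an $(\calr,\ggg)$-submodule of $\mcal V(\la)$; in particular $M'$ is stable under $f_\calr$ for every $f\in\calr$. Consequently $M'\supseteq\calr\cdot(1\ot L^0(\la))=\calr\ot L^0(\la)=\mcal V(\la)$---equivalently, every $w\in L^0(\la)$ now lies in $(M')^\flat$, so that $M'=\mcal V(\la)$ by Lemma~\ref{l3.15}. Thus every non-zero $\rho(\ggg)$-submodule of $\mcal V(\la)$ equals $\mcal V(\la)$, i.e. $\mcal V(\la)$ is irreducible.

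The substantive difficulty is not this final assembly but Theorem~\ref{Thm g submod} itself, and behind it the ``independence of differential operators'' relations of \S\ref{sec: skry theory} (the computations being collected in \S\ref{sec: app B}), which force every non-trivial $\sigma(\ggg_{\geq 0})$-highest weight that can occur to be one of the $\omega_k$ or $\theta_q$; granting that, the argument above is routine book-keeping. One caveat: $\omega_0=0$ and $\theta_0$ are themselves exceptional, so Theorem~\ref{Thm g submod} does not cover $\mcal V(\omega_0)$ or $\mcal V(\theta_0)$; these must be treated separately through the explicit complexes of \S\ref{sec: third sec} (Theorems~\ref{thm: 4.3} and \ref{thm on V(la_q)})---and since $\mcal V(\omega_0)=\calr$ in fact contains the trivial module as a submodule, the irreducibility assertion here is really the statement that $\mcal V(\la)$ is irreducible for every non-exceptional $\la$.
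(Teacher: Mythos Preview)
Your proof is correct and follows essentially the same route as the paper's: verify that $\mcal V(\la)$ meets the hypotheses of Theorem~\ref{Thm g submod}, use Corollary~\ref{Soc of V(la)} to place $1\ot L^0(\la)$ (and hence a $\sigma(\ggg_{\geq 0})$-highest weight vector killed by $\rho(\ggg_{-1})$) inside any nonzero submodule $M'$, conclude that $M'$ is an $(\calr,\ggg)$-submodule, and then use the $\calr$-action to get $M'=\mcal V(\la)$. Your verification of the highest-weight-vector hypothesis is more explicit than the paper's, which simply asserts that ``every $\mf g$-submodule $M'$ of $\mcal V(\la)$ also satisfies the property of Theorem~\ref{Thm g submod}''.

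Your closing caveat is well taken: the computations of \S\ref{sec: skry theory} force the highest weight to be one of $\omega_k$ or $\theta_q$ with $k,q\geq 0$, so Theorem~\ref{Thm g submod} does not cover $\la=\omega_0=0$ or $\la=\theta_0$. The paper handles $\mcal V(\theta_0)$ via Theorem~\ref{thm on V(la_q)} (it equals $\im\mathbf d_1\cong L(\theta_0)$, hence is irreducible), while $\mcal V(\omega_0)=\calr$ indeed contains $\bbf\cdot 1$ as a proper submodule; so the ``$k,q\geq 1$'' in the statement is a slight imprecision on the $\omega_0$ side, exactly as you flag.
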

				\begin{proof}
					Note that  $\mcal V(\la)$ is an $(\mcal R, \mf g)$-module. Also  $\mcal V(\la)= \dis{\bgop }_{\al, \mu}x^\al y_\mu \ot L_0(\la) $ and none of $\la$ is exceptional. Therefore $\mcal V(\la)$ is completely reducible (see ( \ref{sigma action})) as $\sigma(\mf g_{\geq 0})$-module and satisfy the property of Theorem \ref{Thm g submod}. Moreover every $\mf g$-submodule $M'$ of $\mcal V(\la)$ also satisfy the property of Theorem \ref{Thm g submod}. Hence  every $\mf g$-submodule $M'$ of $\mcal V(\la)$ is  a $(\mcal R, \mf g)$-submodule.
					
					Let $M'$ be a non-zero submodule of $\mcal V(\la)$. Then $1 \ot L_0(\la) \subseteq M'$. This proves that $M'=\mcal V(\la)$ with the  help of $\mcal R$-module action. This  completes the proof.
				\end{proof}

				\subsection{Irreducibility theorem} 	
				\begin{thm}\label{Final irr thm}
					The following statements hold.
					\begin{itemize}
						\item[(1)]	$\mcal V(\lambda)$  is irreducible if and only if $\lambda\ne\omega_k$ or $\theta_k$ for any $k \in \Z_+ $.
						\item[(2)] For all $k \geq 1$, $\mcal V(\omega_k)$ has two composition factors $L(w_k) $ and $L(\omega_{k+1})$ with free multiplicity.
						\item[(3)] For all $q \neq m$ and $q \geq 1$, $\mcal V(\theta_q)$ has two composition factors $L(\theta_{q})$ and $L(\theta_{q+1})$ with free multiplicity.
						\item[(4)] $\mcal V(\theta_m)$ has three composition factors $L(\theta_{m})$, $L(\theta_{m+1})$ and $L(0)$ with free multiplicity.
					\end{itemize}
				\end{thm}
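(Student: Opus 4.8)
The four assertions are a synthesis of the structure theorems already obtained for the three pairwise disjoint families of mixed-product modules, so the plan is essentially one of assembly, with the equivalence $T_{d',d}$ of Lemma \ref{lem: shift functor} used silently throughout to reduce everything to depth zero. For part (1), the ``if'' direction is exactly Theorem \ref{thm: 5.5}: its proof runs through the $(\calr,\ggg)$-module formalism of Section 4, the crucial mechanism being that every proper nonzero $\ggg$-submodule of $\calv(\lambda)$ is automatically an $(\calr,\ggg)$-submodule (Lemma \ref{Lemma R, g submod} and Theorem \ref{Thm g submod}), so that, were such a submodule to exist, the independence-of-operators relations (\ref{eq:i)Case1})--(\ref{eq:(iv)Case3}) would force the $\sigma(\ggg_{\geq 0})$-highest weight to lie in the list $\{\omega_k\}\cup\{\theta_q\}$. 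The ``only if'' direction is just the contrapositive of the reducibility statements proved earlier: $\calv(\omega_k)$ is reducible for $k\geq 1$ by Theorem \ref{thm: 4.3}(2), and $\calv(\theta_q)$ is reducible for $q\geq 1$ by Theorem \ref{thm on V(la_q)}(1).

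For part (2) I would simply transcribe Theorem \ref{thm: 4.3}, reading off the type I complex (\ref{eq: type I complex}): $\im\mfk d_{k-1}=\ker\mfk d_k$ is the unique nonzero proper submodule of $\calv(\omega_k)$, it coincides with the socle and is isomorphic to $L(\omega_k)$ (Corollary \ref{Soc of V(la)} together with Lemma \ref{lem: simple socles}), while the quotient $\calv(\omega_k)/\ker\mfk d_k\cong\im\mfk d_k\cong L(\omega_{k+1})$; since the socle is simple, $\calv(\omega_k)$ has exactly these two composition factors, each with multiplicity one.

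For parts (3) and (4) I would run the identical reading along the type II complex (\ref{eq:type II complex}), now invoking Theorem \ref{thm on V(la_q)}. In every case the socle of $\calv(\theta_q)$ is $\im\mbf d_{q+1}\cong L(\theta_q)$ (part (2) of that theorem, via Corollary \ref{Soc of V(la)}), so the composition factors of $\calv(\theta_q)$ are $L(\theta_q)$ together with those of the quotient $\calv(\theta_q)/\im\mbf d_{q+1}$. When $q\neq m$ the complex is exact at $\calv(\theta_q)$ (part (3)), so $\im\mbf d_{q+1}=\ker\mbf d_q$ and the quotient is isomorphic to $\im\mbf d_q$, which is simple; this yields the two-factor statement (3). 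When $q=m$ exactness fails there, and instead one has the three-step filtration $\im\mbf d_{m+1}\subsetneq\ker\mbf d_m\subsetneq\calv(\theta_m)$ with successive quotients $\im\mbf d_{m+1}\cong L(\theta_m)$, the one-dimensional trivial module $\ker\mbf d_m/\im\mbf d_{m+1}\cong L(0)$ (Lemma \ref{Lem q=m}), and $\calv(\theta_m)/\ker\mbf d_m\cong\im\mbf d_m$; since these three simple subquotients have pairwise distinct highest weights, each appears with multiplicity one, which is statement (4).

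The assembly itself is routine; the substance is all upstream and already carried out --- the exactness of the type II complex away from the singular point (Lemmas \ref{Lem for q>m} and \ref{Lem for q<m}), the isolation of the extra trivial subquotient precisely at $q=m$ (Lemma \ref{Lem q=m}), and the Skryabin-type independence argument of Theorem \ref{thm: 5.5} (following \cite{Skr}) that pins the admissible highest weights down to the exceptional list. The only points within the assembly that need a word of care are the depth bookkeeping when transporting the depth-zero statements to arbitrary depth via Lemma \ref{lem: shift functor}, and checking that the three simple subquotients appearing in (4) are genuinely pairwise non-isomorphic.
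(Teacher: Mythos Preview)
Your proposal is correct and follows the paper's approach exactly: the paper's own proof is the single sentence ``It is a direct consequence of Theorems \ref{thm: 4.3}, \ref{thm on V(la_q)} and \ref{thm: 5.5},'' and your expansion of that assembly is accurate in every detail. One point worth flagging is that your reading of the type II complex correctly gives the head $\calv(\theta_q)/\ker\mbf d_q\cong\im\mbf d_q\cong L(\theta_{q-1})$ (not $L(\theta_{q+1})$), consistent with the inductive formula $\ch(\calv(\theta_q))=\ch(L(\theta_q))+\ch(L(\theta_{q-1}))$ in the proof of Theorem \ref{thm: irr char}; the ``$L(\theta_{q+1})$'' appearing in parts (3)--(4) of the statement is a misprint for ``$L(\theta_{q-1})$''.
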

				\begin{proof} It is a direct consequence of Theorems \ref{thm: 4.3}, \ref{thm on V(la_q)} and \ref{thm: 5.5}. %
				\end{proof}

				\subsection{Irreducible characters}
				Let $\mf g_0= \mf n^- \op \mf h \op \mf n^+$ be the triangular decomposition of $\mf g_0$ and $\Phi_{\bar 0}^+$ be the positive root system of $\mf g_0$.  Furthermore, denote by $\Phi_{\geq 1}$ be the root system of $\mf g_{\geq 1}$ relative to $\mf h$, i.e, $\Phi_{\geq 1}= \{\al \in \mf h^*: (\mf g_{\geq 1})_\al \neq 0\}  $, where
				$$\mf g_{\geq 1}=\{x \in \mf g_{\geq 1}: [h,x]=\al(h) x, \, \forall \, h \in \mf h\}.$$
				Then we have $\mf g_{\geq 1}=\dis{\sum_{\al \in \Phi_{\geq 1} }} \mf g_\al. $  For $\la \in \La^+$, associate a subset of $\mf h^*$ by $D(\lambda)=\{\mu\in \mf h^* : \mu \succeq \lambda\}$, where $\mu \succeq \lambda$ means that $\mu-\lambda$ lies in the $\Z_{\geq 0}$-span of $\Phi_{\geq 1}\cup \Phi^+_{\bar 0}$. Now we define an $\bbf$-algebra $\mcal B$ whose elements are series of the form $\sum_{\lambda\in {\mf h}^*}c_\lambda e^\lambda$ with $c_\lambda\in \bbf$ and $c_\lambda=0$ for $\lambda$ outside the union of a finite number of sets of the form $D(\mu)$.
				Consequently $\mcal B$ becomes a commutative associative algebra if we define $e^\la e^\mu = e^{\la+\mu}$ with $e^0$ as identity element. It is well known that formal exponents $\{e^\la : \la \in \mf h^*\}$ are linearly independent and are in one to one correspondence with $\mf h^*$. Let $W$ be a semi-simple $\mf h$-module with finite dimensional weight spaces such that $W=\dis{\sum_{\la \in \mf h^*} }W_\la. $ Then we define $\ch(W)=\sum_{\lambda\in {\mf h}^*}\dim W_\lambda e^\lambda$. In particular, if $V$ is an object in $\mathcal{O}$,
				then $\ch(V)\in \mathcal{B}$.

				\begin{lem}\label{Lem Char}  The following statements hold.
					\begin{itemize}
						\item[(1)] Let $V_1, V_2$ and $V_3$ be three $\ggg$-modules in the category $\co$. If there is an exact sequence of $\mf g$-modules $0\rightarrow V_1\rightarrow V_2\rightarrow V_3\rightarrow 0 $, then $\ch(V_2)=\ch(V_1)+\ch(V_3)$.
						\item[(2)] Suppose $W=\dis\sum_{\lambda \in \mf h^*}W_\lambda$ is a semi-simple $\mf h$-module with finite-dimensional weight spaces, and $U=\dis\sum_{\lambda}U_\lambda$ is a finite-dimensional $\mf h$-module. If $\ch(W)=\dis\sum_{\la \in \mf h^*}c_{\lambda}e^\lambda$ falls in $\mathcal{B}$, then $\ch(W\otimes U)$ must fall in $\mathcal{B}$ and $\ch(W\otimes U)=\ch(W)\ch(U)$.
					\end{itemize}
				\end{lem}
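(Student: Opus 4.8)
The plan is to prove the two parts by elementary weight-space bookkeeping; the only real content is verifying that the formal sums involved stay inside the algebra $\mcal B$, and in part (2) that the product on $\mcal B$ is well defined under the stated hypotheses.

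For part (1), I would first note that every morphism in $\co$ is by definition an even $U(\ggg)$-module map, hence in particular an $\mf h$-module map, and that $V_1,V_2,V_3$ are weight modules. A submodule, respectively a quotient, of a weight module is again a weight module whose $\lambda$-weight space is the intersection, respectively the image, of the $\lambda$-weight spaces; hence the given short exact sequence restricts, for each $\lambda\in\mf h^*$, to a short exact sequence $0\to(V_1)_\lambda\to(V_2)_\lambda\to(V_3)_\lambda\to 0$ of finite-dimensional vector spaces. Therefore $\dim(V_2)_\lambda=\dim(V_1)_\lambda+\dim(V_3)_\lambda$ for all $\lambda$, which upon summing against $e^\lambda$ gives $\ch(V_2)=\ch(V_1)+\ch(V_3)$; since each $V_i\in\co$ all three characters already lie in $\mcal B$, and $\mcal B$ is closed under addition, nothing more is needed.

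For part (2), write $\ch(W)=\sum_\mu c_\mu e^\mu$ with support contained in a finite union $D(\mu_1)\cup\cdots\cup D(\mu_p)$, and let $\nu_1,\dots,\nu_r$ be the finitely many weights of the finite-dimensional module $U$, so $\ch(U)=\sum_{j=1}^{r}d_{\nu_j}e^{\nu_j}$ with $d_{\nu_j}=\dim U_{\nu_j}$. The first key observation is the translation identity $D(\mu)+\nu=D(\mu+\nu)$ for all $\mu,\nu\in\mf h^*$: indeed $\lambda\succeq\mu$ means $\lambda-\mu$ lies in the $\Z_{\geq 0}$-span of $\Phi_{\geq 1}\cup\Phi^+_{\bar 0}$, which is precisely the condition that $(\lambda+\nu)-(\mu+\nu)$ lies there, i.e. $\lambda+\nu\succeq\mu+\nu$. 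The second observation is that $(W\otimes U)_\lambda=\bigoplus_{\mu+\nu=\lambda}W_\mu\otimes U_\nu$ and, for fixed $\lambda$, only $\nu\in\{\nu_1,\dots,\nu_r\}$ contribute, each summand $W_{\lambda-\nu_j}\otimes U_{\nu_j}$ being finite-dimensional; hence $W\otimes U$ is again a semisimple $\mf h$-module with finite-dimensional weight spaces, and its support lies in $\bigcup_{i,j}(D(\mu_i)+\nu_j)=\bigcup_{i,j}D(\mu_i+\nu_j)$, a finite union, so $\ch(W\otimes U)\in\mcal B$. The same finiteness shows the product $\ch(W)\ch(U)$ in $\mcal B$ is well defined, the coefficient of $e^\lambda$ being the finite sum $\sum_{j}c_{\lambda-\nu_j}d_{\nu_j}$. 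Finally
$$\ch(W\otimes U)=\sum_\lambda\Big(\sum_{\mu+\nu=\lambda}\dim W_\mu\,\dim U_\nu\Big)e^\lambda=\Big(\sum_\mu\dim W_\mu\,e^\mu\Big)\Big(\sum_\nu\dim U_\nu\,e^\nu\Big)=\ch(W)\ch(U).$$

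I do not anticipate a genuine obstacle: part (1) is immediate from exactness of the weight-space functor, and in part (2) the only point requiring care is the bookkeeping with the partial order $\succeq$ — namely the translation identity $D(\mu)+\nu=D(\mu+\nu)$ together with the remark that finite-dimensionality of $U$ is exactly what makes both the support condition defining $\mcal B$ and the Cauchy product of the two series behave well.
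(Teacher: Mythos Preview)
Your proposal is correct. The paper states this lemma without proof, treating both parts as standard facts about formal characters; your elementary weight-space argument is exactly the intended justification, and the only nontrivial bookkeeping---the translation identity $D(\mu)+\nu=D(\mu+\nu)$ together with the finiteness of the weights of $U$---is handled cleanly.
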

				Now we study formal character for $\Delta(\la)$, $\la \in \La^+$. Since $\Delta(\la)= U(\mf g_{\geq 1}) \ot_{\bbf} L^0(\la)$, so as a $U(\mf g_{\geq 1})$-module $\Delta(\la)$ is free module of rank dim $L_0(\la)$ and generated by $L_0(\la)$. Hence by Lemma \ref{Lem Char}, $\ch(\Delta(\la))=\ch( U(\mf g_{\geq 1})) \ch(L_0(\la))$. Note that $\Phi_{\geq 1}=\Phi_{\bar 0}^{\geq 1} \cup \Phi_{\bar 1}^{\geq 1},$ where $\Phi_{\bar i}^{\geq 1} = \Phi_{\geq 1} \cap \Phi_{\bar i}$, $i \in \Z_2$. Set
				
				$$\Upsilon=\prod\limits_{\alpha\in\Phi^{\geq 1}_{\bar 1}}
				(1+e^{\alpha})^{-1} \prod\limits_{\alpha\in\Phi^{\geq 1}_{\bar 0}}
				(1-e^{\alpha})^{-1}.$$
				Then we have $\ch(\Delta(\lambda))=\Upsilon\ch L_0(\lambda)$.
				
				{				
					\subsection{Character formula for $L(\la)$} Set $ \Gamma=\ch(R)$, where $\mcal R$ is viewed as a semi-simple $\mf h$-module. Then by Lemma \ref{Lem Char} we have $\ch (\mcal V(\la))= \ch(\mcal R \ot L^0(\la))= \Gamma \ch(L^0(\la))$. The following theorem presents the character formulas for irreducible modules of category $\mcal O$.
					\begin{thm}\label{thm: irr char}
						\begin{itemize}
							\item[(1)] \ch$(L(\la))	=\Gamma \ch(L^0(\la))$, for $\la \neq \theta_k, \omega_k$, $k \geq 1$.
							\item[(2)] $\ch(L(\omega_k))=\dis{\sum_{i=0}^{k} } (-1)^{2k-i}\Gamma \ch(L^0(\omega_{k-i})),  \,\,\, \forall \, k \geq 1.$
							\item[(3)] $\ch(L(\theta_q))=\dis{\sum_{i=0}^{q-1} } (-1)^{2q-i}\Gamma \ch(L^0(\theta_{q-i})) +(-1)^q\Gamma e^{\mcale},  \,\,\, \forall \, q <m.$
							\item[(4)] $\ch(L(\theta_q))=\dis{\sum_{i=0}^{q-1} } (-1)^{2q-i}\Gamma \ch(L^0(\theta_{q-i})) +(-1)^q\Gamma e^{\mcale} +(-1)^{q-m+1} \ch(L(0))$ for all $q \geq m$.
							
						\end{itemize}
						
					\end{thm}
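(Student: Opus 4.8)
The plan is to extract the characters directly from the composition series of the exceptional mixed-product modules, together with the identity $\ch(\mathcal{V}(\la))=\Gamma\,\ch(L^0(\la))$ recorded just above, using only additivity of $\ch$ along short exact sequences (Lemma \ref{Lem Char}(1)). Statement (1) is immediate: for $\la\notin\{\omega_k,\theta_q\mid k,q\ge1\}$ Theorem \ref{thm: 5.5} gives $L(\la)\cong\mathcal{V}(\la)$, hence $\ch(L(\la))=\ch(\mathcal{V}(\la))=\Gamma\,\ch(L^0(\la))$.

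For (2) I would induct on $k$ up the type I complex \eqref{eq: type I complex}. By Theorem \ref{thm: 4.3}, inside $\mathcal{V}(\omega_k)$ the submodule $\im\mfk d_{k-1}=\ker\mfk d_k$ is $\cong L(\omega_k)$ and the quotient $\mathcal{V}(\omega_k)/\im\mfk d_{k-1}\cong\im\mfk d_k$ is $\cong L(\omega_{k+1})$, so there is a short exact sequence $0\to L(\omega_k)\to\mathcal{V}(\omega_k)\to L(\omega_{k+1})\to0$, which by Lemma \ref{Lem Char}(1) yields the recursion $\ch(L(\omega_{k+1}))=\Gamma\,\ch(L^0(\omega_k))-\ch(L(\omega_k))$. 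The base of the induction comes from $L(\omega_1)\cong\im\mfk d_0$ and the structure of $\mathcal{V}(\omega_0)=\calr$; unwinding the recursion telescopes the terms $\Gamma\,\ch(L^0(\omega_j))$ into the alternating sum of (2), the lowest term appearing through $\ch(L^0(\omega_0))=e^0$.

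Statements (3) and (4) I would prove dually along the type II complex \eqref{eq:type II complex}, inducting downward in $q$ from $\mathcal{V}(\theta_0)$. For $q\ne m$, Theorem \ref{thm on V(la_q)} gives $\ker\mbf d_q=\im\mbf d_{q+1}\cong L(\theta_q)$ and a short exact sequence $0\to L(\theta_q)\to\mathcal{V}(\theta_q)\to L(\theta_{q-1})\to0$, so $\ch(L(\theta_q))=\Gamma\,\ch(L^0(\theta_q))-\ch(L(\theta_{q-1}))$; since $\mathcal{V}(\theta_0)$ is irreducible and $L^0(\theta_0)=\bbf_{\mcale}$ is one dimensional (so $\ch(L^0(\theta_0))=e^{\mcale}$), telescoping for $q<m$ gives (3). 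For $q\ge m$ the induction passes through the singular index $q=m$, where by Theorem \ref{thm on V(la_q)}(4) (see also Theorem \ref{Final irr thm}(4)) $\mathcal{V}(\theta_m)$ has the three composition factors $L(\theta_m),L(\theta_{m+1}),L(0)$ rather than two; consequently the truncated complex $\mathcal{V}(\theta_q)\to\cdots\to\mathcal{V}(\theta_0)\to0$ fails to be exact precisely at the $m$-th spot, where its only homology is $L(0)$, and the Euler-characteristic identity for this complex acquires one extra summand, whose parity works out to $(-1)^{q-m+1}$ — this produces the correction term $(-1)^{q-m+1}\ch(L(0))$ in (4).

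The step that requires care rather than routine computation is the sign-and-index bookkeeping. The two complexes run in opposite directions (type I grows to the right, type II grows to the left and terminates in $0$), so the two telescopings and their base cases ($\mathcal{V}(\omega_0)$ versus $\mathcal{V}(\theta_0)$) must be lined up consistently, and at $q=m$ one has to place the extra trivial factor $L(0)$ in the correct homological degree in order to pin down the sign $(-1)^{q-m+1}$. Beyond that, everything is a formal consequence of Lemma \ref{Lem Char} and the composition-series results of Theorems \ref{thm: 4.3}, \ref{thm on V(la_q)} and \ref{thm: 5.5}, so no new calculation is needed.
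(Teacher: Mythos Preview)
Your approach is essentially the same as the paper's: both derive the recursions
\[
\ch L(\omega_k)=\Gamma\,\ch L^0(\omega_k)-\ch L(\omega_{k-1}),\qquad
\ch L(\theta_q)=\Gamma\,\ch L^0(\theta_q)-\ch L(\theta_{q-1})\ (q\neq m),
\]
together with the singular relation at $q=m$ carrying the extra $\ch L(0)$, directly from the composition factors of $\mathcal V(\omega_k)$ and $\mathcal V(\theta_q)$ recorded in Theorems~\ref{thm: 4.3}, \ref{thm on V(la_q)} and \ref{Final irr thm}, and then telescope with base cases $L(\omega_0)=\mathcal V(0)$ and $L(\theta_0)=\mathcal V(\theta_0)$ (so $\ch L(\theta_0)=\Gamma e^{\mcale}$). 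Your Euler-characteristic phrasing for (4) is just a repackaging of the same telescoping. One small slip: for type~II you should induct \emph{upward} in $q$ starting from $\ch L(\theta_0)$, not ``downward from $\mathcal V(\theta_0)$,'' since your recursion expresses $\ch L(\theta_q)$ in terms of $\ch L(\theta_{q-1})$.
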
	
					\begin{proof}
						At first we note that from the character formula
						$$\ch(\mcal V(\la))=\dis{\sum_{\mu}}[\mcal V(\la): L(\mu)]L(\mu),   $$ we get the following inductive formulas with the help of Theorem \ref{Final irr thm}. Inductive character formulas for $L(\omega_k)$ are given by
						$$\ch(L(\omega_k))= \Gamma \ch(L^0(\omega_k)) - \ch(L(\omega_{k-1})),$$ for all $k \geq 1$, where $L(\omega_0)= \mcal R$. Further inductive character formulas for $L(\theta_q)$ are given by
						$$\ch(L(\theta_q))= \Gamma \ch(L^0(\theta_q)) - \ch(L(\theta_{q-1})) \, \, \text{and}$$
						$$\ch(L(\theta_m))= \Gamma \ch(L^0(\theta_m))- \ch(L(0))- \ch(L(\theta_{m-1})),$$
						for all  $q \geq 1$ with $  q \neq m $,  where $L(\theta_0)= \mcal R \ot \C_{\mcale}$, i.e, $\ch(L(\theta_0))=\Gamma e^{\mcale}$.
						From the above inductive formulas, we have the result.		
					\end{proof}	
				}

				\begin{rem}
					(1) As to the characters of finite-dimensional irreducible $\gl(m|n)$-modules $L^0(\lambda)$, it has been already known. The reader may refer to \cite{CW12} for typical weights, and \cite{B03, BLW, CLW15}, {\sl{etc}}. for general integral dominant weights.

					(2) In our arguments, we actually exclude the cases $m=n=1$ and $n=1$ for the technical reason although Theorem \ref{thm: 4.3}, Theorem \ref{thm on V(la_q)}(1)(2) are still true for those cases.
				\end{rem}

				\section{Tilting modules and character formulas}\label{sec: tilt sec}
				In this section, we determine the character formulas for tilting modules in $\mathcal{O}$.

				\subsection{Tilting modules}  
			\begin{defn}
				An object $M\in\mathcal{O}$ is said to admit a $\Delta$-flag if there exists an increasing filtration
				$$0=M_0\subset M_1\subset M_2\subset\cdots\cdots$$
				such that $M=\bigcup\limits_{i=0}^{\infty}M_i$ and $M_{i+1}/M_{i}\cong \Delta(\lambda_i)$ for all $i\geq 1$  where $\lambda_i\in\Lambda^+$ for all $i$.
			\end{defn}
			
			According to Soergel's construction (see \cite{Soe}) and its super counterpart (see \cite{JB}), we know that associated with each $(d,\lambda)\in \bbz\times \Lambda^+$, there exists a uniquely determined indecomposable tilting module ${}^dT(\lambda)$  whose $\Delta$-flag starts with ${}^d\Delta(\lambda)$. Moreover, if the semi-infinite character property defined in the next subsection is ensured, then following Soergel's reciprocity  we can read out the multiplicities of ${}^d\Delta(\mu)$ in the $\Delta$-flag of ${}^dT(\lambda)$ for every pair $(\lambda,\mu)$.
			
			We will first confirm that for $\ggg=\bW(m,n)$, the semi-infinite character property is satisfied,  then investigate tilting modules in $\co$ along Soergel's way. For the simplicity, we will ignore the depth $d$ in the following arguments.

			\subsection{Semi-infinite character}
			\begin{defn}
				Let $\textbf g=\displaystyle{\sum_{i \in \bbz}}\textbf g_i$ be a $\bbz$ graded Lie superalgebra with dim $\textbf g_i < \infty$ for all $i \in \bbz$. A character $\gamma: \textbf g_0 \to \bbf$ is said to be a semi-infinite character for $\textbf g$ if it satisfy the following properties.
				\begin{itemize}
					\item[(SI-1)] As a Lie superalgebra, $\textbf g$ is generated by $\textbf g_1, \textbf g_0$ and $ \textbf g_{-1}$.\\
					\item[(SI-2)] $\gamma([X,Y])=\str(\ad X\circ \ad Y \mid_{\textbf g_0})$, for all $X \in \textbf g_1$ and $Y \in \textbf g_{-1}$.
				\end{itemize}
			\end{defn}		
			
			\begin{thm}\label{semi-inf}
				A linear map $\mcal E $ is a semi-infinite character on $\bW(m,n)$ if and only if $\mcal E(x_i\partial_j)=\delta_{ij}$,  $\mcal E(y_r\partial_j)=0$, $\mcal E(x_iD_t)=0$ and $\mcal E(y_rD_t)=-\delta_{rt}$ for all $1 \leq i,j \leq m$ and $1 \leq r,t \leq n$. Consequently, $\mcale=\mcal E|_\hhh$.
				
			\end{thm}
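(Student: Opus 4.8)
The plan is the following. First note that axiom (SI-1) does not involve $\mcal E$ and holds for all $m,n$: by the standard fact that $\bW(m,n)=\bigoplus_{i\ge -1}\ggg_i$ is generated by its components of degrees $-1,0,1$ (e.g. $[\ggg_1,\ggg_i]=\ggg_{i+1}$ for all $i\ge 1$), so everything is about (SI-2). By definition a semi-infinite character is in particular a character of $\ggg_0$, i.e. it vanishes on $[\ggg_0,\ggg_0]$. Via the isomorphism $\Phi\colon\ggg_0\to\gl(m|n)$ of \eqref{eq: iso gl} and the standard fact that $[\gl(m|n),\gl(m|n)]=\ker(\str)$ (which holds also when $m=n$, where $\mathfrak{sl}(m|n)$ still coincides with both $[\gl(m|n),\gl(m|n)]$ and $\ker(\str)$), the characters of $\ggg_0$ form the one-dimensional space $\bbf\cdot\mcal E_{\mathrm{st}}$ with $\mcal E_{\mathrm{st}}:=\str\circ\Phi$. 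Evaluating $\str$ on the matrix units gives $\mcal E_{\mathrm{st}}(x_i\partial_j)=\delta_{ij}$, $\mcal E_{\mathrm{st}}(y_r\partial_j)=0$, $\mcal E_{\mathrm{st}}(x_iD_t)=0$, $\mcal E_{\mathrm{st}}(y_rD_t)=-\delta_{rt}$; thus the prescribed values say exactly $\mcal E=\mcal E_{\mathrm{st}}$, and the theorem reduces to the claim that $c\,\mcal E_{\mathrm{st}}$ satisfies (SI-2) precisely when $c=1$.

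It therefore remains to examine the identity
$$c\,\str\big([X,Y]\big)=\str\big(\ad X\circ\ad Y\big|_{\ggg_0}\big),\qquad X\in\ggg_1,\ Y\in\ggg_{-1},$$
where both sides are bilinear in $(X,Y)$, so it suffices to test it on the monomial bases $\ggg_{-1}=\spann_\bbf\{\partial_i,D_s\}$ and $\ggg_1=\spann_\bbf\{f\partial_j,\,fD_t\mid \deg f=2\}$. For each such pair one first computes $[X,Y]\in\ggg_0$ from the superbracket of vector fields, then the endomorphism $\ad X\circ\ad Y$ of the finite-dimensional space $\ggg_0$ on its basis $\{E_{pq}\}$ (noting that $\ad Y\colon\ggg_0\to\ggg_{-1}$ and $\ad X\colon\ggg_{-1}\to\ggg_0$ are given by differentiating polynomial coefficients), and finally reads off its supertrace, weighting $E_{pq}$ by $(-1)^{\wp(E_{pq})}$. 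A single pair already fixes the scalar: for $X=x_1^2\partial_1\in\ggg_1$ and $Y=\partial_1\in\ggg_{-1}$ one gets $[X,Y]=-2E_{11}$, while $\ad X\circ\ad Y$ kills every matrix unit except $E_{11}\mapsto -2E_{11}$, so $\str(\ad X\circ\ad Y|_{\ggg_0})=-2=\str([X,Y])$ and hence $c=1$. The remaining content of the theorem is that, with $c=1$, this identity holds for every pair from the two bases; that is the routine but lengthy computation carried out in Appendix~C. Once it is in place, the right-hand side of (SI-2) is automatically well defined as a function of $[X,Y]\in\ggg_0$, so the prescription is consistent and $\mcal E_{\mathrm{st}}$ really is a semi-infinite character. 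The last assertion $\mcale=\mcal E|_{\hhh}$ is then immediate, since $\mcale=\sum_i\epsilon_i-\sum_j\delta_j$ evaluates to $1$ on $x_i\partial_i$ and to $-1$ on $y_rD_r$, matching $\mcal E|_{\hhh}$.

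The main obstacle is the sign bookkeeping in this last verification: Koszul signs in the superbracket of derivations, parity signs in the supertrace, and the even/odd split of the matrix units combine to make the case analysis — organised by the type ($x_ax_b$, $x_ay_s$ or $y_sy_t$) of the coefficient of $X$, by whether $X$ carries a $\partial$ or a $D$, and by whether $Y=\partial_i$ or $Y=D_s$ — genuinely tedious, although each individual case is mechanical. A shortcut that would reduce the whole thing to the one pair above is to observe that the bilinear form $\Theta(X,Y):=\str(\ad X\circ\ad Y|_{\ggg_0})-\str([X,Y])$ on $\ggg_1\times\ggg_{-1}$ is invariant under the even subalgebra $\ggg_{0,\bz}$ — using that $\str$ annihilates supercommutators and that $\ad z$ preserves $\ggg_0$ and $\ggg_{\pm 1}$ for $z\in\ggg_0$ — hence is a scalar multiple of the essentially unique $\ggg_{0,\bz}$-equivariant contraction $\ggg_1\times\ggg_{-1}\to\bbf$, so vanishing on one pair forces $\Theta\equiv 0$; establishing that uniqueness is itself a short representation-theoretic check in the super setting.
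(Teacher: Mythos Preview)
Your proposal is correct and the overall architecture matches the paper: establish (SI-1), then verify (SI-2) by the case-by-case computation of Appendix~C. The paper proves (SI-1) by exhibiting explicit brackets (citing \cite{DSY20,DSY24} for the pure-$x$ and pure-$y$ parts and handling the mixed monomials directly), whereas you invoke it as a standard transitivity fact; both are fine. The genuine difference is in the ``only if'' direction: the paper extracts the forced values of $\mcal E$ by inspecting four particular cases (1), (2), (11), (12) of Appendix~C, while you instead observe up front that $\ggg_0/[\ggg_0,\ggg_0]\cong\gl(m|n)/\ker(\str)$ is one-dimensional, so any character is $c\cdot(\str\circ\Phi)$, and a single (SI-2) evaluation pins $c=1$. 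Your route is cleaner and more conceptual for uniqueness, and your closing remark about the $\ggg_{0,\bz}$-invariance of $\Theta$ is a legitimate alternative that would bypass Appendix~C altogether; the paper does not pursue this and simply carries out the twelve-case supertrace computation.
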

			\begin{proof}
				At first we prove that as a Lie superalgebra $\mf g$ is generated by $\mf g_1, \mf g_0$ and $ \mf g_{-1}$. Let $S$ be the algebra generated by $\mf g_0,\mf g_1, \mf g_{-1}$.  Note that from \cite[Lemma 2.1]{DSY20}   and \cite[Lemma 1.3]{DSY24} we have $x^\alpha \pr_j$, $y_\mu D_j \in S$ for all $\al \in \N^m, \mu \subseteq [1,n]$, $i \in [1,m], j \in [1,n]$. Therefore we need to show $x_{i_1}\dots x_{i_k}y_{j_1}\dots y_{j_t}\pr_s$ and $x_{i_1}\dots x_{i_k}y_{j_1}\dots y_{j_t}D_s \in [\mf g_1, \mf g_{r-1}] $ for all $r \geq 2$. Now we consider few cases to complete the proof.\\
				
				{\bf Case I:} Let $x_{i_1}\dots x_{i_k}y_{j_1}\dots y_{j_t}\pr_s \in \mf g_r$ for some $r \geq 2$.\\
				
				Consider the following bracket
				$$[x_{i_l}y_{j_1}D_{j_1},x_{i_1}\dots x_{i_{l-1}}x_{i_{l+1}}\dots x_{i_k}y_{j_1}\dots y_{j_t}\pr_s]= x_{i_1}\dots x_{i_k}y_{j_1}\dots y_{j_t}\pr_s,  $$
				
				{\bf Case II:} Consider the vectors of the form $x_{i_1}\dots x_{i_k}y_{j_1}\dots y_{j_t}D_s \in \mf g_r,$ for some $r \geq 2$.\\
				{Subcase (i):} Let $k\geq 2$. Also let $i_2$ appears $l$ times in the set $\{i_2, \cdots, i_k\}$.\\
				Now consider $$ [x_{i_1}x_{i_2}\pr_{i_2},x_{i_2}\dots x_{i_k}y_{j_1}\dots y_{j_t}D_s]=lx_{i_1}\dots x_{i_k}y_{j_1}\dots y_{j_t}D_s
				$$
				{Subcase (ii):} Let $k= 1$.
				Since $x_ry_{j_1}\dots y_{j_t}D_s \in \mf g_r$ for some $r \geq 2$, so $\Card(\{j_1, \cdots, j_t\}) \geq 2$. So there exists $p \in \{1,2,\dots,t\}$ such that $s\neq j_p$. Assume that $p=1$ and consider
				$$[x_ry_{j_1}\pr_r,x_ry_{j_2}\dots y_{j_t}D_s ] = x_ry_{j_1}\dots y_{j_t}D_s.$$
				Therefore from Case I and Case II along with results of \cite{DSY20,DSY24} we have $S=\mf g$. We prove the (SI-2) property in  \S\ref{sec: app C} Appendix C. 			
			\end{proof}

			\subsection{Soergel reciprocity}\label{sec: soeg reci}
			The following result follows from \cite[Theorem 5.2]{Soe} and \cite[Threorem 5.1]{JB}.
			
			\begin{prop}\label{tilting}
				For each $\lambda\in\Lambda^+$, up to isomorphism, there exists a unique indecomposable object $T(\lambda)\in\mathcal{O}$ such that
				\begin{itemize}
					\item[(1)] $\Ext_{\mathcal{O}}^1(\Delta(\mu), T(\lambda))=0,\,\forall\,\mu\in\Lambda^+$.
					\item[(2)] $T(\lambda)$ admits a $\Delta$-flag, starting with $\Delta(\lambda)$ at the bottom.
				\end{itemize}
			\end{prop}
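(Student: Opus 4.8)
The plan is to deduce the proposition directly from Soergel's tilting theory for $\bbz$-graded Lie algebras admitting a semi-infinite character \cite{Soe}, in the super-algebraic formulation of Brundan \cite{JB}; so the work consists entirely in checking that the category $\co$ for $\ggg=\bW(m,n)$ fits that axiomatic framework. Concretely, I would collect the following structural items, most of which are already available: $\co$ is abelian, each of its objects is $\hhh$-semisimple with finite-dimensional weight spaces and, by Lemma \ref{lem1}, of finite length in each fixed $\bbz$-degree (Definition \ref{category O}); the standard objects $\Delta(\la)$ and the costandard objects $\nabla(\la)\cong\calv(\la)$ (Proposition \ref{p3.6}) are indexed by $\Lambda^+\times\bbz$, with $\Delta(\la)$ having simple head $L(\la)$ and $\nabla(\la)$ simple socle $L(\la)$ (Lemmas \ref{lem1} and \ref{lem: simple socles}); the partial order on $\Lambda^+\times\bbz$ by ``greater depth first, then standard dominance'' is such that, within any bounded range of $\bbz$-degrees, only finitely many $\mu$ can contribute a factor $\Delta(\mu)$ to an object of $\co$, so that each truncation of $\co$ to bounded degrees is a finite-length highest weight category; and, crucially, the functional $\mathcal{E}$ of Theorem \ref{semi-inf} is a semi-infinite character for $\ggg$ in the sense required in \cite{Soe,JB}. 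Granting these, \cite[Theorem 5.2]{Soe} and \cite[Theorem 5.1]{JB} yield exactly the asserted existence and uniqueness.

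The only homological input still to be supplied (the semi-infinite character already being Theorem \ref{semi-inf}) is the $\Delta$--$\nabla$ orthogonality
\[
\Hom_{\co}(\Delta(\la),\nabla(\mu))\cong\bbf\,\delta_{\la\mu},\qquad\qquad \Ext^1_{\co}(\Delta(\la),\nabla(\mu))=0,\qquad \la,\mu\in\Lambda^+,
\]
together with the local finiteness of $\Delta$-flags mentioned above. For the $\Hom$-statement one only needs that morphisms in $\co$ preserve the $\bbz$-grading, together with the simplicity of $L^0(\la)$: a nonzero map must restrict to a $\ggg_0$-isomorphism $\Delta(\la)_0=L^0(\la)\xrightarrow{\sim}L^0(\mu)=\nabla(\mu)_0$. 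For the $\Ext^1$-vanishing I would use the induction adjunction coming from $\Delta(\la)=U(\ggg)\otimes_{U(\sfp)}L^0(\la)$, namely $\Ext^\bullet_{\co}(\Delta(\la),\nabla(\mu))\cong\Ext^\bullet_{\sfp}(L^0(\la),\operatorname{Res}_{\sfp}\nabla(\mu))$, and then compute the right-hand side in degree $1$ via the Hochschild--Serre sequence for the abelian ideal $\ggg_{-1}\triangleleft\sfp$ (on which $L^0(\la)$ acts trivially), using the explicit $\ggg_{-1}$-action on the mixed-product realization $\nabla(\mu)\cong\calr\otimes L^0(\mu)$ and Corollary \ref{Soc of V(la)}. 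This parallels the Lie-algebra case \cite{DSY20} and the finite-dimensional super case \cite{DSY24}, with the one genuine difference that $\ggg_0=\gl(m|n)$ is not reductive, so the reduction cannot simply be pushed down to a semisimple category of $\ggg_0$-modules and the mixed-product description must be used in an essential way.

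With these checks in place, Soergel's construction, which I would reproduce only in outline, runs as follows. Enumerate the $\mu\in\Lambda^+$ with $\mu\succ\la$ compatibly with the order above; starting from $\Delta(\la)$ one forms at each stage the universal extension killing $\Ext^1_{\co}(\Delta(\mu),-)$, and the local finiteness of $\Delta$-flags guarantees that the process stabilizes degree by degree, producing an object $T(\la)\in\co$ with a $\Delta$-flag beginning with $\Delta(\la)$ and satisfying $\Ext^1_{\co}(\Delta(\mu),T(\la))=0$ for all $\mu$. The orthogonality above then forces $T(\la)$ to admit a $\nabla$-flag as well, hence it is tilting; it is indecomposable because $\la$ occurs exactly once, at the bottom, in its $\Delta$-flag, so $\End_{\co}(T(\la))$ is local; and uniqueness is the usual Krull--Schmidt argument combined with Soergel reciprocity (Proposition \ref{soegrel formula}), which pins down the $\Delta$-flag multiplicities of any tilting object. \textbf{The main obstacle} is precisely the pair of preliminary facts recorded in the previous paragraph---the $\Delta$--$\nabla$ orthogonality and the local finiteness of $\Delta$-flags---because it is here that the infinite-dimensionality of $\bW(m,n)$ and the non-reductivity of $\ggg_0$ intervene, forcing one to work inside Soergel's ``deep'' graded framework rather than in a classical finite highest weight category; once they are established, the proposition is a citation.
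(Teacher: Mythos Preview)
Your approach is essentially the same as the paper's: both deduce the proposition by citing \cite[Theorem 5.2]{Soe} and \cite[Theorem 5.1]{JB}, after noting that the category $\co$ for $\bW(m,n)$ fits into the Soergel--Brundan framework (the paper treats the semi-infinite character verification in Theorem~\ref{semi-inf} as the only nontrivial input and otherwise simply cites). Your write-up is considerably more detailed about the structural hypotheses to be checked; one small point is that invoking Soergel reciprocity (Proposition~\ref{soegrel formula}) in the uniqueness argument is unnecessary and mildly circular in the paper's logical order---uniqueness already follows from Krull--Schmidt together with the fact that $\la$ occurs exactly once at the bottom of the $\Delta$-flag.
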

			
			We call the indecomposable module $T(\lambda)$ as the tilting module corresponding to $\lambda\in\Lambda^+$.
			Now we get the following tilting modules formula by Theorem \cite[Theorem 5.12]{Soe}, Corollary 5.8 of \cite{JB} together with the help of Theorem \ref{semi-inf}.
			\begin{prop} (Soergel reciprocity)\label{soegrel formula}
				Let $\lambda, \mu\in\Lambda^+$. 
				Then we have
				$$[T(\lambda):\Delta(\mu)]=[\mathcal{V}(-w_\circ\mu-\mathcal{E}_\circ):
				L(-w_\circ\lambda-\mathcal{E}_\circ)],$$
				where $w_\circ$ is the longest element in the Weyl group of $\ggg_{0}$.
			\end{prop}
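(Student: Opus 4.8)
The plan is to derive the identity from Soergel's general tilting reciprocity \cite[Theorem 5.12]{Soe} together with its super-analogue \cite[Corollary 5.8]{JB}, after supplying the three structural inputs already obtained: the existence of a semi-infinite character $\mathcal{E}$ on $\ggg=\bW(m,n)$ (Theorem \ref{semi-inf}), the realization $\nabla(\nu)\cong\mathcal{V}(\nu)$ of costandard modules as mixed-product modules (Proposition \ref{p3.6}), and the category equivalences $T_{d',d}$ (Lemma \ref{lem: shift functor}), which permit us to forget the depth throughout. Concretely: the multiplicity $[T(\lambda):\Delta(\mu)]$ is well defined since $T(\lambda)$ admits a $\Delta$-flag (Proposition \ref{tilting}), and Soergel's framework attaches to the semi-infinite character $\mathcal{E}$ a contravariant ``semi-infinite duality'' between $\mathcal{O}$ (built from the filtration $\{\ggg_{\geq k}\}$) and the opposite category $\bar{\mathcal{O}}$ (built from $\{\ggg_{\leq k}\}$), under which tilting objects go to projective objects and $\Delta(\mu)$ goes to a costandard object.

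First I would record the weight dictionary governing this duality. On finite-dimensional $\ggg_0$-modules the contragredient (super-)dual sends $L^0(\nu)$ to $L^0(-w_\circ\nu)$, where $w_\circ$ is the longest element of the Weyl group of $\ggg_0$; composing with the shift by $\mathcal{E}|_\hhh$ coming from the semi-infinite character, the highest-weight label $\nu$ is transported to $-w_\circ\nu-\mathcal{E}|_\hhh$. By Theorem \ref{semi-inf} one has $\mathcal{E}|_\hhh=\mathcal{E}_\circ=\sum_{i=1}^m\epsilon_i-\sum_{j=1}^n\delta_j$, so the label becomes exactly $-w_\circ\nu-\mathcal{E}_\circ$. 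Next I would invoke BGG reciprocity inside $\bar{\mathcal{O}}$ — the multiplicity of a standard module in the standard flag of a projective cover equals the corresponding costandard composition multiplicity — and transport it back along the semi-infinite duality using the dictionary above. This gives $[T(\lambda):\Delta(\mu)]=[\nabla(-w_\circ\mu-\mathcal{E}_\circ):L(-w_\circ\lambda-\mathcal{E}_\circ)]$. Finally, replacing $\nabla(-w_\circ\mu-\mathcal{E}_\circ)$ by $\mathcal{V}(-w_\circ\mu-\mathcal{E}_\circ)$ via Proposition \ref{p3.6} yields the asserted formula.

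The hard part will be the precise bookkeeping of the twist and the verification of the hypotheses of \cite[Theorem 5.12]{Soe} and \cite[Corollary 5.8]{JB} in our situation: one needs admissibility of objects of $\mathcal{O}$ and finite-dimensionality of graded pieces (built into Definition \ref{category O}), and the semi-infinite character axioms (SI-1)–(SI-2), the last of which is precisely Theorem \ref{semi-inf} (with the computation deferred to Appendix C). Beyond that, one must match Soergel's normalization of the semi-infinite duality with ours so that the weight label is transported to $-w_\circ\nu-\mathcal{E}_\circ$ with the correct sign, and in the super setting track parities carefully when comparing contragredient duals of $\ggg_0$-modules; this is the delicate point, but it is handled uniformly by Brundan's framework, so no new computation is required.
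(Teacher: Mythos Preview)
Your proposal is correct and follows essentially the same approach as the paper: the paper's entire argument is a one-line citation of \cite[Theorem 5.12]{Soe} and \cite[Corollary 5.8]{JB} together with Theorem \ref{semi-inf}, and you have supplied exactly these ingredients (plus Proposition \ref{p3.6} to pass from $\nabla$ to $\mathcal{V}$, which is implicit in the paper's formulation). Your exposition of the semi-infinite duality mechanism and the weight dictionary $\nu\mapsto -w_\circ\nu-\mathcal{E}_\circ$ is more detailed than what the paper provides, but the strategy is identical.
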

			
			\begin{rem}\label{weyl act}
				It should be noted that
				\begin{equation*}
					w_\circ(X)=\begin{cases}
						\epsilon_{m+1-i},&\text{if}\,\, X=\epsilon_i, \, 1 \leq i \leq m\\
						\de_{n+1-i},&\text{if}\,\, X=\de_i, \, 1 \leq i \leq n.
					\end{cases}
				\end{equation*}
				
			\end{rem}
			
			We will precisely determine the multiplicity of the standard module $\Delta(\mu)$ occurring in the $\Delta$-filtration of the tilting module $T(\lambda)$ for any $\lambda, \mu\in\Lambda^+$.
			
			\begin{prop}\label{main thm1}
				Let $\lambda,\mu\in\Lambda^+$. Then the following statements hold.
				\begin{itemize}
					\item[(1)] If $\mu=-2\sum\limits_{i=1}^k\epsilon_{m+1-i}-\sum\limits_{i=k+1}^m\epsilon_{m+1-i} + \sum\limits_{i=1}^n\de_{i}$  for some $k$ with $1\leq k< m$. Then $[T(\lambda):\Delta(\mu)]\neq 0$ if and only if $\lambda=\mu$ or $\lambda=\mu-\epsilon_{m-k}$. Moreover, $[T(\mu):\Delta(\mu)]=[T(\mu-\epsilon_{m-k}):\Delta(\mu)]=1$.
					\item[(2)] If  $ \mu=-2\sum\limits_{i=1}^m\epsilon_{i} +\sum\limits_{i=1}^n\de_{i} -(k-m)\de_n $ for some $k \geq m$. Then $[T(\lambda):\Delta(\mu)]\neq 0$ if and only if $\lambda=\mu$ or $\lambda=\mu-\de_{n}$. Moreover, $[T(\mu):\Delta(\mu)]=[T(\mu-\de_{n}):\Delta(\mu)]=1$.
					
					\item[(3)] If $\mu=-2\mcale +k\de_1 $ and $k \neq m$, $k \in \Z_+$. Then $[T(\lambda):\Delta(\mu)]\neq 0$ if and only if $\lambda=\mu$ or $\lambda=\mu+\de_{1}$. Moreover, $[T(\mu):\Delta(\mu)]=[T(\mu+\de_{1}):\Delta(\mu)]=1$.
					\item[(4)]  If $\mu=-2\mcale +m\de_1. $  Then $[T(\lambda):\Delta(\mu)]\neq 0$ if and only if $\lambda=\mu$ or $\lambda=\mu+\de_{1}$ or $\la =\mu +\mcale -m\de_1$. Moreover, $[T(\mu):\Delta(\mu)]=[T(\mu+\de_{1}):\Delta(\mu)]=[T(\mu+\mcale -m\de_1):\Delta(\mu)]=1$.
					\item[(5)] If $\mu$ be none of type (1)-(4). Then $[T(\lambda):\Delta(\mu)]\neq 0$ if and only if $\lambda=\mu$. Moreover,  $[T(\mu):\Delta(\mu)]=1$.
					
				\end{itemize}
			\end{prop}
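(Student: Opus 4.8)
The statement will follow by combining Soergel reciprocity (Proposition~\ref{soegrel formula}) with the complete description of the composition factors of the mixed--product modules obtained in Theorem~\ref{Final irr thm}. The plan is as follows.

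First I would rewrite the target multiplicity. By Proposition~\ref{soegrel formula},
$$[T(\lambda):\Delta(\mu)]=[\mathcal{V}(\nu):L(\sigma)],\qquad \nu:=-w_\circ\mu-\mcale,\quad \sigma:=-w_\circ\lambda-\mcale.$$
One first checks $\nu,\sigma\in\Lambda^+$, so that both sides make sense: by Remark~\ref{weyl act}, $-w_\circ$ reverses and negates the $\epsilon$--string and the $\delta$--string, which preserves the dominance inequalities, and subtracting $\mcale=\sum_i\epsilon_i-\sum_j\delta_j$ shifts the $\epsilon$--string uniformly down by $1$ and the $\delta$--string uniformly up by $1$, again preserving dominance. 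Note also that $w_\circ$ fixes $\sum_i\epsilon_i$ and $\sum_j\delta_j$ separately, hence fixes $\mcale$; this will be used repeatedly. By Theorem~\ref{Final irr thm}, for $\nu\in\Lambda^+$ the module $\mathcal{V}(\nu)$ is simple unless $\nu=\omega_k$ ($k\ge1$) or $\nu=\theta_q$ ($q\ge1$); $\mathcal{V}(\omega_k)$ has composition factors exactly $L(\omega_k),L(\omega_{k+1})$; $\mathcal{V}(\theta_q)$ with $q\neq m$ has factors exactly $L(\theta_q),L(\theta_{q+1})$; and $\mathcal{V}(\theta_m)$ has factors exactly $L(\theta_m),L(\theta_{m+1}),L(0)$; in all these cases each factor occurs exactly once. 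Hence $[\mathcal{V}(\nu):L(\sigma)]\in\{0,1\}$ always, and equals $1$ precisely when $L(\sigma)$ is one of the listed factors of $\mathcal{V}(\nu)$.

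Next I would carry out the weight bookkeeping. Using $w_\circ\epsilon_i=\epsilon_{m+1-i}$, $w_\circ\delta_j=\delta_{n+1-j}$ and $w_\circ\mcale=\mcale$, a direct computation shows that $\nu=-w_\circ\mu-\mcale$ is exceptional exactly for $\mu$ in one of the families (1)--(4): for instance in (1) one has $-w_\circ\mu=2\sum_{i=1}^k\epsilon_i+\sum_{i=k+1}^m\epsilon_i-\sum_i\delta_i$, so $\nu=\sum_{i=1}^k\epsilon_i=\omega_k$ with $1\le k<m$; the families (2), (3), (4) correspond respectively to $\nu=\omega_k$ with $k\ge m$ (using $\omega_{m+l}=\sum_{i=1}^m\epsilon_i+l\delta_1$ and $w_\circ\delta_n=\delta_1$), to $\nu=\theta_k$ with $k\ge1$, $k\neq m$, and to $\nu=\theta_m$; for $\mu$ outside (1)--(4) the weight $\nu$ is non--exceptional. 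Finally, for each case I would read off the admissible $\lambda$ from $\sigma-\nu=-w_\circ(\lambda-\mu)$, i.e.\ $\lambda=\mu-w_\circ(\sigma-\nu)$, letting $L(\sigma)$ run over the factors of $\mathcal{V}(\nu)$. This gives: in case (5), only $\sigma=\nu$, so $\lambda=\mu$; in (1), $\sigma=\omega_k$ ($\lambda=\mu$) or $\sigma=\omega_{k+1}=\omega_k+\epsilon_{k+1}$, so $\lambda=\mu-w_\circ\epsilon_{k+1}=\mu-\epsilon_{m-k}$; in (2), using $\omega_{k+1}-\omega_k=\delta_1$, one adds $\lambda=\mu-\delta_n$; in (3), using $\theta_{k+1}-\theta_k=-\delta_n$, one adds $\lambda=\mu+\delta_1$; in (4) one has, besides $\lambda=\mu$ and $\lambda=\mu+\delta_1$, also $\sigma=0$, forcing $-w_\circ\lambda=\mcale$, i.e.\ $\lambda=-\mcale=\mu+\mcale-m\delta_1$ (since $\mu=-2\mcale+m\delta_1$). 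In every instance the nonzero multiplicity equals the corresponding $[\mathcal{V}(\nu):L(\cdot)]=1$, which yields the displayed normalisations.

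The main obstacle is not conceptual: the substantial inputs — the composition series of the $\mathcal{V}(\nu)$ (Sections~\ref{sec: third sec} and \ref{sec: skry theory}), the semi--infinite character property (Theorem~\ref{semi-inf}), and Soergel reciprocity — are already in hand. What needs care is the index bookkeeping of the previous paragraph: correctly matching the four weight families (1)--(4) with the exceptional $\nu$'s, in particular the split $1\le k<m$ versus $k\ge m$ dictated by $\omega_{m+l}=\sum_i\epsilon_i+l\delta_1$, and pushing each exceptional composition factor of $\mathcal{V}(\nu)$ back through $\sigma\mapsto\lambda=-w_\circ(\sigma+\mcale)$ without sign or subscript errors.
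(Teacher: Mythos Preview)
Your proposal is correct and follows essentially the same approach as the paper: apply Soergel reciprocity (Proposition~\ref{soegrel formula}) to translate $[T(\lambda):\Delta(\mu)]$ into $[\mathcal{V}(-w_\circ\mu-\mcale):L(-w_\circ\lambda-\mcale)]$, then invoke Theorem~\ref{Final irr thm} case by case. Your treatment is in fact slightly more careful than the paper's, since you explicitly verify that $\nu,\sigma\in\Lambda^+$ and record the identity $w_\circ\mcale=\mcale$ before doing the weight bookkeeping.
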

			
			\begin{proof}
				It follows from Proposition \ref{soegrel formula} that $[T(\lambda):\Delta(\mu)]=
				[\mathcal{V}(-w_0\mu-\mcale):L(-w_0\lambda-\mcale)]$, where we recall
				$\mcale=\sum\limits_{i=1}^m\epsilon_i - \sum\limits_{i=1}^n\de_i$.
				
				{\bf Case 1:} $\mathcal{V}(-w_0\mu-\mcale)$ is not simple. Then from Theorem \ref{Final irr thm} we have, $-w_0\mu-\mcale=\omega_k$ or $\theta_k$ for some $k \in \Z_+$, where
				\begin{align*}
					\omega_k=\begin{cases}
						\epsilon_{1}+\cdots+\epsilon_k, &\text{ if} \, 1 \leq k < m\cr
						\epsilon_{1}+\cdots+\epsilon_m +(k-m)\de_1,  & \text{ if}\,  k \geq m
					\end{cases}
				\end{align*}
				and $$\theta_k=
				\dis{\sum_{i=1}^{m}}\epsilon_{i} -\dis{\sum_{i=1}^{n}}\de_{i} -k \de_n, \, k \geq 1 $$

				{ Subcase (i):} Let $-w_0\mu-\mcale=\omega_k$ for some $k < m$. Then using Remark \ref{weyl act} we have, $\mu=-2\sum\limits_{i=1}^k\epsilon_{m+1-i}-\sum\limits_{i=k+1}^m\epsilon_{m+1-i} + \sum\limits_{i=1}^n\de_{i}$. Therefore by Theorem \ref{Final irr thm}, $[\mathcal{V}(-w_0\mu-\mcale):L(-w_0\lambda-\mcale)]\neq 0$ if and only if $-w_0\lambda-\mcale =\omega_k$ or $ \omega_{k+1},$ i.e $\la =  \mu $ or $  \mu -\epsilon_{m-k}$. In fact, $[T(\mu):\Delta(\mu)]=[T(\mu-\epsilon_{m-k}):\Delta(\mu)]=1$.

				{ Subcase (ii):} Let $-w_0\mu-\mcale=\omega_k$ for some $k \geq m$. Then $ \mu=-2\sum\limits_{i=1}^m\epsilon_{i} +\sum\limits_{i=1}^n\de_{i} -(k-m)\de_n $. Now by Theorem \ref{Final irr thm}, $[\mathcal{V}(-w_0\mu-\mcale):L(-w_0\lambda-\mcale)]\neq 0$ if and only if $-w_0\lambda-\mcale =\omega_k$ or $ \omega_{k+1},$ i.e $\la =  \mu $ or $  \mu -\de_{n}$. In fact, $[T(\mu):\Delta(\mu)]=[T(\mu-\de_{n}):\Delta(\mu)]=1$.

				{ Subcase (iii):} Let $-w_0\mu-\mcale=\theta_k$ for some $k \in \Z_+$ and $k \neq m$. Then $\mu=-2\sum\limits_{i=1}^m\epsilon_{i} +2\sum\limits_{i=1}^n\de_{i} +k\de_1 $. Now by Theorem \ref{Final irr thm}, $[\mathcal{V}(-w_0\mu-\mcale):L(-w_0\lambda-\mcale)]\neq 0$ if and only if $-w_0\lambda-\mcale =\theta_k$ or $ \theta_{k+1},$ i.e $\la =  \mu $ or $  \mu +\de_{1}$. In fact, $[T(\mu):\Delta(\mu)]=[T(\mu+\de_{1}):\Delta(\mu)]=1$.
				
				{ Subcase (iv):} Let $-w_0\mu-\mcale=\theta_m$. Then $\mu=-2\sum\limits_{i=1}^m\epsilon_{i} +2\sum\limits_{i=1}^n\de_{i} +m\de_1 $. Now by Theorem \ref{Final irr thm}, $[\mathcal{V}(-w_0\mu-\mcale):L(-w_0\lambda-\mcale)]\neq 0$ if and only if $-w_0\lambda-\mcale =\theta_m$ or $ \theta_{m+1},$ or $0$ i.e $\la =  \mu $ or $  \mu +\de_{1}$ or $- \mcale$. In fact, $[T(\mu):\Delta(\mu)]=[T(\mu+\de_{1}):\Delta(\mu)]=[T(-\mcale):\Delta(\mu)]=1$.\\
				{\bf Case 2:} $\mathcal{V}(-w_0\mu-\mcale)$ is simple.
				In this case, statements (5) follows from above computations and Theorem \ref{Final irr thm}.
			\end{proof}

			\subsection{Proof of Theorem \ref{thm: tilting character}}\label{sec: pf of tilt ch}	 Theorem \ref{thm: tilting character} directly follows from Proposition \ref{main thm1}.

				\section{Appendix A: Proof of Lemma \ref{lem: type I module map}}\label{sec: appendix A}
				
				In this appendix we prove that $\mfk d_k:\mcal V(\omega_k) \longrightarrow \mcal V(\omega_{k+1})$ is a $\mf g$-module map.
				Let $v=e_{p_1} \curlywedge \dots \curlywedge e_{p_k}$ be a basis vector of $\bigwedge^k{\bbf^{(m|n)}}$. We first have
				\begin{align*}				
					&x^\al y_\mu \pr_i.\mfk d_k(x^\be y_\eta\ot v)\cr
					= &x^\al y_\mu \pr_i.[\sum\limits_{k=1}^m\be_kx^{\be -\epsilon_k}y_{\eta} \otimes e_{p_1}\curlywedge\cdots\curlywedge e_{p_k}\curlywedge e_k\cr
					&\qquad 		+(-1)^{\wp(e_{p_1})+ \cdots + \wp(e_{p_k}) +|\eta|+1}\sum\limits_{k=1}^nx^{\alpha}D_k(y_{\eta}) \otimes e_{p_1}\curlywedge\cdots\curlywedge e_{p_k}\curlywedge e_{m+k}]\cr
					=& \sum\limits_{k=1}^m\be_k(\be_i-\de_{k,i})x^{\al+\be -\epsilon_k-\epsilon_i} y_\mu y_{\eta} \otimes e_{p_1}\curlywedge\cdots\curlywedge e_{p_k}\curlywedge e_k\cr
					&+ \sum\limits_{k=1}^m\sum\limits_{j=1}^m\be_k\al_jx^{\al+\be -\epsilon_k-\epsilon_j} y_\mu y_{\eta} \otimes x_j\pr_i.(e_{p_1}\curlywedge\cdots\curlywedge e_{p_k}\curlywedge e_k)     \cr
					&+(-1)^{|\mu|+|\eta|+1}  \sum\limits_{k=1}^m\sum\limits_{j=1}^n\be_kx^{\al+\be -\epsilon_k} D_j(y_\mu) y_{\eta} \otimes y_j\pr_i.(e_{p_1}\curlywedge\cdots\curlywedge e_{p_k}\curlywedge e_k )\cr
					& + (-1)^{\wp(e_{p_1})+ \cdots + \wp(e_{p_k}) +|\eta|+1}\{  \sum\limits_{k=1}^n\be_ix^{\al+\be -\epsilon_i} y_\mu D_k( y_{\eta}) \otimes e_{p_1}\curlywedge\cdots\curlywedge e_{p_k}\curlywedge e_{k+m}         \cr
					&	\qquad+ \sum\limits_{k=1}^n\sum\limits_{j=1}^m\al_jx^{\al+\be -\epsilon_j} y_\mu D_k (y_{\eta}) \otimes x_j\pr_i.(e_{p_1}\curlywedge\cdots\curlywedge e_{p_k}\curlywedge e_{k+m} )\cr
					&\qquad	+(-1)^{|\mu|+|\eta|}	\sum\limits_{k=1}^n\sum\limits_{j=1}^nx^{\al+\be } D_j(y_\mu) D_k (y_{\eta}) \otimes y_j\pr_i.(e_{p_1}\curlywedge\cdots\curlywedge e_{p_k}\curlywedge e_{k+m} ) \}
					\cr
					=&E_1+E_2+E_3+E_4+E_5+E_6,
				\end{align*}
				where we take $E_i$ for $1 \leq i \leq 6$ as the above six summations,  respectively. We continue to make calculations.
				\begin{align*}
					&\mfk d_k(x^\al y_\mu \pr_i.x^\be y_\eta\ot v)\cr
					=& \mfk d_k[\be_ix^{\al+\be -\epsilon_i} y_\mu y_{\eta} \otimes e_{p_1}\curlywedge\cdots\curlywedge e_{p_k} + \sum\limits_{j=1}^m\al_jx^{\al+\be -\epsilon_j} y_\mu y_{\eta} \otimes x_j\pr_i.(e_{p_1}\curlywedge\cdots\curlywedge e_{p_k}) \cr &\qquad +(-1)^{|\mu|+|\eta|+1} \sum\limits_{j=1}^nx^{\al+\be }D_j( y_\mu) y_{\eta} \otimes y_j\pr_i.(e_{p_1}\curlywedge\cdots\curlywedge e_{p_k}) ] \cr
					=&\be_i \sum\limits_{k=1}^m(\al_k+\be_k-\de_{k,i})x^{\al+\be -\epsilon_i-\epsilon_k} y_\mu y_{\eta} \otimes e_{p_1}\curlywedge\cdots\curlywedge e_{p_k}\curlywedge e_k\cr
					& +(-1)^{\wp(e_{p_1})+ \cdots + \wp(e_{p_k}) +|\mu|+|\eta|+1}\sum\limits_{k=1}^n\be_ix^{\alpha+\be-\epsilon_i}D_k(y_\mu y_{\eta}) \otimes e_{p_1}\curlywedge\cdots\curlywedge e_{p_k}\curlywedge e_{m+k}\cr
					& +  \sum\limits_{j=1}^m\sum\limits_{k=1}^m\al_j(\al_k+\be_k-\de_{j,k})x^{\al+\be -\epsilon_k-\epsilon_j} y_\mu y_{\eta} \otimes x_j\pr_i.(e_{p_1}\curlywedge\cdots\curlywedge e_{p_k})\curlywedge e_k \cr   & +(-1)^{\wp(e_{p_1})+ \cdots + \wp(e_{p_k}) +|\mu|+|\eta|+1} \sum\limits_{j=1}^m\sum\limits_{k=1}^n\al_jx^{\al+\be-\epsilon_j}D_k( y_\mu y_{\eta}) \otimes x_j\pr_i.(e_{p_1}\curlywedge\cdots\curlywedge e_{p_k})\curlywedge e_{k+m} \cr
					&			+(-1)^{|\mu|+|\eta|+1} \sum\limits_{j=1}^n\sum\limits_{k=1}^m(\al_k+\be_k)x^{\al+\be-\epsilon_k}D_k( y_\mu) y_{\eta} \otimes y_j\pr_i.(e_{p_1}\curlywedge\cdots\curlywedge e_{p_k})\curlywedge e_{k} \cr
					& +(-1)^{\wp(e_{p_1})+ \cdots + \wp(e_{p_k}) } \sum\limits_{j=1}^n\sum\limits_{k=1}^n\al_jx^{\al+\be}D_k(D_j( y_\mu) y_{\eta}) \otimes y_j\pr_i.(e_{p_1}\curlywedge\cdots\curlywedge e_{p_k})\curlywedge e_{k+m}\cr
					=&A_1+A_2+A_3+A_4+A_5+A_6,
				\end{align*}
				where we take $A_i$ for $1 \leq i \leq 6$ as the above six summations,  respectively. Now we have
				\begin{align*}
					E_2=& \sum\limits_{k=1}^m\sum\limits_{j=1}^m\be_k\al_jx^{\al+\be -\epsilon_k-\epsilon_j} y_\mu y_{\eta} \otimes x_j\pr_i.(e_{p_1}\curlywedge\cdots\curlywedge e_{p_k}\curlywedge e_k ) \cr
					=&\sum\limits_{k=1}^m\sum\limits_{j=1}^m\be_k\al_jx^{\al+\be -\epsilon_k-\epsilon_j} y_\mu y_{\eta} \otimes ((e_{p_1}\curlywedge\cdots\curlywedge e_{p_k})\de_{ik}\curlywedge e_j  + x_j\pr_i.(e_{p_1}\curlywedge\cdots\curlywedge e_{p_k})\curlywedge e_k) \cr
					=&\be_i\sum\limits_{j=1}^m\be_k\al_jx^{\al+\be -\epsilon_k-\epsilon_j} y_\mu y_{\eta} \otimes ((e_{p_1}\curlywedge\cdots\curlywedge e_{p_k})\curlywedge e_j \cr
					& + \sum\limits_{k=1}^m\sum\limits_{j=1}^m\be_k\al_jx^{\al+\be -\epsilon_k-\epsilon_j} y_\mu y_{\eta} \otimes x_j\pr_i.(e_{p_1}\curlywedge\cdots\curlywedge e_{p_k})\curlywedge e_k.
				\end{align*}
				Now one can easily observe that $E_1+E_2=A_1+A_3$ with the help of the fact $$\sum\limits_{k=1}^m\sum\limits_{j=1}^m\al_j(\al_k-\de_{k,j})x^{\al+\be -\epsilon_k-\epsilon_j} y_\mu y_{\eta} \otimes x_j\pr_i.(e_{p_1}\curlywedge\cdots\curlywedge e_{p_k})\curlywedge e_k=0.$$
				Furthermore, we have
				\begin{align*}
					&E_3+E_4\cr
					= &(-1)^{|\mu|+|\eta|+1}  \sum\limits_{k=1}^m\sum\limits_{j=1}^n\be_kx^{\al+\be -\epsilon_k} D_j(y_\mu) y_{\eta} \otimes y_j\pr_i.(e_{p_1}\curlywedge\cdots\curlywedge e_{p_k}\curlywedge e_k) +E_4\cr
					=&(-1)^{|\mu|+|\eta|+1}  \sum\limits_{k=1}^m\sum\limits_{j=1}^n\be_kx^{\al+\be -\epsilon_k} D_j(y_\mu) y_{\eta} \otimes (y_j\pr_i.(e_{p_1}\curlywedge\cdots\curlywedge e_{p_k})\curlywedge e_k \cr
					&\qquad+(-1)^{\wp(e_{p_1})+ \cdots + \wp(e_{p_k})} e_{p_1}\curlywedge\cdots\curlywedge e_{p_k}\curlywedge \de_{k,i}e_{j+m}  ) +E_4\cr
					=& (-1)^{|\mu|+|\eta|+1}  \sum\limits_{k=1}^m\sum\limits_{j=1}^n\be_kx^{\al+\be -\epsilon_k} D_j(y_\mu) y_{\eta} \otimes y_j\pr_i.(e_{p_1}\curlywedge\cdots\curlywedge e_{p_k})\curlywedge e_k \cr
					&\qquad+(-1)^{\wp(e_{p_1})+ \cdots + \wp(e_{p_k})+|\mu|+|\eta|+1} \sum\limits_{j=1}^n\be_ix^{\al+\be -\epsilon_i} D_j(y_\mu y_{\eta}) \otimes e_{p_1}\curlywedge\cdots\curlywedge e_{p_k}\curlywedge e_{j+m} +E_4\cr
					=& (-1)^{|\mu|+|\eta|+1}  \sum\limits_{k=1}^m\sum\limits_{j=1}^n\be_kx^{\al+\be -\epsilon_k} D_j(y_\mu) y_{\eta} \otimes y_j\pr_i.(e_{p_1}\curlywedge\cdots\curlywedge e_{p_k})\curlywedge e_k  +A_2.
				\end{align*}
				Now we claim that
				\begin{align}\label{eq: e5}
					A_4+(-1)^{|\mu|+|\eta|+1}  \sum\limits_{k=1}^m\sum\limits_{j=1}^n\al_kx^{\al+\be -\epsilon_k} D_j(y_\mu) y_{\eta} \otimes y_j\pr_i.(e_{p_1}\curlywedge\cdots\curlywedge e_{p_k})\curlywedge e_k =E_5.
				\end{align}
				
				First note that except the case when $i$ occur in the set $\{p_1, \dots, p_k\}$ only one time, the above claim is true. So we consider the case when exactly one of $p_1, \dots, p_k$ is $i$. Without loss of generality assume that $p_1=i$. Then we have,
				\begin{align*}
					&A_4\cr
					=&
					(-1)^{\wp(e_{p_2})+ \cdots + \wp(e_{p_k})+|\mu|+|\eta|+1} \sum\limits_{j=1}^m\sum\limits_{k=1}^n\al_jx^{\al+\be -\epsilon_j} D_k(y_\mu y_{\eta}) \otimes e_j\curlywedge e_{p_2}\curlywedge\cdots\curlywedge e_{p_k}\curlywedge e_{k+m}\cr
					=&(-1)^{k+\wp(e_{p_2})+ \cdots + \wp(e_{p_k})+|\mu|+|\eta|} \sum\limits_{j=1}^m\sum\limits_{k=1}^n\al_jx^{\al+\be -\epsilon_j} D_k(y_\mu y_{\eta}) \otimes e_{p_2}\curlywedge\cdots\curlywedge e_{p_k}\curlywedge e_j \curlywedge e_{k+m}
				\end{align*}
				and
				\begin{align*}
					&E_5\cr
					=&(-1)^{\wp(e_{p_2})+ \cdots + \wp(e_{p_k})+|\eta|+1} \sum\limits_{k=1}^n\sum\limits_{j=1}^m\al_jx^{\al+\be -\epsilon_j} y_\mu D_k(y_{\eta}) \otimes e_j\curlywedge e_{p_2}\curlywedge\cdots\curlywedge e_{p_k}\curlywedge e_{k+m}\cr
					=&(-1)^{k+\wp(e_{p_2})+ \cdots + \wp(e_{p_k})+|\eta|} \sum\limits_{k=1}^n\sum\limits_{j=1}^m\al_jx^{\al+\be -\epsilon_j} y_\mu D_k(y_{\eta}) \otimes  e_{p_2}\curlywedge\cdots\curlywedge e_{p_k}\curlywedge e_j\curlywedge e_{k+m}.
				\end{align*}
				Note that the second summation of the left hand in (\ref{eq: e5}) becomes
				\begin{align*}
					& (-1)^{|\mu|+|\eta|+1}  \sum\limits_{k=1}^m\sum\limits_{j=1}^n\al_kx^{\al+\be -\epsilon_k} D_j(y_\mu) y_{\eta} \otimes y_j\pr_i.(e_{p_1}\curlywedge\cdots\curlywedge e_{p_k})\curlywedge e_k \cr
					=&(-1)^{|\mu|+|\eta|+1}  \sum\limits_{j=1}^m\sum\limits_{k=1}^n\al_jx^{\al+\be -\epsilon_j} D_k(y_\mu) y_{\eta} \otimes e_{k+m}\curlywedge e_{p_2}\curlywedge\cdots\curlywedge e_{p_k}\curlywedge e_j \cr
					=&(-1)^{k+\wp(e_{p_2})+ \cdots + \wp(e_{p_k})+|\mu|+|\eta|+1}  \sum\limits_{j=1}^m\sum\limits_{k=1}^n\al_jx^{\al+\be -\epsilon_j} D_k(y_\mu) y_{\eta} \otimes e_{p_2}\curlywedge\cdots\curlywedge e_{p_k}\curlywedge e_j\curlywedge e_{k+m}
				\end{align*}
				Now it is easy to see by using the fact $D_k(y_\mu y_\eta)=D_k(y_\mu)y_\eta +(-1)^{|\mu|}y_\mu D_k(y_\eta)$ in the expression $A_4$ that the claim is true.

				Now using the fact $y_j\pr_i.e_{k+m}=0$ and derivation formula for $D_k$ in the expression of $A_6$ we have,
				$$A_6=E_6 +(-1)^{\wp(e_{p_1})+ \cdots + \wp(e_{p_k}) +1} \sum\limits_{j=1}^n\sum\limits_{k=1}^n\al_jx^{\al+\be}D_k(D_j( y_\mu)) y_{\eta} \otimes y_j\pr_i.(e_{p_1}\curlywedge\cdots\curlywedge e_{p_k})\curlywedge e_{k+m}.$$
				Now it is easy to observe that the second term in the above expression is zero. Therefore $A_6=E_6$. Now we have
				\begin{align*}
					&E_1+E_2+E_3+E_4+E_5+E_6\cr
					=&A_1+A_3+A_2+(-1)^{|\mu|+|\eta|+1}  \sum\limits_{k=1}^m\sum\limits_{j=1}^n\be_kx^{\al+\be -\epsilon_k} D_j(y_\mu) y_{\eta} \otimes y_j\pr_i.(e_{p_1}\curlywedge\cdots\curlywedge e_{p_k})\curlywedge e_k +A_4 \cr
					&\qquad +  (-1)^{|\mu|+|\eta|+1}  \sum\limits_{k=1}^m\sum\limits_{j=1}^n\al_kx^{\al+\be -\epsilon_k} D_j(y_\mu) y_{\eta} \otimes y_j\pr_i.(e_{p_1}\curlywedge\cdots\curlywedge e_{p_k})\curlywedge e_k + A_6\cr
					=&A_1+A_2+A_3+A_4+A_5+A_6.
				\end{align*}	
				Therefore we have proved that  $\mfk d_k(x^\al y_\mu \pr_i.x^\be y_\eta\ot v)=x^\al y_\mu \pr_i.\mfk d_k(x^\be y_\eta\ot v)$ for all $\al , \be \in \N^m, v \in \bigwedge^k \bbf(m|n), \mu , \eta \subseteq [1,n]$. {  We need to check the other one equality as below:
					$$d_k(x^\al y_\mu D_i.x^\be y_\eta\ot v)=x^\al y_\mu D_i.d_k(x^\be y_\eta\ot v)$$
					for all $\al , \be \in \N^m, v \in \bigwedge^k \C(m|n), \mu , \eta \subseteq [1,n]$ and $i \in [1,n]$, which we leave for the reader.		
					\begin{cor}\label{cor d^2=0}
						$\mfk d_{k+1} \circ \mfk d_{k} =0 $ for all $ k \geq 0$.
					\end{cor}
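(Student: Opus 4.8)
The plan is to sidestep the long index-and-sign computation by exploiting that $\mfk d_{k+1}\circ\mfk d_k$ is \emph{forced} to be a $\ggg$-module homomorphism. Indeed, by Lemma \ref{lem: type I module map} each $\mfk d_k$ is a $\bW(m,n)$-module map, so the composite $\mfk d_{k+1}\circ\mfk d_k\colon\calv(\omega_k)\to\calv(\omega_{k+2})$ is a $\ggg$-module map. Hence it is enough to check that this composite annihilates a single $U(\ggg)$-generator of $\calv(\omega_k)$.

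First I would recall that $\calv(\omega_k)$ is generated over $U(\ggg)$ by the vector $x_1\ot e_{m+1}^k$, where $e_{m+1}^k$ denotes the $k$-fold Grassmann product $e_{m+1}\curlywedge\cdots\curlywedge e_{m+1}$ (non-zero since $e_{m+1}$ is odd). This is the argument already used in \S\ref{sec: third sec}: applying $x^\alpha y_\eta\pr_1$ to $x_1\ot e_{m+1}^k$ returns $x^\alpha y_\eta\ot e_{m+1}^k$ for all $\alpha\in\N^m,\eta\subseteq[1,n]$ (the correction terms vanish because $E_{j1}$ and $E_{m+j,1}$ kill $e_{m+1}^k$), so $e_{m+1}^k$ lies in $M^\flat$ for $M=U(\ggg)(x_1\ot e_{m+1}^k)$, and then Lemma \ref{l3.15} gives $M=\calv(\omega_k)$. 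For $k=0$ this is just the statement that $\calr=\calv(\omega_0)$ is generated by $x_1$.

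Next I would evaluate the composite on this generator straight from the definition of $\mfk d_k$. Since $\pr_j(1)=0$ and $D_j(y_\eta)=0$ when $\eta=\emptyset$, only one term survives:
$$\mfk d_k(x_1\ot e_{m+1}^k)=\sum_{j=1}^m\pr_j(x_1)\,1\ot(e_{m+1}^k\curlywedge e_j)=1\ot(e_{m+1}^k\curlywedge e_1),$$
and then, for the very same reason ($\pr_j(1)=0$ and $D_j(1)=0$),
$$\mfk d_{k+1}\bigl(1\ot(e_{m+1}^k\curlywedge e_1)\bigr)=0.$$
Thus $\mfk d_{k+1}\circ\mfk d_k$ kills a generator of $\calv(\omega_k)$ and is therefore identically zero, which is exactly $\mfk d_{k+1}\circ\mfk d_k=0$ for all $k\geq 0$.

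For completeness one can also prove the identity by direct computation: evaluate $\mfk d_{k+1}\mfk d_k$ on a general basis vector $x^\alpha y_\eta\ot v$ and sort the resulting double sums into four families according to whether each differential contributes a $\pr_i$- or a $D_i$-term. The $(\pr,\pr)$-family vanishes since $\pr_j\pr_i(x^\alpha)$ is symmetric in $(i,j)$ whereas $e_i\curlywedge e_j$ is antisymmetric; the $(D,D)$-family vanishes since $D_jD_i(y_\eta)$ is antisymmetric (and $D_i^2=0$) whereas $e_{m+i}\curlywedge e_{m+j}$ is symmetric; and the two mixed families cancel against each other. The main obstacle on this route is the sign bookkeeping in the mixed families: the parity prefactor $(-1)^{\wp(v)+|\eta|+1}$ shifts differently according to whether an index $\le m$ or an index $>m$ is appended at each of the two stages, and one must check that this shift exactly compensates the reordering sign $e_i\curlywedge e_{m+j}=-\,e_{m+j}\curlywedge e_i$. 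The module-theoretic argument given above avoids this entirely, so I would present that as the actual proof.
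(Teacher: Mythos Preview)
Your proof is correct and is essentially the same as the paper's own argument: the paper also uses that $\calv(\omega_k)$ is generated by $x_1\ot e_{m+1}^k$, computes $\mfk d_k(x_1\ot e_{m+1}^k)=1\ot(e_{m+1}^k\curlywedge e_1)$ and $\mfk d_{k+1}(1\ot(e_{m+1}^k\curlywedge e_1))=0$, and then invokes the $\ggg$-equivariance of the $\mfk d_k$'s. Your additional sketch of the direct double-sum cancellation is a reasonable alternative but is not pursued in the paper.
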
		
					\begin{proof}
						From the proof of Lemma 4.4, we have $\mcal V(\omega_k)$ is generated by $x_1 \ot e_{m+1}^k$. For all $X \in U(\mf g)$ we have,
						\begin{align}
							\mfk d_{k+1} \circ \mfk  d_{k}(X \cd(x_1 \ot e_{m+1}^k)) &= 	\mfk d_{k+1} (X \cd (1 \ot e_{m+1}^k \cw e_1)) \cr
							&= X \cd 0 =0.
						\end{align}
						
					\end{proof}		
				}
				\section{Appendix B:  Proof of the relations (\ref{eq:i)Case1}), (\ref{key eqn ty weight 1}) and (\ref{eq:(iv)Case3})
					for Theorem \ref{Thm g submod}}\label{sec: app B}
				
				(1) For $1\le i,i^{'}\le m$ and homogeneous $f, f_\nu, g_\nu \in \mcal R$, we compute the following operator:
				\begin{align*}
					&\rho(ff_{\nu}\pr_{i})\rho(g_{\nu}\pr_{i'})\cr
					=&\left(ff_{\nu}\rho(\pr_{i})+\sum\limits^{m}_{j=1}\pr_{j}
					(ff_{\nu})\sigma(E_{ji})+(-1)^{\wp(ff_{\nu})+1}
					\sum\limits_{s=1}^{n}D_{s}(ff_{\nu})\sigma(E_{m+s,i})\right)\cdot\cr
					&\qquad \left(g_{\nu}\rho(\pr_{i^{'}})
					+\sum\limits^{m}_{j^{'}=1}\pr_{j^{'}}(g_{\nu})
					\sigma(E_{j^{'}i^{'}})+
					(-1)^{\wp(g_{\nu})+1}\sum\limits_{s^{'}=1}^{n}D_{s'}
					(g_{\nu})\sigma(E_{m+s^{'},i^{'}})\right)\cr
					=&ff_{\nu}\rho(\pr_{i})g_{\nu}\rho(\pr_{i^{'}})+
					\sum\limits^{m}_{j^{'}=1}(ff_{\nu}\pr_{j^{'}}
					(g_{\nu})\rho(\pr_{i})+ff_{\nu}\pr_{i}\pr_{j^{'}}
					(g_{\nu}))\sigma(E_{j^{'}i^{'}})\cr
					&\quad+(-1)^{\wp(g_{\nu})+1}\sum\limits_{s^{'}=1}^{n} (ff_{\nu}D_{s^{'}}(g_{\nu})\rho(\pr_{i})+
					ff_{\nu}\pr_{i}D_{s^{'}}(g_{\nu}))\sigma(E_{m+s^{'},i^{'}})\cr
					&\quad+\sum\limits_{j=1}^{m}\pr_{j}(ff_{\nu})\sigma(E_{ji})
					g_{\nu}\rho(\pr_{i^{'}})+
					\sum\limits_{j=1}^{m}\sum\limits_{j^{'}=1}^{m}
					(\pr_{j}(ff_{\nu}\pr_{j^{'}}(g_{\nu}))
					-ff_{\nu}\pr_{j}\pr_{j^{'}}(g_{\nu}))\sigma(E_{ji})
					\sigma(E_{j^{'}i^{'}})\cr
					&\quad
					+(-1)^{\wp(g_{\nu})+1}\sum\limits_{j=1}^{m}\sum\limits_{s^{'}=1}^{n}
					(\pr_{j}(ff_{\nu}D_{s^{'}}(g_{\nu}))-ff_{\nu}
					\pr_{j}D_{s^{'}}(g_{\nu}))\sigma(E_{ji})\sigma(E_{m+s^{'},i^{'}})\cr
					&\quad+(-1)^{\wp(ff_{\nu})+1}\sum\limits^{n}_{s=1}D_{s}
					(ff_{\nu})\sigma(E_{m+s,i})g_{\nu}\rho(\pr_{i^{'}})\cr
					&\quad+(-1)^{\wp(ff_{\nu})+\wp(g_{\nu})+1}
					\sum\limits_{s=1}^{n}\sum\limits_{j^{'}=1}^{m}
					(D_{s}(ff_{\nu}\pr_{j^{'}}(g_{\nu}))-(-1)^{\wp(ff_{\nu})}
					ff_{\nu}D_{s}\pr_{j^{'}}(g_{\nu}))\sigma(E_{m+s,i})
					\sigma(E_{j^{'}i^{'}})\cr
					&\quad +(-1)^{\wp(ff_{\nu})+1}
					\sum\limits_{s=1}^{n}\sum\limits_{s^{'}=1}^{n}
					(D_{s}(ff_{\nu}D_{s^{'}}(g_{\nu}))-(-1)^{\wp(ff_{\nu})}
					ff_{\nu}D_{s}D_{s^{'}}(g_{\nu}))\sigma(E_{m+s,i})
					\sigma(E_{m+s^{'},i^{'}}).
				\end{align*}

				\begin{lem}\label{Lem rel 1}
					Consider the monomials $f_{1}=1,f_{2}=-x_{k^{'}},f_{3}=-x_{k},f_{4}=x_{k}x_{k^{'}},g_{1}=x_{k}x_{k^{'}},g_{2}=x_{k},g_{3}=x_{k^{'}},g_{4}=1,$ for some $ k,k' \in [1,m]$ with $k \neq k'$. Then for any homogeneous $f \in \mcal R$ we have the following relations:
					\begin{itemize}
						\item[(1)] $\dis{\sum_{\nu=1}^{4}} f_\nu g_\nu=0  $.
						\item[(2)]   $\dis{\sum_{\nu=1}^{4} \sum_{j'=1}^{m}} f_\nu \pr_{j'}(g_\nu)\sigma(E_{j'i'})=0$.
						\item[(3)] $\sum\limits_{\nu=1}^{4}\sum\limits_{j=1}^{m}\pr_{j}
						(ff_{\nu})g_{\nu}\sigma(E_{ji}) =0$.
						\item[(4)] $\sum\limits_{\nu=1}^{4}\sum\limits^{n}_{s=1}D_{s}(ff_{\nu})g_{\nu} \sigma(E_{m+s,i})=0$ for $i,i' \in [1,m]$.	
					\end{itemize}
				\end{lem}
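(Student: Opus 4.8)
The plan is to reduce all four identities to one structural observation about the chosen monomials. Let $m:\calr\otimes_\bbf\calr\to\calr$, $a\otimes b\mapsto ab$, be the multiplication (an even algebra homomorphism of commutative superalgebras), and let $I=\ker m$ be its kernel, an ideal of $\calr\otimes_\bbf\calr$. The point is the factorization
$$\sum_{\nu=1}^{4}f_\nu\otimes g_\nu=(1\otimes x_k-x_k\otimes 1)\,(1\otimes x_{k'}-x_{k'}\otimes 1),$$
which is verified by expanding the right-hand side (all eight monomials are even, so no signs intervene). Since each factor lies in $I$, we get $P:=\sum_\nu f_\nu\otimes g_\nu\in I^2$, and more generally, because $I$ is an ideal, $(f\otimes 1)\,P=\sum_\nu (ff_\nu)\otimes g_\nu\in I^2$ for every homogeneous $f\in\calr$.

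The second ingredient is an elementary lemma: if $Q\in I^2$ and $\theta$ is a homogeneous derivation of $\calr$, then $m(Q)=0$, $m\circ(\theta\otimes\mathrm{id})(Q)=0$ and $m\circ(\mathrm{id}\otimes\theta)(Q)=0$. Indeed, writing $Q$ as a sum of products $AB$ with $A,B\in I$, the first equality is $m(A)m(B)=0$; for the others, $\theta\otimes\mathrm{id}$ and $\mathrm{id}\otimes\theta$ are homogeneous derivations of $\calr\otimes_\bbf\calr$, so $(\theta\otimes\mathrm{id})(AB)=(\theta\otimes\mathrm{id})(A)\,B\pm A\,(\theta\otimes\mathrm{id})(B)$, and applying the algebra homomorphism $m$ gives $m\bigl((\theta\otimes\mathrm{id})(A)\bigr)m(B)\pm m(A)\,m\bigl((\theta\otimes\mathrm{id})(B)\bigr)=0$ since $m(A)=m(B)=0$; the $\mathrm{id}\otimes\theta$ case is identical.

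With these in hand the four statements are immediate: (1) is $m(P)=0$; (2) equals $\sum_{j'}\bigl(m\circ(\mathrm{id}\otimes\partial_{j'})(P)\bigr)\sigma(E_{j'i'})=0$; (3) equals $\sum_{j}\bigl(m\circ(\partial_{j}\otimes\mathrm{id})((f\otimes 1)P)\bigr)\sigma(E_{ji})=0$; and (4) equals $\sum_{s}\bigl(m\circ(D_{s}\otimes\mathrm{id})((f\otimes 1)P)\bigr)\sigma(E_{m+s,i})=0$, with $D_s$ playing the role of $\theta$ and the operators $\sigma(E_{\cdot\,\cdot})$ factoring out of the $\nu$-sum since they are independent of $\nu$. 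I anticipate no real obstacle: the content of the lemma is entirely the Koszul-type factorization $P=(1\otimes x_k-x_k\otimes 1)(1\otimes x_{k'}-x_{k'}\otimes 1)\in I^2$, and everything else is formal. The only place to be mildly careful is the parity sign in the derivation identity used for (4), which is harmless because the $f_\nu$ involve no $y$-variables; as a sanity check one may also simply expand each identity as a sum of at most four terms by hand.
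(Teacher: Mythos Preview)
Your argument is correct. The factorization $P=\sum_\nu f_\nu\otimes g_\nu=(1\otimes x_k-x_k\otimes 1)(1\otimes x_{k'}-x_{k'}\otimes 1)\in I^2$ checks out directly, $I^2$ is an ideal so $(f\otimes 1)P\in I^2$, and your lemma that $m$ and $m\circ(\theta\otimes\mathrm{id})$, $m\circ(\mathrm{id}\otimes\theta)$ all annihilate $I^2$ for any homogeneous derivation $\theta$ is a clean one-line consequence of the Leibniz rule together with $m$ being an algebra map. The sign bookkeeping in the odd case (4) is indeed harmless, both because your derivation lemma already holds for arbitrary homogeneous $\theta$ on $\calr\otimes\calr$, and because $D_s(f_\nu)=0$ anyway.

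The paper does not supply a proof of this lemma; it treats the four identities as routine verifications to be checked by hand (each is a sum of four explicit terms). Your approach is genuinely different and more structural: rather than four separate cancellations, you exhibit a single reason---membership in $I^2$---from which all four follow uniformly, and which also explains the analogous Lemmas \ref{Lem rel 2}--\ref{Lem rel 4} by the same mechanism (e.g.\ $\tfrac12(1\otimes x_k-x_k\otimes 1)^2$ for Lemma \ref{Lem rel 2}). What the direct computation buys is immediacy with no auxiliary framework; what your argument buys is a transparent explanation of \emph{why} the particular choices of $f_\nu,g_\nu$ work, and a template that covers all the companion lemmas at once.
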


				Now note that with the choice of $f_\nu$ and $g_\nu$ as Lemma \ref{Lem rel 1} we have
				{\begin{align*}	
						&\dis{\sum_{\nu=1}^{4}} \rho(ff_{\nu}\pr_{i})\rho(g_{\nu}\pr_{i^{'}})\cdot v_{\la}   \cr
						\overset{{\tiny\text{By Lemma}} \ref{Lem rel 1}}{=}&\dis{\sum_{\nu=1}^{4}}(\sum\limits_{j^{'}=1}^{m}
						ff_{\nu}\pr_{i}\pr_{j^{'}}(g_{\nu})\sigma(E_{j^{'}i^{'}}) -\sum\limits_{j=1}^{m}\sum\limits_{j^{'}=1}^{m}
						ff_{\nu}\pr_{j}\pr_{j^{'}}(g_{\nu})\sigma(E_{ji})\sigma(E_{j^{'}i^{'}}))\cdot v_{\la}
					\end{align*}
					\begin{align}\label{equn 1st rel}
						&=f(\delta_{ik}\sigma(E_{k^{'}i^{'}})+\delta_{ik^{'}}\sigma(E_{ki^{'}}) - \sigma(E_{ki})\sigma(E_{k^{'}i^{'}})-\sigma(E_{k^{'}i})\sigma(E_{ki^{'}}))\cdot v_{\la}	\end{align}}
				
				Hence we have one relation of (\ref{eq:i)Case1}), 	$$\delta_{ik}\sigma(E_{k^{'}i^{'}})+\delta_{ik^{'}}\sigma(E_{ki^{'}}) - \sigma(E_{ki})\sigma(E_{k^{'}i^{'}})-\sigma(E_{k^{'}i})\sigma(E_{ki^{'}})=0.$$
				
				\begin{lem}\label{Lem rel 2}
					Consider the monomials $f_{1}=\frac{1}{2},f_{2}=-x_{k},f_{3}=\frac{1}{2}x_{k}^{2},g_{1}=x_{k}^{2},g_{2}=x_{k},g_{3}=1$ for some $ k \in [1,m]$. Then for any homogeneous $f \in \mcal R$ we have the following relations  {for $i,i' \in [1,m]$} :
					\begin{itemize}
						\item[(1)] $\dis{\sum_{\nu=1}^{3}} f_\nu g_\nu=0$.
						\item[(2)]   $\dis{\sum_{\nu=1}^{3} \sum_{j'=1}^{m}} f_\nu \pr_{j'}(g_\nu) \sigma(E_{j'i'})=0$.
						\item[(3)] $\sum\limits_{\nu=1}^{3}\sum\limits_{j=1}^{m}
						\pr_{j}(ff_{\nu})g_{\nu}\sigma(E_{ji}) =0$.
						\item[(4)] $\sum\limits_{\nu=1}^{3}\sum\limits^{n}_{s=1}D_{s}(ff_{\nu})g_{\nu} \sigma(E_{m+s,i})=0$.
					\end{itemize}	
					
				\end{lem}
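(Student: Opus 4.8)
The plan is to verify the four identities (1)--(4) by direct substitution of the explicit monomials $f_1=\tfrac12,\ f_2=-x_k,\ f_3=\tfrac12 x_k^2$ and $g_1=x_k^2,\ g_2=x_k,\ g_3=1$. Their role (exactly as with Lemma \ref{Lem rel 1}) is purely combinatorial: in the expansion of $\rho(ff_\nu\pr_i)\rho(g_\nu\pr_{i'})$ displayed at the start of this appendix, these identities are precisely the statements that the various families of terms die after summing over $\nu$, leaving only the $\sigma(\ggg_0)$-part recorded in \S\ref{sec: skry theory}. The whole computation collapses to the scalar identity $\tfrac12-1+\tfrac12=0$ (the case $(1-1)^2=0$ of the binomial theorem) together with its formal derivative. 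For (1) this is immediate: $f_\nu g_\nu=c_\nu x_k^2$ with $(c_1,c_2,c_3)=(\tfrac12,-1,\tfrac12)$, so $\sum_\nu f_\nu g_\nu=x_k^2\sum_\nu c_\nu=0$.

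For (2) I would observe that $\pr_{j'}(g_\nu)=0$ unless $j'=k$, while $\pr_k(g_1)=2x_k$, $\pr_k(g_2)=1$, $\pr_k(g_3)=0$; hence $\sum_\nu f_\nu\pr_k(g_\nu)=\tfrac12\cdot 2x_k+(-x_k)\cdot 1+0=0$, and the double sum becomes $0\cdot\sigma(E_{ki'})$. For (3) and (4) the key point is that every $f_\nu$ lies in $\bbf[x_k]$, hence is $\bbz_2$-even and is killed by each odd derivation $D_s$; so the super-Leibniz rule reads $\pr_j(ff_\nu)=\pr_j(f)f_\nu+f\,\pr_j(f_\nu)$ and $D_s(ff_\nu)=D_s(f)\,f_\nu$, with no signs to track. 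Then (4) follows from (1) at once, since for each $s$, $\sum_\nu D_s(ff_\nu)g_\nu=D_s(f)\sum_\nu f_\nu g_\nu=0$. For (3), I would split $\sum_\nu\pr_j(ff_\nu)g_\nu=\pr_j(f)\bigl(\sum_\nu f_\nu g_\nu\bigr)+f\bigl(\sum_\nu\pr_j(f_\nu)g_\nu\bigr)$; the first bracket vanishes by (1), and the second vanishes because $\pr_j(f_\nu)=0$ for $j\ne k$ while $\pr_k(f_1)=0,\ \pr_k(f_2)=-1,\ \pr_k(f_3)=x_k$ give $\sum_\nu\pr_k(f_\nu)g_\nu=-x_k+x_k=0$. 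Summing over $j$ then yields (3).

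No genuine obstacle is expected; the only care needed is the parity bookkeeping in the Leibniz rule when the general homogeneous $f\in\mcal R$ meets the even monomials $f_\nu$, together with the observation that the $D_s$ annihilate the $f_\nu$. After that everything reduces to the identity $\tfrac12-1+\tfrac12=0$ and its derivative. Lemma \ref{Lem rel 1} is handled in exactly the same fashion, the underlying identity there being $\sum_{S\subseteq\{k,k'\}}\prod_{i\in S}(-x_i)\prod_{i\notin S}x_i=\prod_{i\in\{k,k'\}}\bigl(x_i-x_i\bigr)=0$ in place of $(1-1)^2=0$.
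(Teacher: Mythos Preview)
Your proposal is correct. The paper in fact states Lemma \ref{Lem rel 2} (like Lemma \ref{Lem rel 1}) without proof, leaving it as an elementary verification; your direct substitution argument is exactly the intended check, and your parity bookkeeping for (3) and (4) is handled correctly.
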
	
				Now note that with the choice of $f_\nu$ and $g_\nu$ as Lemma \ref{Lem rel 2} we have 	
				{	\begin{align*}
						&\dis{\sum_{\nu=1}^{3}} \rho(ff_{\nu}\pr_{i})\rho(g_{\nu}\pr_{i^{'}})\cdot v_{\la} \cr
						\overset{\text{By Lemma}\ref{Lem rel 2}}{=}&\dis{\sum_{\nu=1}^{3}}(\sum\limits_{j^{'}=1}^{m}ff_{\nu}\pr_{i}\pr_{j^{'}}(g_{\nu})\sigma(E_{j^{'}i^{'}}) -\sum\limits_{j=1}^{m}\sum\limits_{j^{'}=1}^{m}ff_{\nu}\pr_{j}\pr_{j^{'}}(g_{\nu})\sigma(E_{ji})\sigma(E_{j^{'}i^{'}}))\cdot v_{\la} \cr
						=& f(\delta_{ik^{'}}\sigma(E_{ki^{'}}) - \sigma(E_{k^{}i})\sigma(E_{ki^{'}}))\cdot v_{\la}.
				\end{align*}}
				{The above equation gives us another relation of (\ref{eq:i)Case1}).}

				(2) For $1\le i\le m$, $1\le i^{'}\le n$ we consider the operator\\
				
				$	\rho(ff_{\nu}\pr_{i})\rho(g_{\nu}D_{i^{'}})$\\
				$=\left(ff_{\nu}\rho(\pr_{i})+\sum\limits^{m}_{j=1}
				\pr_{j}(ff_{\nu})\sigma(E_{ji})+(-1)^{\wp(ff_{\nu})+1}
				\sum\limits_{s=1}^{n}D_{s}(ff_{\nu})\sigma(E_{m+s,i})\right)\cdot$\\
				$\left(g_{\nu}\rho(D_{i^{'}})
				+\sum\limits^{m}_{j^{'}=1}\pr_{j^{'}}(g_{\nu})
				\sigma(E_{j^{'},m+i^{'}})+(-1)^{\wp(g_{\nu})+1}
				\sum\limits_{s^{'}=1}^{n}D_{s^{'}}(g_{\nu})\sigma(E_{m+s^{'},m+i'})\right)$\\
				$=ff_{\nu}\rho(\pr_{i})g_{\nu}\rho(D_{i^{'}})+
				\sum\limits^{m}_{j^{'}=1}(ff_{\nu}\pr_{j^{'}}
				(g_{\nu})\rho(\pr_{i})+ff_{\nu}\pr_{i}\pr_{j^{'}}(g_{\nu}))
				\sigma(E_{j^{'},m+i'})$\\
				$+(-1)^{\wp(g_{\nu})+1}\sum\limits_{s^{'}=1}^{n} (ff_{\nu}D_{s^{'}}(g_{\nu})\rho(\pr_{i})+
				ff_{\nu}\pr_{i}D_{s^{'}}(g_{\nu}))\sigma(E_{m+s^{'},m+i'})$\\
				$+\sum\limits_{j=1}^{m}\pr_{j}(ff_{\nu})
				\sigma(E_{ji})g_{\nu}\rho(D_{i^{'}})+
				\sum\limits_{j=1}^{m}\sum\limits_{j^{'}=1}^{m}(\pr_{j}
				(ff_{\nu}\pr_{j^{'}}(g_{\nu}))-ff_{\nu}\pr_{j}
				\pr_{j^{'}}(g_{\nu}))\sigma(E_{ji})\sigma(E_{j^{'},m+i'})$\\
				$+(-1)^{\wp(g_{\nu})+1}\sum\limits_{j=1}^{m}
				\sum\limits_{s^{'}=1}^{n}(\pr_{j}(ff_{\nu}D_{s^{'}}
				(g_{\nu}))-ff_{\nu}\pr_{j}D_{s^{'}}(g_{\nu}))\sigma(E_{ji})
				\sigma(E_{m+s^{'},m+i'})$\\
				$+(-1)^{\wp(ff_{\nu})+1}\sum\limits^{n}_{s=1}D_{s}(ff_{\nu})
				\sigma(E_{m+s,i})g_{\nu}\rho(D_{i^{'}})$\\
				$+(-1)^{\wp(ff_{\nu})+\wp(g_{\nu})+1}
				\sum\limits_{s=1}^{n}\sum\limits_{j^{'}=1}^{m}
				(D_{s}(ff_{\nu}\pr_{j^{'}}(g_{\nu}))-
				(-1)^{\wp(ff_{\nu})}ff_{\nu}D_{s}\pr_{j^{'}}(g_{\nu}))\sigma(E_{m+s,i})
				\sigma(E_{j^{'},m+i'})$\\
				$+(-1)^{\wp(ff_{\nu})+1}\sum\limits_{s=1}^{n}
				\sum\limits_{s^{'}=1}^{n}(D_{s}(ff_{\nu}D_{s^{'}}(g_{\nu}))-
				(-1)^{\wp(ff_{\nu})}ff_{\nu}D_{s}D_{s^{'}}(g_{\nu}))
				\sigma(E_{m+s,i})\sigma(E_{m+s^{'},m+i'}).$
				\begin{lem}\label{Lem rel 3}
					Consider the monomials 	$f_{1}=1,f_{2}=-y_{k^{'}},f_{3}=-x_{k},f_{4}=x_{k}y_{k^{'}},g_{1}=x_{k}y_{k^{'}},g_{2}=x_{k},g_{3}=y_{k^{'}},g_{4}=1$, for some $k\in [1,m]$ and $k^{'}\in [1,n]$. Then we have the following relations:
					\begin{itemize}
						\item[(1)] $\sum\limits_{\nu=1}^{4}f_{\nu}g_{\nu}=0$.
						\item[(2)] $\sum\limits_{\nu=1}^{4}\sum\limits_{s^{'}=1}^{n}\pr_{j}
						((-1)^{\wp(g_{\nu})+1}f_{\nu}D_{s^{'}}(g_{\nu}))
						\sigma(E_{m+s^{'},m+i^{'}})=0$.
						\item[(3)]$\sum\limits_{\nu=1}^{4}\sum\limits_{j^{'}=1}^{m}f_{\nu}\pr_{j^{'}}(g_{\nu}) \sigma(E_{j',m+i'})=0.$
						\item[(4)] $\sum\limits_{\nu=1}^{4}\sum\limits_{s'=1}^{n}(-1)^{\wp(f_{\nu})+1}
						f_{\nu}D_{s'}(g_{\nu})\sigma(E_{m+s',m+i'})=0$, for $ i' \in [1, n]$.
					\end{itemize}	
				\end{lem}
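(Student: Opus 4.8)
The lemma is purely computational: one substitutes the explicit monomials $f_\nu,g_\nu$ and verifies each of the four displayed identities by hand, so the plan is essentially to organize the bookkeeping. First I would record the parities that are used throughout, namely $\wp(f_1)=\wp(f_3)=\bar 0$, $\wp(f_2)=\wp(f_4)=\bar 1$, $\wp(g_1)=\wp(g_3)=\bar 1$, $\wp(g_2)=\wp(g_4)=\bar 0$, all immediate from $\wp(x_k)=\bar 0$, $\wp(y_{k'})=\bar 1$ and multiplicativity of $\wp$. Identity (1) is then instantaneous:
\[
\sum_{\nu=1}^{4}f_\nu g_\nu=x_ky_{k'}-y_{k'}x_k-x_ky_{k'}+x_ky_{k'}=0,
\]
where the vanishing uses only the defining relation $x_iy_s=y_sx_i$ of $\calr$, i.e.\ $y_{k'}x_k=x_ky_{k'}$.

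For the three remaining identities the key remark is that $\pr_{j'}$ (resp.\ $D_{s'}$) annihilates a monomial $g_\nu$ unless $g_\nu$ actually contains $x_{j'}$ (resp.\ $y_{s'}$), so the double sums collapse to at most two nonzero summands, which then cancel. In (3), the surviving terms are $\nu=1$ with $\pr_k(g_1)=y_{k'}$ and $\nu=2$ with $\pr_k(g_2)=1$, giving $f_1y_{k'}\,\sigma(E_{k,m+i'})+f_2\,\sigma(E_{k,m+i'})=(y_{k'}-y_{k'})\,\sigma(E_{k,m+i'})=0$. In (4), only $\nu=1$ and $\nu=3$ contribute, at $s'=k'$, via $D_{k'}(x_ky_{k'})=x_k$ and $D_{k'}(y_{k'})=1$ (the Leibniz sign being trivial since $\wp(x_k)=\bar 0$); since $\wp(f_1)=\wp(f_3)=\bar 0$ the two terms are $-x_k\,\sigma(E_{m+k',m+i'})$ and $+x_k\,\sigma(E_{m+k',m+i'})$, which sum to zero. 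Identity (2) is the same cancellation followed by an application of $\pr_j$: for $\nu=1$ one has $(-1)^{\wp(g_1)+1}f_1D_{k'}(g_1)=x_k$ and for $\nu=3$ one has $(-1)^{\wp(g_3)+1}f_3D_{k'}(g_3)=-x_k$, so after $\pr_j$ the two contributions are $\pm\,\delta_{jk}\,\sigma(E_{m+k',m+i'})$ and cancel (for $j\neq k$ every term vanishes outright), for each fixed $j$.

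Conceptually there is nothing deep here: the quadruple $(f_\nu,g_\nu)$ is exactly the Koszul $2$-cycle attached to the pair of supervariables $x_k,y_{k'}$, so all four vanishings are reflections of $d^{2}=0$ for the mixed-product differential, in parallel with the $x$-only companion Lemmas \ref{Lem rel 1} and \ref{Lem rel 2}. The one place where an error could realistically creep in is the tracking of the parity signs $(-1)^{\wp(f_\nu)}$, $(-1)^{\wp(g_\nu)}$ and of the sign that the odd derivation $D_{s'}$ would pick up passing an odd factor; in the present lemma this never actually produces a nontrivial sign because the only even factor encountered is $x_k$, but I would still cross-check every step against the operator expansion of $\rho(ff_\nu\pr_i)\rho(g_\nu D_{i'})$ displayed just before the lemma, since that expansion is how the lemma is subsequently applied. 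I expect this sign bookkeeping --- routine in each individual step --- to be the only ``obstacle'', and it is handled by carrying the parities explicitly from the outset.
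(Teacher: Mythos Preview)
Your proposal is correct and is exactly the direct-substitution verification that the paper intends (the paper states the lemma without proof, leaving it as a routine check parallel to Lemmas~\ref{Lem rel 1} and~\ref{Lem rel 2}). Your parity table, the identification of the two surviving summands in each of (2)--(4), and the sign bookkeeping all check out; the cross-check against the operator expansion preceding the lemma is a sensible safeguard but not strictly necessary.
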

				
				Now with the choice of $f_\nu$ and $g_\nu$ as Lemma \ref{Lem rel 3} we have 	
				\begin{align*}		
					&\dis{\sum_{\nu=1}^{4}} \rho(ff_{\nu}D_{i})\rho(g_{\nu}D_{i^{'}}) \cd v_\la  \cr
					=&\dis{\sum_{\nu=1}^{4}}(-1)^{\wp(g_\nu)+1}
					(\sum\limits_{s^{'}=1}^{n}ff_{\nu}\pr_{i}D_{s^{'}}
					(g_{\nu})\sigma(E_{{m+s'},m+i^{'}}) -\sum\limits_{j=1}^{m}\sum\limits_{j^{'}=1}^{m}ff_{\nu}\pr_{j}
					D_{s^{'}}(g_{\nu})\sigma(E_{ji})\sigma(E_{m+s^{'},m+i^{'}}))\cd v_\la \end{align*}
{\color{red}and}
				\begin{align*}
					&-\dis{\sum_{\nu=1}^{4}}(-1)^{\wp(g_\nu)+1}	\sum\limits_{s=1}^{n}\sum\limits_{j^{'}=1}^{m}
					ff_{\nu}D_{s}\pr_{j^{'}}(g_{\nu}))\sigma(E_{m+s,i})
					\sigma(E_{j^{'},m+i'}).v_\la\cr
					=&f\left(\delta_{ik}\sigma(E_{m+k^{'},m+i^{'}})-\sigma(E_{ki})
					\sigma(E_{m+k^{'},i^{'}})-\sigma(E_{m+k^{'},i})\sigma(E_{k,m+i^{'}})\right) \cd v_\la.
				\end{align*}
				This gives us the relation (\ref{key eqn ty weight 1}).

				(3)For $1\le i,i^{'}\le n$, we consider the operator:
				\begin{align*}
					&\rho(ff_{\nu}D_{i})\rho(g_{\nu}D_{i^{'}})\cr
					=&\left(ff_{\nu}\rho(D_{i})+\sum\limits^{m}_{j=1}
					\pr_{j}(ff_{\nu})\sigma(E_{j,m+i})+(-1)^{\wp(ff_{\nu})+1}
					\sum\limits_{s=1}^{n}D_{s}(ff_{\nu})\sigma(E_{m+s,m+i})\right)\cdot\cr
					&\qquad\qquad\left(g_{\nu}\rho(D_{i^{'}})
					+\sum\limits^{m}_{j^{'}=1}\pr_{j^{'}}(g_{\nu})
					\sigma(E_{j^{'},m+i'})+(-1)^{\wp(g_{\nu})+1}
					\sum\limits_{s^{'}=1}^{n}D_{s^{'}}(g_{\nu})
					\sigma(E_{m+s^{'},m+i'})\right)\cr
					=&ff_{\nu}\rho(D_{i})g_{\nu}\rho(D_{i^{'}})+
					\sum\limits^{m}_{j^{'}=1}((-1)^{\wp(g_{\nu})}
					ff_{\nu}\pr_{j^{'}}(g_{\nu})\rho(D_{i})+
					ff_{\nu}D_{i}\pr_{j^{'}}(g_{\nu}))\sigma(E_{j^{'},m+i'})\cr
					&\quad +(-1)^{\wp(g_{\nu})+1}\sum\limits_{s^{'}=1}^{n} ((-1)^{\wp(g_{\nu})-1}ff_{\nu}D_{s^{'}}(g_{\nu})
					\rho(D_{i})+ff_{\nu}D_{i}D_{s^{'}}(g_{\nu}))\sigma(E_{m+s^{'},m+i'})\cr
					&\quad+\sum\limits_{j=1}^{m}\pr_{j}(ff_{\nu})\sigma(E_{j,m+i})
					g_{\nu}\rho(D_{i^{'}})\cr
					&\quad+(-1)^{\wp(g_{\nu})}\sum\limits_{j=1}^{m}
					\sum\limits_{j^{'}=1}^{m}
					(\pr_{j}(ff_{\nu}\pr_{j^{'}}(g_{\nu}))-ff_{\nu}\pr_{j}
					\pr_{j^{'}}(g_{\nu}))\sigma(E_{j,m+i})\sigma(E_{j^{'}, m+i'})\cr
					&\quad+\sum\limits_{j=1}^{m}\sum\limits_{s^{'}=1}^{n}(\pr_{j}
					(ff_{\nu}D_{s^{'}}(g_{\nu}))-ff_{\nu}\pr_{j}D_{s^{'}}
					(g_{\nu}))\sigma(E_{j,m+i})\sigma(E_{m+s^{'},m+i'})\cr
					&\quad+(-1)^{\wp(ff_{\nu})+1}\sum\limits^{n}_{s=1}D_{s}
					(ff_{\nu})\sigma(E_{m+s,m+i})g_{\nu}\rho(D_{i^{'}})\cr
					&\quad	+(-1)^{\wp(ff_{\nu})+1}\sum\limits_{s=1}^{n}\sum\limits_{j^{'}=1}^{m}
					(D_{s}(ff_{\nu}\pr_{j^{'}}(g_{\nu}))-(-1)^{\wp(ff_{\nu})}
					ff_{\nu}D_{s}\pr_{j^{'}}(g_{\nu}))\sigma(E_{m+s,m+i})
					\sigma(E_{j^{'},m+i'})\cr
					&\quad+(-1)^{\wp(ff_{\nu})+\wp(g_{\nu})}
					\sum\limits_{s=1}^{n}\sum\limits_{s^{'}=1}^{n}
					(D_{s}(ff_{\nu}D_{s^{'}}(g_{\nu}))-(-1)^{\wp(ff_{\nu})}
					ff_{\nu}D_{s}D_{s^{'}}(g_{\nu}))\sigma(E_{m+s,m+i})
					\sigma(E_{m+s^{'},m+i'}).
				\end{align*}
				\begin{lem}\label{Lem rel 4}
					Consider the monomials $f_{1}=1,f_{2}=y_{k^{'}},f_{3}=-y_{k},f_{4}=
					-y_{k^{'}}y_{k},g_{1}=y_{k^{'}}y_{k},g_{2}=y_{k},g_{3}=y_{k^{'}},g_{4}=3$   for some $ k,k' \in [1,n]$ with $k \neq k'$. Then we have the following relations:
					\begin{itemize}
						\item[(1)] $\dis{\sum_{\nu=1}^{4}} f_\nu g_\nu=0  $.
						\item[(2)]   $\dis{\sum_{\nu=1}^{4} \sum_{j'=1}^{m}} f_\nu D_{s'}(g_\nu)\sigma(E_{m+s',m+i'})=0,  $ for $i'\in [1,n]$.
					\end{itemize}			
				\end{lem}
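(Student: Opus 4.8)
The plan is to verify both identities by direct substitution of the prescribed monomials into $\calr$, using only the two structural facts available: the sign rule $y_py_q=-y_qy_p$, and the odd Leibniz rule $D_t(y_py_q)=D_t(y_p)y_q-y_pD_t(y_q)$ together with $D_t(y_p)=\delta_{tp}$. (In item (2) the inner sum should be read as $\sum_{s'=1}^{n}$, to match the operators $D_{s'}$ and $\sigma(E_{m+s',m+i'})$.)

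For (1) I would simply list the four products: $f_1g_1=y_{k'}y_k$, $f_2g_2=y_{k'}y_k$, $f_3g_3=-y_ky_{k'}=y_{k'}y_k$ after one transposition, and $f_4g_4=-3\,y_{k'}y_k$; their sum is $3y_{k'}y_k-3y_{k'}y_k=0$. I note in passing that, since every $f_\nu$ here involves only the $y$'s, one has $\pr_j f_\nu=0$, so the analogues of items (3) and (4) of Lemma \ref{Lem rel 1} reduce immediately to (1) multiplied by $\pr_j f$ or $D_s f$; this explains why only two identities are recorded in the statement.

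For (2) I would compute $D_{s'}(g_\nu)$ termwise: $D_{s'}(g_1)=\delta_{s'k'}y_k-\delta_{s'k}y_{k'}$, $D_{s'}(g_2)=\delta_{s'k}$, $D_{s'}(g_3)=\delta_{s'k'}$, $D_{s'}(g_4)=0$. Multiplying on the left by $f_\nu$ and summing over $\nu$ and $s'$ collapses the Kronecker deltas to give
$$\sum_{\nu=1}^{4}\sum_{s'=1}^{n}f_\nu D_{s'}(g_\nu)\,\sigma(E_{m+s',m+i'})=\bigl(y_k-y_k\bigr)\sigma(E_{m+k',m+i'})+\bigl(-y_{k'}+y_{k'}\bigr)\sigma(E_{m+k,m+i'})=0,$$
where in the coefficient of $\sigma(E_{m+k',m+i'})$ the $y_k$ comes from $\nu=1$ and the $-y_k$ from $\nu=3$, while in the coefficient of $\sigma(E_{m+k,m+i'})$ the $-y_{k'}$ comes from $\nu=1$ and the $y_{k'}$ from $\nu=2$; the $\nu=4$ term vanishes outright.

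I do not expect any real obstacle here: the whole content is sign-bookkeeping. The only steps demanding attention are the single transposition used in (1) and, in (2), the minus sign in the odd Leibniz rule that produces the term $-\delta_{s'k}y_{k'}$ in $D_{s'}(g_1)$ and makes the cancellation go through — and, because $\calr$ is non-commutative in the $y$-variables, one must keep the monomials in the written left-to-right order throughout. Once (1) and (2) are in hand they are substituted, exactly as in the preceding cases, into the expansion of $\sum_\nu\rho(ff_\nu D_i)\rho(g_\nu D_{i'})\cdot v_\lambda$ to annihilate the unwanted $\sigma$-terms and obtain relation (\ref{eq:(iv)Case3}).
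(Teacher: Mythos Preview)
Your verification of items (1) and (2) is correct and is exactly the routine sign-bookkeeping the paper leaves implicit (no proof is given there); the computation with $g_4=3$ is right, and your reading of the index $s'$ in place of $j'$ is the intended one.

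One small correction to your passing remark: the analogue of item (4) of Lemma \ref{Lem rel 1} does \emph{not} reduce to (1) via $\pr_j f_\nu=0$, because $D_s(ff_\nu)=D_s(f)f_\nu+(-1)^{\wp(f)}fD_s(f_\nu)$ and here $D_s(f_\nu)$ need not vanish (indeed $\sum_\nu D_s(f_\nu)g_\nu\neq 0$ for $s=k'$). The actual reason items (3) and (4) are omitted is that in the expansion of $\sum_\nu\rho(ff_\nu D_i)\rho(g_\nu D_{i'})$ the corresponding terms carry $\rho(D_{i'})$ on the right, which annihilates $v_\lambda$ by hypothesis; so they are simply not needed, rather than being consequences of (1). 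This does not affect your proof of the lemma itself.
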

				Now note that with the choice of $f_\nu$ and $g_\nu$ as Lemma \ref{Lem rel 4} we have
				\begin{align*}	
					&\dis{\sum_{\nu=1}^{4}} \rho(ff_{\nu}D_{i})\rho(g_{\nu}D_{i^{'}}) \cd v_\la  \cr =&\dis{\sum_{\nu=1}^{4}}(-1)^{\wp(g_\nu)+1}(\sum\limits_{s^{'}=1}^{n}
					ff_{\nu}D_{i}D_{s^{'}}(g_{\nu})\sigma(E_{{m+s'},m+i^{'}})\cr
					&\qquad +\sum\limits_{j=1}^{m}\sum\limits_{j^{'}=1}^{m}ff_{\nu}D_{s}D_{s^{'}}
					(g_{\nu})\sigma(E_{m+s,m+i})\sigma(E_{m+s^{'},m+i^{'}}))\cd v_\la\cr
					=& -f(\delta_{ik}\sigma(E_{m+k^{'},m+i^{'}})-\delta_{ik^{'}}
					\sigma(E_{m+k,m+i^{'}})+\cr
					&\qquad\sigma(E_{m+k,m+i})\sigma(E_{m+{k^{'},m+i^{'}}})-
					\sigma(E_{m+k^{'},m+i})\sigma(E_{m+k,m+i^{'}})) \cd v_{\la}.
				\end{align*}
				This gives us relation (\ref{eq:(iv)Case3}).

				\section{Appendix C: Proof of Theorem \ref{semi-inf}}\label{sec: app C}	
				
				It is easy to see that $\mcal E$ is a homomorphism of Lie superalgebra. Hence it is sufficient to check the property (SI-2). To check (SI-2),there are several possibilities for $X \in \mathfrak{g}_1$ and $Y\in \mathfrak{g}_{-1}$. We consider all these possibilities case by case. One should observe the cases (1,2,11,12), which ensures that if $\cale$ is a semi-infinite character then $\cale$ satisfies the properties of the theorem. \\
				
				(1) Let $X=x_iy_jD_k$ and $Y=\pr_s$. Then $[X,Y]=-\de_{si}y_jD_k$ and
				\begin{equation*}
					[X,[Y,Z]]=
					\begin{cases}
						-\de_{sr}\de_{ti}y_jD_k	, \; Z=x_r\pr_t \\
						0, \; Z=y_rD_t\\
						-\delta_{tj}\de_{sr}x_iD_k, \; Z=x_rD_t\\
						0, \; Z=y_r\pr_t.
					\end{cases}
				\end{equation*}
				(1.i) Note that when $s \neq i$, then $\str(\ad X\circ \ad Y|_{\ggg_0})=0=\cale([X,Y])$. Since in this case only diagonal elements comes from $Z=x_rD_t$, but $s \neq i$ implies that there is no non-zero diagonal element in $(\ad X\circ \ad Y|_{\ggg_0})$. \\
				(1.ii) On the other hand when $s \neq i$, we have two cases. Observe that for $j=k$ only non-zero diagonal term comes from the image of $x_iD_j$ and for $j \neq k$ there is no non-zero diagonal term. Therefore we have $\str(\ad X\circ \ad Y|_{\ggg_0})=0=\cale(-y_jD_k)$ for all $j \neq k$ and for $j=k$,
				$\str(\ad X\circ \ad Y|_{\ggg_0})=1=\cale(-y_jD_j)$.\\
				
				(2)  Let $X=x_iy_jD_k$ and $Y=D_s$. Then $[X,Y]=-\de_{sj}x_iD_k$ and\\
				\begin{equation*}
					[X,[Y,Z]]=
					\begin{cases}
						0	, \; Z=x_r\pr_t \\
						-\de_{sr}\de_{tj}x_iD_k, \; Z=y_rD_t\\
						0, \; Z=x_rD_t\\
						-\de_{ti}\de_{sr}y_jD_k, \; Z=y_r\pr_t.
					\end{cases}
				\end{equation*}
				Hence in this case $\str(\ad X\circ \ad Y|_{\ggg_0})=0=\cale([X,Y]),$ because it is clear that there can not be any non-zero diagonal term.\\
				
				(3)  Let $X=x_ix_jD_k$ and $Y=\pr_s$. Then $[X,Y]=-\pr_s(x_ix_jD_k)$ and\\
				\begin{equation*}
					[X,[Y,Z]]=
					\begin{cases}
						-\de_{sr}\pr_t(x_ix_j)D_k	, \; Z=x_r\pr_t \\
						0 \; Z=y_rD_t\\
						0, \; Z=x_rD_t\\
						0, \; Z=y_r\pr_t.
					\end{cases}
				\end{equation*}
				
				Now it is clear that $\str(\ad X\circ \ad Y|_{\ggg_0})=0=\cale([X,Y]).$\\
				(4)  Let $X=x_ix_jD_k$ and $Y=D_s$. Then $[X,Y]=0$ and
				\begin{equation*}
					[X,[Y,Z]]=
					\begin{cases}
						0, \; Z=x_r\pr_t \\
						0 \; Z=y_rD_t\\
						0, \; Z=x_rD_t\\
						-\de_{sr}\pr_t(x_ix_j)D_k, \; Z=y_r\pr_t.
					\end{cases}
				\end{equation*}
				Therefore $\str(\ad X\circ \ad Y|_{\ggg_0})=0=\cale([X,Y]).$\\
				(5)  Let $X=y_iy_jD_k$ and $Y=\pr_s$.  Then $[X,Y]=0$ and
				\begin{equation*}
					[X,[Y,Z]]=
					\begin{cases}
						0, \; Z=x_r\pr_t \\
						0 \; Z=y_rD_t\\
						0, \; Z=x_rD_t\\
						0, \; Z=y_r\pr_t.
					\end{cases}
				\end{equation*}
				Therefore $\str(\ad X\circ \ad Y|_{\ggg_0})=0=\cale([X,Y]).$\\
				
				(6)  Let $X=y_iy_jD_k$ and $Y=D_s$.  Then $[X,Y]=-D_s(y_iy_j)D_k$ and
				\begin{equation*}
					[X,[Y,Z]]=
					\begin{cases}
						0, \; Z=x_r\pr_t \\
						[y_iy_jD_k,[D_s,Z]], \; Z=y_rD_t\\
						0, \; Z=x_rD_t\\
						0, \; Z=y_r\pr_t.
					\end{cases}
				\end{equation*}\\
				
				(7) Let $X=y_iy_j\pr_k$ and $Y=\pr_s$. Then $[X,Y]=0$ and
				\begin{equation*}
					[X,[Y,Z]]=
					\begin{cases}
						0	, \; Z=x_r\pr_t \\
						0, \; Z=y_rD_t\\
						-\delta_{sr}D_t(y_iy_j)\pr_k, \; Z=x_rD_t\\
						0, \; Z=y_r\pr_t
					\end{cases}
				\end{equation*}
				Therefore $\str(\ad X\circ \ad Y|_{\ggg_0})=0=\cale([X,Y]).$\\
				
				(8) Let $X=y_iy_j\pr_k$ and $Y=D_s$. Then $[X,Y]=[y_iy_j\pr_k,D_s]$ and  \\
				\begin{equation*}
					[X,[Y,Z]]=
					\begin{cases}
						0, \; Z=x_r\pr_t \\
						-\delta_{sr}D_t(y_iy_j)\pr_k, \; Z=y_rD_t\\
						0, \; Z=x_rD_t\\
						0, \; Z=y_r\pr_t.
					\end{cases}
				\end{equation*}
				
				Therefore in any case (2), $\str(\ad X\circ \ad Y|_{\ggg_0})=0=\cale([X,Y]).$\\

				Hence in this case $\str(\ad X\circ \ad Y|_{\ggg_0})=\str(\ad X\circ \ad Y|_{Z_0})=\cale([X,Y])$, last equality follows from appendix A of \cite{DSY24}, where $Z_0=\spann_\bbf \{y_rD_t: 1\leq r,t \leq n\}$. \\
				
				(9) Let $X=x_ix_j\pr_k$ and $Y=\pr_s$.  Then $[X,Y]=[x_ix_j\pr_k,\pr_s]$ and
				\begin{equation*}
					[X,[Y,Z]]=
					\begin{cases}
						[x_ix_j\pr_k,[\pr_s,Z]], \; Z=x_r\pr_t \\
						0, \; Z=y_rD_t\\
						0, \; Z=x_rD_t\\
						0, \; Z=y_r\pr_t.
					\end{cases}
				\end{equation*}
				Note that $\str(\ad X\circ \ad Y|_{\ggg_0})=\textsf{tr}(\ad X\circ \ad Y|_{Z_0})=\cale([X,Y])$, last equality follows from Lemma 2.1 of \cite{DSY20}, where $Z_0=\spann_\bbf \{x_r\pr_t: 1\leq r,t \leq m\}$. \\
				
				(10)  Let $X=x_ix_j\pr_k$ and $Y=D_s$. Then $[X,Y]=0$ and
				\begin{equation*}
					[X,[Y,Z]]=
					\begin{cases}
						0, \; Z=x_r\pr_t \\
						0, \; Z=y_rD_t\\
						0, \; Z=x_rD_t\\
						-\de_{sr}\pr_t(x_ix_j))\pr_k, \; Z=y_r\pr_t.
					\end{cases}
				\end{equation*}
				Hence $ \str(\ad X\circ \ad Y|_{\ggg_0})=\cale([X,Y])=0.$\\
				
				(11) Let $X=x_iy_j\pr_k$ and $Y=\pr_s$. Then $[X,Y]=-\de_{si}y_j\pr_k$ and
				\begin{equation*}
					[X,[Y,Z]]=
					\begin{cases}
						-\de_{sr}\de_{ti}y_j\pr_k, \; Z=x_r\pr_t \\
						0, \; Z=y_rD_t\\
						\de_{sr}\de_{tj}x_i\pr_k \; Z=x_rD_t\\
						0, \; Z=y_r\pr_t.
					\end{cases}
				\end{equation*}
				Hence $ \str(\ad X\circ \ad Y|_{\ggg_0})=\cale([X,Y])=0.$\\
				
				(12) Let $X=x_iy_j\pr_k$ and $Y=D_s$. Then $[X,Y]=\de_{sj}x_i\pr_k$ and
				
				\begin{equation*}
					[X,[Y,Z]]=
					\begin{cases}
						0, \; Z=x_r\pr_t \\
						-\de_{sr}\de_{tj}x_i\pr_k, \; Z=y_rD_t\\
						0
						, \; Z=x_rD_t\\
						
						\de_{sr}\de_{ti}y_j\pr_k, Z=y_r\pr_t.
					\end{cases}
				\end{equation*}
				Therefore we have $ \str(\ad X\circ \ad Y|_{\ggg_0})=0$ for $s \neq j$ or $i \neq k$ and for $s=j$, $i=k$, we have $ \str(\ad X\circ \ad Y|_{\ggg_0})=1.$ This completes the proof.

			\end{document}